\documentclass[12pt,final]{amsart}
\usepackage{amssymb,euscript}
\usepackage{amscd}
\usepackage[notref,notcite]{showkeys}
\usepackage{latexsym,todonotes}
\usepackage{epsfig,pxfonts}
\usepackage{amsfonts}

\usepackage{xr,xr-hyper}
\usepackage{bbm,ifpdf}
\ifpdf
  \usepackage{pdfsync,hyperref}
\fi
\usepackage[margin=3cm,footskip=25pt,headheight=21pt]{geometry}
\usepackage{fancyhdr,mathrsfs,nicefrac}
\pagestyle{fancy}
\chead[\thetitle]{Ben Webster}
\cfoot[\thepage]{\thepage}

\newtheorem{theorem}{Theorem}[section]
\newtheorem{lemma}[theorem]{Lemma}
\newtheorem{assumption}[theorem]{Assumption}
\newtheorem{proposition}[theorem]{Proposition}
\newtheorem{corollary}[theorem]{Corollary}

\newtheorem{conjecture}[theorem]{Conjecture}
\newtheorem{itheorem}{Theorem}

{
\theoremstyle{plain}
\newtheorem{definition}[theorem]{Definition}
\newtheorem{example}[theorem]{Example}

\newtheorem{remark}[theorem]{Remark}
}


\newcommand{\nc}{\newcommand}
\nc{\cat}{\mathcal{V}}

\newcommand{\arxiv}[1]{\href{http://arxiv.org/abs/#1}{\tt arXiv:\nolinkurl{#1}}}

\newcommand{\supp}{\operatorname{supp}}

\newcommand{\umod}{\operatorname{-\underline{mod}}}

\newcommand{\im}{\operatorname{im}}

\renewcommand{\dim}{\operatorname{dim}}

\newcommand{\ck}{\kappa}

\newcommand{\stR}{R}

\newcommand{\Bi}{\mathbf{i}}
\newcommand{\Bj}{\mathbf{j}}
\newcommand{\Bs}{\mathbf{s}}
\newcommand{\Bt}{\mathbf{t}}
\newcommand{\K}{\mathbf{k}}

\newcommand{\Z}{\mathbb{Z}}
\newcommand{\Q}{\mathbb{Q}}

\newcommand{\om}{\omega}

\newcommand{\LLoc}{\mathbb{L}\!\operatorname{Loc}}
\newcommand{\Rsecs}{\mathbb{R}\Gamma_\bS}
\newcommand{\R}{\mathbb{R}}

\newcommand{\C}{\mathbb{C}}

\newcommand{\wt}{\operatorname{wt}}

\newcommand{\la}{\leftarrow}


\nc{\Bv}{\mathbf{v}}
\nc{\Bu}{\mathbf{u}}
  \nc{\Bw}{\mathbf{w}}
\nc{\coho}{\EuScript{G}}
\nc{\sllhat}{\mathfrak{\widehat{sl}}_\ell}
\nc{\slehat}{\mathfrak{\widehat{sl}}_e}
\nc{\glehat}{\mathfrak{\widehat{gl}}_e}

\newcommand{\secs}{\Gamma_\bS}

\renewcommand{\la}{\lambda}

\newcommand{\al}{\alpha}

\newcommand{\Hom}{\operatorname{Hom}}

\nc{\lift}{\gamma}

\newcommand{\cO}{\mathcal{O}}
\newcommand{\cL}{\mathcal{L}}
\newcommand{\cT}{\mathcal{T}}

\newcommand{\becircled}{\mathaccent "7017}

\newcommand{\Ext}{\operatorname{Ext}}

\newcommand{\cD}{\mathcal{D}}
\newcommand{\bS}{\mathbb{S}}
\newcommand{\bT}{\mathbb{T}}
\newcommand{\bt}{\mathbbm{t}}

\newcommand{\cM}{\mathcal{M}}

\newcommand{\cN}{\mathcal{N}}

\newcommand{\Loc}{\operatorname{Loc}}

\newcommand{\Diff}{\mathfrak{D}}
\newcommand{\excise}[1]{}

\newcommand{\fS}{\mathfrak{S}}

\newcommand{\End}{\operatorname{End}}
\newcommand{\Aut}{\operatorname{Aut}}

\newcommand{\fM}{\mathfrak{M}}
\newcommand{\fN}{\mathfrak{N}}
\newcommand{\fX}{\mathfrak{X}}
\newcommand{\bmu}{\boldsymbol{\mu}}

\newcommand{\fp}{\mathfrak{p}}
\newcommand{\fg}{\mathfrak{g}}
\newcommand{\ft}{\mathfrak{t}}
\newcommand{\fG}{\mathfrak{G}}
\newcommand{\mmod}{\operatorname{-mod}}
\newcommand{\dgmod}{\operatorname{-dg-mod}}
\newcommand{\red}{\mathfrak{r}}
\newcommand{\cOg}{\mathcal{O}_{\!\operatorname{g}}}
\newcommand{\dOg}{D_{\cOg}}
\newcommand{\preO}{p\cOg}
\newcommand{\dpreO}{D_{p\cOg}}
\newcommand{\dpreOz}{D_{p\cOg^0}}

\newcommand{\alg}{T}

\newcommand{\bla}{{\underline{\boldsymbol{\la}}}}
\newcommand{\cOa}{\mathcal{O}_{\!\operatorname{a}}}
\newcommand{\Lotimes}{\overset{L}{\otimes}}
\newcommand{\thetitle}{On generalized category $\cO$ for a quiver variety}

\begin{document}

\renewcommand{\theitheorem}{\Alph{itheorem}}
\usetikzlibrary{decorations.pathreplacing,backgrounds,decorations.markings}
\tikzset{wei/.style={draw=red,double=red!40!white,double distance=1.5pt,thin}}

\noindent {\Large \bf 
\thetitle}
\bigskip\\
{\bf Ben Webster}\footnote{Supported by the NSF under Grant DMS-1151473 and the Alfred P. Sloan Foundation}\\
Department of Mathematics,  University of Virginia, Charlottesville, VA
\bigskip\\
{\small
\begin{quote}
\noindent {\em Abstract.} 
In this paper, we give a method for relating the generalized category
$\cO$ defined by the author and collaborators to explicit finitely
presented algebras, and apply this to quiver varieties.  This allows us
to describe combinatorially not just the structure of these category
$\cO$'s but also how certain interesting families of derived
equivalences, the shuffling and twisting functors, act on them.

In the case of Nakajima quiver varieties, the algebras that appear are weighted KLR algebras and
their steadied quotients, defined by the
author in earlier work.  In particular, these give a geometric
construction of canonical bases for simple representations, tensor
products and Fock spaces.  If the $\C^*$-action used to define the
category $\cO$ is a ``tensor product action'' in the sense of
Nakajima, then we arrive at the unique categorifications of  tensor products; in
particular, we obtain a geometric description of the braid group
actions used by the author in defining categorifications of
Reshetikhin-Turaev invariants.  Similarly, in affine type A, an arbitrary
action results in the diagrammatic algebra equivalent to blocks of category
$\cO$ for cyclotomic Cherednik algebras.  

This approach also allows us to show that these categories are
Koszul and understand their Koszul duals; in particular, we can show
that categorifications of minuscule tensor products in types ADE are Koszul.

In the affine case, this shows that our category $\cO$'s are Koszul
and their Koszul duals are given by category $\cO$'s with rank-level
dual dimension data, and that this duality switches shuffling and
twisting functors.
\end{quote}
}
\bigskip

\section{Introduction}

In this paper, our aim is to introduce a new technique for relating
geometric and algebraic categories.  Since the categories on both
sides of this correspondence are
probably not familiar to many readers, we will provide a teaser
for the results before covering any of the details.  Amongst the
results we will cover are:
\begin{itemize}
\item A new geometric construction of categories (using quiver
  varieties) equivalent to cyclotomic Khovanov-Lauda-Rouquier (KLR) algebras and their weighted
  generalizations.  In particular, this gives geometric constructions
  of the canonical bases of simple representations and their tensor
  products in types ADE, and on higher level Fock spaces in affine
  type A as the simple objects in a category of sheaves.

Alternatively, the reader can think of these as Kazhdan-Lusztig type
character formulae for the decomposition multiplicities of category $\cO$ in these
cases. The r\^{o}le of Kazhdan-Lusztig polynomials is played by the
coefficients of the canonical basis.  
\item A geometric description of the braid group action used in
  \cite{Webmerged} to construction categorifications of
  Reshetikhin-Turaev invariants.  
\item A new method for proving Koszulity of these algebras, providing
  the first proof that categorifications of minuscule tensor products
  in types D and E are Koszul, and giving a new proof that blocks of
  category $\cO$ for Cherednik algebras of $\Z/\ell\Z \wr S_n$ are
  Koszul.
\end{itemize}

In recent years, a great deal of attention in representation theory
has been directed toward generalizations of the BGG category $\cO$;
perhaps the most famous of these is that for Cherednik algebras
\cite{GGOR}, but Braden, Licata, Proudfoot and the author have also considered
these for ``hypertoric enveloping algebras'' 
\cite{BLPWtorico}.  The author and the same group of collaborators have
given a definition subsuming all these examples into a uniform
definition of a category $\cO$ attached to a conical symplectic
singularity, a Hamiltonian $\C^*$-action commuting with the conical structure and a choice of {\bf period}
(which the reader should think of as a central character)
\cite{BLPWquant,BLPWgco}.  Many natural properties of this category
follow from general principles, but at the moment, the techniques for
understanding these categories are not well-developed.  

One consistent theme is that these categories have simple
realizations as representation categories of finite
dimensional algebras with combinatorial presentations.
\begin{itemize}
\item For BGG category $\cO$, this follows from the combinatorial
  descriptions of the categories of Soergel bimodules by Elias,
  Khovanov and Williamson \cite{ElKh,ElW}.
\item For hypertoric category $\cO$, these are provided by certain
  combinatorial algebras defined by Braden, Licata, Proudfoot and the
  author \cite{GDKD}.
\item For a Cherednik algebra of type $G(r,1,\ell)$, these are given
  in \cite{WebRou} by certain variants of Hecke and KLR algebras.
\end{itemize}
One of the most interesting symplectic singularities is a
Nakajima quiver variety for a quiver $\Gamma$, especially for a finite or affine Dynkin
diagram.  We have already proven that this category carries a
categorical action of the Lie algebra attached to $\Gamma$
\cite{Webcatq}, and we know that such a structure
has a lot of power over the category it acts on.

For any Hamiltonian $\C^*$-action commuting with the conical structure
on a quiver variety, we have a diagrammatically defined algebra which
is a candidate for category $\cO$: a reduced
steadied quotient of a weighted Khovanov-Lauda-Rouquier algebra
\cite{WebwKLR}.  Of course, this is not such a familiar object, but it
is purely combinatorial in nature, and certain special cases have had
a bit more of a chance to filter into public consciousness.

To simplify things for the introduction, assume that $\Gamma$ is 
finite or affine type ADE.  In this case, we separate appropriate
$\C^*$-actions into 2 cases:
\begin{itemize}
\item Case 1: the action is a {\bf tensor product action} as used
  by Nakajima \cite{Nak01}; this is induced by a cocharacter $\C^*\to
  G_{\Bw}$ in Nakajima's notation.
\item Case 2: $\Gamma$ is a cycle, and the sum of
  the weights of the edges of the cycle is non-zero.
\end{itemize}
In the first case, the Grothendieck group of category $\cO$ has a
natural map to a tensor product of simple representations; in the
second, it has a natural map to a higher-level Fock space.
\begin{itheorem}\label{main1}
  For a generic integral period, category $\cO$ is equivalent to the
  heart of a $t$-structure on:
  \begin{itemize}
  \item [Case 1:] the category of dg-modules over a tensor product
    algebra $T^\bla$ from \cite[\S 4.2]{Webmerged}.
  \item [Case 2:] the category of dg-modules over a Fock space
    algebra $T^\vartheta$, as studied in \cite[\S 4.1]{WebRou}.
  \end{itemize}
In Case 2, we also have an explicit description of the
  abelian category as a block of category $\cO$ for a cyclotomic
  Cherednik algebra.
\end{itheorem}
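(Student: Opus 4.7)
The plan is to deduce the theorem from a general correspondence, which should be the main structural result of the paper, identifying category $\cO$ on a conical symplectic resolution with the heart of a t-structure on dg-modules over the endomorphism algebra of a distinguished projective generator built from the standard objects attached to the torus-fixed components. For a Nakajima quiver variety with a chosen Hamiltonian $\C^*$-action, this generator is assembled via Hecke-type correspondences between components of the fixed locus, and its endomorphism algebra should be computed, via a geometric bimodule action on equivariant cohomology, as a reduced steadied quotient of a weighted KLR algebra $\stR^\vartheta$ from \cite{WebwKLR}. The weighting $\vartheta$ records the $\C^*$-weights at framing vertices, while steadying kills summands supported off the core variety.

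To pass to the two cases I would then match $\vartheta$ with the known combinatorial descriptions. In Case 1, a tensor product cocharacter factors through a product of general linear factors inside $G_\Bw$ and so picks out a sequence of dominant weights $\bla=(\la^1,\ldots,\la^n)$; a direct comparison of generators and relations should identify the resulting steadied quotient of $\stR^\vartheta$ with the tensor product algebra $T^\bla$ of \cite[\S4.2]{Webmerged}. In Case 2, for the cyclic quiver, the hypothesis that the sum of edge weights is nonzero is precisely the generic positioning needed for the steadied quotient to collapse to the Fock space algebra $T^\vartheta$ of \cite[\S4.1]{WebRou}. The final sentence on Cherednik algebras is then a direct citation of the equivalence proved in \emph{loc.\ cit.}\ between $T^\vartheta\mmod$ and blocks of category $\cO$ for the rational Cherednik algebra of $\Z/\ell\Z\wr S_n$.

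The t-structure claim itself comes from transporting the highest-weight structure: by construction the generator sends standards to standards, the geometric filtration of $\cO$ by supports matches the algebraic filtration by standardly stratified quotients, and so the essential image of the abelian category $\cO$ is exactly the heart of the natural t-structure on $T^\bla\dgmod$ (respectively on $T^\vartheta\dgmod$). The main obstacle, I expect, is the explicit identification of the weighting $\vartheta$ with the combinatorial data defining $T^\bla$: the former arises from equivariant cohomology on the quiver variety while the latter is specified by red-strand positions encoding a tensor-product decomposition, and bridging the two requires a careful choice of stable envelope basis and an understanding of how Nakajima's tensor product $\C^*$-action interacts with the fixed-point decomposition. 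A secondary technical point is that $T^\bla$ is not in general formal, so the dg-enhancement is essential and the t-structure must be constructed at the dg level rather than by a naive passage to the bounded derived category.
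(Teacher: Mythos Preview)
Your proposal misidentifies the mechanism at every stage, so there is a genuine gap rather than a difference of route.

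The paper does \emph{not} work directly on the quiver variety $\fM$ with a projective generator built from standards or stable envelopes. Instead it works upstairs, before symplectic reduction, with $(G,\chi)$-equivariant $\Diff_E$-modules on the representation space $E$. The key objects are the pushforwards $L_{\Bi}=(p_{\Bi})_*\fS_{X_{\Bi}}$ from loaded flag spaces $X_{\Bi}$; these are semi-simple objects (by the decomposition theorem), not projectives, and their Ext-algebra is computed as the equivariant Borel--Moore homology of a Steinberg-type fiber product $X\times_E X$. This is exactly the weighted KLR algebra $\tilde T^\bla$ (respectively $\tilde T^\vartheta$) by a result of \cite{WebwKLR}. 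The passage to $\dOg$ is then accomplished by a dg-quotient: one shows that the reduction functor $\red$ realizes $\dOg$ as the quotient of $\dpreO$ by the subcategory supported on the unstable locus, and that killing the unsteady idempotents in $\tilde T^\bla$ exactly implements this quotient, yielding $T^\bla$.

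The substantive technical work is not ``matching the weighting $\vartheta$ with red-strand positions''---that identification is already in \cite{WebwKLR}---but rather verifying hypothesis $(\dagger)$: that every simple in $\preO$ occurs as a summand of some $L_{\Bi}$, and that those with unstable singular support arise precisely from unsteady loadings. In Case~1 this uses a pigeonhole argument comparing the count of simples in $\cOg$ with Li's count of summands surviving reduction; in Case~2 it requires an analysis of the Kirwan--Ness stratification of the relative precore in terms of multipartitions and multisegments. None of this involves stable envelopes.

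Finally, your comment on formality is backwards. Formality of the Ext-algebra of the $L_{\Bi}$ is not an obstacle but the linchpin: it follows from purity of Hodge structures (condition $(\maltese)$, implied by $(\dagger)$) and is precisely what allows one to replace the dg-Ext-algebra by its cohomology $T^\bla$. Without it the equivalence $\dOg\simeq T^\bla\dgmod$ would not even make sense as stated.
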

In each case, we actually get a stronger graded version of this
theorem.  In Cases 1 and 2 above, category $\cO$ has a graded lift $\tilde{\cO}$,
which we construct geometrically using Hodge theory.  We compare these
with the gradings on the algebras $T^\bla,T^\vartheta$.

\excise{Let $A$ be a graded algebra such that every graded simple module is
self-dual up to an even grading shift (as a vector space). 
Generalizing \cite{MOS},  we call a complex of projectives $\cdots \to P_i\to P_{i+1}\to \cdots$ over $A$
{\bf linear} if in the $i$th homological degree, the head $H$ of any summand
of $P_i$ is isomorphic $H^*(2i)$, the dual of $H$ with grading
shifted downward by $2i$.  If $A$ is basic, this is the same as
requiring that $P_i$ is generated by elements of degree $-i$.  This is
an abelian category if $A$ is graded Morita equivalent to a
non-negatively graded algebra with degree 0 part semi-simple; this
condition is satisfied by 
$T^\bla$ and $T^\vartheta$. }
By \cite[8.7]{WebCB} and \cite[5.23]{WebRou}, the algebras $T^\bla$ and
$T^\vartheta$ are Morita equivalent to non-negatively graded algebras $\becircled{T}^\bla$ and $\becircled{T}^\vartheta$
which have degree 0 part semi-simple.  We call a complex of
graded projectives over such an algebra $A$ {\bf linear} if its $i$th
homological degree is generated in degree $-i$.
The category of linear complexes of projectives
$\mathsf{LCP}(A)$ is a graded abelian category, where
the functor of ``grading shift'' by $a$ is given by shifting the homological
grading by $-a$, and the internal grading by $a$ (to maintain
linearity).  
Theorem
\ref{main1} can then be strengthened to:
\addtocounter{itheorem}{-1}
\renewcommand{\theitheorem}{\Alph{itheorem}'}
\begin{itheorem}
 In Case 1, resp. Case 2, the graded lift $\tilde{\cO}$ is equivalent to the
 graded category of
linear projective complexes $\mathsf{LCP}(\becircled T^\bla)$, resp.
$\mathsf{LCP}(\becircled T^\vartheta)$. 
This equivalence sends simples in $\tilde{\cO}$ to indecomposable
projectives over the algebras $\becircled T^\bla$ and
$\becircled T^\vartheta$ (thought of as
one step complexes).  
This induces isomorphisms of $\Z[q,q^{-1}]$-modules:
  \begin{itemize}
  \item [Case 1:] $K(\tilde{\cO})\cong K(T^\bla\operatorname{-gmod})\cong
V_{\la_1}\otimes \cdots \otimes V_{\la_\ell}$ to the
Grothendieck group of graded projective modules over $T^\bla$, which
is in turn isomorphic to a tensor product of (Lusztig integral forms
of) simple modules over the
quantum group $U_q(\mathfrak{g}_\Gamma)$.
  \item [Case 2:] $K(\tilde{\cO})\cong
    K(T^\vartheta\operatorname{-gmod}) \cong \mathbf{F}_\vartheta$ to the
Grothendieck group of graded projective modules over $T^\vartheta$, which
is in turn isomorphic to a higher level $q$-Fock space.  
  \end{itemize}
From the results of
\cite[Th. A]{WebCB} and \cite[Th. B(5)]{WebRou}, it
follows that in both cases, the simples in $\tilde{\cO}$ match the
canonical bases of the tensor product and $q$-Fock space.
\end{itheorem}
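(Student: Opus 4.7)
The plan is to bootstrap the ungraded equivalence of Theorem~\ref{main1} to a graded one. On the geometric side, $\tilde{\cO}$ has been constructed via mixed Hodge theory (as flagged before the statement), so its simples carry a canonical internal grading, and are pure and self-dual up to an even shift. On the algebraic side, the algebras $\becircled T^\bla$ and $\becircled T^\vartheta$ are, by \cite[8.7]{WebCB} and \cite[5.23]{WebRou}, explicit non-negatively graded algebras with semisimple degree zero parts, so $\mathsf{LCP}(\becircled T^\bla)$ and $\mathsf{LCP}(\becircled T^\vartheta)$ are honest graded abelian categories whose shift functor is simultaneous shifts of homological and internal degrees.

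The underlying bijection between simples of $\tilde{\cO}$ and indecomposable one-step complexes in $\mathsf{LCP}(\becircled T)$ is already provided by Theorem~\ref{main1}: ungraded simples of $\cO$ match ungraded simples of $\becircled T\umod$, which in turn correspond to the indecomposable projectives, viewed as one-step linear complexes. The content of the strengthened statement is therefore to promote this bijection to a graded equivalence, i.e.\ to show that the $\Ext$-grading between simples of $\tilde{\cO}$ coming from Hodge weights coincides with the $\mathsf{LCP}$-grading between the corresponding one-step projective complexes (computed from the internal grading on $\becircled T$).

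This grading match is the main obstacle. My plan is to reduce it to a degree-one check using the Koszulity of $\becircled T^\bla$ and $\becircled T^\vartheta$: both algebras are generated in degree one with quadratic relations, so graded $\Ext^{\bullet}$ between simples is determined by $\Ext^1$. Geometrically, $\Ext^1$ between simples of $\tilde{\cO}$ is read off from the first-order geometry of the strata structure on the quiver variety, and the purity of the underlying perverse sheaves (built into the Hodge-theoretic construction of $\tilde{\cO}$) forces these classes to sit in the weight predicted by their cohomological degree. On the combinatorial side, the degree-one part of $\becircled T$ is the explicit piece of the KLR-diagrammatic presentation recorded in \cite{WebwKLR}; matching these two descriptions of $\Ext^1$ generators, together with quadraticity, pins down the grading.

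Finally, the Grothendieck group assertions follow formally. The functor sending a graded projective $P$ over $\becircled T$ to the one-step complex concentrated in degree zero induces a $\Z[q,q^{-1}]$-linear isomorphism $K(\becircled T\operatorname{-gmod})\cong K(\mathsf{LCP}(\becircled T))$, with $q$ acting by internal shift on the left and by the combined homological/internal shift on the right; Morita equivalence with $T^\bla$ or $T^\vartheta$ identifies this with $K(T^\bla\operatorname{-gmod})$, respectively $K(T^\vartheta\operatorname{-gmod})$. Composing with the graded equivalence $\tilde{\cO}\simeq \mathsf{LCP}(\becircled T)$ then yields the stated $K$-group isomorphisms, and the identifications with the tensor product $V_{\la_1}\otimes\cdots\otimes V_{\la_\ell}$ and the higher level Fock space $\mathbf{F}_\vartheta$, together with the canonical basis statement, are direct applications of \cite[Th.~A]{WebCB} and \cite[Th.~B(5)]{WebRou}.
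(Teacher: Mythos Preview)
Your proposal has a genuine gap: you invoke Koszulity of $\becircled T^\bla$ and $\becircled T^\vartheta$ to reduce the grading match to degree one, but Koszulity is not available as an input here. For general $\bla$ it is not known, and in the cases where the paper does establish it (minuscule $\bla$, or the affine case), Koszulity is deduced \emph{from} the machinery underlying this theorem via Theorem~\ref{maltese-koszul}, not fed into it. So your degree-one reduction is circular.

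More importantly, you have misidentified where the content lies. There is no separate ``grading match'' to perform: by construction, the grading on $T^\bla$ (resp.\ $T^\vartheta$) \emph{is} the cohomological grading on $\Ext^\bullet(L,L)$, which under \hyperlink{maltese}{$(\maltese)$} coincides with the Hodge weight grading. The equivalence $\dOg\simeq T^\bla\dgmod$ is the functor $\Ext^\bullet(L,-)$, and it carries the Hodge grading to the internal grading tautologically. What actually needs to be argued is that the \emph{standard $t$-structure} on $\dOg$ (whose heart is $\cOg$) transports to the \emph{linear} $t$-structure on $T^\bla\dgmod$ (whose heart is $\mathsf{LCP}$). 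The paper does this directly in the Proposition of \S\ref{sec:kosz-mixed-hodge}: an object $M$ lies in $D^{\ge 0}$ iff $\Ext^i(L,M)=0$ for $i<0$, which on the dg-module side says the $i$th term of the projective resolution is generated in degrees $\ge -i$; dually for $D^{\le 0}$. Intersecting gives exactly $\mathsf{LCP}$. No Koszulity, no degree-one check---just the definition of the $t$-structure and of linearity. Once hypothesis \hyperlink{dagger}{$(\dagger)$} is verified in each case (Theorem~\ref{alg-bla}, Proposition~\ref{O-is-T-affine}), Corollary~\ref{dagger-implies-maltese} supplies \hyperlink{maltese}{$(\maltese)$} and the $t$-structure argument finishes the job. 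Your final paragraph on Grothendieck groups is fine.
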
 
\renewcommand{\theitheorem}{\Alph{itheorem}}

These results follow from a more general yoga for understanding
category $\cO$ of symplectic reductions of vector spaces by studying
the category of D-modules before reduction.  We can also use the same
principles to recover the hypertoric results of \cite{BLPWtorico}.  At
the moment, we are not aware of any other especially interesting
symplectic reductions of vector spaces on which to apply these
techniques, but such examples may present themselves in the future.
We also expect that the same ideas can apply to other categories of
representations, the most obvious possibility being the Harish-Chandra
bimodules considered in \cite{BLPWquant}.  It is also worth noting
that this approach only covers certain special parameters and changes will be needed to understand the structure of these
categories at general parameters. 

One interesting application of this approach is that it gives a new
geometric proof of the Koszulity of these category $\cO$'s,
conjectured in \cite{BLPWgco}.
\begin{itheorem}
  If the Hamiltonian $\C^*$-action has isolated fixed points on the
  symplectic quotient, and $E$
  and $G$ satisfy certain geometric conditions
  (which we denote \hyperlink{maltese}{$(\maltese)$} or
  \hyperlink{dagger}{$(\dagger)$}), then the category $\cO$ is
  Koszul.  

  In particular, if $\bla$ is a list of minuscule weights, then the
  tensor product algebra
  $T^\bla$ is Koszul.
\end{itheorem}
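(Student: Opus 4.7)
The plan is to reduce Koszulity to a purity statement and feed it through the graded equivalence from Theorem A$'$. By that theorem, $\tilde{\cO}$ is equivalent as a graded category to $\mathsf{LCP}(\becircled T^\bla)$ (or $\mathsf{LCP}(\becircled T^\vartheta)$), so Koszulity of $\becircled T^\bla$ is equivalent to the abelian upgrade $\mathsf{LCP}(\becircled T^\bla)\simeq \becircled T^\bla\operatorname{-gmod}$, which is in turn equivalent to the statement that $\Ext^i(L_\mu,L_\nu)$ between graded simples of $\tilde{\cO}$ is concentrated in internal degree $i$. So the goal becomes showing these $\Ext$ groups have the correct weight.

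First, I would unwind the geometric construction of the graded lift $\tilde{\cO}$: simples are identified with shifts of $\IC$ sheaves on a stratification of an appropriate symplectic-reduction model, and their $\Ext$ algebra is computed as the equivariant hypercohomology of the local Ext complexes between these $\IC$'s. The isolated fixed points hypothesis is what ensures that the hyperbolic restriction / Morse-theoretic recipe that recovers $\cO$ from this geometric side degenerates to a finite, pointwise computation, so that the Hodge structures on $\Ext^i$ really are finite-dimensional mixed Hodge structures rather than coming in families.

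Second, I would invoke $(\maltese)$ or $(\dagger)$ to produce purity. Following the Beilinson--Ginzburg--Soergel paradigm, what is needed is that the relevant strata admit a resolution which is small (or at least semismall with a decomposition-theorem description of its summands); the conditions on $E$ and $G$ should be designed precisely to exhibit such a resolution in the pre-reduction $D$-module picture used to build $\tilde{\cO}$. Purity of the resulting $\IC$ complexes forces the mixed Hodge structure on $\Ext^i(L_\mu,L_\nu)$ to be pure of weight $i$, which matches the internal grading on $\becircled T^\bla$ and yields the desired Koszulity.

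For the minuscule case, the tensor product Nakajima variety attached to a list $\bla$ of minuscule weights has isolated fixed components for the tensor product $\C^*$-action (indexed combinatorially by standard tableaux on the product of the crystals of the $\la_i$), and admits the type of small resolution one needs; verifying $(\maltese)$ or $(\dagger)$ in this setting should reduce to known geometric input about minuscule Schubert varieties. The main obstacle will be step two: producing the small/semismall resolution, or equivalently the purity of the $\IC$ sheaves, at the asserted level of generality. Purity for $\IC$ sheaves on Nakajima quiver varieties in finite type is classical, but the strata cut out by an arbitrary conical-commuting $\C^*$-action are subtler, and it is in controlling them that $(\maltese)$ and $(\dagger)$ will have to do their work.
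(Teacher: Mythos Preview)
Your proposal has the right high-level slogan (purity $\Rightarrow$ Koszulity), but the roles of the two hypotheses are swapped, and as a result neither step does what you think.

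First, $(\maltese)$ is not a geometric condition from which you still need to extract purity; it \emph{is} the purity hypothesis. By definition, $(\maltese)$ says that every simple $L_\nu$ in $\preO$ carries a pure Hodge structure such that $\Ext^m_{\mathsf{MHM}}(L_\nu,L_\mu(m))\cong \Ext^m_{\Diff_E}(L_\nu,L_\mu)$. There is no small resolution or decomposition theorem argument left to run once $(\maltese)$ is assumed. (What you are describing is closer to how $(\dagger)$ implies $(\maltese)$: under $(\dagger)$ all simples are summands of proper pushforwards $(p_\lift)_*\fS_{X_\lift}$, and the affine paving of the fiber products $X\times_E X$ gives the needed purity. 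That is the content of Proposition~\ref{formal-pure} and Corollary~\ref{dagger-implies-maltese}.)

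Second, the isolated fixed points assumption is not there to make Hodge structures finite-dimensional. Under $(\maltese)$, the paper shows that $\cOg$ is equivalent to linear complexes of projectives over $\EuScript{R}$, so by \cite[Thm.~30]{MOS} Koszulity of $\cOg$ is equivalent to the inclusion $D^b(\cOg)\hookrightarrow \dOg$ being an equivalence. This last equivalence is exactly what isolated $\bT$-fixed points buys you, via \cite[5.17]{BLPWgco}. So the logical flow is: $(\maltese)$ gives the quadratic-dual/LCP description, and isolated fixed points closes the gap between $D^b(\cOg)$ and $\dOg$; purity alone does not suffice. Your proposal never addresses this derived-category comparison, which is the actual content of the ``isolated fixed points'' hypothesis.

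Finally, the minuscule case is much simpler than you suggest. For a tensor product action, $(\dagger)$ holds by Theorem~\ref{alg-bla} (using Li's work, not Schubert geometry). The $\bT$-fixed locus on $\fM$ is $\fM^{\la_1}\times\cdots\times\fM^{\la_\ell}$ by Nakajima, and each $\fM^{\la_i}$ is a finite set of points when $\la_i$ is minuscule. That is the entire argument; no small resolutions of minuscule Schubert varieties are needed.
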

This also gives a new proof of known Koszulity results for affine
quiver varieties and hypertoric varieties.

Furthermore, there are two classes of auto-functors defined on
categories $\cO$:
\begin{itemize}
  \item {\bf twisting functors}, which arise from tensoring with the
    sections of quantized line bundles, are defined in \cite[\S 6]{BLPWquant}.
  \item {\bf shuffling functors}, which arise from changing the
    $\C^*$-action, are defined in \cite[\S 7]{BLPWgco}.
\end{itemize}
These can be identified with diagrammatically defined functors:
\begin{itheorem}\label{twist-shuffle}
  The
  functors of Chuang
  and Rouquier's braid group action are intertwined with twisting functors in Cases 1 and 2. The shuffling functors are
  intertwined with the diagrammatic functors given by reordering tensor
  factors defined in \cite[\S 6]{Webmerged} (in Case 1) or
  change-of-charge functors as defined in \cite[\S 5.3]{WebRou} (in Case 2).

In Case 2, the Koszul duality between rank-level dual category
$\cO$'s interchanges these two affine braid group actions.
\end{itheorem}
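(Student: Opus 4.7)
The plan is to reduce everything to Theorem \ref{main1} and its graded refinement, transporting the geometric functors (twisting and shuffling) through the equivalence with $T^\bla\umod$ or $T^\vartheta\umod$ and then matching them with the known diagrammatic functors on the algebra side. The structural point is that both twisting and shuffling functors are defined purely in terms of data (quantized line bundles and Hamiltonian $\cs$-actions) that live on the quiver variety before passing to category $\cO$, so their behavior can be tracked through the reduction equivalence from the D-module category on the vector space upstairs.

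First I would handle the twisting functors. Twisting by the quantized line bundle $\cL_\chi$ corresponds, on the unreduced side, to a shift in the quantum Hamiltonian by the character $\chi$; under the comparison with the (weighted) KLR algebra this amounts to relabelling the parameters $\vartheta$ or the highest-weight data $\bla$ by a lattice element coming from $\chi$. On the algebra side, Chuang--Rouquier's braid generators are also realized as derived equivalences arising from such lattice translations of parameters (through the categorical $\fg_\Gamma$-action of \cite{Webcatq}, which is already identified with the standard $\fg_\Gamma$-action on $T^\bla$ by \cite{Webmerged}). So one needs to check that the two sides both implement the same ``wall-crossing'' between adjacent chambers of the parameter; this should follow from the compatibility of the categorical action with the equivalence of Theorem \ref{main1}, together with uniqueness of the braid generators lifting a given simple reflection.

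Next I would treat the shuffling functors. A shuffling functor is induced by a one-parameter family of cocharacters $\cs\to G_\Bw$ (in Case 1) or by varying the weights of the edges of the cycle (in Case 2). Under the equivalence of Theorem \ref{main1}, such a variation exactly corresponds to changing the order of the tensor factors in $T^\bla$ (respectively the charge in $T^\vartheta$), because the ``loading'' data from \cite{WebwKLR} that defines the diagrammatic algebra is read off from the cocharacter. The functors of reordering tensor factors in \cite[\S 6]{Webmerged} and the change-of-charge functors in \cite[\S 5.3]{WebRou} are characterized by their effect on standard modules and on the categorical action, and the geometric shuffling functors have the same characterizing properties (standards correspond to stable envelopes, which transform as expected under the variation of $\cs$-action). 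The main obstacle here is making this matching precise on the derived level, since shuffling is only well-behaved as a derived equivalence and not abelian; I would use the linear complex description $\mathsf{LCP}(\becircled T)$ of Theorem \ref{main1}$'$ to write down the shuffling functor explicitly and then identify it with the diagrammatic one by comparing their action on projective generators.

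Finally, for the Koszul duality statement in Case 2, I would invoke the Koszul duality established in the previous theorem together with the rank-level duality of \cite{WebRou} for $T^\vartheta$-algebras. On the diagrammatic side it is already known (by \cite[Th. B]{WebRou}) that under rank-level Koszul duality, change-of-charge functors on one side correspond to the Chuang--Rouquier braid generators for the dual Lie algebra on the other. Combining this with the identifications established in the first two paragraphs of twisting with Chuang--Rouquier and of shuffling with change-of-charge, one gets that Koszul duality interchanges the twisting and shuffling affine braid group actions. The hard part of the whole theorem, in my estimation, is the first step: identifying the effect of tensoring with a quantized line bundle on the unreduced D-module category precisely enough to match it with the categorical braid action on $T^\bla$, since this is where the geometric parameter (the period) and the algebraic parameter (the weighting) must be carefully synchronized.
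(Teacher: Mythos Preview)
Your overall architecture is right---transport the geometric functors through Theorem~\ref{main1} and match them with diagrammatic ones---but you have misidentified where the work is, and your sketch of the Koszul duality step has a genuine gap.

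\textbf{Twisting.} You call this the hard part, but in the paper it is essentially a citation. The twisting functors are rewritten as $\red^{\eta'}\circ\red^\eta_!$ (Proposition~\ref{adjoint-twist}), transported along Maffei's Weyl group isomorphisms so that ``change of GIT chamber'' becomes ``change of weight space,'' and then the identification $\mathscr{T}_{s_i}\cong\Theta_i$ with the Chuang--Rouquier generator is quoted from Bezrukavnikov--Losev \cite[3.4]{BLet}. There is no synchronization of periods with weightings, and no uniqueness-of-braid-generators argument.

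\textbf{Shuffling.} Your proposal (characterize by action on standards, use the $\mathsf{LCP}$ description) is not what happens. The paper computes the Ext bimodule $\Hom(I^{\sigma\bla}L',I^{\bla}L)$ between the generating sheaves for two different $\bT$-actions and identifies it with the diagrammatic bimodule $\tilde{\mathfrak B}_\sigma$ via \cite[4.12]{WebwKLR}; then a short adjunction chase with $\red$ and $\red_!$ (Theorem~\ref{tensor-shuffling}, Corollary~\ref{affine-shuffling}) shows that shuffling is $\mathfrak{B}_\sigma\overset{L}{\otimes}-$. This is a direct bimodule calculation, not a characterization argument.

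\textbf{Koszul duality interchange.} Here is the gap. You write that ``on the diagrammatic side it is already known (by \cite[Th.~B]{WebRou}) that under rank-level Koszul duality, change-of-charge functors correspond to the Chuang--Rouquier braid generators.'' That is not in \cite{WebRou}; it is Theorem~\ref{twist-shuff-dual} of \emph{this} paper, and proving it is the real content. The argument is: (i) both $\mathscr{T}_{s_i}$ and $\mathscr{B}^{\vartheta,s_i\vartheta}$ send the standard exceptional collection to the \emph{same} mutation of it (citing \cite[5.21, 5.14]{WebRou}), so their composition $\mathscr{B}\circ\mathscr{T}^{-1}$ is exact and preserves projectives; (ii) both functors act as the identity on a distinguished projective $P^0$---the sum of projective covers of simples whose support contains a free closed orbit---and this is nontrivial: it requires the whole two-sided cell analysis (Proposition~\ref{prop:strata-bijection}, Corollary~\ref{cor:same-P}) to know that $\red_!^v(P^0)$ is independent of the chamber $v$; (iii) since $\Hom(P^0,-)$ is a faithful quotient functor, an exact endofunctor that is the identity on $P^0$ must be the identity. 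Your proposal skips (ii) entirely by assuming the conclusion, and without it the argument does not close.
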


In particular, this theorem gives an indication of how to give a
geometric construction of the knot invariants categorifying
Reshetikhin-Turaev invariants from \cite{Webmerged}, using quiver varieties.  The above
theorem gives a geometric description as shuffling functors of the braid group action used
in that paper; describing the cups and caps will
require techniques we will not delve into here.  

\section{Reduction and quotients}

\subsection{Symplectic quotients}
\label{sec:symplectic-quotients}

Let $E$ be a complex vector space, and let $G$ be a connected reductive
algebraic group with a fixed faithful linear action on $E$.  Let $H=\Aut_{G}(E)$ and
let $Z=H\cap G=Z(G)$.  

In this case, we consider the symplectic
GIT quotient $\fM=T^*E/\!\!/\!\!/\!\!/_\al G=\mu^{-1}(0)/\!\!/_\al G$ for $\al$ a
character of $G$ such that all semi-stable points for $\al$ are
stable.  Since the action of $G$ on the stable points is locally free, the variety $\fM$ is
rationally smooth.  Furthermore, it is equipped with a projective map
$\pi\colon\fM\to 
T^*E/\!\!/\!\!/\!\!/_0 G=\mu^{-1}(0)/\!\!/_0 G$ sending a stable orbit
to the unique closed orbit in its closure.  Let $\fN$ be the
image of this map; if $\fN$ is normal, this is the same as the affinization of $\fM$.
In many cases, the map $\pi$ is
a symplectic rational resolution of
$\fN$.  
Examples of such varieties are Nakajima
quiver varieties and hypertoric varieties (for more general
discussions of these varieties, see \cite{Nak98} and \cite{Pr07},
respectively).  
\excise{Assume that there is a $G$-invariant splitting $E\cong E(0)\oplus
E(-1)$ such that on $E'\cong E(0)\oplus E(-1)^*$, there are no non-constant $G$-invariant
polynomials.  We have a canonical isomorphism $T^*E\cong T^*E'$
sending the conormal Lagrangian to $E(0)$ to $E'$, and the conormal to
$E(-1)$ to the cotangent fiber $T^*_0E'$.  
Let $\bS$ be the
copy  of $\C^*$ acting on $T^*E'$ with weight $-1$ on the cotangent fibers and
trivially on the base, and consider the induced action on $T^*E$.  }

Let $\bS$ be a copy of $\C^*$ acting on $T^*E\cong E\oplus E^*$ via
the action $s\cdot(x,\xi)=(s^{-a}x,s^{-b}\xi)$ for some $a,b\geq 0$ with
$a+b=n>0$. We assume that the $\bS$-fixed points $(T^*E)^\bS$ have no non-constant
$G$-invariant functions; if $a,b>0$, this is automatic, but if $a=0$
or $b=0$,
it may fail.   We'll also use $\bS$ to denote the
resulting conical action on $\fM$.  

Also fix a Hamiltonian $\C^*$-action on $\fM$ which factors through
$H/Z$; to avoid confusion (and match notation from
\cite{BLPWquant,BLPWgco}), we denote this copy of $\C^*$ by $\bT$.

There's a natural action of $H/Z$ and thus $\bT$ on the ring $\C[T^*E]^G$.  Since $G$
and $H$ are reductive, the ring  $\C[T^*E]^G$ is finitely generated
and we can choose each generator to be a $\bT$ weight vector.
We'll also want to make a slightly unusual assumption:
\begin{assumption}\label{positivity}
  The algebra $\C[T^*E]^G$ has a generating set such that any
  generator of negative $\bT$-weight $-k$ has $\bS$-weight $\leq k$.
\end{assumption}
Obviously, this can be achieved by simply replacing $\bT$ with a
positive multiple.  None of our constructions are changed by replacing
the action of $\bT$ with a positive multiple, so we lose no generality
in assuming this; it will simply make some of our later statements a
bit cleaner.  
If
we are given two commuting $\C^*$ actions, we take their product to be
the pointwise product of their values.  We let $\bT'$ be the action of
$\C^*$ on $\fM$ given by the pointwise product of $\bT$ with $\bS$.

\subsection{Quantizing symplectic quotients}
\label{sec:quant-sympl-quot}

Let $\Diff_E$ be the ring of differential operators on $E$; we wish to
think of this as a non-commutative version of the cotangent bundle
$T^*E$.  The action of $G$ extends to one on $\Diff_E$, which is inner; we have a non-commutative moment map $\mu_\chi\colon
U(\fg)\to \Diff_E$ given by
$\mu_\chi(X)=X_E-\chi(X)$ where $X_E$ is the vector
field on $E$ attached to the action of $X\in\fg$, thought of as a
differential operator.  

After fixing $\chi$, there is a non-commutative algebra given by the
quantum Hamiltonian reduction $A_\chi'=\big(\Diff_E/\Diff_E\cdot\mu_\chi(\fg)
\big)^G$.  We let $\xi$ be the vector field
induced by the action of $\bT$; you should think of this as a quantization
of the moment map for the action of $\bT$ on $\fM$.   Since the
actions of $\bT$ and
$G$ commute, the element $\xi$ induces an element of $A_\chi'$, which we
also denote by $\xi$.    

There is also a sheafified version of this construction.  Attached to $\chi$, we have a sheaf of algebras on $\fM$ called
$\cD_\chi$; you should think of $\fM$ as ``morally'' a cotangent bundle
and $\cD_\chi$ as an analogue of the sheaf of twisted microlocal
differential operators.

More explicitly, we can think of the $\bS$-action as
encoding a filtration on the Weyl algebra $\Diff_E$ where linear
functions on $E$ have degree $a$, and constant vector fields degree
$b$.  Thus, $a=0,b=1$ corresponds to the usual order filtration.  
The $h$-adically complete Rees algebra $R(\Diff_E)$ of this filtration is generated by the elements $h^{a/n}x_i,
h^{b/n}\frac{\partial}{\partial x_i}\in \Diff_E[[h^{1/n}]]$.  We have
an obvious isomorphism $  R(\Diff_E)/h^{1/n}R(\Diff_E)\cong
\C[T^*E]$.  Thus, we can define a sheaf $\mathscr{E}_E$ on $T^*E$, by
letting $\mathscr{E}_E(U)$ for $U\subset T^*E$ be the algebra $R(\Diff_E)$ localized at the
multiplicative system of elements of $R(\Diff_E)$ which are
non-vanishing on $U$ modulo $h^{1/n}$.  We let
$\cD_E:=\mathscr{E}_E[h^{-1}]$ and let 
\[\cD_\chi
:=\psi_*(\End_{\cD_E}^{\operatorname{op}}(\cD_E/\cD_E\cdot\mu_\chi(\fg))|_{\mu^{-1}(0)^{ss}})\]
where $\psi$ is the quotient map $\mu^{-1}(0)^{ss}\to \fM$.
 This is a sheaf of free $\C((h))$-modules, and it is
equipped with a free $\C[[h]]$-lattice $\cD_\chi(0)$ induced by the
image of $\mathscr{E}_E$.    We let $A_\chi:=\Gamma(\fM;\cD_\chi)^\bS$.
 If the
algebras of global functions on $\fN$ and $\fM$ coincide, i.e.
$\C[\fN]=\C[\mu^{-1}(0)]^G\cong \C[\fM]$, then by
\cite[3.13]{BLPWquant}, we have that $A_\chi'=A_\chi$.
Otherwise, we just have a map $A_\chi'\to A_\chi$ which will be
surjective if $\fN$ is normal, but may not be
injective.  Let us abuse notation and let $\xi$ denote the image of
the vector field of same name in $A_\chi'$.  

The category of
good $\bS$-equivariant modules over $\cD_\chi$
is related to the category of $A_\chi$-modules by an adjoint pair of
functors $$\secs\colon\cD_\chi\mmod\to A_\chi\mmod\qquad \Loc\colon
A_\chi\mmod\to \cD_\chi\mmod$$ discussed in \cite[\S 4.2]{BLPWquant}.
We wish to compare two special subcategories under these functors.

\begin{definition}[\mbox{\cite[3.10]{BLPWgco}}]
  The category $\cOa$ is the category of finitely generated
  $A_\chi$-modules on which $\xi$ acts locally finitely with finite
  dimensional generalized eigenspaces, such that the real parts of the
  eigenvalues of $\xi$ are bounded above by some real number $b$.
\end{definition}
This is what most representation theorists would probably
think of as the basic object considered in this paper; however, its dependence on $\chi$ is
quite complicated, and beyond our ability to analyze in any generality
in this work.  Instead, we'll replace it with a category which is
easier to analyze, and gives the same answer for many $\chi$.  
Let $o\in \fN$ be the image of the origin in $T^*E$.  
\begin{definition}[\mbox{\cite[3.15]{BLPWgco}}]
  The category $\cOg$ is the category of coherent $\cD_\chi$-modules
  which are supported on the subvariety 
\[\fM^+=\big\{p\in \fM\mid \lim_{\bT\ni t\to 0}t\cdot p \in \pi^{-1}(o)\}\] and which are {$\bT$-regular}, that is, they possess a
$\cD_\chi(0)$-lattice which is preserved by the induced action of the
Lie algebra $\bt$.
\end{definition}
For those readers (probably most) who are not familiar with these sheaves
and the general machinery of deformation quantization in algebraic
geometry (see \cite{BK04a,KSdq}), a $\cD_\chi(0)$-lattice should be thought
of as an equivalent of a good filtration on a D-module.  All these
issues are discussed in great detail in \cite[\S 4]{BLPWquant}.

The category $\cOg$ is a ``local'' version of $\cOa$; our primary
results in this paper will be stated in terms of this category, since
it is more conducive to analysis from a topological perspective.  The
relation between these categories is close, but subtle. In particular,
by \cite[3.19]{BLPWgco}, the functors $\Loc$ and $\secs$ preserve these
subcategories, so we have induced functors
\[ \Loc\colon \cOa\to \cOg \qquad \secs \colon \cOg \to \cOa.\]
These are adjoint equivalences for many values of $\chi$ (though far from
 always). In this case, we say {\bf localization holds} at $\chi$.
 Though its
details are best left to other papers, we note for the benefit of
those who worry that $\cOg$ is too far afield from
representation theory that these functors show that:
\begin{itemize}
\item $\cOg$ for a given parameter $\chi$ is always equivalent to $\cOa$
  for some possibly different parameter $\chi+n\al$  with $n\gg 0$
  \cite[5.11]{BLPWquant} \& \cite[3.19]{BLPWgco}.
\item often even when these functors are not equivalences, the derived
  functors $\LLoc$ and $\Rsecs$ induce an equivalence between the
  subcategories of the derived category with cohomology in 
  $\cOa$ and $\cOg$ \cite[4.13]{BLPWquant}, see also \cite{MN}.
\end{itemize}

One important consequence of these results is that:
\begin{corollary}
  The categories $\cOg$ corresponding to a fixed $\bT$-action attached to different choices of $\bS$-action
  (i.e. different choices of $a,b$) are canonically isomorphic.
\end{corollary}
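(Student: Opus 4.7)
The strategy is to separate the dependence on $(a,b)$ into two pieces: the sheaf $\cD_\chi$ itself as a sheaf of $\C((h^{1/n}))$-algebras on $\fM$, and the integral lattice $\cD_\chi(0)$ used in the definition of $\bT$-regularity. First I will show that, after inverting $h$, the sheaves of algebras are canonically identified, and then I will show that the class of modules admitting a $\bt$-invariant lattice does not depend on the choice of integral structure.

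For the first step, for any $(a,b)$ with $a+b=n$, the Rees algebra $R(\Diff_E)$ sits inside $\Diff_E[[h^{1/n}]]$ as the subalgebra generated by $h^{a/n}x_i$ and $h^{b/n}\partial_i$; after inverting $h$ this becomes $\Diff_E((h^{1/n}))$, independently of $(a,b)$. The canonical isomorphism $R(\Diff_E)/h^{1/n}R(\Diff_E)\cong \C[T^*E]$ sends $h^{a/n}x_i$ and $h^{b/n}\partial_i$ to the standard coordinates on $T^*E$, so the Ore localization defining $\mathscr{E}_E$ inverts the same subset of symbols regardless of $(a,b)$. Hence $\cD_E=\mathscr{E}_E[h^{-1}]$ is canonically identified across different choices, and since the quantum moment map $\mu_\chi$ and the reduction procedure involve no reference to $\bS$, this identification descends to a canonical isomorphism $\cD_\chi^{(a,b)}\cong \cD_\chi^{(a',b')}$ (working over a common extension $\C((h^{1/N}))$ if $n\neq n'$). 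The support condition in the definition of $\cOg$ refers only to $\fM^+$, which depends only on $\bT$, so it is invariant under the change.

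The essential remaining point is that the lattices $\cD_\chi^{(a,b)}(0)$ do vary with $(a,b)$, so two choices of $\bS$ specify a priori different $\bT$-regularity conditions on the same sheaf of algebras. To reconcile these, one uses: (i) both $\cD_\chi^{(a,b)}(0)$ and $\cD_\chi^{(a',b')}(0)$ are $\bS\times\bT$-stable $\C[[h]]$-subalgebras of $\cD_\chi$ that are cofinal with each other after multiplication by bounded powers of $h^{1/N}$, because $\cD_\chi^{(a',b')}(0)$ is generated locally from $\cD_\chi^{(a,b)}(0)$ by rescaling generators by such powers; and (ii) the $\bS$-action commutes with $\bT$, so $h$-rescaling preserves $\bt$-invariance. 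Given a $\bt$-invariant $\cD_\chi^{(a,b)}(0)$-lattice $L\subset M$, the $\cD_\chi^{(a',b')}(0)$-submodule $L'$ generated by $L$ is again a lattice (by coherence of $M$ and cofinality from (i)) and again $\bt$-invariant (by (ii)). Running the construction in both directions gives the canonical isomorphism of categories.

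The main obstacle is verifying the cofinality and finite-generation claim in (i) with enough care: one must check that the $\cD_\chi^{(a',b')}(0)$-submodule generated by a coherent $\cD_\chi^{(a,b)}(0)$-lattice is still a genuine lattice, i.e.\ that it is finitely generated and $h$-torsion free. Once this technical check is in place, together with the canonical identification of the $\C((h^{1/N}))$-algebras from the first step, the corollary follows immediately.
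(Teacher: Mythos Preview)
Your approach differs from the paper's, and the cofinality claim in step~(i) is false, so the argument has a genuine gap.

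Under your identification via $\Diff_E((h^{1/N}))$, the lift of a symbol $x^\alpha\xi^\beta$ sits in the Rees algebra for $(a,b)$ as $h^{(|\alpha|a+|\beta|b)/n}x^\alpha\partial^\beta$ and in the Rees algebra for $(a',b')$ as $h^{(|\alpha|a'+|\beta|b')/n'}x^\alpha\partial^\beta$; these differ by the factor $h^{(|\alpha|-|\beta|)(a/n-a'/n')}$. The quantity $|\alpha|-|\beta|$ is unbounded even among $G$-invariants that survive to $\fM$: for $G=\C^*$ acting on $E=\C^2$ with weights $(1,2)$, the invariant $(x_1^2\xi_2)^k$ has $|\alpha|-|\beta|=k$, and both $(a,b)=(0,1)$ and $(1,0)$ satisfy the standing hypothesis on $(T^*E)^{\bS}$. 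Your stated reason --- that one lattice is obtained from the other by rescaling finitely many generators by bounded powers of $h$ --- is precisely why cofinality \emph{fails} rather than holds: products of $k$ generators accumulate a factor $h^{\pm kc}$, so the two subalgebras $\cD_\chi^{(a,b)}(0)$ and $\cD_\chi^{(a',b')}(0)$ are not commensurable, $\cD_\chi^{(a',b')}(0)\cdot\cD_\chi^{(a,b)}(0)$ is not finitely generated as a left $\cD_\chi^{(a',b')}(0)$-module, and $L'=\cD_\chi^{(a',b')}(0)\cdot L$ need not be a coherent lattice.

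The paper avoids any lattice comparison. It uses that the category of good $\cD_\chi$-modules, and $\cOg$ inside it, admits a description making no reference to $\bS$: by \cite[5.8]{BLPWquant} it is equivalent to finitely generated modules over the $\Z$-algebra ${}_iZ_j$ modulo bounded ones, and ${}_iZ_j$ agrees up to bounded pieces with ${}_i\EuScript{Z}_j$, which is built from $\Diff_E$ and $\mu_\chi$ alone. Concretely, one tensors by a large power of an ample quantized line bundle, applies $\secs$ to land in $\cOa$ --- defined purely via $A_\chi$ and the locally finite $\xi$-action, with no lattice in sight --- and then applies $\Loc$ for the other $\bS$-action.
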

\begin{proof}
  This equivalence can be constructed by tensoring with a quantization
  of a large power of an ample line bundle, taking $\secs$, and then
  $\Loc$ for the different $\bS$-action, since $\cOa$ only depends on $\bT$.  More canonically,
  \cite[5.8]{BLPWquant} describes the category $\cD_\chi$ as finitely generated modules over
  a 
  $\Z$-algebra ${}_iZ_j$ which is independent of the choice of $\bS$-action,
  modulo bounded modules.  The category $\cOg$ is the full subcategory
  of modules $\{{}_iN\}$ where almost all ${}_iN$ lie in the category
  $\cOa$ for ${}_iZ_i$ by 
\cite[3.19]{BLPWgco}.
\end{proof}

\subsection{Quotients of dg-categories}
\label{sec:localization}

At the base of this work is the interplay between $G$-equivariant
$\Diff_E$-modules on $E$ and $\cD_\chi$-modules.

The relevant category to consider is that of $(G,\chi)$ strongly
equivariant $\Diff_E$-modules.  
\begin{definition}
  We call a $\Diff_E$-module $M$ {\bf strongly $(G,\chi)$-equivariant} if the
  action of $\fg$ via left multiplication by $\mu_\chi$ integrates to a
  $G$-action.
\end{definition}
As usual, we wish to consider these as the heart of a $t$-structure on
the dg-category of complexes of $\Diff_E$-modules on the Artin stack
$E/G$ with strongly $(G,\chi)$-equivariant cohomology bounded-below in degree; that is to say,
we wish to work in the dg-enriched equivariant bounded-below derived category of
$E$. This is perhaps most conveniently understood as the dg-category of
D-modules on the simplicial manifold given by the Borel space of this
action:
\[\cdots \mathrel{\substack{\textstyle\rightarrow\\[-0.4ex]
                      \textstyle\rightarrow \\[-0.4ex]
                      \textstyle\rightarrow \\[-0.4ex]
                      \textstyle\rightarrow}} G\times G\times E  \mathrel{\substack{\textstyle\rightarrow\\[-0.4ex]
                      \textstyle\rightarrow \\[-0.4ex]
                      \textstyle\rightarrow}}  G\times E \rightrightarrows E\]

Recall that Drinfeld (and others) have introduced the quotient of a
dg-category by a dg-subcategory; see \cite{DrDG} for a more detailed
discussion of this construction.  In intuitive terms, the quotient of
a dg-category by a dg-subcategory is the universal dg-category which
receives a functor killing all objects in that subcategory.

Kashiwara and Rouquier \cite[2.8]{KR07} have shown that for a free action
$\cD_\chi$-modules on a quotient can be interpreted as strongly
equivariant modules on the source.  It is more useful for us to use a
slightly stronger version of this result.

There is a microlocalization functor for $\Diff_E$-modules, given
$\boldsymbol{\mu}\cM:=\cD_E\otimes_{\Diff_E}\cM $.
We can use this to construct the reduction functor (called the {\bf Kirwan functor} in
\cite[\S
5.4]{BLPWquant}) \[\red(\cM)=\psi_*\mathcal{H}om_{\cD_E}(\cD_E/\cD_E\cdot\mu_\chi(\fg),
\boldsymbol{\mu} \cM)|_{\mu^{-1}(0)^{ss}}\colon \cD_E\mmod\to \cD_\chi \mmod,\] given by
taking the homomorphism sheaf from $\cD_E/\cD_E\cdot\mu_\chi(\fg) $
to $\cM$ (which is the same as the sheaf of sections $m$ satisfying
$\mu_\chi(\fg)  \cdot m=0$), restricting it to the semi-stable locus, and
pushing forward to the quotient $\fM$.  This is a left module over
$\cD_\chi$, since $\cD_E/\cD_E\cdot\mu_\chi(\fg)
|_{\mu^{-1}(0)^{ss}}$ is naturally a right module over $\psi^*\cD_\chi
$.  

In \cite[5.19]{BLPWquant},
we define a left adjoint $\red_{!}$ to this reduction functor.  A
version of this construction with more general assumptions is also given in \cite[2.8]{BLet}.
The functor is most readily constructed by thinking of $\cD_\chi\mmod$
in terms of $\Z$-algebras;  we have a reduction $\Z$-algebra ${}_i\EuScript{Z}_j=\Diff_E/\big(\Diff_E\cdot\mu_{\chi+j\al}(\fg)
+\mu_{\chi+i\al}(\fg)\cdot \Diff_E\big)$ for $i>j\geq 0$; since $G$ is reductive, this is
the same as the semi-invariants of
$\Diff_E/\Diff_E\cdot\mu_{\chi+j\al}(\fg)$ for the character
$\al^{i-j}$.  This has a natural map to the quantum homogeneous
coordinate ring ${_iZ_{j}}$ defined in \cite[5.6]{BLPWquant}, defined by
$\Gamma(\fM;{_{i}\cT_{j}'})^{\bS}$, the sections of quantizations of
powers of the ample line bundle defined by the GIT description.   This
map is not necessarily an isomorphism, but its kernel and cokernel are
bounded and thus it induces an equivalence
\begin{equation}
\EuScript{Z}\mmod/\EuScript{Z}\mmod_{\operatorname{bd}}\cong
{Z}\mmod/{Z}\mmod_{\operatorname{bd}}\cong \cD\mmod\label{eq:equivalences}
\end{equation}
by
\cite[5.8]{BLPWquant}.  If 
$N_j=N/\mu_{\chi+j\al}(\fg)\cdot N$, then we have a natural
multiplication ${}_i\EuScript{Z}_j\otimes N_j\to N_i$, which makes
$\{N_i\}$ into a ${}_i\EuScript{Z}_j$.  For $j$ sufficiently large,
we have \[N_j\cong \red^\Z(N)_i:=
\Gamma(\fM;{_{j}\cT_{0}'}\otimes_{\cD_\chi}\red(N))^{\bS},\] so modulo
bounded submodules, $\{N_i\}$ and $\red^\Z(N)$ coincide.   Thus the 
equivalences \eqref{eq:equivalences} displayed above intertwine $\red$ with the
functor of taking semi-invariants.  The adjoint $\red_!$ can thus be
defined using the adjoint of semi-invariants. We consider the
homogeneous sections $\Gamma_\Z(\cM)$, as a module over
$\EuScript{Z}$, and we can define
\begin{equation}
\red_!(\cM):=Y\otimes_{\EuScript{Z}}\Gamma_\Z(\cM)\label{eq:bang1}
\end{equation}
where $Y$ is the
$\Diff_E\operatorname{-}\EuScript{Z}$ bimodule $\{Y_k:=\Diff_E/\Diff_E\mu_{\chi+k\al}(\fg)\}$.
By the
description in \cite[5.18]{BLPWquant}, this is the same as the $\Diff_E$-module
\begin{equation}
\red_!(\cM)=Y_{k}
\otimes_{A_{\chi+k\al}}\Gamma(\fM;{_{k}\cT_{0}}\otimes_{\cD_\chi}\cM)
\text{ for $k\gg 0$.}  \label{eq:bang2}
\end{equation}
Both left and right adjoints to $\red$
are constructed in \cite{MNmorse} using a different approach.  

\begin{proposition}\label{dg-quotient}
  The functor $\red$ realizes the dg-category of $\bS$-equivariant $\cD_\chi$-modules on $\fM$ 
  as the quotient of the dg-category of strongly
  $(G,\chi)$-equivariant $\Diff_{E}$-modules by the subcategory of
  complexes whose cohomology has 
  singular support in the unstable locus.
\end{proposition}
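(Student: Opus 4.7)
The plan is to verify the standard criterion for $\red$ to realize the Drinfeld dg-quotient by $\mathcal{K}$, the full sub-dg-category of complexes whose cohomology has singular support in the unstable locus. It suffices to establish three things: (i) $\red$ kills every object of $\mathcal{K}$; (ii) the left adjoint $\red_!$ of \eqref{eq:bang1}--\eqref{eq:bang2} is fully faithful, equivalently $\red\circ\red_!\simeq\id$ on $\cD_\chi\mmod$, so that $\cD_\chi\mmod$ embeds as a reflective sub-dg-category and $\red$ is left adjoint to this inclusion; (iii) the cone of the counit $\red_!\red\cM\to\cM$ lies in $\mathcal{K}$ for every strongly $(G,\chi)$-equivariant $\cM$. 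Given (ii) and (iii), $\ker(\red)$ is exactly $\mathcal{K}$, and $\red$ induces an equivalence $\cD_E\mmod/\mathcal{K}\simeq\cD_\chi\mmod$ by the general nonsense for reflective localizations of dg-categories as in \cite{DrDG}.

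Step (i) is immediate from the formula: $\red$ factors through the microlocalization $\boldsymbol{\mu}\cM$ followed by restriction to $\mu^{-1}(0)^{ss}$, and the $\mathscr{E}_E$-support of $\boldsymbol{\mu}\cM$ equals the singular support of $\cM$, so any complex with cohomology in $\mathcal{K}$ is annihilated. For step (ii), I would translate to the Z-algebra picture \eqref{eq:equivalences}, under which $\red$ becomes the semi-invariants functor $\{N_i\}\mapsto\{N_i^G\}$ on $\EuScript{Z}$-modules and $\red_!$ becomes $Y\otimes_{\EuScript{Z}}(-)$. The composition $\red\red_!$ then corresponds to the bimodule $\{(Y_k/\mu_{\chi+j\alpha}(\fg)\cdot Y_k)^G\}$, which for $k\gg j$ coincides with the quantum homogeneous coordinate ring ${}_kZ_j$ by \cite[5.6, 5.18]{BLPWquant}. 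This is precisely the unit of the Z-algebra equivalence, hence an isomorphism modulo bounded modules, which after \eqref{eq:equivalences} upgrades to a quasi-isomorphism in $\cD_\chi\mmod$.

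The main obstacle is (iii). The idea is that on the semistable locus the $G$-action is locally free (by the stability-equals-semistability hypothesis), so that the Kashiwara--Rouquier theorem \cite[2.8]{KR07} identifies $\cD_\chi\mmod$ with strongly equivariant $\Diff$-modules on $\mu^{-1}(0)^{ss}$; under this identification the restrictions of $\red$ and $\red_!$ become mutually inverse equivalences. Consequently, the counit $\red_!\red\cM\to\cM$ restricts to an isomorphism after microlocalizing and restricting to $\mu^{-1}(0)^{ss}$, and hence its cone $C$ has $\boldsymbol{\mu}C|_{\mu^{-1}(0)^{ss}}=0$, i.e.\ singular support disjoint from the semistable locus. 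The delicate part is to argue that this forces the singular support into the unstable locus rather than merely into $T^*E\setminus\mu^{-1}(0)^{ss}$: for a strongly $(G,\chi)$-equivariant module the moment-map condition pins the characteristic variety inside $\mu^{-1}(0)$, so the only surviving contributions come from $\mu^{-1}(0)\setminus\mu^{-1}(0)^{ss}$, which is by definition the unstable locus. This last pinning is where the hypothesis of strong (rather than weak) equivariance is essential and is the step requiring the most care; once it is in place, (iii) is established and the three conditions together yield the proposition.
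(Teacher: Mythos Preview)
Your proposal is correct and follows essentially the same three-step verification as the paper's proof, which applies \cite[1.4]{DrDG}: (i) $\red$ kills the unstable subcategory, (ii) $\red\red_!\simeq\id$ (cited from \cite[5.19]{BLPWquant}), and (iii) the cone of $\red_!\red M\to M$ lies in $\mathcal{K}$.

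The one place you work harder than necessary is step (iii). You invoke Kashiwara--Rouquier to argue that the counit is an isomorphism over $\mu^{-1}(0)^{ss}$; the paper instead just applies $\red$ to the counit and notes that $\red\red_!\red(M)\to\red(M)$ is an isomorphism because $\red\red_!\simeq\id$, so the cone is killed by $\red$. Your ``delicate part'' about strong equivariance forcing the singular support into $\mu^{-1}(0)$ is a correct observation that the paper leaves implicit, but once you know $\red(\text{cone})=0$ this is all that remains, and it is standard.
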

\begin{proof}
  Here, we apply \cite[1.4]{DrDG}.
  \begin{itemize}
  \item The functor $\red$ is essentially surjective
  after passing to the homotopy category, since $\red\red_!$ is
  isomorphic to the identity functor by \cite[5.19]{BLPWquant}.
\item All complexes whose
  cohomology is supported on the unstable locus are sent to exact,  and thus contractible,
  complexes by pullback to the stable locus.  
\item The cone of the natural map $\red_!\red (M)\to M$ (which
  represents the functor given by the cone of the map
  $\Ext^\bullet(M,-) \to \Ext^\bullet(\red(M),\red(-))$) has
  cohomology supported on the unstable locus, since the induced map
  $\red \red_!\red( M)\to \red (M)$ is an isomorphism.
  \end{itemize}
Thus, the result follows immediately.
\end{proof}
While psychologically satisfying, this proposition doesn't directly
give us a great deal of information about the category of
$\bS$-equivariant $\cD$-modules since the quotient is a category
in which it is difficult to calculate.  On the other hand, it serves as a hand-hold on the
way to more concrete results.

In particular, the adjoint $\red_!$ provides a slightly more concrete approach to calculation in
$\cD_\chi\mmod$, since $\Ext^\bullet_{\cD_\chi}(M,N)\cong
\Ext^\bullet_{\Diff_E}(\red_!M,\red_!N)$; of course, the principle of
conservation of trouble suggests that computing $\red_!M$ is not
particularly easy, but this at least allows one to work in a better
understood category than $\cD_\chi\mmod$.

\begin{remark}
  We should also mention the work of Zheng and Li, which was an
  important inspiration for us.  In this case, they work with the
  localization of the category of constructible sheaves on $E/G$ by
  those with certain bad singular supports; this category can be
  compared with
  $\cD\mmod$ using the Riemann-Hilbert correspondence.  Using
  D-modules instead of constructible sheaves has the very significant
  advantage that the quotient category is a concrete category of
  modules over a sheaf of algebras, rather than an abstraction.
\end{remark}

\subsection{Riemann-Hilbert and sheaves on the relative precore}

Now, assume that $\bT$ acts on the space $\fM$ via a fixed cocharacter
$\vartheta\colon \bT\to H/Z$.  
Let 
\[\tilde{G}=\big\{(h,g,t,t)\in H\times_Z G\times \bS\times \bT\mid h\equiv
\vartheta(t)\pmod Z  \big\} \] denote the
fiber product of the maps $H\times_Z G\times \bS\to
H/Z\times\bS\leftarrow\bT'$; the group $   \tilde{G}$ acts on $ T^*E$,
with the induced action on $\fM$ factoring through the projection
character $\nu\colon \tilde{G}\to \bT'$.  We call a splitting of this
character $\gamma\colon \bT'\to \tilde{G}$ a {\bf lift} of the
$\bT'$-action on $\fM$.  A rational (real, etc.) lift is a splitting
of the derivative of $\nu$ on the rational Lie algebras $\bt'_{\Q}\to
\tilde{\fg}$.

Consider the action of the  group $   \tilde{G}$ on $\mu^{-1}(0)\times \C$ via
restriction of the action on $T^*E$ and the character $\nu$.  
\begin{definition}
We let the {\bf relative precore} $pC$ of a cocharacter $\vartheta$ be
the set of
elements in $\mu^{-1}(0)$ that have a limit which
lies in $(T^*E)^{\bS}$ under a lift $\bT\to H\times_Z G$.
\end{definition}
It will often be more useful for us to use an alternative
characterization of this space.  The equivalence of these definitions
is why we require Assumption \ref{positivity}:
\begin{proposition}
The set $pC$ is the unstable
points of $\mu^{-1}(0)$ with respect to $\nu$.  That is, it is the vanishing locus on $\mu^{-1}(0)$ of  the functions  which
 are G-invariant and 
  positive weight for some (equivalently all) lifts of $\bT'$.
  Equivalently, 
\[pC:=\big\{x\in \mu^{-1}(0)\mid \overline{\tilde{G}\cdot (x,1)}\cap
T^*E\times \{0\}\neq \emptyset\big\}.\]   
\end{proposition}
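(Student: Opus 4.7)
The plan is to realize all three descriptions as the unstable locus of the $\tilde{G}$-action on $T^*E\times\C$ linearized by $\nu$. The equivalence of the algebraic description (vanishing of positive $\bT'$-weight $G$-invariants) and the orbit-closure description is the classical Hilbert--Mumford/Mumford--King criterion: $(x,1)$ is unstable iff every $\tilde{G}$-semi-invariant section of $\nu^k$ for some $k>0$ vanishes at it, iff the $\tilde{G}$-orbit closure of $(x,1)$ meets the zero section $T^*E\times\{0\}$. Since $G\subseteq\ker(\nu)$, such semi-invariants on $T^*E\times\C$ correspond precisely to $G$-invariant functions on $T^*E$ of positive $\bT'$-weight for any chosen splitting of $\bT'$ into $\tilde{G}$; the induced $\bT'$-action on $G$-invariant functions is determined intrinsically by $\vartheta\times\id_\bS\colon\bT'\to H/Z\times\bS$ and so does not depend on the choice of lift, justifying the parenthetical ``some (equivalently all)''.

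For the forward implication from the relative precore description to the orbit-closure description, suppose $\gamma\colon\bT\to H\times_Z G$ is a lift of $\vartheta$ with $\lim_{t\to 0}\gamma(t)\cdot x = y\in (T^*E)^\bS$. I would define $\lambda\colon\bT\to\tilde{G}$ by $\lambda(t)=(\gamma(t),1,t)$ in the decomposition $\tilde{G}\subseteq (H\times_Z G)\times\bS\times\bT$; this lies in $\tilde{G}$ because $\gamma_H(t)\equiv\vartheta(t)\pmod Z$, and $\nu(\lambda(t))=t$. Then $\lambda(t)\cdot(x,1)=(\gamma(t)\cdot x,\,t)\to(y,0)\in T^*E\times\{0\}$, witnessing instability.

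The converse is the technical heart. By Hilbert--Mumford (allowing rational cocharacters, i.e. passing to a finite cover), there is a 1-PS $\lambda(t)=(\lambda_H(t),\lambda_G(t),t^m,t^k)$ of $\tilde{G}$ with $m+k>0$ sending $(x,1)$ into $T^*E\times\{0\}$, and I must extract from it a lift $\gamma\colon\bT\to H\times_Z G$ whose limit at $x$ is $\bS$-fixed. When $m,k>0$, the reparametrization $\gamma(t):=(\lambda_H(t^{1/k}),\lambda_G(t^{1/k}))$, followed by clearing denominators to make it integral, does the job: the resulting limit differs from $\lim_{t\to 0}\lambda(t^{1/k})\cdot x$ by an $\bS$-rescaling by $t^{-m/k}$, which drives the $E$- and $E^*$-components to their $\bS$-fixed parts and so lies in $(T^*E)^\bS$. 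The obstacle is the case $m\le 0$, where this reparametrization is insufficient. Here Assumption \ref{positivity} is essential: contrapositively, every $G$-invariant of positive $\bT'$-weight has non-negative $\bT$-weight, so a Kempf-style optimal-destabilizer argument in the cocharacter lattice of $\bT\times\bS$ (with a suitable invariant inner product) permits replacing $\lambda$ by a destabilizer with $m>0$ and $k>0$, reducing to the previous case. Carrying out this convex-geometric refinement and verifying that it genuinely relies on Assumption \ref{positivity} is the main technical hurdle.
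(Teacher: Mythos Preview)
Your argument rests on a misreading of $\tilde G$. By definition
\[\tilde{G}=\big\{(h,g,t,t)\in H\times_Z G\times \bS\times \bT\mid h\equiv\vartheta(t)\pmod Z\big\},\]
so the $\bS$- and $\bT$-coordinates of every element coincide. Consequently your proposed destabilizer $\lambda(t)=(\gamma(t),1,t)$ does not lie in $\tilde G$, and any one-parameter subgroup of $\tilde G$ automatically has equal $\bS$- and $\bT$-exponents; the case ``$m\le 0$'' that you isolate as the technical heart simply cannot arise, so no Kempf-style refinement is needed for the converse and Assumption~\ref{positivity} plays no role there. On the other hand, the natural repair $\lambda(t)=(\gamma(t),t,t)$ acts on $T^*E$ as $t\cdot_\bS(\gamma(t)\cdot x)$, and there is no reason this converges: the $\bS$-action has strictly negative weights and can blow up the error $\gamma(t)\cdot x-y$ even though $\gamma(t)\cdot x\to y\in(T^*E)^\bS$. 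So your forward implication has a genuine gap.

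The paper takes a different and shorter route, never constructing a destabilizing one-parameter subgroup. One checks directly that if $\gamma(t)\cdot x\to y\in(T^*E)^\bS$ then every $G$-invariant $f$ with $w_\bT(f)\ge 0$ and $w_\bS(f)>0$ satisfies $f(x)=0$; thus $pC$ lies in the common zero locus of such $f$. The substance of the proof is then the purely algebraic fact that this zero locus coincides with that of the positive-$\bT'$-weight $G$-invariants. One inclusion is immediate from $w_{\bT'}=w_\bT+w_\bS$. For the other, the hypothesis that $(T^*E)^\bS$ carries no nonconstant $G$-invariant lets one choose algebra generators of $\C[T^*E]^G$ all of positive $\bS$-weight; Assumption~\ref{positivity} then forces every generator of negative $\bT$-weight to have $w_{\bT'}\le 0$, so any monomial of positive $\bT'$-weight must contain a factor with $w_\bT\ge 0$ and $w_\bS>0$, hence vanishes on $pC$. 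Thus Assumption~\ref{positivity} enters in the inclusion $pC\subseteq\{\text{unstable}\}$, exactly the direction your approach leaves incomplete.
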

\begin{proof}
The set $(T^*E)^{\Bs}$ is the vanishing set of functions with positive
$\bS$-weight.  Thus, the relative precore is the vanishing set of $G$-invariant
functions of non-negative weight under $\bT$ and positive weight under
$\bS$.  Of course, any such function has positive weight for $\bT'$,
and thus vanishes on the unstable locus.  On the other hand, the
functions of positive $\bS$ weight form an ideal of codimension 1 in
$\C[T^*E]^{G}$, since otherwise we would have a non-constant invariant
polynomial on $(T^*E)^{\bS}$.  Thus, $ \C[T^*E]^{G}$ is a finitely
generated algebra and we can choose the generators to all have
positive $\bS$-weight and be $\bT$-weight vectors.  Any monomial in
the generators of Assumption \ref{positivity} which
contains a non-negative $\bT$ weight term lies in the vanishing set of
the relative precore.  If a function has positive weight under $\bT'$ and
doesn't vanish on the relative precore, it must be a sum of monomials in
generators with negative weight; however, we can choose our
presentation so that  these all have non-positive
weight under $\bT'$, by Assumption \ref{positivity} so this is impossible. 
\end{proof}

\begin{example}
  Let $E=\C^2$ with the scalar action of $G=\C^*$, the map $\bT\to H/Z$ trivial and $a=0,b=1$.
  In this case, $\tilde{G}=G\times \bS$, and a lift $\bT'\to \tilde{G}$
  determined by the weight $n$ with which $\bT$ acts on $\C^2$.  If
  $n\geq 0$, then the points in $E$ all have limits; they are all
  fixed if $n=0$, and all have limit 0 if $n>0$.  If $n<0$, then the
  points with limits are those in the
  cotangent fiber $E^*=T_0^*E$ (since the weight of $\bT'$ on this
  fiber is $-n-1$).  Thus, the relative precore in this case is the
  union $E\cup E^*$.  

Instead, we could choose $\bT$ to be the induced Hamiltonian action on
$T^*E$ by the cocharacter $t\mapsto \left[
\begin{smallmatrix}
  t & 0\\
0& 1
\end{smallmatrix}\right ]$, so the different lifts of $\bT'$ possible have
weights 
$n+1,n$ on $E$ and $-n-2,-n-1$ on $E^*$.  For $n\geq 0$, we get $E$ as
before, and for $n\leq -2$, we get $E^*$.   But if $n=-1$, then the
points with limits are the conormal space to the $x$-axis, since points on
the $x$-axis have limits, and those on the $y$-axis do not.   Thus,
the relative precore consists the union of these 3 spaces.  
\end{example}

\begin{proposition}\label{prop:isotropic}
  The relative precore is an isotropic subvariety with finitely
  many components. 
\end{proposition}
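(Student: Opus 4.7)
The plan is to combine the GIT-unstable characterization of $pC$ from the preceding proposition with a direct symplectic weight-space computation on the linear space $T^*E$.

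For finitely many irreducible components, I would invoke the Hesselink–Kempf–Kirwan–Ness stratification of the $\nu$-unstable locus in $T^*E$. Since $\tilde G$ is reductive and $T^*E$ is a finite-dimensional linear representation, this stratification has finitely many strata, each of which is (up to a $\tilde G$-translate) the attracting locus of a rational one-parameter subgroup of $\tilde G$ projecting to $\bT'$. Each such attracting locus is a linear subspace of $T^*E$ depending only on the signs of the finitely many $\gamma$-weights on $T^*E$, so only finitely many distinct attracting subspaces can occur. Restricting to the closed subvariety $\mu^{-1}(0)$ preserves this finiteness, so $pC$ has finitely many components.

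For isotropy, I would fix any rational lift $\gamma\colon \bT\to H\times_Z G$ of $\vartheta$, producing a $\bT\times\bS$-weight decomposition $T^*E=\bigoplus_{(m,s)}V_{(m,s)}$. Using the definition of the precore, the contribution from this lift consists of points $p$ whose nonzero weight components $p_{(m,s)}$ satisfy either $m>0$ or $(m,s)=(0,0)$ (so that the $\bT$-limit exists and lies in $(T^*E)^{\bS}$). Call this linear subspace $W_\gamma$. The canonical symplectic form $\omega=\sum_i dx_i\wedge d\xi_i$ is preserved by the cotangent lift of any linear action and hence has $\bT$-weight $0$, while the coordinates $x_i,\xi_i$ carry $\bS$-weights $a,b$, making $\omega$ of $\bS$-weight $n=a+b$. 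Thus $\omega$ has bi-weight $(0,n)$, so $\omega(V_{(m_1,s_1)},V_{(m_2,s_2)})=0$ unless $m_1+m_2=0$ and $s_1+s_2=n$. No pair of indices from the allowed set for $W_\gamma$ meets this condition: either $m_1+m_2>0$ (ruling out the first equation), or both indices equal $(0,0)$ and $s_1+s_2=0\neq n$. Hence $W_\gamma$ is an isotropic linear subspace of $T^*E$, and $pC\subseteq \bigcup_\gamma W_\gamma\cap \mu^{-1}(0)$ is isotropic.

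The principal obstacle is reconciling the union-of-lifts description used for the isotropy calculation with the finitely-many-strata description used for component-counting: varying the lift $\gamma$ by a cocharacter of the reductive part of $H\times_Z G$ corresponds exactly to varying the rational one-parameter subgroup of $\tilde G$ over $\bT'$ in the Hesselink stratification, and both enumerations produce the same finite list of isotropic attracting subspaces because the weight decomposition of the finite-dimensional representation $T^*E$ admits only finitely many sign patterns.
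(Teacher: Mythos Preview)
Your finiteness argument via the Kirwan--Ness stratification is essentially the paper's; that part is fine.

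The isotropy argument has a genuine gap. You correctly show that each linear attracting subspace $W_\gamma$ is isotropic, and you write $pC\subseteq\bigcup_\gamma W_\gamma\cap\mu^{-1}(0)$. But this union is over \emph{all} rational lifts $\gamma\colon\bT\to H\times_Z G$, and when $G$ is non-abelian these form a continuous family: conjugating $\gamma$ by $g\in G$ gives another lift with $W_{g\gamma g^{-1}}=g\cdot W_\gamma$. Your claim that ``only finitely many distinct attracting subspaces can occur'' is true only after fixing a maximal torus; globally the $W_\gamma$'s sweep out $G$-saturations. An irreducible component of $pC$ is $G$-stable and therefore typically \emph{not} contained in any single $W_\gamma$ --- think of the nilpotent cone in $\mathfrak{gl}_2$, which is an irreducible surface written as a union of conjugates of a single line. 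So ``contained in a union of isotropic subspaces'' does not by itself give ``isotropic.''

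What the paper does, and what your argument is missing, is to use the moment map condition on the tangent space. At a point $x\in Y$ (the non-negative weight space for one lift), the tangent space to the whole Kirwan--Ness stratum $G\cdot Y^{KN}$ is $D\alpha(\fg)+T_xY$, which is generally \emph{not} isotropic. Intersecting with $\mu^{-1}(0)$ replaces this by $(D\alpha(\fg)+T_xY)\cap D\alpha(\fg)^\perp$, because $\ker D\mu_x=D\alpha(\fg)^\perp$. A short linear-algebra check using $T_xY\subseteq(T_xY)^\perp$ then shows this intersection is isotropic. In your write-up, $\mu^{-1}(0)$ appears only as the ambient locus, not as the mechanism that kills the non-isotropic $G$-orbit directions; that is the missing idea.
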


\begin{proof}
Since the relative precore is an unstable locus, it carries a decomposition
into Kirwan-Ness strata.  Each Kirwan-Ness stratum corresponds to a
conjugacy class of rational lifts $\bt\to \tilde{\fg}$.  Let $Y$
denote the space of elements with non-negative weight under a particular lift
$\lift$ in this conjugacy class.
This space is isotropic since $\lift$ acts on the symplectic form with
weight $a+b$, so the annihilator of $Y$ is the space on which the
symplectic form acts with weight $>-a-b$.  An open subset
$Y^{KN}\subset Y$ lies in the corresponding Kirwan-Ness stratum.
Attached to this cocharacter, we have a parabolic $P_\lift\subset
\tilde{G}$ preserving $Y^{KN}$, and we have $G\cdot Y^{KN}\cong
G\times_{P_\lift}Y^{KN}$.  Let $X^{KN}=G\cdot Y^{KN}\cap \mu^{-1}(0)$. Now, consider the derivative $D\mu$ at a point in
$x\in X^{KN}\cap Y$. By definition, the derivative of the moment map $D\mu$ at $x$ sends $T_x(T^*E) \to \fg^*$ by the dual of the action map
$D\alpha\colon \fg\to T_x(T^*E)$.  So, if two vectors are tangent to $X^{KN}$, both are
symplectic orthogonal to the $G$-orbit through this point.  Thus, they
lie in $W_x=(D\alpha(\fg)+T_xY)\cap D\alpha(\fg)^\perp$; since
$W_x^\perp=(D\alpha(\fg)+(T_xY)^\perp)\cap D\alpha(\fg)^\perp$,
this space is isotropic.

By our GIT type description of the relative precore, we have that the relative precore
is the vanishing set of the semi-invariants for positive powers of
$\nu$; being a semi-invariant for a (fractional) power of
$\nu$ is the same as being a $G$-invariant function, so this is the
same as $G$-invariant functions with positive $\bT'$-weight.
\end{proof}
\excise{
We can actually strengthen this result a bit, to identify Lagrangian
components.  For a given $\lift$, let $S_\lift$ be the set of elements
of $E$ with a limit under a conjugate of $\lift$, and
$S_\lift^{\circ}$ its smooth locus.
\begin{proposition}\label{prop:Lagrangian}
 Every Lagrangian component of $pC$ is the
  closure of the conormal bundle to $S_\lift^{\circ}$ for some lift
  $\lift$.   If the space of elements of non-negative weight for $\lift$ is
  Lagrangian, then the corresponding KN stratum is Lagrangian  and
  irreducible, with the same closure as the conormal bundle to
  $S_\lift^{\circ}$.  
\end{proposition}
\begin{proof}
Every component $C$ has a unique KN stratum that has dense intersection
with it.  
  Using the notation of the proof of Proposition \ref{prop:isotropic},
  we let $X^{KN}$ be this stratum and consider $x\in X^{KN}\cap C$.
  The component $C$ is conormal to some smooth locally close subvariety of $E$, given by
its image under the projection $p\colon T^*E \to E$.  

  we have that $Y$ is Lagrangian and $G\cdot Y^{KN}\cong
G\times_{P_\lift}Y^{KN}$.   When we consider the action map
  at a point $y\in Y^{KN}$, then we have that
  $D\alpha^{-1}(T_yY)=\fp_\lift$.  Thus, dually, we have that $D\mu$
  maps $T_yY$ surjectively to $\fp^\perp_\lift$.  This shows that
  $Y^{KN}\cap \mu^{-1}(0)$ is smooth with codimension $\dim
  G/P_\lift$.  Thus, $ X^{KN}$ is Lagrangian as well.

Now, consider a point $x\in X^{KN}$.  The component that $x$ lies in
is The image of the
derivative of this map is $D\alpha_{p(x)}(\fg)+T_{p(x)}(Y\cap E)$.  In
particular, the image of this component has dense intersection with
$Y\cap E$, so its closure is all of $S_\lift$.  Thus, the conormal to
$S_\lift^{\circ}$ lies in this component.  This shows, indeed, that
there can only be one component.
\end{proof}}


We call a $\Diff_E$-module $\cM$ {\bf $\bT$-regular} if the vector field $\xi$
acts locally finitely.

\begin{definition}
  We let $\preO$ be category of $\bT$-regular $(G,\chi)$-equivariant $\Diff_E$-modules with
  singular support on the relative precore.
\end{definition}

This is the natural category whose reductions lie in $\cOg$ for any
different choice of GIT stability condition.
\begin{proposition}\label{red-functor}
The functor $\red$ sends modules in $\preO$ to sheaves in $\cOg$.
The functor $\red_{!}$ sends sheaves in $\cOg$ to modules in $\preO$.
\end{proposition}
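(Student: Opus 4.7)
The plan is to verify, in each case, the two defining conditions of the target category (a support condition and $\bT$-regularity); coherence of $\red(\cM)$ as a $\cD_\chi$-module and $(G,\chi)$-equivariance of $\red_!(\cN)$ are immediate from the sheaf-$\mathcal{H}om$ construction of $\red$ and the quotient presentations \eqref{eq:bang1}--\eqref{eq:bang2} of $\red_!$, respectively.

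For $\red\colon\preO\to\cOg$, I would compute $\supp(\red(\cM))=\psi\bigl(SS(\cM)\cap\mu^{-1}(0)^{ss}\bigr)$; since $SS(\cM)\subseteq pC$, it then suffices to establish the geometric containment $\psi(pC\cap\mu^{-1}(0)^{ss})\subseteq\fM^+$. This follows from the definition of $pC$ via lifts: any stable $x\in pC$ admits a lift $\bT\to H\times_Z G$ whose $t\to 0$ limit lies in $(T^*E)^{\bS}$, and by the standing hypothesis that $(T^*E)^{\bS}$ carries no non-constant $G$-invariant functions, that limit maps to the origin in $\fN$, so $\lim_{t\to 0} t\cdot\psi(x)\in\pi^{-1}(o)$, i.e.\ $\psi(x)\in\fM^+$. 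For $\bT$-regularity, local finiteness of $\xi$ on $\cM$ lets one pick a $\xi$-stable, $\Diff_E$-coherent generator; the resulting good filtration descends under $\red$ to a $\bt$-preserved $\cD_\chi(0)$-lattice.

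For $\red_!\colon\cOg\to\preO$, I would control singular support by splitting $\mu^{-1}(0)=\mu^{-1}(0)^{ss}\sqcup\mu^{-1}(0)^{us}$: the unstable part is harmless, since the unstable locus for $\al$ is contained in that for $\nu$ (as $\nu$ extends $\al$), and the latter equals $pC$ by the preceding proposition; on the stable part, the identity $\red\,\red_!\cong\id$ from \cite[5.19]{BLPWquant} gives $SS(\red_!\cN)\cap\mu^{-1}(0)^{ss}\subseteq\psi^{-1}(\supp\cN)\subseteq\psi^{-1}(\fM^+)$, and the reverse containment $\psi^{-1}(\fM^+)\subseteq pC$ follows by lifting a $\bT$-attracting path on $\fM$ to one in $\mu^{-1}(0)$ via $\tilde{G}$. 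For $\bT$-regularity, the presentation \eqref{eq:bang2} shows that a $\bt$-stable $\cD_\chi(0)$-lattice of $\cN$ produces a $\xi$-stable subspace of sections generating $\red_!(\cN)$, so $\xi$ acts locally finitely there as well. The main obstacle is the pair of geometric claims $\psi(pC\cap\mu^{-1}(0)^{ss})\subseteq\fM^+$ and $\psi^{-1}(\fM^+)\subseteq pC$, tying the Kirwan--Ness description of $pC$ upstairs to the attracting variety $\fM^+$ downstairs; both rely on Assumption~\ref{positivity} and the no-$G$-invariants condition on $(T^*E)^{\bS}$, the subtlety being to match up the various possible lifts of the $\bT'$-action on the two sides of the quotient.
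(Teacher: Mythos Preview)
Your argument for $\red$ is fine and matches the paper's, which simply asserts the support condition is ``obvious'' and then builds a $\xi$-stable good filtration exactly as you do.

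For $\red_!$ there is a genuine gap. The assertion that ``$\nu$ extends $\al$'' is false: by construction $G$ sits inside $\tilde G$ as (essentially) the fiber of the projection $\nu\colon\tilde G\to\bT'$ over $1$, so $\nu|_G$ is the trivial character, not $\al$. Consequently there is no reason for the $\al$-unstable locus in $\mu^{-1}(0)$ to lie in $pC$, and in general it does not. (In the hypertoric picture of Section~3, membership in $pC$ is governed by boundedness for the $\xi$-arrangement while $\al$-instability is governed by infeasibility for the $\eta$-arrangement; these are independent conditions, so there are $\al$-unstable points outside $pC$.) Your stable/unstable decomposition therefore does not bound $SS(\red_!\cN)$. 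The path-lifting justification for $\psi^{-1}(\fM^+)\subseteq pC$ is also incomplete: a $\bT$-orbit on $\fM$ lifts to an orbit of any chosen lift in $\mu^{-1}(0)^{ss}$, but nothing forces that lifted orbit to extend to a limit in $T^*E$, let alone one lying in $(T^*E)^{\bS}$.

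The paper takes an algebraic route that avoids these issues. Starting from the support condition on $\cM\in\cOg$, one checks that the associated graded of $\Gamma(\fM;{_k\cT_0}\otimes_{\cD_\chi}\cM)$ is killed by the relevant positive-weight $G$-invariant functions and is $\xi$-locally finite. The key step is then that tensoring with the bimodule $Y_k=\Diff_E/\Diff_E\cdot\mu_{\chi+k\al}(\fg)$ in \eqref{eq:bang2} preserves both properties: $Y_k$ is $\xi$-locally finite for the adjoint action, and on $\gr Y_k$ the left and right actions of $G$-invariant functions coincide. This yields directly that $\gr(\red_!\cM)$ is annihilated by those functions, hence $SS(\red_!\cM)\subseteq pC$, without ever invoking the $\al$-stability stratification.
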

\begin{proof}
  If a module $M$ is in $\preO$ then obviously $\red(M)$ is supported
  on $\fM^+$.  Furthermore, $M$ has a good filtration which is
  invariant under the action of $\xi$ (since $M$ must be generated by
  a $\xi$-stable finite dimensional subspace).  This induces a lattice
  on $\red(M)$ which is also $\xi$-invariant. Thus $\red(M)$ is in $\cOg$.

Conversely, if $\cM$ is in $\cOg$, then the classical limit $\cM/h\cM$
is killed by all functions which vanish on the scheme $\fM^+$, which
includes all positive $\bS$-weight $G$-invariant functions.  The same is
true for ${_{k}\cT_{0}}\otimes_{\cD_\chi}\cM$, which lies in $\cOg$ by
\cite[3.17]{BLPWgco}.  Thus $\Gamma(\fM;{_{k}\cT_{0}}\otimes_{\cD_\chi}\cM)$ is a
$\xi$-locally finite module whose associated graded is killed by all
positive weight $G$-invariant functions.   By \eqref{eq:bang2}, the $\Diff_E$-module
$\red_!(\cM)$ can be realized as the tensor product \[K=\red_{!}(\cM)=Y_{k}
\otimes_{A_{\chi+k\al}}\Gamma(\fM;{_{k}\cT_{0}}\otimes_{\cD_\chi}\cM)\] for $k\gg 0$.
Both of the desired properties are preserved by tensor product with the bimodule
$Y_\chi=\Diff_E/\Diff_E\cdot \mu_{\chi}(\fg)$ since it is $\xi$-locally finite for
the adjoint action, and the left and right actions of $G$-invariant
functions on its associated graded coincide.  Thus, $\red_!(\cM)$ is
supported on the relative precore and is $\bT$-regular.
\end{proof}

Let $\dOg$ and $\dpreO$ be the dg-subcategories of the bounded dg-category of
all $\cD_\chi$-modules and equivariant $\Diff_E$-modules, respectively, generated
by $\cOg$ and $\preO$.  The same proof as Proposition \ref{dg-quotient} shows that:
\begin{corollary}\label{preO-quotient}
  The category $\dOg$ is the dg-quotient of $\dpreO$ by the
  dg-subcategory of complexes whose cohomology is supported on the unstable components of
  the relative precore.
\end{corollary}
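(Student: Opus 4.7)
The strategy is to imitate the proof of Proposition \ref{dg-quotient} verbatim, applying Drinfeld's criterion \cite[1.4]{DrDG} to the pair of functors $(\red_!,\red)$ restricted to the subcategories. The essential new input is Proposition \ref{red-functor}, which guarantees that the adjoint pair already lifted to $(\preO, \cOg)$ at the level of abelian hearts; this extends to an adjoint pair between the bounded dg-categories $\dpreO$ and $\dOg$ since these are by definition generated by the hearts.

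With the adjunction in place, I would check Drinfeld's three conditions in turn. First, $\red \colon \dpreO \to \dOg$ is essentially surjective on the homotopy category: by \cite[5.19]{BLPWquant} the counit $\red\,\red_! \to \id$ is an isomorphism on $\cOg$, hence on $\dOg$, and by Proposition \ref{red-functor} the right adjoint $\red_!$ lands in $\dpreO$, so every object in $\dOg$ is in the image of $\red$ up to isomorphism. Second, any complex in $\dpreO$ whose cohomology is supported on the unstable components of the relative precore has cohomology supported on the unstable locus of $\mu^{-1}(0)$; since $\red$ factors through restriction to $\mu^{-1}(0)^{ss}$, such a complex is sent to an acyclic, hence contractible, complex. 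Third, for $M \in \dpreO$, one must show that the cone $C$ of the natural map $\red_!\red(M) \to M$ lies in the kernel subcategory. The cone $C$ lies in $\dpreO$ because both $M$ and $\red_!\red(M)$ do, and applying $\red$ yields the cone of $\red\red_!\red(M) \to \red(M)$, which is an isomorphism, so $\red(C)$ is acyclic and the singular support of $C$ is contained in the unstable locus. Combined with $C \in \dpreO$, whose singular support is constrained to the relative precore, this confines the cohomology of $C$ precisely to the unstable components of the relative precore.

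The main obstacle — really the only non-mechanical point — is verifying that the cone $C$ in the third step has cohomology genuinely supported on the relative precore, and not merely some larger isotropic subvariety of the unstable locus. This is why Proposition \ref{red-functor} is indispensable: it ensures that $\preO$ is preserved by the relevant adjoint functor, so the cone automatically inherits the singular-support constraint defining $\preO$. Once this is in hand, Drinfeld's criterion delivers the quotient description immediately.
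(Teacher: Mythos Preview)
Your proposal is correct and follows exactly the approach the paper intends: the paper's ``proof'' is the single sentence ``The same proof as Proposition~\ref{dg-quotient} shows that,'' and you have spelled out precisely how that argument transfers, with Proposition~\ref{red-functor} supplying the only new ingredient (that $\red_!$ and $\red$ restrict to the subcategories).

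Two minor terminological slips: $\red_!$ is the \emph{left} adjoint to $\red$, not the right adjoint; correspondingly the isomorphism $\red\red_!\cong\id$ on $\dOg$ is the \emph{unit} of the adjunction, not the counit. Neither affects the mathematics.
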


Let $\EuScript L$ be the sum of one copy of each isomorphism class of
simple objects in $\preO$, and let $\displaystyle \EuScript Q:=
\Ext_{\dpreO}^\bullet(\EuScript L,\EuScript L)^{\operatorname{op}}$;
throughout the paper, we'll only consider examples where
\begin{itemize}
\item[$(*)$] \hypertarget
{formal} {this} dg-algebra is formal (i.e. quasi-isomorphic to its
  cohomology).
\end{itemize}
We have a quasi-equivalence\footnote{Quasi-equivalence is the correct
  notion of equivalence for dg-categories.  A dg-functor $F$ is a
  quasi-equivalence if it induces an equivalence on homotopy
  categories, and induces a quasi-isomorphism on morphism complexes,
  that is, it induces an isomorphism $\Ext^i(M,N)\cong
  \Ext^i(F(M),F(N))$ for all $M$ and $N$.} $\dpreO\cong \EuScript Q\operatorname{-dg-mod}$ given by
$\Ext^\bullet_{\dpreO}(\EuScript L,-)$.  Under this quasi-equivalence, the simples of
$\preO$ are sent to the indecomposable summands of $\EuScript Q$ and modules
supported on the unstable locus are sent to those which are
quasi-isomorphic to an iterated cone of summands of $\EuScript Q$
corresponding to unstable simples. 

Let $I$ be the 2-sided ideal in
$Q$ generated by
every map factoring through a summand supported on the unstable locus
and let
$\EuScript{R}\cong \EuScript Q/I.$ 
\begin{proposition}\label{equivalence}
  We have a quasi-equivalence $\dOg\cong \EuScript{R}\operatorname{-dg-mod}$ intertwining $\red$
  with the functor $\EuScript{R}\Lotimes_{\EuScript Q}-$.  In particular,
  $\Ext_{\dOg}(\red(\EuScript L),\red(\EuScript L))\cong \EuScript{R}$ as formal dg-algebras.
\end{proposition}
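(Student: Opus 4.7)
The plan is to translate Corollary \ref{preO-quotient} through the quasi-equivalence $\dpreO\cong \EuScript Q\operatorname{-dg-mod}$ given by $\Ext^\bullet_{\dpreO}(\EuScript L,-)$, and then invoke the standard description of Drinfeld dg-quotients of module categories by subcategories generated by an idempotent.

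First, decompose $1=e+f$ in (the degree zero part of) $\EuScript Q$, where $e$ is the sum of primitive idempotents associated to the simples in $\preO$ supported on the unstable components of the relative precore, and $f$ the complementary sum. Under the quasi-equivalence, these simples correspond to the indecomposable summands $\EuScript Q e_i$ of $\EuScript Q$ (viewed as a left module over itself), and the dg-subcategory of $\dpreO$ of complexes with cohomology supported on the unstable locus is identified with the full pretriangulated dg-subcategory of $\EuScript Q\operatorname{-dg-mod}$ generated by $\EuScript Q e$. By definition, the two-sided ideal $I$ is then exactly $\EuScript Q e\EuScript Q$, so $\EuScript R = \EuScript Q/\EuScript Q e\EuScript Q$.

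The key algebraic input is the standard fact --- a dg-enhancement of the Cline--Parshall--Scott recollement, established in the dg-setting in \cite{DrDG} --- that the Drinfeld quotient of $\EuScript Q\operatorname{-dg-mod}$ by the subcategory generated by $\EuScript Q e$ is quasi-equivalent to $\EuScript R\operatorname{-dg-mod}$, with quotient functor $\EuScript R\Lotimes_{\EuScript Q}(-)$. Combining this with Corollary \ref{preO-quotient} yields a quasi-equivalence $\dOg\cong \EuScript R\operatorname{-dg-mod}$; Proposition \ref{red-functor} together with the universal property of the dg-quotient ensures that the abstract quotient functor really agrees with $\red$ rather than some twist. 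For the last assertion, $\EuScript L$ corresponds to $\EuScript Q$ as a left $\EuScript Q$-module, so $\red(\EuScript L)$ corresponds to $\EuScript R\Lotimes_{\EuScript Q}\EuScript Q\cong \EuScript R$, whose derived endomorphism dg-algebra in $\EuScript R\operatorname{-dg-mod}$ is $\EuScript R^{\operatorname{op}}$ --- matching $\EuScript R$ under the paper's convention $\EuScript Q = \Ext^\bullet(\EuScript L,\EuScript L)^{\operatorname{op}}$.

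The main obstacle is verifying the Drinfeld-quotient identification cleanly: one must check that the universal property pins down $\EuScript R\Lotimes_{\EuScript Q}(-)$ and that, because the subcategory being killed is generated by a projective summand $\EuScript Q e$ rather than an arbitrary dg-subcategory, the abstract Drinfeld construction admits the concrete model $\EuScript R\operatorname{-dg-mod}$; the formality hypothesis $(*)$ is what makes this identification work at the level of dg-algebras, with no residual $A_\infty$-corrections. Once this algebraic input is in hand, the geometric side of the argument is automatic.
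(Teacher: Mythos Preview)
Your proposal is correct and follows essentially the same strategy as the paper: both identify $\EuScript{R}\Lotimes_{\EuScript Q}-$ as a dg-quotient functor and invoke the uniqueness of dg-quotients (\cite[6.1]{DrDG}) together with Corollary~\ref{preO-quotient}. You are simply more explicit than the paper's two-line proof, spelling out the idempotent decomposition $I=\EuScript Q e\EuScript Q$, the role of formality, and the final Ext computation.
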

\begin{proof}
  The functor $\EuScript{R}\Lotimes_{\EuScript Q}- \colon
  \EuScript Q\operatorname{-dg-mod}\to \EuScript{R}\operatorname{-dg-mod}$ is a
  dg-quotient functor.  By
  Corollary \ref{preO-quotient}, and the uniqueness of dg-quotients up
  to quasi-equivalence
  (\cite[6.1]{DrDG}), the result follows.
\end{proof}

On any smooth variety over the complex numbers, there is an
equivalence of categories between appropriate dg-categories of regular holonomic D-modules and
constructible sheaves, typically called the {\bf Riemann-Hilbert
correspondence}.  

Assume that every object in $\preO$ is a regular $\Diff_E$-module;
this will, for example, be the case when the supports of such modules
are a finite collection of $G$-orbits.

\begin{proposition}
If $\chi$ is integral, the category $\dpreO$ is quasi-equivalent to the dg-category of
sheaves of $\C$-vector spaces on $E/G$ with constructible cohomology
  and  singular support on the relative precore.
\end{proposition}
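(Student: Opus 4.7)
The plan is to appeal to the equivariant Riemann-Hilbert correspondence in the formulation appropriate for quotient stacks, and then check that the three structural conditions that cut out $\preO$ (strong $(G,\chi)$-equivariance, the singular support constraint, and $\bT$-regularity) translate cleanly to the stated conditions on the constructible side.

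First I would set up equivariant RH on the simplicial Borel presentation of $E/G$ displayed above. On each term of the simplicial scheme, the classical theorem of Kashiwara-Mebkhout gives a quasi-equivalence between the bounded dg-category of regular holonomic $\Diff$-modules and the dg-category of constructible sheaves; functoriality of this correspondence under pullbacks (which holds for regular holonomic modules with the usual six-functor formalism) then yields a quasi-equivalence after taking the homotopy limit along the simplicial diagram. This is where the assumption that every object of $\preO$ is regular is used. Integrality of $\chi$ enters next: the character $\chi$ of $\fg$ integrates to an algebraic character $G\to \C^\times$, so the twist of the equivariant structure by $\chi$ can be undone on the topological side (the associated rank-one local system is trivial), and strongly $(G,\chi)$-equivariant regular holonomic $\Diff_E$-modules correspond under RH to ordinary $G$-equivariant constructible sheaves on $E$, i.e.\ to constructible sheaves on the stack $E/G$.

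Next I would match singular supports. Kashiwara-Schapira's microsupport of a constructible sheaf agrees with the characteristic variety of the associated regular holonomic $\Diff_E$-module; both are $G$-stable closed conic subvarieties of $T^*E$ contained in $\mu^{-1}(0)$ for equivariant objects (by the moment condition), so the subcategory generated by modules in $\preO$ maps to the subcategory of complexes whose cohomology sheaves have microsupport in the relative precore, and conversely.

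The remaining point, which I expect to be the main obstacle, is to show that the $\bT$-regularity condition is automatic on constructible sheaves with microsupport in the relative precore, so that it need not be imposed on the topological side. Since the precore is $\bT$-stable by construction and isotropic by Proposition~\ref{prop:isotropic}, constructible sheaves with microsupport there are automatically $\bT$-monodromic in the sense of Verdier: the $\bT$-action on $E$ restricts nontrivially only in directions transverse to the microsupport, and a non-characteristic propagation argument shows that such a sheaf is locally constant along $\bT$-orbits. Under RH, $\bT$-monodromy of a constructible sheaf translates precisely into local finiteness of the action of the Euler vector field $\xi$ on the corresponding $\Diff_E$-module, i.e.\ into $\bT$-regularity. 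Combining these three matches gives the desired quasi-equivalence at the level of abelian hearts; the dg-enhancement then follows because equivariant RH can be promoted to a quasi-equivalence of dg-enhancements (this is standard: morphism complexes on both sides compute $\Ext$ in the corresponding abelian categories, and RH induces isomorphisms on all $\Ext$-groups).
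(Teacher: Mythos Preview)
The paper gives no proof of this proposition; it is stated immediately after the regularity hypothesis and treated as a direct consequence of Riemann--Hilbert. Your outline---equivariant RH on the simplicial Borel presentation, untwisting the $\chi$-equivariance via integrality, and matching characteristic variety with microsupport \`a la Kashiwara--Schapira---is exactly the intended argument and is correct as far as it goes.

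The one place where your write-up has a real gap is the justification that $\bT$-regularity is automatic. The sentence ``the $\bT$-action on $E$ restricts nontrivially only in directions transverse to the microsupport, and a non-characteristic propagation argument shows that such a sheaf is locally constant along $\bT$-orbits'' does not make sense as stated: microsupport lives in $T^*E$, and there is no reason the vector field $\xi$ (or any covector built from it) should be non-characteristic for a sheaf whose microsupport merely lies in $pC$. A regular holonomic module supported on a subvariety that is \emph{not} $\bT$-invariant can certainly fail to be $\bT$-regular, so isotropy of $pC$ alone is not what you want---you need its $\bT$-invariance.

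A clean way to close the gap: since $pC$ is $\tilde G$-invariant it is in particular $\bT$-invariant, so any simple perverse sheaf $\cF$ with $SS(\cF)\subset pC$ has $\bT$-stable singular support and hence $\bT$-stable support. Writing $\cF=IC(Y,L)$ with $Y$ the smooth locus of the support, $Y$ is $\bT$-stable; because $\bT$ is connected and isomorphism classes of irreducible local systems on $Y$ form a discrete set, we get $t^*\cF\cong\cF$ for all $t\in\bT$. Simplicity forces these isomorphisms (each unique up to scalar) to assemble into a weak $\bT$-equivariant structure. On the $\Diff$-module side, the infinitesimal generator of that structure differs from left multiplication by $\xi$ by a $\Diff_E$-linear endomorphism of a simple module, hence a scalar, so $\xi$ acts locally finitely. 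Since $\dpreO$ is generated by its simple objects, this is enough.
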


Thus, we can use the geometry of constructible sheaves in order to
understand the modules on the relative precore, which in turn can lead to an
understanding of category $\cOg$. 

\subsection{Koszulity and mixed Hodge modules}
\label{sec:kosz-mixed-hodge}

Given that we have assumed that the algebra $\EuScript Q$ is formal, there is a
natural graded lift of $\EuScript Q\dgmod$: the derived category of graded
modules over $H^*(\EuScript Q)$.  One might naturally ask if this lift  can be
interpreted in a geometric way.  At least in certain cases, this is
indeed the case, using the formalism of mixed Hodge modules.  The
category of mixed Hodge modules on a quotient Artin stack such as
$E/G$ is considered by Achar in \cite{Achmix}.  We will only use basic
properties of this category, particularly those of
\cite[0.1-2]{SaitoMHM}:
\begin{proposition}
  On any quasi-projective variety $X$, there is a graded abelian category $\mathsf{MHM} $ where each object
  is a $\Diff_X$-module $\cM$ with additional structure\footnote{This
    addition structure is a $\mathbb{Q}$-form $K$ of the perverse sheaf
    $\operatorname{DR}(\cM)$, compatible weight filtrations of $\cM$
    and $K$, and good filtration on $\cM$.}, whose morphisms are
  morphisms of  $\Diff$-modules preserving said additional structure.
  If $X$ is smooth, then the structure sheaf $\fS_X$ lies naturally in
  $\mathsf{MHM} $ (with an obvious choice of additional structure).
  The derived category $D^b(\mathsf{MHM})$ has a six-functor formalism
  defining $f_*,f^*,f_!,f^!, \otimes$ and $\mathcal{H}om$ in a way
  compatible with the forgetful functor $\operatorname{rat}$ passing to the underlying
  complex of $\Diff_X$-modules.  
\end{proposition}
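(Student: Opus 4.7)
The plan is to appeal directly to Saito's foundational work on mixed Hodge modules rather than reconstruct it, since the statement is essentially a summary of \cite[0.1-2]{SaitoMHM} (together with Achar's extension \cite{Achmix} to quotient Artin stacks such as $E/G$, which will be invoked implicitly when the category is applied later). Concretely, I would proceed in three short steps.

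First, for the existence of $\mathsf{MHM}$ on a quasi-projective variety $X$, I would simply cite Saito's definition: an object is a tuple $(\cM, F_\bullet \cM, K, W_\bullet)$ where $\cM$ is a holonomic regular $\Diff_X$-module, $F_\bullet\cM$ is a good filtration, $K$ is a $\Q$-perverse sheaf equipped with an isomorphism $K\otimes_\Q\C \cong \operatorname{DR}(\cM)$, and $W_\bullet$ is a compatible weight filtration; morphisms are tuples preserving all this data. The abelianness is part of Saito's main theorem, and the grading by weight is built into the definition.

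Second, for the smooth case, I would note that $\fS_X$ (the structure sheaf viewed as a $\Diff_X$-module) carries the trivial good filtration concentrated in degree $0$, the constant sheaf $\Q_X[\dim X]$ as its $\Q$-form (using the de Rham comparison), and the weight filtration placing it in pure weight $\dim X$. Saito verifies directly that this tuple satisfies the axioms; this is nothing more than unpacking Saito's definition on the point $\operatorname{Spec}\C$ and pulling back along $X\to \operatorname{Spec}\C$.

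Finally, for the six-functor formalism and compatibility with $\operatorname{rat}$, I would cite Saito's construction of $f_*,f^*,f_!,f^!,\otimes,\mathcal{H}om$ on $D^b(\mathsf{MHM})$ and the compatibility statement $\operatorname{rat}\circ f_* = f_*\circ \operatorname{rat}$ (and similarly for the other five functors), which is precisely the content of \cite[0.1-2]{SaitoMHM}. The only genuine subtlety --- and the only place where real work would be needed if one were not content to cite Saito --- is verifying compatibility of the weight filtration under the six functors; but this is exactly what Saito's decomposition and stability theorems provide. Since the paper explicitly disclaims the need for anything beyond the basic properties, no further argument is required.
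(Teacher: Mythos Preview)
Your proposal is correct and matches the paper's approach exactly: the paper does not prove this proposition at all but simply states it as a summary of \cite[0.1-2]{SaitoMHM} (with Achar \cite{Achmix} for the stacky version), treating Saito's results as a black box. Your three-step unpacking is a reasonable expansion of what the paper leaves implicit, but no argument beyond the citation is expected or given.
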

The most important consequence of this theorem for is that if we have
two mixed Hodge modules $L_1$ and $L_2$, then 
\begin{multline*}
  \bigoplus_m \Ext_{\mathsf{MHM}}^i (L_1,L_2(m))\cong
  \bigoplus_m\mathbf{R}^i \Gamma(\mathcal{H}om_{\mathsf{MHM}}(L_1,L_2(m)))\\ \cong
  \mathbf{R}^i\Gamma(\mathcal{H}om_{\Diff_X\mmod}(L_1,L_2))\cong
  \Ext^i_{\Diff_X\mmod}(L_1,L_2),
\end{multline*}
but the former group has the additional structure of being graded (we
think of $\Ext_{\mathsf{MHM}}^i (L_1,L_2(m))$ as being homogeneous
elements of degree $m$).  

For a given $\Diff_X$-module $\cM$, we will call a choice of object in
$\mathsf{MHM} $ with $\cM$ as underlying $\Diff_X$-module a {\bf Hodge structure}
on $\cM$.  
From now on, we will only consider examples where:
\begin{itemize}
\item [$(\maltese)$] \hypertarget
{maltese} {Every} simple $L_\nu$ in $\preO$ can be endowed
  with a pure Hodge structure such that the induced Hodge structure on
  $\Ext_{\mathsf{MHM}}^\bullet (L_\nu,L_\mu)$ in the category $\mathsf{MHM} $
  of mixed Hodge modules is pure, that is, $\Ext_{\mathsf{MHM}}^m (L_\nu,L_\mu(m))\cong \Ext_{\Diff_E\mmod}^m (L_\nu,L_\mu).$ 
\end{itemize}
This may seem like a rather abstruse assumption, but we'll see below
that it is quite geometrically natural in many situations.
Importantly, the assumption $(\maltese)$ implies the assumption \hyperlink{formal}{$(*)$}
discussed earlier, since if $\Ext_{\mathsf{MHM}}^\bullet
(L_\nu,L_\mu)$ has a pure Hodge structure, the compatibility of the
dg-structure with the Hodge structure shows that it is formal as a
dg-algebra.  

We'll call an object $L$ in $\preO$ {\bf gradeable} if it can be
endowed with a mixed Hodge structure.  Note that this is equivalent to
the $Q$-modules $\Ext_{\Diff_E\mmod}(-,L)$ or
$\Ext_{\Diff_E\mmod }(L,-)$ being formal.  
Given any objects $M$ and $N$, we have a universal extension 
\[0\longrightarrow \Ext^1(M,N)^*\otimes N\longrightarrow U(N,M)
\longrightarrow M \longrightarrow 0.\]
corresponding to the canonical class in
$\Ext^\bullet(M,\Ext^1(M,N)^*\otimes N)\cong \Ext^1(M,N)^*\otimes
\Ext^1(M,N)$.
This operation obviously preserves gradability, and every
indecomposable projective in $\preO$ can be constructed by beginning
with a simple $L$, and applying this operation inductively with other
simples, until the result is projective.  In particular, projective
objects in $\preO$ are always gradeable.

We'll call an object $\cM$ of $\cOg$ {\bf gradeable} if $\red_!\cM$ is
gradeable as
defined above.  Since $\red_!$ preserves projectives, the projectives
of $\cOg$ are always gradeable.  Thus, assuming $(\maltese)$, we can
give the projectives $\red_!P_\al$ for the projectives of $\cOg$
unique mixed
Hodge structures with the head pure of degree 0, and thus a natural
grading on the endomorphism
ring \[\EuScript{R}^!:=\End(\oplus_{\al}P_\al)\cong \End(\oplus_{\al}\red_!P_\al).\]
 Similarly, we can define $\EuScript Q^!$ as the graded
endomorphism ring of the sum of all projectives in $\preO$.

\begin{proposition}
  If the assumption \hyperlink{maltese}{$(\maltese)$} holds, the ring $\EuScript{R}^!$ is the quadratic dual (in the sense of
  \cite{MOS}) of the ring $\EuScript{R}$, and similarly $\EuScript Q^!$ the
  quadratic dual of $\EuScript Q$.  That is, the induced graded lift of the category
  $\cOg$ is naturally equivalent to the category of linear complexes
  of projectives over  $\EuScript{R}^!$.  
\end{proposition}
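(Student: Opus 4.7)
The plan is to deduce the proposition from $(\maltese)$ via the Mazorchuk--Ovsienko--Stroppel formalism~\cite{MOS} for linear complexes of projectives, in three steps.

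\emph{Step 1: Hodge-theoretic gradings.} By $(\maltese)$, each simple $L_\nu\in\preO$ admits a pure Hodge structure satisfying $\Ext^m_{\mathsf{MHM}}(L_\nu,L_\mu(m))\cong\Ext^m_{\Diff_E\mmod}(L_\nu,L_\mu)$. Hence $\EuScript Q$, and via Proposition~\ref{equivalence} also $\EuScript{R}$, becomes $\Z$-graded with internal degree equal to homological degree; the degree-zero parts are semisimple endomorphism rings of sums of simples, and the algebras are concentrated on the diagonal. Dually, the iterated universal extension construction of each indecomposable projective $P_\al$ recalled in the preamble lifts step by step to mixed Hodge modules, endowing $P_\al$ with a canonical mixed Hodge structure whose head is pure of weight $0$ and in which each new extension class sits in Hodge weight $1$. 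As a consequence, $\EuScript Q^!=\End(\bigoplus_\al P_\al)$ and $\EuScript{R}^!=\End(\bigoplus_\al \red_!P_\al)$ become nonnegatively graded algebras with semisimple degree-zero parts.

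\emph{Step 2: Quadratic duality.} The iterative extension picture shows that $\EuScript{R}^!$ is generated in degrees $0$ and $1$, with its degree-$1$ piece dual to the degree-$1$ piece of $\EuScript{R}$. A bar-complex computation of $\Ext^\bullet_{\EuScript{R}^!}((\EuScript{R}^!)_0,(\EuScript{R}^!)_0)$ then identifies the degree-$1$ generators with those of $\EuScript{R}$, and the diagonality of $\EuScript{R}$ forces the relations of $\EuScript{R}^!$ to be purely quadratic: any cubic or higher relation would manifest as an off-diagonal class in $\EuScript{R}$, contradicting $(\maltese)$. This yields Koszulity of $\EuScript{R}^!$ together with an isomorphism $\EuScript{R}\cong E(\EuScript{R}^!)=(\EuScript{R}^!)^!$, so by involutivity of quadratic duality for Koszul algebras, $\EuScript{R}^!$ is the quadratic dual of $\EuScript{R}$. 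The identical argument applied to $\preO$ in place of $\cOg$ yields that $\EuScript Q^!$ is the quadratic dual of $\EuScript Q$.

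\emph{Step 3: Linear complexes description.} Applying the MOS equivalence~\cite{MOS} to the positively graded algebra $\EuScript{R}^!$ with semisimple degree-zero part, the category $\mathsf{LCP}(\EuScript{R}^!)$ is abelian and equivalent to the category of graded modules over $E(\EuScript{R}^!)=\EuScript{R}$, with indecomposable projectives of $\EuScript{R}^!$ (viewed as one-step complexes) corresponding to graded simples. Combining this with the Hodge-theoretic graded refinement of $\dOg\cong\EuScript{R}\dgmod$ from Proposition~\ref{equivalence} identifies the graded lift of $\cOg$ with $\mathsf{LCP}(\EuScript{R}^!)$, as required. The main obstacle is Step 2, namely ruling out higher relations in $\EuScript{R}^!$; this rests squarely on $(\maltese)$, since any higher-degree relation would produce an off-diagonal $\Ext$ class in $\EuScript{R}$ that purity forbids.
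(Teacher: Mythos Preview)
Your Step~2 contains a genuine gap that actually contradicts the paper's logic. You claim that the diagonality of $\EuScript{R}$ forces the relations of $\EuScript{R}^!$ to be purely quadratic, and conclude that $\EuScript{R}^!$ is Koszul. But this is stronger than what the proposition asserts, and it is \emph{not} a consequence of $(\maltese)$ alone: the very next result in the paper (Theorem~\ref{maltese-koszul}) states that Koszulity of $\cOg$ is \emph{equivalent} to the inclusion $D^b(\cOg)\to \dOg$ being a quasi-equivalence, which is an additional hypothesis not implied by $(\maltese)$. Your bar-complex argument conflates two different things: $(\maltese)$ tells you $\Ext^m(L_\nu,L_\mu)$ sits in Hodge weight $m$, i.e.\ $\EuScript{R}$ is diagonal; but a cubic relation in $\EuScript{R}^!$ corresponds to an off-diagonal class in $\Ext_{\EuScript{R}^!}$ of simples, not in $\EuScript{R}$ itself. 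The implication you want would require knowing $\EuScript{R}\cong\Ext^\bullet_{\EuScript{R}^!}(\text{simples},\text{simples})$, which is essentially the Koszulity you are trying to prove.

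The paper's argument sidesteps this entirely by working with the MOS notion of quadratic dual, which does not require either algebra to be quadratic or Koszul. The paper simply transports the standard $t$-structure on $\dOg$ across the equivalence $\dOg\simeq \EuScript{R}\dgmod$ given by $\Ext^\bullet(\EuScript{L},-)$: an object lies in $D^{\geq 0}$ iff its image is a complex of projectives with $i$th term generated in degrees $\geq -i$, and dually for $D^{\leq 0}$, so the heart $\cOg$ maps precisely onto $\mathsf{LCP}(\EuScript{R})$. Then $\EuScript{R}^!$, being the endomorphism ring of a projective generator of this heart, is \emph{by definition} the MOS quadratic dual of $\EuScript{R}$. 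No direct analysis of relations in $\EuScript{R}^!$ is needed, and Koszulity is correctly deferred to a separate statement.
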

\begin{proof}
The category $\cOg$ is the heart of the standard $t$ structure on
$\dOg$.  Thus, we need only study the image of the standard
$t$-structure under the functor $\Ext^\bullet(L,-)$.  The subcategory
$D^{\geq 0}$ is sent to the category of complexes of projectives where
the $i$th term is generated in degrees $\geq -i$; similarly $D^{\leq
  0}$ is sent to the category of such complexes generated in degree
$\leq -i$.  Thus, the intersection of these subcategories is precisely
the linear complexes of projectives.  The ring $\EuScript{R}^!$ is the
endomorphism algebra of a projective generator in this subcategory,
and thus is the quadratic dual.
\end{proof}
Thus, from \cite[Thm. 30]{MOS}, we conclude:

\begin{theorem}\label{maltese-koszul}
  If the assumption \hyperlink{maltese}{$(\maltese)$} holds, then the inclusion functor $D^b(\cOg)\to
  \dOg$ is a quasi-equivalence if and only if the category $\cOg$ is Koszul.
  Similarly, the functor $D^b(\preO)\to \dpreO$   is a quasi-equivalence if and only if the category $\preO$ is Koszul.
\end{theorem}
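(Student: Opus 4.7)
The plan is to reduce both claims to direct applications of \cite[Thm.~30]{MOS} via the identifications already developed in this section. Under $(\maltese)$, $\EuScript R$ is a formal dg-algebra with a canonical non-negative internal grading (coming from the Hodge weights) and semisimple degree-$0$ part, and Proposition \ref{equivalence} gives a quasi-equivalence $\dOg \simeq \EuScript R\dgmod$ intertwining $\red$ with tensor product. The preceding proposition identifies the graded lift $\widetilde{\cOg}$ of the heart $\cOg$ with $\mathsf{LCP}(\EuScript R^!)$, the category of linear complexes of projectives over the quadratic dual.

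Transporting the inclusion $D^b(\cOg)\to\dOg$ across these two identifications — and lifting to graded versions, which is possible because $(\maltese)$ endows every projective with a canonical Hodge structure — produces exactly the natural functor $D^b(\mathsf{LCP}(\EuScript R^!))\to D(\EuScript R\operatorname{-gmod})$ whose invertibility \cite[Thm.~30]{MOS} characterizes. That theorem asserts that this functor is a quasi-equivalence if and only if $\EuScript R$ is Koszul as a positively graded algebra. Since Koszulity of $\cOg$ means, by definition, Koszulity of the basic algebra $\EuScript R^!$ controlling its graded lift, and Koszulity is preserved under quadratic duality between $\EuScript R$ and $\EuScript R^!$, the first claim follows. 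The argument for $\preO$ is formally identical, with $(\EuScript Q,\EuScript Q^!,\preO,\dpreO)$ replacing $(\EuScript R,\EuScript R^!,\cOg,\dOg)$ and the quasi-equivalence $\dpreO\cong \EuScript Q\dgmod$ (a direct analogue of Proposition \ref{equivalence}, with $I=0$) in the role of the first step.

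The step that I expect to require the most care is the compatibility of the two identifications with $t$-structures: the standard $t$-structure on $\EuScript R\dgmod$ must correspond, via the identification of the graded lift of $\cOg$ with $\mathsf{LCP}(\EuScript R^!)$, to the $t$-structure implicit in \cite[Thm.~30]{MOS}. This should follow from tracking simples and projectives through both equivalences: under the preceding proposition, simples of $\cOg$ correspond to indecomposable projectives of $\EuScript R^!$ placed in homological degree $0$, and this uniquely pins down the relevant $t$-structure. Once this compatibility is checked, both halves of the theorem become immediate consequences of the Koszul-duality criterion of \cite{MOS}.
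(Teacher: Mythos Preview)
Your approach is the paper's: the ``proof'' in the paper is literally the single sentence ``Thus, from \cite[Thm.~30]{MOS}, we conclude,'' after the preceding proposition has set up the identifications. So you have the right strategy.

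One concrete slip to fix: you have $\EuScript R$ and $\EuScript R^!$ swapped in the key identification. The proof of the preceding proposition shows that under $\Ext^\bullet(\EuScript L,-)\colon \dOg\to \EuScript R\dgmod$, the heart $\cOg$ lands in $\mathsf{LCP}(\EuScript R)$, i.e.\ linear complexes of projectives over $\EuScript R$, not over $\EuScript R^!$; the algebra $\EuScript R^!$ then arises as the endomorphism ring of a projective generator of that heart, which is why it is the quadratic dual. With this in hand the inclusion $D^b(\cOg)\to\dOg$ transports to $D^b(\mathsf{LCP}(\EuScript R))\to \EuScript R\dgmod$, which is exactly the functor whose invertibility \cite[Thm.~30]{MOS} characterizes by Koszulity of $\EuScript R$ (equivalently of $\EuScript R^!$, hence of $\cOg$). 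Your version, $D^b(\mathsf{LCP}(\EuScript R^!))\to D(\EuScript R\operatorname{-gmod})$, is not the functor MOS treats; the patch via ``Koszulity is preserved under quadratic duality'' does not by itself identify the two sides. The confusion may well originate in the phrasing of the proposition's statement, but its proof makes the correct direction clear.

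Finally, the ``step requiring the most care'' you flag---compatibility of $t$-structures---is precisely what the proof of the preceding proposition already does: it computes the image of $D^{\geq 0}$ and $D^{\leq 0}$ and shows the heart is the linear complexes. No further checking is needed.
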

As shown in \cite[5.17]{BLPWgco}, if the fixed points of $\bT$ on
$\fM$ are
isolated, the inclusion $D^b(\cOg)\to
  \dOg$ is always an equivalence.  Thus:
  \begin{corollary}
    If  the fixed points of $\bT$ in $\fM$ are
isolated and \hyperlink{maltese}{$(\maltese)$} holds, the
    category $\cOg$ will be Koszul.
  \end{corollary}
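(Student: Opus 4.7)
The plan is to combine Theorem \ref{maltese-koszul} with the cited result \cite[5.17]{BLPWgco} in an essentially formal way: the former reduces Koszulity to a statement about a certain inclusion being a quasi-equivalence, and the latter supplies that quasi-equivalence under the isolated fixed point hypothesis.

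More concretely, I would proceed as follows. First, invoke \cite[5.17]{BLPWgco}: since the fixed points of $\bT$ on $\fM$ are isolated, the natural inclusion $D^b(\cOg)\to \dOg$ is a quasi-equivalence. This does not require the hypothesis $(\maltese)$ and is purely a statement about the geometry of $\fM^+$ and the structure of $\cOg$. Next, invoke Theorem \ref{maltese-koszul}: under $(\maltese)$, the inclusion $D^b(\cOg)\to \dOg$ is a quasi-equivalence if and only if $\cOg$ is Koszul. Chaining these two statements in the forward direction yields Koszulity of $\cOg$ immediately.

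Since this is a one-line deduction from previously stated results, there is no real obstacle to overcome in this corollary itself; all of the substantive work has already been done. The genuine input is the identification of $\cOg$ with linear projective complexes over $\EuScript{R}^!$ (which requires $(\maltese)$ to obtain a pure Hodge structure on the $\Ext$-algebra and hence formality plus a quadratic dual description), together with the general $t$-structure argument of \cite{MOS} relating the bounded derived category to the full dg-category. Under the additional assumption of isolated fixed points, the potential gap between $D^b(\cOg)$ and $\dOg$ vanishes, and Koszulity follows.

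In short, the proof is: combine Theorem \ref{maltese-koszul} with \cite[5.17]{BLPWgco}, using isolated fixed points to certify that $D^b(\cOg)\to\dOg$ is a quasi-equivalence and $(\maltese)$ to turn that quasi-equivalence into Koszulity.
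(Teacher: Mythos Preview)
Your proposal is correct and matches the paper's own argument exactly: the paper simply notes that \cite[5.17]{BLPWgco} gives the quasi-equivalence $D^b(\cOg)\to\dOg$ under the isolated fixed point hypothesis, and then invokes Theorem~\ref{maltese-koszul} to conclude Koszulity.
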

Of course, this is a
  very serious geometric assumption, but it holds in many cases, as we
  will see.

\subsection{Constructing objects in \texorpdfstring{$\preO$}{pO}}
\label{sec:constr-objects-preo}

There's a general method for constructing objects in $\cOg$ in a way which
is conducive to calculation; let $\lift$ be a rational lift of $\bT'$.
 We'll only
consider lifts where the space $(T^*E)_\lift$ of elements with non-negative weight is
Lagrangian.   For such a lift, we let 
$E_\lift\subset E$ be the sum of the non-negative weight spaces of $\lift$ acting
on $E$; the conormal to $E_\lift$ is contained in, and thus is equal to $(T^*E)_\lift$. Let
$\fg_\lift$ be the subalgebra with non-negative weights under
$\lift$ in the adjoint representation (this is a parabolic
subalgebra) and let $G_\lift$ be corresponding parabolic subgroup.

Let $X_\lift$ be the fiber product
$G\times_{G_{\lift}}E_\lift$.  This is a vector bundle over
$G/G_\lift$ via the obvious projection map, and has a proper map
$p_\lift\colon X_\lift\to E$ induced by sending $(g,e)$ to
$g\cdot e$. 

The set
 of rational lifts is infinite, but the space $X_\lift$ is unchanged
 if we conjugate $\lift$ by an element of $\tilde{G}$.  Thus, we lose no
 generality by only considering lifts in a fixed maximal torus
 $\tilde{T}$ of
 $\tilde{G}$.
The space of rational lifts in $\tilde{T}$ will be an affine space
over $\mathbb{Q}$.
  There will be finitely many affine hyperplanes in this space given by the
  vanishing sets of weights in $E$ and $\fg$.  The spaces $E_\lift$ and
  $G_\lift$ are constant on the faces of this arrangement.  Thus, only finitely many different
  $G_\lift$ and $E_\lift$ will occur up to conjugacy.  
\begin{remark}
  Note that attached to this data, we have a {\bf generalized Springer theory} in the sense defined by
  Sauter \cite{SauSpr}, for the quadruple $(G,\{G_{\lift}\},E,\{E_{\lift}\})$
  where $\lift$ ranges over conjugacy classes of generic rational lifts.  

  Every interesting example we know of a generalized Springer theory is of
  this form or is obtained from it by the ``Fourier transform'' 
   operation sending $(G,P_i,V,F_i)$ to $(G,P_i,V^*,F_i^\perp)$: 
  \begin{itemize}
  \item the classical Springer theory of a group is obtained when
    $E=\mathfrak{g}$ and $\bT$ acts trivially. This is usually
    presented in the Fourier dual 
    form, so $F_i=\mathfrak{b}^\perp$ instead of $F_i=\mathfrak{b}$.
\item the quiver Springer theory is obtained by taking $E$ to be the
  space of representations of a quiver in a fixed vector space, and
  $G$ the group acting on such representations by change of basis, and
  $\bT$ acting trivially.    This definition recovers Lusztig's
  sheaves defining the canonical basis \cite{Lus91} if we take a
  quiver without loops, and the
  generalization of defined by Bozec in \cite{Bozecperv} if it
  possesses loops.
  \end{itemize}
 Thus at the moment we believe this is the best scheme for
  viewing these examples.
\end{remark}

As in \cite{BLPWgco}, to avoid confusion we will denote the structure
sheaf of a scheme $X$ by $\fS_X$ (rather that $\cO_X$, as would be
more standard).  We will always be considering the structure sheaves
of varieties as left D-modules on those varieties; in particular,
pushforward will always refer to pushforward of D-modules, not of
coherent sheaves. 
\begin{theorem}\label{th:pushforward}
  The pushforward $L_\lift=(p_\lift)_*\fS_{X_\lift}$ is a sum of shifts
  of simple modules in $\preO$.  In fact, its singular support is a
  union of components on which $\lim_{t\to 0}\lift(t)\cdot x$ exists.

If there is a cocharacter
  $\varpi\colon \C^*\to G$ such that $(T^*E)_{\lift}\subset (T^*E)_{\varpi}$ and
  $\langle \al,\varpi\rangle>0$, then $L_\lift$ is supported on
  the unstable locus for $\al$ in $T^*E$.  
\end{theorem}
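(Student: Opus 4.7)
The plan is to establish the three assertions in order. For semisimplicity and the containment of each summand in $\preO$, I would invoke the Beilinson--Bernstein--Deligne decomposition theorem in its D-module incarnation (equivalently, Saito's decomposition theorem for pure Hodge modules), applied to the pushforward $(p_\lift)_*\fS_{X_\lift}$: since $X_\lift = G\times_{G_\lift}E_\lift$ is smooth and $p_\lift$ is proper, this pushforward decomposes as a direct sum of cohomological shifts of simple perverse D-modules on $E$. To place each summand in $\preO$, I would check three properties. The map $p_\lift$ is $G$-equivariant by construction, so the pushforward is naturally strongly $(G,0)$-equivariant; the correct strong $(G,\chi)$-equivariance is obtained by twisting $\fS_{X_\lift}$ by the pullback of a character line bundle on $G/G_\lift$ associated to $\chi|_{G_\lift}$, which is well-defined because $G_\lift$ is a parabolic subgroup of $G$. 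The induced $\bT'$-action on $X_\lift$ via $\lift$ is compatible with $p_\lift$, so the vector field $\xi$ acts locally finitely on each cohomology sheaf, giving $\bT$-regularity. The remaining condition on singular support follows from the next step.

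For the singular support, I would apply the standard microlocal estimate for proper pushforwards of D-modules. Since the singular support of $\fS_{X_\lift}$ is the zero section of $T^*X_\lift$, the singular support of $(p_\lift)_*\fS_{X_\lift}$ is contained in $G\cdot(T^*E)_\lift$, using the identification of the conormal bundle to $E_\lift$ in $E$ with $(T^*E)_\lift$ noted just before the theorem statement. A point of the form $g\cdot p_0$ with $p_0\in (T^*E)_\lift$ satisfies $\lim_{t\to 0}(g\lift g^{-1})(t)\cdot(g\cdot p_0) = g\cdot \lim_{t\to 0}\lift(t)\cdot p_0$, which exists, so this point lies in the relative precore under the conjugate lift $g\lift g^{-1}$. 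Combined with the fact that strong $(G,\chi)$-equivariance confines the singular support to $\mu^{-1}(0)$, this shows that each irreducible component of the singular support of $L_\lift$ is a Lagrangian on which some conjugate of $\lift$ has a limit.

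For the unstable-locus assertion, the support of $L_\lift$ is contained in $p_\lift(X_\lift) = G\cdot E_\lift$. The containment $(T^*E)_\lift \subseteq (T^*E)_\varpi$ restricts on the base $E$ to $E_\lift \subseteq E_\varpi$, so $\supp(L_\lift) \subseteq G\cdot E_\varpi$. For any $x\in E_\varpi$, the limit $\lim_{t\to 0}\varpi(t)\cdot x$ exists, and by the Hilbert--Mumford numerical criterion the hypothesis $\langle \al,\varpi\rangle > 0$ forces $x$ to be $\al$-unstable; hence $L_\lift$ is supported on the unstable locus.

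I expect the main technical obstacle to lie in arranging the $(G,\chi)$-equivariance correctly, since $\fS_{X_\lift}$ is only naturally strongly $(G,0)$-equivariant, and the twist needed to produce the correct $(G,\chi)$-structure requires a careful extension of $\chi$ to $G_\lift$ and a choice of associated line bundle on $G/G_\lift$. After this bookkeeping is in place, the verification that the decomposition theorem summands truly lie in $\preO$ and that the singular support estimate is preserved under the twist is then essentially formal.
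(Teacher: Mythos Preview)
Your approach matches the paper's: decomposition theorem for semisimplicity, the standard microlocal estimate for proper pushforwards to bound the singular support, and Hilbert--Mumford for the instability claim. Your formulation of the singular support bound as $SS(L_\lift)\subset G\cdot(T^*E)_\lift$ is exactly what the paper computes via its set $Y$.

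Two points need correction. First, in the unstable-locus step you argue only about the support of $L_\lift$ in $E$, whereas the assertion concerns the \emph{singular support} in $T^*E$ (this is what is needed later to show such summands are killed by $\red$). The fix is immediate from what you already proved: $SS(L_\lift)\subset G\cdot(T^*E)_\lift\subset G\cdot(T^*E)_\varpi$, and every point of $(T^*E)_\varpi$ has a limit under $\varpi$, so its $G$-translate has a limit under a conjugate of $\varpi$; since $\langle\alpha,\varpi\rangle>0$ is conjugation-invariant, Hilbert--Mumford gives instability. Second, your discussion of $(G,\chi)$-equivariance is off. The paper simply takes $\chi$ integral (it remarks immediately after the theorem that the construction only works for integral $\chi$); for such $\chi$, a strongly $(G,0)$-equivariant module becomes strongly $(G,\chi)$-equivariant by twisting the $G$-action by the character $\chi$ itself, with no auxiliary line bundle on $G/G_\lift$ required. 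Indeed, a character of all of $G$ does not produce a nontrivial line bundle on $G/G_\lift$, so your proposed twist would be trivial anyway. This is bookkeeping, not the main obstacle; the content is the singular support estimate, which you have.
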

When such a cocharacter $\varpi$ exists, we call $\lift$ {\bf unsteady}.
\begin{proof}
  This pushforward is a sum of shifts of simple modules by the
  decomposition theorem, and these are all regular since
  $\fS_{X_\lift}$ is regular.  Thus, we need only show that the
  singular support of $(p_\lift)_*\fS_{X_\lift}$ lies in the
  relative precore.
If $a=0,b=1$, then as in \cite[\S 13]{Lus91}, we can compute the singular support of  $(p_\lift)_*\fS_{X_\lift}$ using the geometry of this situation.
Let \[Y=\{(x,\xi)\in p_\lift^*T^*E\mid\langle
T_xX_\lift,\xi\rangle=0\}.\]
That is, these are the pairs where $x\in X_\lift$, and $\xi$ is a cotangent vector at
$p_\lift(x)$ which is perpendicular to the image of the derivative $(Dp_\lift)_x$.
By \cite[9a \& b]{BernD}, we have $SS((p_\lift)_*\fS_{X_\lift})\subset p_\lift(Y)$; thus, we need only show
that for any point in $y\in Y$, there is a rational lift of $\bT$ which attracts
$p_{\lift}(y)$ to a limit in $E$.  Since the pushforward
$(p_\lift)_*\fS_{X_\lift}$ is equivariant for the conical action on
$E$, its singular support is unchanged by changing $a$ and $b$.  

Consider a point  $x\in X_\lift$.  By definition,  $p_\gamma(x)$ is
attracted to a limit $z$ by $g\lift g^{-1}$ for
some $g\in G$.  Thus, the image of the derivative at $p_\gamma(x)$
includes the subspace $g\cdot E_\lift=E_{g\lift g^{-1}}$.  If $(x,\xi)\in Y$, then in
particular, $\xi$ must kill $g\cdot E_\lift=E_{g\lift g^{-1}}$.
Since the positive weight subspace of the dual space is
the annihilator of the non-negative weight space of the primal, the covector $\xi$ must be a sum of covectors of positive weight for
$g\lift g^{-1}$.  In
particular, $p_{\lift}(y)$ has a limit under this lift, and we are
done.

Note that the same argument shows that any point in $p_\lift(Y)$
has a limit under a conjugate of $\varpi$; the Hilbert-Mumford criterion shows that
this point is unstable in the sense of GIT.
\end{proof}

Unfortunately, not all simple objects
in $\preO$ are necessarily summands of such modules; the classical
Springer theory of simple Lie algebras outside type A provides
counterexamples.  
Furthermore, this technique only works for $\chi$ integral (though
we could use more interesting $G_\lift$-equivariant D-modules on
$E_\vartheta$ to produce objects for other twists).

The subcategory of $\dpreO$ generated by
$(p_\lift)_*\fS_{X_\lift}$ can be understood by considering
the dg-Ext algebra of the sum of these objects.  Fix a set $B$ of
rational lifts $\lift$. 
We let
$L=\bigoplus_{\lift\in B}(p_\lift)_*\fS_{X_\lift}$ be the
sum of pushforwards 
over this collection, and let $X=\bigsqcup_{\lift}X_\lift$ be
the union of the corresponding varieties.

The sheaf $L$ plays the
role of the Springer sheaf, the variety $X\times_E X$ the role of
Steinberg variety, and $H^{B\! M}_G(X\times_{E}X)$ the (equivariant)
Steinberg algebra for the Springer theory $(G,\{G_{\lift}\},E,\{E_{\lift}\})$.

We should note that the pushforwards
$(p_\lift)_*\fS_{X_\lift}$ have a natural lift to mixed Hodge
modules, since the structure sheaf on a quasi-projective variety carries a canonical Hodge
structure, and mixed Hodge modules have a natural pushforward.  

\begin{proposition}\label{formal-pure}
  We have a quasi-isomorphism of dg-algebras
\[Q:=\Ext^\bullet(L,L)\cong H^{B\! M}_G(X\times_{E}X)\]
where the latter is endowed with convolution product and trivial
differential; the induced Hodge structure on $\Ext^\bullet(L,L)$ is pure.
\end{proposition}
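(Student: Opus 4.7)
The plan is to run the standard Chriss--Ginzburg-type computation that identifies the Ext algebra of a sum of proper pushforwards of structure sheaves from smooth varieties with the convolution algebra on the Borel--Moore homology of their fiber product, and then to lift the whole argument to Saito's derived category of mixed Hodge modules in order to extract the purity statement.

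For the first part, fix $\gamma,\gamma'\in B$ and let $q_1,q_2$ be the two projections from $X_\gamma\times_E X_{\gamma'}$ onto its factors. Adjunction and proper base change along the cartesian square formed by $p_\gamma, p_{\gamma'}, q_1, q_2$ give
\[
\Ext^\bullet_G\bigl((p_\gamma)_*\fS_{X_\gamma},\,(p_{\gamma'})_*\fS_{X_{\gamma'}}\bigr)\cong H^\bullet_G\bigl(X_\gamma\times_E X_{\gamma'},\,q_2^{\,!}\fS_{X_{\gamma'}}\bigr).
\]
Since $X_{\gamma'}$ is smooth (it is a vector bundle over the partial flag variety $G/G_{\gamma'}$), $q_2^{\,!}\fS_{X_{\gamma'}}$ is a degree shift of the equivariant dualizing complex of $X_\gamma\times_E X_{\gamma'}$, so the right hand side is, up to a shift, the equivariant Borel--Moore homology $H^{B\!M}_G(X_\gamma\times_E X_{\gamma'})$. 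Summing over $\gamma,\gamma'\in B$ yields the vector space isomorphism $Q\cong H^{B\!M}_G(X\times_E X)$, and matching Yoneda composition with convolution is the same diagram chase (using the projection formula and base change) that appears in the classical Steinberg-algebra case.

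Now I promote this computation to mixed Hodge modules. The structure sheaf $\fS_{X_\gamma}$ carries a tautological pure Hodge structure, and since $p_\gamma$ is proper, Saito's proper direct image sends it to a pure complex in $D^b(\mathsf{MHM})$ on $E/G$. Running the Ext computation inside $D^b(\mathsf{MHM})$ then endows $Q\cong H^{B\!M}_G(X\times_E X)$ with a canonical mixed Hodge structure, compatible with the convolution product.

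The only real obstacle is purity of this Hodge structure. I would argue it by stratifying $X_\gamma\times_E X_{\gamma'}$ via the $G$-orbits on $G/G_\gamma\times G/G_{\gamma'}$, which are indexed by the double cosets $G_\gamma\backslash G/G_{\gamma'}$: a stratum corresponding to a double coset represented by $w$ is an affine bundle (with fiber $E_\gamma\cap w\cdot E_{\gamma'}$) over the corresponding smooth $G$-orbit, hence has pure equivariant Borel--Moore homology. The weight spectral sequence for this stratification then degenerates on weight grounds, delivering purity of $H^{B\!M}_G(X\times_E X)$. The trivial-differential claim for the dg-algebra $Q$ is an immediate corollary of purity: the weight filtration splits the dg-structure, so $Q$ is formal and quasi-isomorphic to its cohomology equipped with the convolution product, as required.
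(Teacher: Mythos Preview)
Your proposal is correct and follows essentially the same route as the paper. The paper invokes Chriss--Ginzburg \cite[8.6.7]{CG97} directly for the identification $\Ext^\bullet(L,L)\cong H^{B\!M}_G(X\times_E X)$, whereas you spell out the adjunction/base-change argument; and for purity both you and the paper use the same stratification of $X_\gamma\times_E X_{\gamma'}$ via the map to $G/G_\gamma\times G/G_{\gamma'}$, observing that each piece is an affine bundle over (an affine bundle over) a partial flag variety and hence has pure Borel--Moore homology, from which formality follows.
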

\begin{proof}
  In general we have that $\Ext^\bullet(L,L)$ is quasi-isomorphic to
  the dg-algebra of Borel-Moore chains on $X\times_{E}X$ endowed with
  the convolution multiplication by Ginzburg
  and Chriss \cite[8.6.7]{CG97}.  We need only see that the latter is formal.
  The product $X_{\lift}\times_{E}X_{\lift'}$ has a
  $G$-equivariant map $X_{\lift}\times_{E}X_{\lift'}\to
  G/G_{\lift}\times G/G_{\lift'}$ with fibers given by vector
  spaces $g_1\cdot E_{\lift}\cap g_2\cdot E_{\lift'}$.  The variety $ G/G_{\lift}\times G/G_{\lift'}$ has
  finitely many $G$-orbits which are all affine bundles over partial
  flag varieties, so $X\times_{E}X$ is a finite union of
  affine bundles over partial flag varieties.  Since each of these
  pieces has a pure Hodge structure on its Borel-Moore homology, the
  Borel-Moore homology of $X\times_{E}X$ is pure as well and the higher
  $A_\infty$-operations must vanish on any minimal model.  Thus, we
  are done.
\end{proof}
In \cite{SauSpr}, the algebra $Q$ is called the {\bf Steinberg
  algebra}.
For every rational lift $\lift$, we have a diagonal embedding of
$X_\lift\hookrightarrow X\times_EX$.  Thus, we can view the
fundamental class $\Delta_*[X_\lift]$ as an idempotent element of
$Q$. Under the isomorphism to the Ext-algebra, this corresponds to the projection $L\to
L_\lift$.  

Now assume that:
\begin{itemize}
\item [$(\dagger)$] \hypertarget
{dagger} {we} have chosen a set $B$ of rational lifts such that each simple module in $\preO$ is a summand of a shift of $L$,
  and every simple with unstable support is a summand of
  $L_\lift$ for $\lift\in B$ unsteady.
\end{itemize}
Note that, by Proposition
\ref{formal-pure}, we have that
\begin{corollary}\label{dagger-implies-maltese}
The condition $(\dagger)$ implies condition
\hyperlink{maltese}{$(\maltese)$} and thus also \hyperlink{formal}{$(*)$}.
\end{corollary}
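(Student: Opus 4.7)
The plan is to deduce $(\maltese)$ from $(\dagger)$ via the decomposition theorem for mixed Hodge modules, then invoke the observation recorded just after the statement of $(\maltese)$ to pass from $(\maltese)$ to $(*)$.

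First, I would endow each simple of $\preO$ with a pure Hodge structure. For every $\lift\in B$ the variety $X_\lift$ is smooth and quasi-projective, so the structure sheaf $\fS_{X_\lift}$ carries its canonical pure Hodge module structure. Since $p_\lift$ is proper, Saito's decomposition theorem says that $L_\lift = (p_\lift)_*\fS_{X_\lift}$ is a pure complex of mixed Hodge modules on $E/G$ and splits, up to Tate twists, as a direct sum of simple mixed Hodge modules. Hypothesis $(\dagger)$ says that every simple $L_\nu$ of $\preO$ occurs (up to a shift and twist) as a summand of $L=\bigoplus_{\lift} L_\lift$, so I equip $L_\nu$ with the inherited pure Hodge structure. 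This choice is well-defined up to Tate twist, and that ambiguity is harmless for the purity statement in $(\maltese)$.

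Next, to verify $(\maltese)$ itself, I would realize $\Ext^\bullet_{\mathsf{MHM}}(L_\nu, L_\mu)$ as a direct summand of $\Ext^\bullet_{\mathsf{MHM}}(L, L)$, cut out by the idempotents projecting $L$ onto $L_\nu$ and $L_\mu$ respectively (these are the images of the diagonal fundamental classes $\Delta_*[X_\lift]$ under the map $H^{BM}_G(X\times_E X)\to \Ext^\bullet(L,L)$ composed with a splitting). By Proposition \ref{formal-pure}, $\Ext^\bullet_{\mathsf{MHM}}(L,L)$ is pure, and since purity is inherited by direct summands in the abelian category of mixed Hodge structures, the summand $\Ext^\bullet_{\mathsf{MHM}}(L_\nu,L_\mu)$ is pure. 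Summing over all pairs $(\nu,\mu)$ of simples yields the purity assertion required by $(\maltese)$.

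For the final implication $(\maltese)\Rightarrow(*)$, I would simply cite the remark following the definition of $(\maltese)$: purity of the induced Hodge structure on $\Ext^\bullet_{\mathsf{MHM}}(\EuScript L,\EuScript L)$ forces the dg-algebra $\EuScript Q$ to be formal, since the weight filtration provides a second grading compatible with the differential and the $A_\infty$-operations, and purity rules out any nontrivial higher operations on a minimal model (the same vanishing argument used at the end of the proof of Proposition \ref{formal-pure}). The main potential obstacle is the assertion that the decomposition theorem and the structure of mixed Hodge modules on the Artin stack $E/G$ behave as expected; for this I would invoke Achar's framework \cite{Achmix}, which provides exactly the six-functor formalism and decomposition theorem needed, so that all of the above manipulations go through verbatim in the equivariant setting.
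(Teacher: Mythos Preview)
Your proposal is correct and follows essentially the same approach as the paper's proof: equip each simple with the pure Hodge structure inherited as a summand of $L$ (using the decomposition theorem for the proper pushforward from the smooth $X_\lift$), then observe that $\Ext^\bullet(L_\nu,L_\mu)$ is a summand of $\Ext^\bullet(L,L)$, whose purity is exactly Proposition~\ref{formal-pure}. The paper's version is terser but identical in substance; your additional remarks on Tate-twist ambiguity and the appeal to Achar's stack formalism are reasonable elaborations rather than a different argument.
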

\begin{proof}
  The object $L$ has a canonical Hodge structure which is pure of
  weight 0, since it is a sum of
  pushforwards.  Thus, it is a sum of simple mixed Hodge modules, each
  with a simple underlying $\Diff_E$-module.  Thus, every summand of
  $L$ can be endowed with a pure Hodge structure, so every simple in
  $\preO$ is gradeable.  In order to establish that
  $\Ext^\bullet(L_\mu,L_\nu)$ has pure mixed Hodge structure, it's
  enough to check this for $\Ext^\bullet(L,L)$ since it contains
  $\Ext^\bullet(L_\mu,L_\nu)$ as a summand.  This is a conclusion of Proposition
\ref{formal-pure}.
\end{proof}

Let $I\subset Q$ be
the ideal generated by the classes $\Delta_*[X_\lift]$ for $\lift$
unsteady, and let $\stR=Q/I$.  Note that if $(\dagger)$ holds,
then the algebras $Q$ and $\EuScript{Q}$ are Morita equivalent, since
they are Ext-algebras of semi-simple objects in which the same simples
appear.  This further induces a Morita equivalence between
$\stR$ and $\EuScript{R}$.
Combining Corollary \ref{preO-quotient} and Proposition
\ref{formal-pure}, we see that:
\begin{theorem}\label{O-E-equivalent}
  Subject to hypothesis \hyperlink{dagger}{$(\dagger)$}, category $\dOg$ is quasi-equivalent to $\stR\dgmod$, via
  the functor $\cM\mapsto \Ext^\bullet(L,\cM)$.
\end{theorem}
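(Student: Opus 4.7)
The plan is to bootstrap Proposition~\ref{equivalence}, which already identifies $\dOg$ with $\EuScript{R}\dgmod$, by transporting that equivalence across a Morita equivalence between the pair $(\EuScript{Q}, \EuScript{R})$ and the geometrically defined pair $(Q, \stR)$. The formality of $Q$ provided by Proposition~\ref{formal-pure}, together with the formality of $\EuScript{Q}$ coming from $(\dagger)\Rightarrow (\maltese)\Rightarrow (*)$ via Corollary~\ref{dagger-implies-maltese}, reduces the problem to a classical Morita equivalence of graded rings plus a check that this equivalence descends through the two-sided ideal $I$.

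First I would observe that the first clause of $(\dagger)$, combined with the decomposition theorem (already invoked inside Theorem~\ref{th:pushforward}), implies that $L$ and $\EuScript{L}$ share exactly the same collection of indecomposable summands up to isomorphism and grading shift, possibly with different multiplicities. Consequently there is an idempotent $e\in Q$ projecting onto one copy of each isomorphism class of simple summand of $L$, with $\EuScript{Q}^{\mathrm{op}}\cong eQe$ canonically; tensoring with the bimodule $Qe$ then yields the desired Morita equivalence of dg-algebras between $Q$ and $\EuScript{Q}$ (which is honest rather than higher-categorical thanks to formality). Next, I would show that this equivalence descends to the quotients. By Theorem~\ref{th:pushforward}, each summand $L_\lift$ with $\lift$ unsteady has singular support in the unstable locus, so the idempotents $\Delta_*[X_\lift]$ generating $I\subset Q$ correspond, under the Morita equivalence, to idempotent summands of $\EuScript{L}$ with unstable support. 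The second clause of $(\dagger)$ gives the converse: every simple in $\preO$ with unstable support already appears in some unsteady $L_\lift$. Therefore the image of the ideal $I\subset Q$ under the truncation $Q\to eQe\cong \EuScript{Q}^{\mathrm{op}}$ coincides with the ideal in $\EuScript{Q}$ defining $\EuScript{R}$, so $\stR$ and $\EuScript{R}$ are Morita equivalent. Composing with Proposition~\ref{equivalence} and chasing the functors yields $\dOg\cong \stR\dgmod$ realized by $\cM\mapsto \Ext^\bullet_{\dpreO}(L,\red_!\cM)\cong \Ext^\bullet_{\dOg}(\red L,\cM)$, which is the stated functor after the standard identification of $L$ with its reduction.

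The main obstacle is the bookkeeping at the quotient stage: one must ensure that $I\subset Q$ can legitimately be viewed as a formal sub-dg-ideal and that the quotient dg-algebra $\stR$ has the expected cohomology, so that $\stR\dgmod$ really is the dg-quotient of $Q\dgmod$ by the subcategory of complexes of summands supported on the unstable locus. Purity of the Hodge structure on $\Ext^\bullet(L,L)$ supplied by Proposition~\ref{formal-pure} is exactly what makes this step automatic, because under formality idempotents, two-sided ideals, and their quotients all lift uniquely to the dg-level. A secondary check is that the generators $\Delta_*[X_\lift]$ truly correspond to the sought-after projectors rather than to arbitrary endomorphisms factoring through unstable summands, but this is immediate from the convolution description of $Q$ used in Proposition~\ref{formal-pure}.
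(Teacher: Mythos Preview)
Your proposal is correct and follows essentially the same approach as the paper. The paper's argument is extremely terse: the paragraph preceding the theorem asserts the Morita equivalence of $(Q,\stR)$ with $(\EuScript{Q},\EuScript{R})$ under $(\dagger)$ (``since they are Ext-algebras of semi-simple objects in which the same simples appear''), and the theorem is then stated as a direct combination of Corollary~\ref{preO-quotient} and Proposition~\ref{formal-pure}; your write-up simply fills in the details of how the Morita equivalence descends through the ideals and why formality makes the dg-quotient step go through.
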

In the remainder of the paper, we will consider particular examples,
with the aim of confirming hypothesis \hyperlink{dagger}{$(\dagger)$}
in these cases, and identifying the ring $\stR$, thus giving an
algebraic description of $\dOg$.

\section{Hypertoric varieties}

If $G$ is a torus, with $G\times \bT$ acting on $\C^n$ diagonally, then we can describe
much of the geometry of the situation using an associated hyperplane
arrangement.   Since the structure of the associated category $\cO$ is
set forth in great detail in \cite{BLPWtorico,GDKD}, we'll just give a
sketch of how to apply our techniques in this case as a warm-up to
approaching quiver varieties.  Let $D$ be the full
group of invertible diagonal matrices; we let
$\mathfrak{g}_\R,\mathfrak{d}_\R$ be the corresponding Lie algebras, and
$\mathfrak{g}_\R,\mathfrak{d}_\R$ be the Lie algebras of the maximal
compact subgroups of these tori.  Throughout this section, we'll take
the $\bS$-action associated to
$a=0,b=1$.  Choosing a GIT parameter $\eta\in \mathfrak{g}^*_\R$ and the derivative
$\xi:\R\to \mathfrak{d}_\R/\mathfrak{g}_\R$ of the
$\bT$-action, we obtain a {\bf polarized hyperplane arrangement} in
the sense of \cite{GDKD}, by intersecting the coordinate hyperplanes
of $\mathfrak{d}^*_\R$ with the affine space of functionals which
restrict to $\eta$ on $\mathfrak{g}_\R$.  For simplicity, we assume
that this arrangement is unimodular, i.e. all subsets of normal
vectors that span over $\R$ span the same lattice over $\Z$.

For each rational lift $\lift\colon \bT\to D$, the weights of $\lift$ form a vector
$\mathbf{a}=(a_1,\dots,a_n)$.  The space $E_\lift$ is the sum of coordinate lines
where $a_i\geq 0$; we will encode this by replacing $\mathbf{a}$ with
a corresponding sign vector $\boldsymbol{\sigma}$ (where for our purposes, $0$ becomes a
plus sign).
For each sign vector, we let
\[\fg_{\boldsymbol{\sigma}}=\{(b_1,\dots, b_n)\in
\fg_\R| b_i\geq 0\text{ if }\sigma_i=+,\,  b_i< 0\text{ if }\sigma_i=-\}.\] 

A coordinate subspace will appear as $E_\lift$ for some rational lift $\lift$ if and only if there's
an element of $\xi+\mathfrak{g}_\R$ which lies in $\fg_{\boldsymbol{\sigma}}$, that is, if it is {\bf feasible}. In this case, we denote
$E_{\boldsymbol{\sigma}}:=E_\lift$.
The subspace $E_{\boldsymbol{\sigma}}$ will be unstable if there's an element $\varpi$ of
$\mathfrak{g}_{\boldsymbol{\sigma}}$
with $\langle\eta,\varpi\rangle>0$; that is, if $\mathfrak{g}_{\boldsymbol{\sigma}}$
has no minimum for $\eta$ and thus is {\bf unbounded}.

The arrangement on $\xi+\mathfrak{g}_\R$ is  the Gale dual of the one
usually used to describe the torus action;  as discussed in
\cite{GDKD}, the fundamental theorem of linear programming shows that
feasibility and boundedness switch under Gale duality.

Thus, in the usual indexing, we find that:
\begin{proposition}
  The relative relative precore is a union of the conormal bundles to certain coordinate
  subspaces.  A subspace lies in the relative relative precore if and only if its chamber
  is bounded, and contained in the unstable locus if and only if its
  chamber is infeasible.
\end{proposition}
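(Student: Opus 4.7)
The plan is to combine the Kirwan--Ness decomposition of the relative precore from Proposition \ref{prop:isotropic} with the combinatorial dictionary between rational lifts and sign vectors already set up in the surrounding text, and then to apply Gale duality (the fundamental theorem of linear programming, as recalled from \cite{GDKD}) to translate between the arrangement on $\xi+\mathfrak{g}_\R$ and the usual one on $\mathfrak{d}^*_\R$.

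For the first assertion, since $\tilde{G}$ is a torus, every rational lift $\lift$ forms its own conjugacy class, and the non-negative weight space $(T^*E)_\lift$ is automatically $G$-stable and equals the conormal bundle to the coordinate subspace $E_\lift=\bigoplus_{a_i\ge 0}\C e_i$ (after rescaling $\lift$ by a positive rational so that the $\bS$-shifted weights on $E^*$ behave uniformly, a harmless operation). Proposition \ref{prop:isotropic} then identifies the relative precore as the union, over all such lifts, of these conormal bundles intersected with $\mu^{-1}(0)$, giving the claimed union of conormals to coordinate subspaces.

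For the second assertion, the observation stated immediately before the proposition is that a sign vector $\boldsymbol{\sigma}$ arises from some rational lift precisely when $\xi+\mathfrak{g}_\R$ meets the sign cone $\mathfrak{d}_{\boldsymbol{\sigma}}$, that is, when $\boldsymbol{\sigma}$ is feasible for the polarized arrangement on $\xi+\mathfrak{g}_\R$. Since this arrangement is Gale dual to the usual arrangement on the affine slice of $\mathfrak{d}^*_\R$ of functionals restricting to $\eta$, feasibility on one side corresponds to boundedness of the dual chamber on the other, establishing the equivalence between ``$E_{\boldsymbol{\sigma}}$ appears in the relative precore'' and ``the chamber labeled by $\boldsymbol{\sigma}$ in the usual arrangement is bounded.''

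For the third assertion, the Hilbert--Mumford criterion (as used at the end of the proof of Theorem \ref{th:pushforward}) shows that $E_{\boldsymbol{\sigma}}$ lies in the unstable locus for $\eta$ exactly when $\mathfrak{g}_{\boldsymbol{\sigma}}$ contains a direction $\varpi$ with $\langle\eta,\varpi\rangle>0$, i.e.\ the cone $\mathfrak{g}_{\boldsymbol{\sigma}}$ is unbounded with respect to $\eta$. Under the same Gale duality, unboundedness in the dual arrangement matches infeasibility of the chamber in the usual one, completing the proof. The main obstacle is only bookkeeping between the two dual arrangements; no new geometric input beyond Proposition \ref{prop:isotropic}, Theorem \ref{th:pushforward}, and the linear programming duality from \cite{GDKD} is required.
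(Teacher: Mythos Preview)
Your proof is correct and follows essentially the same approach as the paper. The paper does not give a separate proof of this proposition; it is stated as a summary of the paragraphs immediately preceding it, which already identify $E_\lift$ with coordinate subspaces indexed by sign vectors, observe that a sign vector arises from a lift precisely when it is feasible for the arrangement on $\xi+\mathfrak{g}_\R$, note that instability corresponds to unboundedness of $\mathfrak{g}_{\boldsymbol{\sigma}}$ for $\eta$, and then invoke Gale duality to swap feasibility with boundedness. Your write-up makes the same moves, just with explicit pointers to Proposition~\ref{prop:isotropic} and Theorem~\ref{th:pushforward}. One small remark: the reference to Proposition~\ref{prop:isotropic} for the first assertion is a bit indirect---the decomposition into conormals to coordinate subspaces follows more directly from the definition of the relative precore (points with limits under some lift) together with the observation that in the abelian case each $(T^*E)_\lift$ is exactly such a conormal; your parenthetical about rescaling $\lift$ is unnecessary once one works with integer weight vectors, since the $\bS$-shift by $-1$ on $E^*$ already makes the sign dichotomy clean.
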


Since $G$ is abelian, we have that $G_\lift=G$ for every rational lift, and
the pushforward $L_\lift$ is just the
trivial local system on $E_\lift$.  As before, let $L=\bigoplus
L_\lift$, where the set $B$ consists of one lift corresponding to
each bounded sign vector.

\begin{proposition}
If the character $\chi$ is integral, then hypothesis \hyperlink{dagger}{$(\dagger)$}
holds for this set $B$.  Every simple $\cD_\chi$-module in $\preO$ is of the form
$L_\lift$; if $L_\lift$ has unstable support, then $\lift$
is unsteady.  
\end{proposition}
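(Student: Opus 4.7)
The plan is to use the fact that $G$ is a torus to make the objects $L_\lift$ extremely explicit, and then classify the simples of $\preO$ directly by their singular supports.

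Since $G$ is abelian, $G_\lift = G$ for every rational lift $\lift$, so $X_\lift = G\times_G E_\lift \cong E_\lift$ and $p_\lift$ is just the closed embedding of the coordinate subspace $E_\lift$. Hence $L_\lift = (p_\lift)_+\fS_{E_\lift}$ is the D-module direct image of the structure sheaf along a smooth closed embedding; this is an irreducible holonomic $(G,\chi)$-equivariant $\Diff_E$-module supported on $E_\lift$, and by Theorem~\ref{th:pushforward} it lies in $\preO$.

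To show these exhaust the simples of $\preO$, let $M$ be a simple object. Its singular support is an irreducible Lagrangian contained in the relative precore, which by the preceding proposition must be the closure of $T^*_{E_{\boldsymbol{\sigma}}}E$ for some bounded sign vector $\boldsymbol{\sigma}$. Thus $M$ is the minimal extension of a $(G,\chi)$-equivariant integrable connection on the open dense $G$-orbit in $E_{\boldsymbol{\sigma}}$. Since $G$ is a torus acting diagonally, the stabilizer of a generic point of $E_{\boldsymbol{\sigma}}$ is a connected subtorus; combining this with integrality of $\chi$ and the requirement that the singular support of the minimal extension remain inside the relative precore (in particular, that the only conormals appearing are to further coordinate subspaces) forces the connection to be the structure sheaf, so $M \cong L_\lift$ for any $\lift$ realizing $\boldsymbol{\sigma}$.

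For the final claim, suppose $L_\lift$ has unstable support. By the preceding proposition, $\boldsymbol{\sigma}$ is infeasible, i.e.\ $\xi + \fg_\R$ does not meet $\fg_{\boldsymbol{\sigma}}$. The fundamental theorem of linear programming, in its Gale-dual form, then produces $\varpi \in \fg_{\boldsymbol{\sigma}}$ with $\langle \eta,\varpi\rangle > 0$. Unwinding the definition of $\fg_{\boldsymbol{\sigma}}$, the condition $\varpi \in \fg_{\boldsymbol{\sigma}}$ means $\varpi$ has non-negative weight on each coordinate of $E_\lift$ and non-positive weight on each coordinate normal to $E_\lift$, which is exactly the inclusion $(T^*E)_\lift \subset (T^*E)_\varpi$. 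Thus $\varpi$ exhibits $\lift$ as unsteady. The main obstacle is the uniqueness statement in the previous paragraph: ruling out nontrivial irreducible $(G,\chi)$-equivariant local systems on the generic orbits as candidate simples. This rests on the combination of the integrality of $\chi$, the connectedness of torus stabilizers, and the precise combinatorial shape of the relative precore; once it is settled, the rest of the argument is essentially bookkeeping with Gale duality of the polarized arrangement.
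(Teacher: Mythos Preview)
Your proof has a genuine gap in the uniqueness step, and it stems from a misconception about the orbit structure.

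You write that $M$ is the minimal extension of a connection on ``the open dense $G$-orbit in $E_{\boldsymbol{\sigma}}$''. But there is no such orbit: $G$ is a torus of dimension strictly less than $n$, so its orbits in $E_{\boldsymbol{\sigma}}\cong\C^k$ have dimension at most $\dim G$ and are never dense. The correct open stratum is $(\C^*)^k\subset E_{\boldsymbol{\sigma}}$, the complement of the smaller coordinate subspaces, and this is a single $D$-orbit but a continuous family of $G$-orbits. A simple $(G,\chi)$-equivariant connection on $(\C^*)^k$ is a rank-one local system, determined by $k$ monodromies $\mu_i=e^{2\pi i a_i}$. Equivariance for integral $\chi$ only forces the linear combinations of the $a_i$ lying in the image of $\mathfrak{g}^*$ to be integral; there remains a torus of dimension $\dim(\mathfrak{d}/\mathfrak{g})$ worth of equivariant local systems. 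Your connectedness-of-stabilizer argument would be the right tool if there \emph{were} a dense orbit, but here it proves nothing.

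What actually forces all the $a_i$ to be integral is the singular-support constraint, and the paper's argument to extract this is more delicate than you indicate. The key observation is that a non-integral coordinate $i$ gives nontrivial vanishing cycles along $\{x_i=0\}$, so partial Fourier transform in that direction again yields an IC of a local system on the full torus $(\C^*)^n$; iterating, one sees that between the chamber of $\boldsymbol{\sigma}$ and any unbounded chamber there must lie at least one integral coordinate hyperplane. This forces the integral coordinates to span $\mathfrak{d}/\mathfrak{g}$, and then \emph{unimodularity} (which you never invoke) upgrades this to all coordinates being integral. Without unimodularity the statement can fail, so this hypothesis is doing real work.

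A smaller point: the singular support of a simple holonomic $\Diff$-module need not be irreducible, so your second paragraph's opening deduction is not valid as stated. You only know the support is $\overline{E_{\boldsymbol{\sigma}}}$ for some $\boldsymbol{\sigma}$, which is enough to proceed. Your treatment of the unsteadiness claim is essentially fine, though the parenthetical ``i.e.\ $\xi+\fg_\R$ does not meet $\fg_{\boldsymbol{\sigma}}$'' conflates the two Gale-dual indexings; what you actually need (and use) is unboundedness in the lift-based indexing, which directly hands you the destabilizing $\varpi$.
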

\begin{proof}
  Certainly, every simple object $\cM$ in $\preO$ is the intermediate extension of
  a local system on a coordinate subspace minus its intersections with
  smaller coordinate subspaces.  By applying partial Fourier transform, we can
  assume that this coordinate subspace is all of $\C^n$.

The monodromy of moving around a smaller coordinate subspace is given
by integrating the vector field $x_i\frac{\partial}{\partial x_i}$
where $x_i$ is the coordinate forgotten.  If
$x_i\frac{\partial}{\partial x_i} f=af$, then the monodromy acts by
multiplication by 
$e^{2\pi i a}$, as we can see from the case where $f=x_i^a$.  We wish to prove that this
monodromy is trivial, which is equivalent to
$x_i\frac{\partial}{\partial x_i}$ acting with integral eigenvalues on
$\cM$.

The set of $i$ for which $x_i\frac{\partial}{\partial x_i}$ acts with
integral eigenvalues forms a subarrangement;  we'll call these
coordinates integral.  In order to prove that all coordinates are
integral, it suffices to prove this for a basis of
$\mathfrak{d}/\mathfrak{g}$ by unimodularity.  

On the other hand, if $x_i$ is not an integral coordinate, then the
vanishing cycles along $x_i=0$ of $\cM$ are non-trivial.  Thus, we can
use partial
Fourier transforms, switching $x_i$ and $\frac{\partial}{\partial x_i}$,
and again obtain the intermediate extension of a local system on the
complement of coordinate subspaces.  Thus, between the corresponding
chamber and any unbounded one, there must be at least one integral
coordinate hyperplane.  This will only be the case if
$x_i\frac{\partial}{\partial x_i}$ for $i$ integral span
$\mathfrak{d}/\mathfrak{g}$.  As noted before, unimodularity proves
this is only possible if all coordinates are integral.

Thus, the
  the local system induced on any component of the pre-core has trivial
  monodromy around any isotropic coordinate subspace; thus, we have
  that every simple is a summand of $L_\lift$ for some
  $\lift$.  Since $L_\lift$ is just the pushforward of the
  functions on a linear subspace, it is already simple, so every
  simple is of this form. Note that if $\chi$ is not integral, we can
  have non-trivial local systems.  

  Thus, every simple in $\preO$ is on the form $L_\lift$.  If this
  simple has unstable support, then every point in the subspace $N^*E_\lift$ must
  have a limit as $t\to 0$ under some cocharacter $\varpi$.  The
  cocharacter $\varpi$ thus unsteadies the rational lift $\lift$.  
\end{proof}
By Theorem \ref{O-E-equivalent}, we can give a description 
description of $\dOg$.  
\begin{corollary}
    The Steinberg algebra $Q=H_*^{BM,G}(X\times_E X)$ is the ring
  $A^!_{\operatorname{pol}}(-\xi,-)$ defined in \cite[\S
  8.6]{BLPWtorico}. The quotient algebra $\stR=Q/I$ is isomorphic to
 $ A^!(-\xi,-\eta)$.
\end{corollary}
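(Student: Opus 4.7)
The plan is to compute the Steinberg algebra $Q = H^{B\!M}_G(X\times_E X)$ directly from the toric structure and match it with the combinatorial presentation of $A^!_{\operatorname{pol}}(-\xi,-)$ in \cite[\S 8.6]{BLPWtorico}; the quotient statement will then follow formally.

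Since $G$ is a torus, $G_\lift = G$ for every rational lift, so each $X_\lift = E_\lift = E_{\boldsymbol{\sigma}}$ is just a coordinate subspace and $p_\lift$ is the inclusion. Hence $X_{\boldsymbol{\sigma}}\times_E X_{\boldsymbol{\sigma}'}$ is the coordinate subspace $E_{\boldsymbol{\sigma} \wedge \boldsymbol{\sigma}'}$ obtained by keeping only coordinates present in both sign vectors. Taking equivariant Borel--Moore homology gives $H^{B\!M}_G(E_{\boldsymbol{\sigma}\wedge\boldsymbol{\sigma}'}) \cong \Sym(\fg^*)$, free of rank one with a degree shift by twice the codimension. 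Summing over ordered pairs of bounded sign vectors recovers the underlying graded vector space of $A^!_{\operatorname{pol}}(-\xi,-)$, with the idempotents $e_{\boldsymbol{\sigma}}= \Delta_*[X_{\boldsymbol{\sigma}}]$ placed in degree zero.

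The substance of the proof is to check that the convolution product matches the defining relations of $A^!_{\operatorname{pol}}$. The bimodule action of $H^*_G(pt) = \Sym(\fg^*)$ on each summand, by cap product with equivariant Chern classes, becomes under this identification multiplication by the restriction of $\fg^*$ to the face of the arrangement associated with $\boldsymbol{\sigma}$; this matches the polynomial relations in the presentation of \cite{BLPWtorico}. The key computation is a round trip: for $\boldsymbol{\sigma}$ and $\boldsymbol{\sigma}'$ differing in a single coordinate $i$, the self-convolution of the elementary class picks up the excess normal bundle with Euler class equal to the weight $t_i$ of $G$ on the $i$th coordinate line, which is precisely the single-crossing relation defining $A^!_{\operatorname{pol}}$. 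For more distant pairs the convolution factorizes through single-coordinate moves via the standard excess-intersection formula. The sign $-\xi$ records the Gale-duality switch between feasibility and boundedness discussed in the paragraphs preceding the statement.

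For the quotient, by the preceding proposition a lift $\lift$ is unsteady iff $\fg_{\boldsymbol{\sigma}}$ contains an element on which $\eta$ is strictly positive, equivalently iff $\boldsymbol{\sigma}$ is infeasible in the polarized arrangement for $(-\xi,-\eta)$. Killing the corresponding idempotents in $A^!_{\operatorname{pol}}(-\xi,-)$ yields $A^!(-\xi,-\eta)$ by definition. The main obstacle throughout will be the bookkeeping of signs and orientations, so that the Euler-class computation in the excess normal bundle produces exactly the conventions used in \cite{BLPWtorico}; once that is settled, the identification of generators and relations is essentially a dictionary lookup.
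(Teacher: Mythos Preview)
Your proposal is correct and follows essentially the same approach as the paper. The one organizational difference worth noting: the paper first computes the convolution algebra for \emph{all} coordinate subspaces working $D$-equivariantly (where $D$ is the full diagonal torus), identifies this with the auxiliary algebra $Q_n$ from \cite[\S 3.4]{BLPWtorico}, and only then restricts from $D$ to $G$ (imposing the linear relations $\ker(\mathfrak{d}^*\to\mathfrak{g}^*)$ on the $\theta_i$'s) and to bounded sign vectors. You go directly to the $G$-equivariant picture, which is fine but makes the identification of the polynomial relations slightly less transparent---your sentence about ``restriction of $\fg^*$ to the face'' is the place where this shows; what you actually need is that passing from $D$- to $G$-equivariance kills exactly the linear combinations of $\theta_i$'s lying in $\ker(\mathfrak{d}^*\to\mathfrak{g}^*)$.
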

\begin{proof}
  First, consider the constant sheaves of all coordinate subspaces in
  the equivariant derived category for $D$. 
We can identify this with the algebra
  denoted $Q_n$ in \cite[\S 3.4]{BLPWtorico}; for two sign vectors that differ
  by single entry, the element $\al\to \beta$ is given by the
  pushforward or pullback maps on the constant sheaves on coordinate
  spaces.  In this situation, one will be codimension 1 inside the
  other, and the composition of pullback and pushforward in either
  order is the equivariant Euler class of the normal bundle.  This is
  simply the usual polynomial generators of the cohomology of the
  classifying space of the full diagonal matrices, which we identify
  with the elements $\theta_i$ in the notation of \cite{BLPWtorico}.

  Restricting to objects that lie in $\preO$, we only consider the
  vertices of the $n$-cube corresponding to the bounded sign vectors
  for the corresponding hyperplane arrangement; furthermore
  restricting the group acting from $D$ to $G$ has
  the effect of imposing linear relations on the $\theta_i$'s, exactly
  those in the kernel of $\mathfrak{d}^*\to \mathfrak{g}^*$.  Thus, we
  obtain the ring $A^!_{\operatorname{pol}}(-\xi,-)$.  

  The classes $\Delta_*[X_\lift]$ for $\lift$ unsteady are
  exactly the idempotents that generate the kernel of the map
  $A^!_{\operatorname{pol}}(-\xi,-)\to A^!(-\xi,-\eta)$.  The result follows.
\end{proof}
\begin{remark}
  Braden, Licata, Proudfoot and the author have proven this result in
  a different way, by analyzing the category of projectives in $\cOa$
  over the section algebra of $\cD$ twisted so that localization
  holds.  We show that it is an algebra
  $A(\eta,\xi)$, which we had previously shown was Koszul dual to
  $A^!(-\xi,-\eta)$ in \cite{GDKD}.  The result above gives a more direct geometric proof
  of this fact.

  A bit more care in non-unimodular and non-integral cases would also yield the
  general case of \cite[4.7]{BLPWtorico}, but since this is reproving an old
  result, we leave the details to the reader.
\end{remark}

\section{Quiver varieties: general structure}
\label{sec:quiver-varieties} 

\subsection{Background}
\label{sec:background} 

Our primary application is the study of quiver varieties. Quiver
varieties are perhaps the most interesting examples of symplectic
singularities in the wild.  Furthermore, since the work of Ringel and
Lusztig in the late 80's and early 90's, it has been quite clear that
they have a powerful tie to Lie theory, in particular to its
categorification.  Fix an unoriented graph $\Gamma$ without loops.    Let $\fG$
denote the Kac-Moody algebra associated to $\Gamma$, that is, the
Kac-Moody algebra with Cartan matrix $C=2I-A$, with $A$ the adjacency
matrix of $\Gamma$.  As usual, we let $\al_i,\al_i^\vee$ denote the
simple roots and coroots of this algebra.

\begin{definition}
 For each
orientation $\Omega$ of $\Gamma$ (thought of as a subset of the edges
of the oriented double), a {\bf representation of
  $(\Gamma,\Omega)$ with shadows} is \begin{itemize}
\item a pair of finite dimensional $\C$-vector spaces $V$ and $W$,
  graded by the vertices of $\Gamma$, and
\item a map $x_e:V_{t(e)}\to V_{h(e)}$ for each oriented edge
  (as usual, $t$ and $h$ denote the head and tail of an
  oriented edge), and
\item a map $q:V\to W$ that preserves grading.
\end{itemize}
We let $\Bw$ and $\Bv$ denote $\Gamma$-tuples of integers.
\end{definition}  
For now, we fix an orientation $\Omega$, though we will sometimes wish
to consider the collection of all orientations.
With this choice, we
have the {\bf universal $(\Bw,\Bv)$-dimensional representation}
$$E^\Bw_\Bv=\bigoplus_{i\to  j}\Hom(\C^{v_i},\C^{v_j})
\oplus\bigoplus_i \Hom(\C^{v_i},\C^{w_i}).$$ 
In moduli terms, this is the moduli space of actions of the quiver (in
the sense above) on the vector spaces $V=\bigoplus_i \C^{v_i},W=\bigoplus_i \C^{w_i}$, with their
chosen bases considered as additional structure.  Let $\epsilon_{i,j}$
be the number of arrows with $t(e)=i$ and $h(e)=j$.

This can be thought of in terms of usual quiver representations by
adding a new vertex $\infty$ with $w_i$ edges from $i$ to $\infty$,
forming the {\bf Crawley-Boevey quiver}.  See \cite[\S 3.1]{WebwKLR}
for a longer discussion.

If we wish to consider the moduli space of representations where
$V$ has fixed graded dimension (rather than of actions on a fixed
vector space), we should quotient by the group of isomorphisms of
quiver representations; that is, by the product $G_\Bv=\prod_i\operatorname{GL}(\C^{v_i})$ acting by pre- and
post-composition.  The result is the {\bf moduli stack of $\Bv$-dimensional representations shadowed by $\C^\Bw$}, which we can
define as the stack quotient $$X^\Bw_\Bv=E_{\Bv}^\Bw/G_\Bv.$$ As
before, this is
a smooth Artin stack, which  we can understand using the simplicial Borel space construction.

We'll wish to ``double'' this construction and consider
$T^*E^\Bw_\Bv$; we can think of this as a space of representations of
the doubled quiver of $\Gamma$, with maps $\bar{q}\colon W\to V$ and
$\bar{x}_e=x_{\bar{e}}\colon V_{h(e)}\to V_{t(e)}$.

By convention, if $w_i=\al_i^\vee(\la)$ and $\mu=\la-\sum v_i\al_i$,
then $X^\la_\mu=X^\Bw_\Bv, E^\la_\mu=E^\Bw_\Bv$ (if the difference is not in the positive
cone of the root lattice, then these spaces are by definition empty), and $X^\la=\dot \sqcup_{\xi} X^\la_{\xi}$.
 Let
\[\fM^\la_\mu=T^*E^\la_\mu/\!\!/_{\det}\,G_{\mu}=\mu^{-1}(0)^s/G_{\mu}\qquad
\fN^\la_\mu=\operatorname{image}(\pi\colon \fM^\la_\mu \to T^*E^\la_\mu/\!\!/_{0}\,G_{\mu})\]
be the Nakajima quiver
varieties attached to $\la$ and $\mu$.  
If  $\mu$ is dominant, then 
$\fN^\la_\mu=T^*E^\la_\mu/\!\!/_{0}\,G_{\mu}$.  Note that in order to apply our
construction to these varieties when they contain loops or oriented
cycles, we must use an $\bS$-action with $a,b>0$, since one with $a=0$
or $b=0$ will have invariant polynomials on the fixed points of
$\bS$.  If there are no loops or oriented cycles, it suffices to
assume that $a+b>0$.

See \cite{Nak94,Nak98} for a more detailed
discussion of the geometry of these varieties.  We are interested in
categories of modules over quantizations of these varieties, and
specifically, the categories $\cOg$.  
 In
\cite[Th. A]{Webcatq}, we proved that for integral parameters, the categories
$\cD_\chi\mmod$ for different dimension vectors.
carry a categorical $\fG$-action.  This action preserves the category
$\cOg$, since it acts by tensor product with Harish-Chandra bimodules
as discussed in \cite[3.3]{Webcatq}, which preserve category $\cO$ by
\cite[2.17 \& 3.14]{BLPWquant}.

The Grothendieck group $K(\cOg)$ for any conical symplectic variety
carries a 2-sided cell filtration induced by the decomposition of
$\fN$ into symplectic strata; following the notation of
\cite[\S 7]{BLPWgco}, we let $\cOg^S$ be the subcategory in $\cOg$ of
objects supported on $\pi^{-1}(\bar {S})$, and $\cOg^{\partial S}$ the
subcategory of objects supported on $\pi^{-1}(\partial S)$.  We call a
stratum $S$ {\bf special} if $\cOg^S\neq \cOg^{\partial S}$.

In the case of quiver varieties, all the
varieties $\fN^\la_\mu$ are embedded simultaneously into the affine
variety $\fN^\la_\infty$ of semi-simple representations of the
preprojective algebra up to stabilization.  Thus, we can think about these
filtrations simultaneously on all the category $\cO$'s of
$\fM^\la_\mu$ for all $\mu$.  The most important
example of strata are the subvarieties of the form $\fN^\la_{\mu'}$
for various $\mu'$, but other subvarieties can occur if $\Gamma$ is not of finite type.
\begin{lemma}\label{lem:cell-invariant}
  The 2-sided cell filtration on $K(\cOg)$ is invariant under the
  induced action of $\mathfrak{G}$.
\end{lemma}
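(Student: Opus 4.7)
The claim asks that the filtration on $K(\cOg)$ by $[\cOg^S]$, for $S$ a symplectic stratum of the common affinization $\fN^\la_\infty$, be preserved by the induced $\fG$-action. The plan is to establish the stronger statement that each Chevalley generator $E_i,F_i$ of $\fG$ sends $\cOg^S$ into $\cOg^S$ (and likewise $\cOg^{\partial S}$ into $\cOg^{\partial S}$, across the appropriate dimension vectors), which immediately implies invariance of the induced filtration on $K(\cOg)$.

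As noted in the excerpt, the $\fG$-action is realized by tensor product with Harish-Chandra bimodules from \cite[3.3]{Webcatq}; concretely, these are quantizations of the Hecke correspondences $\mathfrak{H}_{\mu,\mu+\al_i}\subs\fM^\la_\mu\times\fM^\la_{\mu+\al_i}$ parametrizing inclusions of preprojective representations whose quotient is the simple module at vertex $i$. The key geometric fact I would use is that the two projections of $\mathfrak{H}_{\mu,\mu+\al_i}$ to the common affinization $\fN^\la_\infty$ agree: an extension of semisimple preprojective representations has the same semisimplification as the direct sum of its composition factors, so $M$ and $M'$ land at identical points of $\fN^\la_\infty$ under the stabilizing embeddings of the various $\fN^\la_\mu$.

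From this geometric statement, I would deduce the algebraic one by a standard support computation. Pick an $\bS$-stable good lattice in both $\cM\in\cOg^S$ and in the Harish-Chandra bimodule; at the classical limit, the support of the tensor product (after applying $\pi$) is contained in the image under the second projection of the preimage under the first projection of $\bar S$, and the coincidence of the two projections into $\fN^\la_\infty$ forces this image to lie in $\bar S$ as well. Hence $E_i\cM,F_i\cM\in\cOg^S$, and repeating the argument verbatim with $\partial S$ in place of $\bar S$ gives preservation of $\cOg^{\partial S}$.

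The main obstacle is the passage from the classical geometric picture (agreement of the two projections of the Hecke correspondence in $\fN^\la_\infty$) to the support statement for quantized $\cD_\chi$-modules under tensor product with Harish-Chandra bimodules. This, however, is controlled by the same good-lattice framework from \cite[\S 4]{BLPWquant} already invoked to show that the $\fG$-action preserves $\cOg$ itself, adapted to track the finer support condition by $\bar S$ rather than by $\fN^\la_\infty$ as a whole.
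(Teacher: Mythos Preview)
Your proposal is correct and takes essentially the same approach as the paper. The paper's proof is a one-line citation: since the categorical action is given by convolution with Harish-Chandra sheaves (as in \cite{Webcatq}), the result follows from the same argument as \cite[7.10]{BLPWgco}, which is precisely the general statement that tensoring with a Harish-Chandra bimodule preserves support over the affinization; you have unpacked this argument in the specific quiver-variety context, with the coincidence of the two projections of the Hecke correspondence into $\fN^\la_\infty$ being the concrete geometric input.
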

\begin{proof}
  Since the categorical action is given by convolution with
  Harish-Chandra sheaves by the construction of \cite{Webcatq}, this follows immediately from the same
  argument as \cite[7.10]{BLPWgco}.
\end{proof}
For any
representation $U$ of $\fG$, we let the {\bf isotypic filtration} be
the filtration indexed by the poset of dominant weights where $U_\mu$
is the sum of the isotypic components for $\mu'\geq \mu$ in the usual
partial 
order on dominant weights.  

By definition, every vector of weight $\mu'$ (and thus every highest
weight vector)  in $K(\cOg)$
is a sum of classes of objects supported
on $\fN^\la_{\mu'}$ and thus lies in $K(\cOg^{\fN^\la_{\mu}})$ for
$\mu\leq \mu'$; that is, the isotypic filtration is ``smaller'' than
the 2-sided cell filtration.  On the other hand, as discussed in  \cite[\S
6]{BLPWgco}, we can also compare this filtration with the BBD
filtration on $H^{BM}_*(\fM^\la_\mu)$, which is of necessity ``bigger.''

\subsection{Weighted KLR algebras}
\label{sec:weight-klr-algebr}

As before, let $G=G_{\Bv}$ and $H=\operatorname{Aut}_{G_{\Bv}}(E_\Bv^{\Bw}).$
\begin{proposition}
  The group $H$ is the product $\prod_{i,j\in
    \Gamma_{\Bw}}GL(\C^{\epsilon_{i,j}})$ over ordered pairs of vertices in the
  Crawley-Boevey graph  $\Gamma_\Bw$, where $\epsilon_{i,j}$ is the
  number of arrows directed from $i$ to $j$.  This acts
  by replacing the maps along edges with linear combinations of the
  maps along parallel edges; in particular, the
  contribution of the pair $(i,\infty)$ is $GL(\C^{w_i})$.
\end{proposition}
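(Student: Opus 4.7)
The plan is to read off the isotypic decomposition of $E_\Bv^\Bw$ as a $G_\Bv$-module and then apply Schur's lemma. Using the Crawley-Boevey framing, set $V_\infty := \C$ and think of each of the $w_i$ copies of $\Hom(V_i, W_i)$ as coming from one of $w_i$ arrows $i \to \infty$; then
\[
E_\Bv^\Bw \;\cong\; \bigoplus_{(i,j)\in \Gamma_\Bw\times \Gamma_\Bw} \Hom(V_i, V_j)\otimes \C^{\epsilon_{i,j}},
\]
and $G_\Bv$ acts by pre- and post-composition on each $\Hom$-factor, trivially on the multiplicity space $\C^{\epsilon_{i,j}}$. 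Note that $\Gamma$ has no loops, and in the Crawley-Boevey graph all framing arrows are oriented away from $\infty$, so $\epsilon_{i,i}=0$ for every $i$ and $\epsilon_{\infty,j}=0$ for $j\in\Gamma$; thus only ordered pairs $(i,j)$ with $i\ne j$ (and $j=\infty$ permitted only on the right) actually contribute.

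Next I would check that the $\Hom(V_i,V_j)$ appearing above are pairwise non-isomorphic irreducible $G_\Bv$-modules. For $i,j\in\Gamma$ with $i\ne j$, the module $\Hom(V_i,V_j)=V_i^*\boxtimes V_j$ is irreducible for $GL(V_i)\times GL(V_j)$ as the external tensor product of two standard representations, and the other factors of $G_\Bv$ act trivially. For $(i,\infty)$, the module $\Hom(V_i,V_\infty)=V_i^*$ is irreducible for $GL(V_i)$. Finally, different ordered pairs yield modules with different characters on the diagonal torus of $G_\Bv$, so these irreducible summands are pairwise non-isomorphic.

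Having identified the isotypic pieces, Schur's lemma gives
\[
H \;=\; \Aut_{G_\Bv}(E_\Bv^\Bw) \;=\; \prod_{(i,j)\in \Gamma_\Bw\times \Gamma_\Bw} \Aut_{G_\Bv}\!\bigl(\Hom(V_i,V_j)\otimes \C^{\epsilon_{i,j}}\bigr) \;=\; \prod_{(i,j)} GL(\C^{\epsilon_{i,j}}),
\]
with each factor acting on the multiplicity space $\C^{\epsilon_{i,j}}$ by linear change of basis — precisely the operation of replacing a map along an edge by a linear combination of the maps along the parallel edges. Specializing to $j=\infty$ gives $\epsilon_{i,\infty}=w_i$ and so a factor $GL(\C^{w_i})$, confirming the remark about framing in the statement.

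There is no substantial obstacle here; the only mild points requiring care are keeping track of which pairs $(i,j)$ have $\epsilon_{i,j}>0$ in the Crawley-Boevey conventions, and noting that the loop-free hypothesis on $\Gamma$ is what guarantees that no summand of the form $\End(V_i)$ (which would have a larger automorphism group from the scalars-vs-traceless decomposition) appears.
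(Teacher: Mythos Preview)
Your proof is correct and follows essentially the same approach as the paper: identify the isotypic decomposition of $E_\Bv^\Bw$ under $G_\Bv$ and read off $H$ as the product of general linear groups on the multiplicity spaces via Schur's lemma. The paper's proof is just a two-sentence summary of this same idea, whereas you have filled in the verification that the $\Hom(V_i,V_j)$ are irreducible and pairwise non-isomorphic. One small wording slip: you write that the framing arrows are ``oriented away from $\infty$'' but then (correctly) conclude $\epsilon_{\infty,j}=0$; you mean the arrows point \emph{toward} $\infty$.
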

\begin{proof}
The group $H$ is a product of general linear groups of the
multiplicity spaces of $G$ acting on $E$.  The spaces
$\C^{\epsilon_{i,j}}$ are precisely the multiplicity space of
  $\Hom(\C^{v_i},\C^{v_j})$ in $E_{\Bv}^\Bw$.
\end{proof}

If $\Gamma$ is a tree, then $H/Z$ is just $PG_\Bw=\prod
GL(\C^{w_i})/\C^*$ where the $\C^*$ represents the diagonal embedding
of scalar matrices.  If $\Gamma$ is a cycle, then $H/Z$ is a quotient
of 
$PG_\Bw\times \C^*$ by a central cyclic group with $n$ elements; the
last factor of $\C^*$ acts with weight 1 on one edge of the cycle.

In what follows, we'll pick a rational cocharacter $\vartheta\colon \bT\to H$, which we fix.
By replacing $\vartheta$ by a conjugate, we can assume that
$\vartheta$ acts diagonally on $\C^{\epsilon_{i,j}}$, i.e. that it
acts by scaling the map along the edge $e$ by weight $\vartheta_e$.  

Having fixed $\vartheta$, its different rational lifts
$\bT\to G\times_ZH$ are all obtained by taking the pointwise product
$\xi\cdot \vartheta$ of this cocharacter
with a rational cocharacter $\xi\colon \bT\to G$.  
Assuming that this cocharacter is generic, we can record its conjugacy
class as a {\bf loading} in the sense of
\cite{WebwKLR}; that is, a map $\Bi\colon \Z\to \Gamma\cup \{0\}$ which
sends any integer which appears as {\it minus} the weight of $\xi$ to the vertex on
which it appears, and all others to 0.  Any loading which appears this
way must have $\#\Bi^{-1}(i)=v_i$,
i.e. exactly $v_i$ non-trivial elements are sent to $i\in \Gamma$.
In this case, we write $|\Bi|=\Bv$.  

If we take $\xi$ to be a
rational cocharacter, we take a
map $\Bi\colon \Q\to \Gamma\cup \{0\}$. Actually, we could use a map
$\Bi\colon \R\to \Gamma\cup \{0\}$ as in \cite{WebwKLR}; this does not
correspond to a $\C^*$ action, but it does have a corresponding vector field
on $T^*E^\la_\mu$, and one can make sense of $\lim_{t\to 0}$
using this vector field or an associated Morse function.  This is not
really necessary for our purposes, though.

The varieties $X_\Bi$ for different lifts of a fixed cocharacter
$\vartheta\colon \bT\to H/Z$ have already appeared; they are precisely
the {\bf loaded flag spaces} discussed in \cite[\S 4.1]{WebwKLR}.

\begin{definition}
  We let an $\Bi$-loaded flag on $V$ be a flag of $\Gamma$-homogeneous
  subspaces $F_a\subset V$ for each $a\in \R$ such that
  $F_b\subset F_a$ for $b\leq a$, and $\dim F_a=\sum_{b\leq a}
  \Bi(b)$.  Even though this filtration is indexed by rational numbers,
  only finitely many different spaces appear; the dimension vector can only change at points in the
  support of the loading, by adding the simple root labeling that
  point to the dimension vector.  Let $\operatorname{Fl}_\Bi$ denote the space of $\Bi$-loaded flags.
\end{definition}

\begin{proposition}
  For $\Bi$ a loading with $|\Bi|=\Bv$ with corresponding cocharacter $\xi\cdot\vartheta$,
  we have an isomorphism of the variety $X_{\xi\cdot\vartheta}$ defined above with
  the loaded flag space \[X_{\Bi}\cong\{(f,F_\bullet)\in E_{\nu}\times \operatorname{Fl}_\Bi |
  f_e(F_a)\subset F_{a+\vartheta_e} \}.\] 
\end{proposition}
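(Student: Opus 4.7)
The plan is to unwind the definition $X_{\xi \cdot \vartheta}=G\times_{G_{\xi\cdot\vartheta}}E_{\xi\cdot\vartheta}$ and match both factors with their flag-theoretic counterparts by a direct weight computation.

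First I would fix conventions: $\vartheta$ acts trivially on $V=\bigoplus \C^{v_i}$, so on $V$ the lift $\lift=\xi\cdot\vartheta$ agrees with $\xi$. The loading $\Bi$ is defined so that for each $a\in\Q$ with $\Bi(a)=i$ there is a one-dimensional piece of $\C^{v_i}$ on which $\xi$ acts with weight $-a$; more invariantly, if $V_{i,a}\subset \C^{v_i}$ denotes the $\xi$-weight space of weight $-a$, then $\dim V_{i,a}=\#\Bi^{-1}(i)\cap\{a\}$. Set the base flag $F^0_{a}:=\bigoplus_{i}\bigoplus_{b\le a}V_{i,b}$. By construction $\dim F^0_{a}=\sum_{b\le a}\Bi(b)$, so $F^0_\bullet\in \operatorname{Fl}_\Bi$, and its stabilizer in $G=G_\Bv$ is exactly the parabolic whose Lie algebra has non-negative $\xi$-weight, i.e.\ $G_{\lift}$. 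Hence the natural map $gG_\lift\mapsto g\cdot F^0_\bullet$ identifies $G/G_\lift$ with $\operatorname{Fl}_\Bi$, and this identification is transitive because $G$ acts transitively on flags of given loading type.

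Second I would compute $E_\lift$ componentwise. For an edge $e\colon i\to j$ in $\Omega$ the summand $\Hom(\C^{v_i},\C^{v_j})$ of $E^\la_\mu$ decomposes as $\bigoplus_{a,b}\Hom(V_{i,a},V_{j,b})$, and $\lift$ acts on $\Hom(V_{i,a},V_{j,b})$ with weight $a-b+\vartheta_e$ (the contribution $\vartheta_e$ coming from the $H$-action scaling the map along $e$). Non-negativity of this weight is equivalent to $b\le a+\vartheta_e$, so $f\in E_\lift$ if and only if $f_e(V_{i,a})\subset \bigoplus_{b\le a+\vartheta_e}V_{j,b}=F^0_{a+\vartheta_e,\,j}$, which in turn is equivalent to $f_e(F^0_{a,\,i})\subset F^0_{a+\vartheta_e,\,j}$ for all $a$. (The framing maps $V\to W$ are handled the same way by recording the fixed $\vartheta$-weights on the $\infty$-vertex.)

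Third I would assemble this into the claimed isomorphism. Given $[g,f]\in G\times_{G_\lift}E_\lift$, send it to $(g\cdot f,\; g\cdot F^0_\bullet)$. Independence of representative follows because $G_\lift$ stabilizes $F^0_\bullet$ and acts on $E_\lift$ through its linear action on $E$. Compatibility of the image with the flag condition is the $G$-translate of the computation in the previous paragraph. Conversely, given $(f,F_\bullet)$ in the right-hand side, transitivity of $G$ on $\operatorname{Fl}_\Bi$ lets us pick $g$ with $F_\bullet=g\cdot F^0_\bullet$; then $g^{-1}\cdot f$ satisfies the weight inequalities defining $E_\lift$, so $[g,g^{-1}\cdot f]$ is a preimage, and any two such choices of $g$ differ by an element of $G_\lift$. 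This produces a $G$-equivariant algebraic bijection, which is an isomorphism since both sides are smooth of the same dimension.

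The only real obstacle is keeping the sign conventions in the loading and the $\vartheta$-twist consistent; once that is done, everything is a straightforward unpacking of the definitions of $E_\lift$ and $G_\lift$ against the definitions of $\operatorname{Fl}_\Bi$ and the flag-compatibility condition.
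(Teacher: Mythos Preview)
Your proof is correct and follows essentially the same approach as the paper's: both compute the $\lift$-weight on the matrix coefficients $\Hom(V_{i,a},V_{j,b})$ to translate the non-negativity condition defining $E_\lift$ into the flag-compatibility condition $f_e(F_a)\subset F_{a+\vartheta_e}$, and then pass from the grading to the flag via the quotient by $G_\lift$. You are simply more explicit than the paper about the identification $G/G_\lift\cong\operatorname{Fl}_\Bi$ and the construction of the inverse map, which is fine.
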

\begin{proof}
  In fact, the choice of $\xi\colon \bT\to
  G$ induces a grading on $V$ by {\it minus} the eigenvalues.  The
  weight of $\bT$ acting on the matrix coefficient mapping 
  homogeneous elements of weight $g$ to one of weight $h$ along the
  edge $e$ is $\vartheta_e-h+g$.   We will thus have a limit if
  $\vartheta\geq h-g$.  That is, a representation will
  have a limit if and only if the map $f_e$ is a sum of homogeneous
  maps of degree $\leq \vartheta_e$.  This is exactly the condition
  $f_e(F_a)\subset F_{a+\vartheta_e}$ where $F_a$ is the sum of spaces
  of degree $\leq a$.  Finally, the quotient by
    $G_{\xi\cdot\vartheta}$ forgets the grading, only remembering the flag $F_a$.
\end{proof}
Let $B(\nu)$ be a set of loadings such that every component of the
relative precore is associated to one of them; in the language of
\cite{WebwKLR}, there is a relation called {\bf equivalence} on
loadings, and we take one loading from each of equivalence classes.
As discussed before, the sets $E_\lift$ and $G_\lift$ are constant on
the chambers of an affine hyperplane arrangement on the set of lifts.
Though there are infinitely many loadings, we only need finitely many
to get the finite set of components of the relative precore.  In fact, we could
without loss of generality assume that these loadings are over $\Z$.
Let $p\colon X_{\Bi}\to E_\nu$ be the map forgetting the flags, and
let
\begin{equation}
L_\Bi:=(p_{\Bi})_*\fS_{X_{\Bi}}[\dim
X_\Bi].\label{eq:L-def}
\end{equation}
We let \[\mathbb{X}=\bigsqcup_{\Bi,\Bj\in
  B(\nu)}X_{\Bi}\times_{E_\nu}X_{\Bj}\qquad \text{ and } \qquad
L_\nu=\bigoplus_{ \Bi\in B(\nu) } L_\Bi\]

In \cite[\S 3.1]{WebwKLR}, we also defined a diagrammatic algebra, the
{\bf reduced weighted KLR algebra}, which depends on a choice of 1-cocycle in $\R$
on $\Gamma$; of course, for any rational cocharacter
$\vartheta:\bT\to H$, we can
think of it as a 1-cocycle, and consider its reduced wKLR algebra
$\bar{W}^\vartheta$.  
Applying Proposition \ref{formal-pure}, we arrive at the conclusion that:
\begin{theorem}[\mbox{\cite[4.3]{WebwKLR}}]
  $H^{G,BM}_\bullet(\mathbb{X})\cong \Ext^\bullet(L_\nu,L_\nu)\cong \bar W^\vartheta_\nu$.
\end{theorem}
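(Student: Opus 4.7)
The plan is to invoke \cite[4.3]{WebwKLR} after verifying that the geometric setup here matches the one in that paper. The first isomorphism $H^{G,BM}_\bullet(\mathbb{X}) \cong \Ext^\bullet(L_\nu, L_\nu)$ is an instance of the Ginzburg--Chriss theorem identifying Ext between direct images of constant sheaves with the equivariant Borel--Moore homology of the associated Steinberg-type variety, endowed with the convolution product. This is precisely the identification used in Proposition \ref{formal-pure}; the purity argument given there shows there are no higher $A_\infty$ operations to worry about, so the two algebras agree strictly.

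For the second isomorphism, my approach is to exhibit an explicit set of generators of the convolution algebra $H^{G,BM}_\bullet(\mathbb{X})$ and match them with the diagrammatic generators of $\bar W^\vartheta_\nu$. The loaded flag varieties $X_\Bi$ constructed above are, after unpacking definitions, literally the ones from \cite[\S 4.1]{WebwKLR}, and the condition $f_e(F_a) \subset F_{a+\vartheta_e}$ matches the diagrammatic ``ghost-strand'' bookkeeping in the definition of $\bar W^\vartheta_\nu$. The generators in the diagrammatic presentation correspond to: idempotents $e(\Bi)$ realized as the diagonal classes $\Delta_*[X_\Bi]$ on the component $X_\Bi \times_{E_\nu} X_\Bi$; polynomial generators (dots), realized as equivariant Chern classes of tautological line bundles on $X_\Bi$; and crossings, realized as fundamental classes of natural correspondences between $X_\Bi$ and $X_\Bj$ where $\Bj$ is obtained from $\Bi$ by interchanging two adjacent labels. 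A convolution-algebra calculation then shows these classes generate.

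The main obstacle is verifying the defining relations of $\bar W^\vartheta_\nu$, particularly the quadratic crossing relations whose precise form depends nontrivially on the weights $\vartheta_e$ and on the relative position of ghost strands. These are computed by standard excess intersection and Euler class calculations on the fibers of the convolution diagram $X_\Bi \times_{E_\nu} X_\Bj \to E_\nu$, which in each case reduce to polynomial identities in the tautological Chern roots on the partial flag varieties $G/G_{\xi\cdot\vartheta}$. Since the full verification is carried out in \cite{WebwKLR}, the proof at this stage should consist of a careful matching of conventions, in particular confirming that the $1$-cocycle appearing in the diagrammatic definition of $\bar W^\vartheta_\nu$ agrees with the rational cocharacter $\vartheta\colon \bT \to H$ used here to define the loaded flag spaces.
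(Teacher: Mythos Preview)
Your proposal is correct and matches the paper's approach exactly: the paper does not give its own proof of this theorem but simply cites \cite[4.3]{WebwKLR}, noting that the first isomorphism is an instance of Proposition \ref{formal-pure}. Your sketch of the content of that cited result (diagonal idempotents, tautological Chern classes for dots, correspondence classes for crossings, with relations checked by excess intersection) is an accurate summary of how the argument in \cite{WebwKLR} goes, but it is additional detail beyond what the present paper provides.
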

In \cite[\S 2.6]{WebwKLR}, we considered the quotient of $\bar
W^\vartheta_\nu$ which corresponds to killing the sheaves $L_\xi$
attached to unsteady cocharacters.
For a GIT parameter $\varpi$, we denote this {\bf steadied quotient} by $\bar W^\vartheta_\nu(\varpi)$.
By Proposition \ref{equivalence}, we have the immediate corollary:
\begin{corollary}\label{dagger-W}
  If the hypothesis  \hyperlink{dagger}{$(\dagger)$} holds, then the
  category $\dOg$ is quasi-equivalent to $\bar
  W^\vartheta_\nu(\varpi)\dgmod$ and $\dpreO$ to $\bar W^\vartheta_\nu\dgmod$
\end{corollary}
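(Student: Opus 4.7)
The plan is to simply combine the Ext-algebra identification of the weighted KLR algebra from \cite[4.3]{WebwKLR} with the abstract quasi-equivalence of Theorem \ref{O-E-equivalent} and the definition of the steadied quotient from \cite[\S 2.6]{WebwKLR}, and check that the two notions of ``unsteady'' (the GIT-theoretic one of Theorem \ref{th:pushforward} and the diagrammatic one used to define $\bar W^\vartheta_\nu(\varpi)$) correspond under this identification.

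More concretely, I would proceed as follows. First, under hypothesis \hyperlink{dagger}{$(\dagger)$} the sum $L_\nu=\bigoplus_{\Bi\in B(\nu)}L_\Bi$ has every simple of $\preO$ as a summand (up to shift), so the Steinberg algebra $Q=\Ext^\bullet(L_\nu,L_\nu)$ is Morita equivalent to the Koszulity algebra $\EuScript Q$ of Proposition \ref{equivalence}, and the cited theorem from \cite[4.3]{WebwKLR} gives a canonical isomorphism $Q\cong \bar W^\vartheta_\nu$ of formal dg-algebras, which in turn gives a Morita-equivalence $\EuScript Q\dgmod\simeq \bar W^\vartheta_\nu\dgmod$; composing with the quasi-equivalence $\dpreO\cong \EuScript Q\dgmod$ of Proposition \ref{equivalence} yields the second assertion.

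For the first assertion, Theorem \ref{O-E-equivalent} already supplies a quasi-equivalence $\dOg\cong \stR\dgmod$ with $\stR=Q/I$, where $I\subset Q$ is the two-sided ideal generated by the diagonal classes $\Delta_*[X_\lift]$ for $\lift$ unsteady in the GIT sense of Theorem \ref{th:pushforward}. The remaining step is to match this ideal with the defining ideal of the steadied quotient $\bar W^\vartheta_\nu(\varpi)$ from \cite[\S 2.6]{WebwKLR} under the isomorphism $Q\cong\bar W^\vartheta_\nu$. Both ideals are generated by the idempotents attached to loadings $\Bi$ whose associated cocharacter $\xi\cdot\vartheta$ admits a destabilizing cocharacter $\varpi$; since the isomorphism of \cite[4.3]{WebwKLR} sends the idempotent $e(\Bi)$ to the fundamental class $\Delta_*[X_\Bi]$, one need only verify that the GIT stability condition on a rational lift matches the combinatorial steadiness condition imposed on the loading---but this is built into the definitions, since the destabilizing $\varpi$ of Theorem \ref{th:pushforward} is precisely the data recorded by the steadiness test on the loading $\Bi$ associated to $\xi\cdot\vartheta$.

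The main (and only non-cosmetic) obstacle is the final compatibility check between the two ``unsteady'' conditions. Once that identification is in hand, composing with Theorem \ref{O-E-equivalent} gives $\dOg\simeq\bar W^\vartheta_\nu(\varpi)\dgmod$, completing the proof.
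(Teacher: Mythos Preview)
Your proposal is correct and follows essentially the same route as the paper: combine the isomorphism $Q\cong\bar W^\vartheta_\nu$ from \cite[4.3]{WebwKLR} with Theorem~\ref{O-E-equivalent} (the paper cites Proposition~\ref{equivalence}, which amounts to the same thing once $(\dagger)$ gives the Morita equivalence of $Q$ with $\EuScript Q$), and observe that the steadied-quotient ideal matches the ideal $I$ generated by unsteady idempotents. The paper treats the matching of the two ``unsteady'' conditions as definitional---it simply states that the quotient of \cite[\S 2.6]{WebwKLR} ``corresponds to killing the sheaves $L_\xi$ attached to unsteady cocharacters''---so your explicit discussion of that point is more than the paper provides, but not different in substance.
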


Note that Theorem \ref{main1} will follow from this corollary once we
know that  hypothesis  \hyperlink{dagger}{$(\dagger)$} holds in the relevant
cases.

\subsection{The case of \texorpdfstring{$\bT$}{T} trivial}
\label{sec:property-l}

Let us first consider the special case where $\bT$ acts trivially and
$\la=0$ (i.e. $\Bw=0$).  We fix $\Bv$ and use the notation
$E:=E^0_\Bv$.  If we orient
$\Gamma$ so that there are no oriented cycles, then 
the relative precore is the Lagrangian subvariety $\Lambda$ considered by
Lusztig\footnote{
If $\Gamma$ does have loops, we obtain the
generalization of $\Lambda$ defined by Bozec \cite{Bozecloop}; as
discussed before, this action does not satisfy all our hypotheses.}
\cite{Lus91}; this is the intersection of $\mu^{-1}(0)$ with the nilcone of the group $G=G_\Bv$ acting
on $T^*E$.
In this case,
the simple $\Diff_E$-modules that appear as summands of $L_\nu$ 
are the images under the Riemann-Hilbert correspondence of the
sheaves $\mathscr{P}_{\Bv,\Omega}$ considered by Lusztig \cite{Lus91} in his categorification of
the upper half of the universal enveloping algebra.  The corresponding
algebra is the original KLR algebra of \cite{KLI,Rou2KM} by \cite[3.6]{VV}.

One fact we will need to use in this paper is that:
\begin{proposition}\label{prop-L}
  When $\bT$ acts trivially and $\chi$ is a character of $G$, every
  simple in $\preO$ on $T^*E$ is a summand of
  $L_\lift$ for some lift $\lift$.  
\end{proposition}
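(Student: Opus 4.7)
The plan is to reduce this to Lusztig's classification of semisimple perverse sheaves on the representation space of a quiver, applied via Riemann-Hilbert.

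First, I would identify the relative precore explicitly. Since $\bT$ acts trivially we have $\bT' = \bS$, so a $G$-invariant function on $T^*E$ has positive $\bT'$-weight exactly when it has positive $\bS$-weight. By the description of the relative precore as the vanishing locus on $\mu^{-1}(0)$ of $G$-invariants of positive $\bT'$-weight, and by the Hilbert-Mumford criterion applied to the $G$-action on $T^*E$, the relative precore coincides with the intersection of $\mu^{-1}(0)$ with the global nilpotent cone, i.e.\ with Lusztig's Lagrangian $\Lambda \subset T^*E_\Bv$ of \cite{Lus91}.

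Next, since $\chi$ is a character of $G$ (hence integral), the forgetful functor from strongly $(G,\chi)$-equivariant $\Diff_E$-modules to ordinary $\Diff_E$-modules is faithful with image the $G$-equivariant $\Diff_E$-modules, up to twisting by the $\chi$-equivariant trivial line bundle on $E$. In particular, the simple objects of $\preO$ correspond, via Riemann-Hilbert, to simple $G$-equivariant perverse sheaves on $E$ whose singular support lies in $\Lambda$, with the $\chi$-equivariant structure obtained uniquely by twist.

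I would then invoke Lusztig's main result of \cite[\S 13]{Lus91}: every simple $G$-equivariant perverse sheaf on $E_\Bv$ with singular support in $\Lambda_\Bv$ is a direct summand (up to shift) of $(p_\Bi)_* \underline{\C}_{F_\Bi}[\dim F_\Bi]$ for some Lusztig flag variety $F_\Bi$ indexed by a sequence $\Bi$ of vertices in $\Gamma$ with $|\Bi| = \Bv$. These flag varieties are precisely special cases of the loaded flag spaces $X_\lift$ of Section 4.2: any sequence $\Bi$ can be realized by choosing an integral lift $\lift$ in which the integer weights on $V$ read off $\Bi$ in the correct order, and the identification $X_\lift \cong F_\Bi$ is tautological once the edge-weight condition is unwound. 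The $\chi$-twist passes through the pushforward since $\chi$ extends to $G_\lift$ via the Levi quotient, so each simple in $\preO$ is indeed a summand of $L_\lift$ for some (integral) lift.

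The main obstacle is bookkeeping: checking that Lusztig's parameter set of orbit sequences matches our set of rational lifts up to $\tilde{G}$-conjugacy (which reduces to the observation that reorderings of $\Bi$ correspond to conjugations of $\lift$ and yield isomorphic $X_\lift$), and verifying that the relevant twist by $\chi$ is compatible with Lusztig's untwisted setting. Both are formal consequences of $E$ being a representation of $G$ and $\chi$ being a genuine character of $G$.
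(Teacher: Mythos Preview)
Your identification of the relative precore with Lusztig's $\Lambda$ and the Riemann--Hilbert translation to $G$-equivariant perverse sheaves with singular support in $\Lambda$ are both correct, and indeed the paper makes exactly this translation in the preamble to the proposition. The gap is in the step you label ``Lusztig's main result of \cite[\S 13]{Lus91}.'' What Lusztig proves in that section is the \emph{forward} direction: the singular support of each of his pushforward sheaves is contained in $\Lambda$. He does \emph{not} prove the converse, that every simple $G$-equivariant perverse sheaf with singular support in $\Lambda$ already occurs as such a summand. That converse is precisely the content of the proposition, and the paper explicitly remarks (immediately after the statement) that ``this proposition shows that every such D-module lies in the set Lusztig denotes $\mathscr{P}_{\Bv,\Omega}$.'' So you are assuming what is to be proved.

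The paper's argument is genuinely more involved. In finite ADE type it goes through a counting argument: there are finitely many $G$-orbits on $E$, each equivariantly simply connected, so the simple equivariant perverse sheaves are the IC sheaves of orbits with trivial local system, and their number matches the Kostant partition function by Gabriel's theorem and \cite[10.17(a)]{Lus91}. For general quivers the paper relies on an unpublished result of Baranovsky--Ginzburg (that the characteristic cycle map on $K$-theory of core-supported modules over a quantization is injective into top Borel--Moore homology), combined with a pullback trick along $p\colon E^\la \to E$ for a sufficiently large framing $\la$, to force any simple in $\preO$ to be detected after reduction and hence to coincide with a summand of some $L_{\Bj}$. The need for this machinery is exactly why the result is stated as a proposition here rather than cited.
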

The category $\preO$ is the category of strongly $G$-equivariant
D-modules on $E$ whose singular support is contained in the
subvariety Lusztig calls $\Lambda$; thus, this proposition shows that
every such D-module lies in the set Lusztig denotes
$\mathscr{P}_{\Bv,\Omega}$ in \cite[\S 2.2]{Lus91}. This is closely allied with the
hypothesis \hyperlink{dagger}{$(\dagger)$} discussed earlier, but with
no assumptions about stability.  
 
\begin{proof}[Proof of \ref{prop-L} in finite type]
If $\Gamma$ is finite type, then $G$ acts on $E$ with finitely many
orbits.  The automorphism group of any $\Gamma$-representation is an
extension of a product of general linear groups extended by a
unipotent subgroup. Thus every orbit is equivariantly homotopic to a
point modulo a product of general linear groups, which is simply
connected.  Thus, any $G$-equivariant D-module on $E$ is the
intersection cohomology sheaf of an orbit with the trivial local
system.  Each one of these lies in $\mathscr{P}_{\Bv,\Omega}$ by the
pigeonhole principle, since
this set has size equal to the Kostant partition function of $\sum
v_i\al_i$ by \cite[10.17(a)]{Lus91}, and the same is true of the number of $G$-orbits on $E$ by
Gabriel's theorem.
\end{proof}

For more general types, this will be deduced from the following
(unpublished) result of Baranovsky and
Ginzburg \cite{BaGi}:
\begin{theorem}\label{BaGi}
  Let $\fM$ be a conic symplectic resolution with quantization $\cD$
  and $C$ the preimage of the cone point in $\fM$.  Then the Grothendieck group of
  sheaves of modules over $\cD$ with support in $C$ injects into
  $H^{BM}_{top}(L;\Z)$ under the characteristic cycle map.  
\end{theorem}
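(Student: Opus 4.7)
The plan is to factor the characteristic cycle map as
\[
K_0(\cD\mmod_C) \xrightarrow{\gr} G_0(C) \xrightarrow{\operatorname{cyc}} H^{BM}_{\mathrm{top}}(C;\Z),
\]
where the first map sends $[M]$ to $[M_0/hM_0]$ for a good $\cD(0)$-lattice $M_0\subset M$ (well-defined on $K_0$ since different lattices differ by finite filtrations), and the second is the top-degree part of the cycle class map on the Grothendieck group of coherent sheaves supported on $C$. I would aim to show the composition is injective.

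First, I would establish that $C$ is Lagrangian and that $H^{BM}_{\mathrm{top}}(C;\Z)$ has a $\Z$-basis indexed by the irreducible components of $C$. The first claim follows from general properties of conic symplectic resolutions: since $\pi\colon \fM\to \fN$ is a symplectic resolution of the cone $\fN$, $C=\pi^{-1}(o)$ is equidimensional of dimension $\frac{1}{2}\dim\fM$, and by Gabber's involutivity the characteristic variety of any $\cD$-module supported on $C$ is a union of Lagrangian components. For the second claim, the conical $\C^*$-action yields a Bialynicki-Birula decomposition of $\fM$ whose cells contained in $C$ form an affine paving; hence $H^{BM}_{\ast}(C;\Z)$ is free abelian concentrated in even degrees, and its top part is freely generated by the closures of top-dimensional BB cells, which coincide with the irreducible components of $C$.

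The main technical step is to show that distinct simples give distinct characteristic cycles. Each simple $\cD$-module supported on $C$ is the intermediate extension of an irreducible local system $\cL$ on the smooth locus of some irreducible component $V\subset C$, and its characteristic cycle in $H^{BM}_{\mathrm{top}}(C;\Z)$ equals $(\rk \cL)[V]$ (contributions from strictly lower-dimensional support vanish in top degree). I would use the conic $\C^*$-action to endow simple $\cD$-modules on $C$ with canonical $\C^*$-equivariant structures; then, since every point of $V$ has $\C^*$-orbit closure meeting the $\C^*$-fixed locus, the smooth locus of $V$ is $\C^*$-equivariantly homotopy equivalent to a neighborhood of the fixed points it contains, so equivariant local systems are controlled by rigid finite data at the fixed locus. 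This forces $\rk\cL=1$ and a bijection between simples and components, so $CC$ sends simples to the basis vectors of $H^{BM}_{\mathrm{top}}(C;\Z)$, yielding injectivity on $K_0$. The hardest part will be making this rigidity argument precise: for a general conic symplectic resolution one may need either a homotopy-theoretic analysis of the BB cells of $C$ or a Hodge-theoretic purity argument ensuring that all relevant equivariant local systems are trivial.
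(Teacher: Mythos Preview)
The paper does not prove this statement at all: it is quoted as an unpublished result of Baranovsky and Ginzburg and used as a black box (see the attribution immediately preceding the theorem and its invocation in the proof of Proposition~\ref{prop-L}). So there is no ``paper's proof'' to compare against; what I can do is assess whether your argument stands on its own.

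Your factorization and the claims about $C$ being Lagrangian and $H^{BM}_{\mathrm{top}}(C;\Z)$ being free on the components are fine. The trouble is in your ``main technical step.'' You import the language of intermediate extensions and local systems from the theory of $\Diff$-modules/perverse sheaves, but $\cD$ here is a deformation quantization of $\fS_{\fM}$, not a sheaf of differential operators on a base, and there is no Riemann--Hilbert picture available. A simple $\cD$-module supported on $C$ is not in any useful sense the intermediate extension of a local system on the smooth locus of a single component: the associated graded $M_0/hM_0$ is merely a coherent sheaf on $C$, and its cycle can (and typically does) involve several components with multiplicities greater than one. So the assertion that $CC(L)=(\rk\cL)[V]$ is unfounded, and the subsequent ``rigidity'' argument forcing $\rk\cL=1$ and a bijection between simples and components has nothing to grip onto. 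Note also that the theorem only asserts injectivity, i.e.\ that there are at most as many simples as components; you are trying to prove the stronger bijection, which is not claimed and need not hold without further hypotheses.

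If you want a genuine strategy, the arguments people use for results of this type are quite different: one route is a deformation/specialization argument over the base of periods (showing the rank of $K_0$ is upper semicontinuous and computing it at a generic parameter), another is a positivity argument exploiting that characteristic cycles of simples are effective together with an Euler-form computation. Either of these requires real input beyond what you have sketched.
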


\begin{proof}[Proof of \ref{prop-L} in general type]
Consider a highest weight $\la$ and let $E^\la:=E^\la_{\la-\sum
  v_i\al_i}$.  Note that we have a projection map $p\colon E^\la\to E$
forgetting the component $\Hom(V_i,W_i)$.  
By Theorem \ref{BaGi}, if we show that Lusztig's construction
supplies simples with support in $C$ whose characteristic classes span
$H^{BM}_{top}(C;\Z)$, then we will know that these are the only
simples supported on $C$.  
 
Given a union of components $D\subset \Lambda\subset T^*E$, we can construct a
corresponding Lagrangian subvariety of $T^*E^\la$ by taking the 
preimage of $D$ and then its image under the correspondence $T^*E\leftarrow
E^*\oplus E^\la\to T^*E^\la$.  Then intersecting with the stable locus and
projecting to $\fM$, we obtain a union of components (possibly empty) of $C\subset \fM$, which we
denote $n(D)$.  
Each component  of $C\subset \fM^\la$ obtained as $n(D)$ for some component
$D\subset \Lambda$, since $n$ is the left inverse of the map
$\kappa\circ \iota$ defined by Saito \cite[4.6.2]{Saito02}.
There is a corresponding operation on $\Diff_E$-modules $\cM$, which
is to consider $\red(p^*\cM)$.  One can see directly that
$\supp(\red(p^*\cM))=n(SS(\cM))$.  

There is an order on the components of $\Lambda$ such that for
each component $D$, one can construct one of Lusztig's sheaves $L_D$ which has
multiplicity 1 along that component and trivial multiplicity along
higher ones, by \cite[6.2.2(2)]{KS97}.  
Thus, the classes of the sheaves $\red(p^*L_D)$ with $n(D)$ ranging
over the components of $C$ generate 
$H^{BM}_{top}(C;\Z)$.  This shows that if $\preO$ for $E$ contains
any objects which are not produced from Lusztig's construction, they
are killed by performing $\red\circ p^*$.

  Now, let $\cM$ be an arbitrary object in $\preO$ for $E$.  We claim
  that there is some $\la$
  such that the pullback $p^*\cM$ is not
  killed by the reduction functor $\red$, since its support is not in
  the unstable locus, i.e. $n(  SS(\cM))\neq \emptyset$.
If, for example, we choose $w_i\geq v_i$, then for any $e\in T^*E$, there
is always a point in
$p^{-1}(e)\in E^*\oplus E^\la\subset T^*E^\la$ where the maps
$q_i\colon V_i\to W_i$ are injective.  Thus,
there is no nontrivial subrepresentation killed by all $q_i$, since there is no
nontrivial subspace killed by all $q_i$.  
  Since $p^*\cM$ is
  not killed by $\red$, it's a summand of $p^*L_\Bj$, the
  pullback of one of
  Lusztig's sheaves. Since \[\Hom(L_\Bj ,L_\Bj )\cong \Hom(L_\Bj
  ,p_*p^*L_\Bj )\cong \Hom(p^*L_\Bj ,p^*L_\Bj ),\] every summand of
  $p^*L_\Bj $ is the pullback of a summand of $L_{\Bi}$.   Thus $\cM$ is a summand of $L_{\Bj}$ and we're done.
\end{proof}

\excise{
Throughout this paper we will only consider
quivers $\Gamma$ with the property that every simple D-module on
$E/G$ with characteristic
variety contained in $\Lambda$ is one of Lusztig's perverse sheaves.
We call this {\bf property $L$}.  This is closely allied with the
hypothesis \hyperlink{dagger}{$(\dagger)$} discussed earlier, but with
no assumptions about stability.  

\begin{proposition}
  If $\Gamma$ is finite or affine type ADE, it has property $L$.
\end{proposition}
\begin{proof}
  If $\Gamma$ is finite type, then every D-module on the moduli
  space is one of Lusztig's D-modules, and the statement is
  obvious.  

  Now, consider the affine case, and fix a D-module $\cM$ with
  nilpotent characteristic variety. Fix a charge on the quiver
  $\Gamma$ (an association of a point in the upper half-plane to each
  node of the quiver) for which the induced order on simple roots
  is adapted to the orientation $\Omega$.  In this case, the Harder-Narasimhan filtration of
  every representation splits; by uniqueness, this is the only filtration with this
  sequence of dimension vectors.

  Let $\Bv_1,\Bv_2,\dots$ be the dimension vectors of the successive
  quotients of HN filtration of a generic point in the support of
  $\cM$; let $\lift$ be a cocharacter in $G_\Bv$ with weight 1 on
  a space of dimension $\Bv_1$, weight 2 on a space of dimension
  $\Bv_2$, etc.  The map $p_\lift$ is an isomorphism on the set of
  reps with this HN filtration; in particular, $\cM$ is a composition
  factor of $(p_\lift)_*p_\lift^*\cM$.   The splitting of the
  HL filtration guarantees, in turn, that this sheaf is a composition
  factor in a pullback from
  $E_{\Bv_1}/G_{\Bv_1}\times E_{\Bv_2}/G_{\Bv_2}\times \cdots$.  Thus,
  we have reduced to the case where this filtration is trivial.
  If $\Bv$ is a multiple of a real root,  we are
  done, since $\cM$ must be the IC sheaf of the unique orbit of
  maximal dimension, and we are done.  

Thus, we must have that $\Bv$
  is a multiple of $\delta$, and the generic representation in its
  support has no rigid summands.  In this case, it must be the
  intermediate extension of a local system on the subset $X(0)$
  introduced by Lusztig in  \cite[\S
  6.10]{Lusaff}. Thus it must be one of
  the sheaves denoted $P_{0,\chi}$ which lie in desired category, as proven there.  
\end{proof}

 An equivalent statement of property
$L$ is that
  if $\bT$ acts trivially on $E$, then the category $\preO$ is the
  category of D-modules whose composition factors are the Lusztig
  D-modules on $E$.
}

\subsection{Twisting functors}
\label{sec:twisting-functors}

\nc{\GWeyl}{\mathsf{W}}

The category $\cO$'s attached to different GIT parameters are related
by functors, which we call {\bf twisting functors}, introduced in
\cite[\S 6]{BLPWquant}.  In that paper, we focused more on the induced
functors on the algebraic categories $\cOa$ for different parameters,
but these functors have a natural geometric interpretation.

For each quiver variety, we can identify the sets of GIT parameters
with $\mathfrak{H}^*$, the  dual Cartan of $\fG$.  Nakajima's usual
stability condition is identified with the dominant Weyl chamber, and
each GIT wall corresponds to the vanishing set of a coroot of $\fG$,
though not all coroots contribute.  In fact, there are only finitely
many GIT walls, so if $\fG$ is not finite type, necessarily almost all
coroots do not contribute a wall. 
Let $\GWeyl$ be the Weyl group of $\fG$.  For any $w\in \GWeyl$, the
image of the dominant Weyl chamber under $w$ lies in a single
GIT chamber, defining a map from $\GWeyl$ to the set of chambers, with
image given by the chambers in the Tits cone.  
Choose $\eta\in \mathfrak{H}^*$, a strictly dominant integral weight.

\begin{proposition}[\mbox{\cite[Cor.  B.1]{BLPWquant}}]
  For any fixed character $\chi\colon \fg\to \C$ and any finite subset $\GWeyl_0\subset \GWeyl$,
  there is an integer $n\gg 0$ such that
  localization holds for the character $\chi+nw\cdot \eta$ for
  every $w\in \GWeyl_0$ on the GIT quotient for $w\cdot \eta$.
\end{proposition}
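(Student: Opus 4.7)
The plan is to reduce the statement to the single-chamber abstract localization theorem from \cite[5.11]{BLPWquant} and then use finiteness of $\GWeyl_0$ to take a maximum.

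First, I would unpack what it means for $w\cdot\eta$ to determine a GIT chamber. Since $\eta$ is strictly dominant integral, $w\cdot\eta$ is a regular integral character; by the identification of Nakajima's chamber structure with the Tits cone, this character lies in the interior of a bona fide GIT chamber of $\fg^*$. The associated line bundle $\cL_{w\cdot\eta}$ on $\fM_{w\cdot\eta}:=T^*E/\!\!/\!\!/\!\!/_{w\cdot\eta}G$ is ample, and its quantization ${}_k\cT_0'$ fits into the $\Z$-algebra formalism from Section \ref{sec:quant-sympl-quot}.

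Second, I would apply the single-chamber localization theorem: for any fixed starting character $\chi$ and any fixed GIT chamber whose ample line bundle is $\cL_{w\cdot\eta}$, there exists $n_w\gg 0$ such that localization holds at the parameter $\chi+n_w\, w\cdot\eta$ on the GIT quotient $\fM_{w\cdot\eta}$. This is exactly \cite[5.11]{BLPWquant}, which produces, for any initial parameter, an integer after which tensoring with successive powers of the ample quantization yields an equivalence between $A_{\chi+nw\cdot\eta}\mmod$ and $\cD_{\chi+nw\cdot\eta}\mmod$. Concretely, the adjoint pair $(\Loc,\secs)$ becomes an equivalence for all sufficiently large such shifts, and one may arrange the threshold $n_w$ so that it works for every $n\geq n_w$.

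Finally, since $\GWeyl_0$ is finite, set
\[n:=\max_{w\in\GWeyl_0}n_w.\]
For this uniform $n$, the character $\chi+nw\cdot\eta$ lies past the localization threshold on the GIT quotient $\fM_{w\cdot\eta}$ for every $w\in\GWeyl_0$ simultaneously, proving the claim. The only substantive step is the single-chamber localization result, which is already established in \cite{BLPWquant}; the Weyl-group version here is just the observation that finitely many such thresholds have a common upper bound. If one wanted to strengthen the statement to an infinite subset of $\GWeyl$, the main obstacle would be a uniform bound on $n_w$ as $w$ varies, which is not guaranteed by the cited result and would require genuinely new input about the asymptotic geometry of the quantization parameters along rays into distinct chambers.
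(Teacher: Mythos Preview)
Your proposal is correct. The paper does not give an independent proof of this proposition; it simply records it as \cite[Cor.~B.1]{BLPWquant}, and your argument is precisely the natural unpacking of that corollary: apply the single-chamber localization theorem (the result behind \cite[5.11]{BLPWquant} or \cite[Th.~A]{BLPWquant}) to each $w\in\GWeyl_0$, note that the threshold is stable under further shifts, and take the maximum over the finite set.
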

We can identify the category $\cOg$ for different GIT parameters with
category $\cOa$ for different quantization parameters using
localization functors; throughout this section, we'll implicitly
identify $\cOg$ for different parameters that differ by integral
amounts, using tensor product with quantized line bundles (the {\bf
  geometric twisting functors} of \cite[\S 6]{BLPWquant}).  
We let $\cOg^{v\eta}$ be the geometric category $\cO$ of the GIT
quotient $\mu^{-1}(0)/\!\!/_{v\eta}G_\mu$, and $\LLoc^{v\eta}_\chi$ and $
\R\Gamma^{v\eta}_\chi$ be the localization and $\bS$-invariant sections functors on this GIT
quotient at the character $\chi$.

In this notation, identifying $\cOg^{v\eta}$ with $\cOa$ for the
parameter $\chi+nv\cdot \eta$  intertwines the twisting functors from
\cite[\S 6]{BLPWquant} (in
\cite{BLet}, these are called {\bf wall-crossing functors}) with the functors given by 
\[\EuScript{T}^{\eta_1,\eta_2}_\chi:=\LLoc^{\eta_1}_\chi\circ
\R\Gamma^{\eta_2}_\chi\colon \cOg^{\eta_2}\to \cOg^{\eta_1}\]
by  \cite[6.29]{BLPWquant}.
Note that these functors depend on $\chi$; compositions of these
functors that begin and end at a single $\eta$ generate
a very large group of autoequivalences of the derived category
$D^b(\cOg)$.  On the other hand, this dependence of $\chi$ is limited
in an important way: the functors
$\EuScript{T}^{\eta',\eta}_{\chi+n\eta}$ and
$\EuScript{T}^{\eta',\eta}_{\chi+n\eta'}$ stabilize for $n\gg 0$, since
the categories $\cOa$ are related by twisting functors
$\Phi^{\chi+n\eta}_{\chi+m\eta}$ which are equivalences intertwining the
functors $\LLoc^{\eta_1}_{\chi+n\eta}, \R\Gamma^{\eta_2}_{\chi+n\eta}$ for $m,n\gg 0$.  We
can also describe these functors in terms of reduction functors and
their adjoints.
\begin{proposition}[\mbox{\cite[(4.10)]{BLet}}]\label{adjoint-twist}
  For $n\gg 0$, we have isomorphisms of functors
  $\EuScript{T}^{\eta',\eta}_{\chi+n\eta'}\cong \red^{\eta'}\circ \red^\eta_*$ and
$\EuScript{T}^{\eta',\eta}_{\chi+n\eta}\cong \red^{\eta'}\circ \red^\eta_!$. 
\end{proposition}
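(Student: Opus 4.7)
The plan is to prove both isomorphisms in parallel, starting with the left-adjoint version $\EuScript{T}^{\eta',\eta}_{\chi+n\eta}\cong\red^{\eta'}\circ\red^\eta_!$, and then obtaining the other by a dual argument.

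The starting point is the explicit formula \eqref{eq:bang2} for $\red^\eta_!$, which for $k\gg 0$ factors this functor through sections on the $\eta$-GIT quotient: for $\cM\in \cD^\eta_{\chi+n\eta}\mmod$,
\[
\red^\eta_!(\cM)\;=\;Y_{k}\otimes_{A^\eta_{\chi+n\eta+k\eta}}\Gamma\!\left(\fM^\eta;\,{}_{k}\cT^\eta_{0}\otimes_{\cD^\eta_{\chi+n\eta}}\!\cM\right)^{\bS}.
\]
For $n$ large, localization holds on the $\eta$-chamber by \cite[Cor.~B.1]{BLPWquant}, so the composition of $\bS$-invariant sections and the $\cT$-tensor is identified with $\R\Gamma^\eta_{\chi+n\eta}$ up to a parameter shift. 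Concretely, $\red^\eta_!(\cM)\cong Y_{k}\otimes_{A_{\chi+(n+k)\eta}}\R\Gamma^\eta_{\chi+n\eta}(\cM)$, so $\red^\eta_!$ is tensor-induction along the bimodule $Y_k$ from the category $A\mmod$ back to $\Diff_E\mmod$.

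The next step is to apply $\red^{\eta'}$ and interchange it with the tensor product over $A$: since $Y_k=\Diff_E/\Diff_E\mu_{\chi+(n+k)\eta}(\fg)$ is free as a right $\Diff_E$-module modulo the moment ideal, the reduction $\red^{\eta'}$ is compatible with the $A$-module structure, giving
\[
\red^{\eta'}\!\bigl(\red^\eta_!(\cM)\bigr)\;\cong\;\red^{\eta'}(Y_k)\otimes_{A_{\chi+(n+k)\eta}}\R\Gamma^\eta_{\chi+n\eta}(\cM).
\]
One then identifies $\red^{\eta'}(Y_k)$ with the bimodule that implements $\LLoc^{\eta'}$ at parameter $\chi+n\eta$, twisted by $k$ copies of the quantized line bundle for $\eta'$. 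Indeed, $\red^{\eta'}(Y_k)$ is, by construction, the localization on $\fM^{\eta'}$ of the $\Diff_E$-module $Y_k$; the $\Z$-algebra description of $\LLoc^{\eta'}$ from \cite[\S 5]{BLPWquant} expresses it as tensor product with exactly this bimodule, modulo bounded submodules. After combining with the line-bundle shifts already present in $\cT$, these cancel with the shift from $Y_k$ and one arrives at $\LLoc^{\eta'}_{\chi+n\eta}\circ\R\Gamma^\eta_{\chi+n\eta}(\cM)=\EuScript{T}^{\eta',\eta}_{\chi+n\eta}(\cM)$.

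The main obstacle, and the only part really needing care, is the identification in the previous step: matching $\red^{\eta'}$ of the translation $\Diff_E$-$A$-bimodule $Y_k$ with the explicit $\Z$-algebra bimodule realizing $\LLoc^{\eta'}$, including a correct accounting of all the quantization-parameter shifts. This is exactly where the hypothesis $n\gg 0$ enters on both sides: we need the relevant $\Z$-algebra maps to be isomorphisms modulo bounded pieces, which holds for $k,n$ sufficiently large by \cite[5.8]{BLPWquant}. The right-adjoint version $\EuScript{T}^{\eta',\eta}_{\chi+n\eta'}\cong\red^{\eta'}\circ\red^\eta_*$ is then obtained by the dual argument, replacing \eqref{eq:bang2} with the $\mathcal{H}om$-formula for the right adjoint characterized by the adjunction in Proposition~\ref{dg-quotient}, and using that for $n\gg 0$ on the $\eta'$-side, $\LLoc^{\eta'}_{\chi+n\eta'}$ is an equivalence realized by $\red^{\eta'}$ applied to the same class of translation bimodules.
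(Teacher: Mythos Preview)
The paper does not give a self-contained proof: it cites \cite[(4.10)]{BLet} for the second isomorphism and observes that the first follows by taking adjoints. There is, however, an excised proof in the source that takes a cleaner route than yours. Rather than unwinding the explicit $\Z$-algebra description of $\red^\eta_!$ and then trying to match bimodules, it simply checks both functors on a generator: since localization holds at $\chi+n\eta$ for the stability condition $\eta$, the object $\cD_{\chi+n\eta}$ generates the category, and one computes directly that $\red^\eta_!(\cD_{\chi+n\eta})\cong \Diff_E/\Diff_E\cdot\mu_{\chi+n\eta}(\fg)$ and hence $\red^{\eta'}\circ\red^\eta_!(\cD_{\chi+n\eta})\cong \cD_{\chi+n\eta}$ on the $\eta'$-quotient, with the obvious identification of endomorphism rings. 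Since $\EuScript{T}^{\eta',\eta}_{\chi+n\eta}$ is the unique colimit-preserving functor with this effect on $\cD_{\chi+n\eta}$, the isomorphism follows.

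Your approach is not wrong in principle, but the step you yourself flag as the main obstacle---identifying $\red^{\eta'}(Y_k)$ with the bimodule realizing $\LLoc^{\eta'}$ and cancelling the line-bundle twists---is asserted rather than carried out, and involves delicate parameter bookkeeping (note that $Y_k$ is defined at the shifted parameter $\chi+(n+k)\eta$, while $\red^{\eta'}$ is being applied at $\chi+n\eta$, so you need to explain why the left $\Diff_E$-module structure of $Y_k$ is compatible with reduction at that parameter). The generator argument bypasses all of this. Your reduction of the first isomorphism to the second by duality is exactly what the paper does.
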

Only the second equality is proven in \cite{BLet}, but the first is
simply the adjoint of the second.
\excise{\begin{proof}
By taking adjoints, it suffices to show the second isomorphism.
Since localization holds at $\chi+n\eta$ for the stability condition
$\eta$, we have that $\red^\eta_!(\cD_{\chi+n\eta})$ is just the module
$\Diff_{E}/\Diff_{E}\cdot \mu_{\chi+n\eta}(\fg)$. Of course,
$\red^{\eta'}(\Diff_{E}/\Diff_{E}\cdot \mu_{\chi+n\eta}(\fg))$ is
$\cD_{\chi+n\eta}$ (this time on $\mu^{-1}(0)/\!\!/_{\eta'}G_\mu$),
with the functor inducing the obvious isomorphism of endomorphism
rings.  On the other hand, by the localization we already observed,
$\cD_{\chi+n\eta}$ generates the category of $\cD_{\chi+n\eta}$-modules,
so $\EuScript{T}^{\eta',\eta}_{\chi+n\eta}$ is the unique functor that
sends $\cD_{\chi+n\eta}$ (on $\mu^{-1}(0)/\!\!/_{\eta}G_\mu$) to $\cD_{\chi+n\eta}$ (on $\mu^{-1}(0)/\!\!/_{\eta'}G_\mu$)  and we are done.
\end{proof}}

Let $\EuScript{T}_w$ be the functor given by
$\EuScript{T}^{vw^{-1}\cdot \eta,v\cdot\eta}_{\chi+nv\cdot \eta}$ for $n\gg
0$, for all $v\in \GWeyl$.  
\begin{proposition}
  The functors $\EuScript{T}_w$ define a strong action of the Artin
  braid group of $\fg$ on the categories $\cOg^{v\cdot \eta}$ for
  $v\in \GWeyl$.  
\end{proposition}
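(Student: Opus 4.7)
The plan is to apply a standard criterion for constructing a strong braid group action from a system of functors indexed by the Weyl group: it suffices to produce, for each $w\in \GWeyl$, a functor $\EuScript{T}_w\colon \cOg^{v\eta}\to \cOg^{vw^{-1}\eta}$ together with natural isomorphisms $\EuScript{T}_{w_2 w_1}\cong \EuScript{T}_{w_2}\circ \EuScript{T}_{w_1}$ whenever $\ell(w_2 w_1)=\ell(w_2)+\ell(w_1)$, satisfying the evident coherence condition for triples whose lengths add. Matsumoto's theorem then implies that all reduced-word ambiguities collapse, giving a well-defined action of the Artin braid group of $\fg$ on the collection $\{\cOg^{v\eta}\}_{v\in \GWeyl}$.

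To construct the composition isomorphisms, I would exploit Proposition \ref{adjoint-twist} to rewrite $\EuScript{T}^{\eta',\eta}_{\chi+n\eta}$ as $\red^{\eta'}\circ \red^{\eta}_!$. The key cancellation is the counit $\red^{\eta_2}\circ \red^{\eta_2}_!\cong \id$, which holds whenever localization is valid on the GIT quotient for $\eta_2$. For $w_1, w_2$ with $\ell(w_2 w_1)=\ell(w_2)+\ell(w_1)$, concatenating reduced expressions produces a reduced expression for $w_2 w_1$, so the intermediate chamber $vw_1^{-1}\eta$ lies on a non-backtracking path through chambers of the Tits cone connecting $v\eta$ and $v(w_2w_1)^{-1}\eta$. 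For $n\gg 0$, localization at $\chi+nvw_1^{-1}\eta$ is faithful on the appropriate subcategory, and the telescoping composition $\red^{v(w_2w_1)^{-1}\eta}\circ \red^{vw_1^{-1}\eta}_!\circ \red^{vw_1^{-1}\eta}\circ \red^{v\eta}_!$ collapses to $\red^{v(w_2w_1)^{-1}\eta}\circ \red^{v\eta}_!\cong \EuScript{T}_{w_2 w_1}$. The coherence condition for triples reduces to associativity of two successive cancellations of this form.

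The main obstacle is the $\chi$-dependence: the definition of $\EuScript{T}_w$ requires choosing $n\gg 0$ depending on $w$, and for the composition to be meaningful the parameter shifts must be simultaneously compatible. This is handled via the algebraic twisting functors $\Phi^{\chi+n\eta}_{\chi+m\eta}$, which intertwine the localization functors for $m,n\gg 0$ and make the definitions of $\EuScript{T}^{\eta',\eta}_{\chi+n\eta}$ and $\EuScript{T}^{\eta',\eta}_{\chi+n\eta'}$ stabilize. Since any fixed braid relation involves only finitely many elements of $\GWeyl$, one can choose a single $n$ large enough that all stabilizations and all intermediate localizations are simultaneously valid, and the verification of the composition isomorphisms and their coherence reduces to formal manipulations of the adjoint pairs $(\red^{\eta}_!, \red^\eta)$.
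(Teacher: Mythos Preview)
There is a genuine gap in your telescoping step. You write the composition
\[
\EuScript{T}_{w_2}\circ\EuScript{T}_{w_1}
\;\cong\;
\red^{v(w_2w_1)^{-1}\eta}\circ \red^{vw_1^{-1}\eta}_!\circ \red^{vw_1^{-1}\eta}\circ \red^{v\eta}_!
\]
and propose to collapse the middle using the counit $\red^{\eta_2}\circ\red^{\eta_2}_!\cong\id$. But the middle term that actually appears is $\red^{vw_1^{-1}\eta}_!\circ\red^{vw_1^{-1}\eta}$, which is the \emph{other} composition. The identity $\red\circ\red_!\cong\id$ (which says $\red_!$ is fully faithful) does not imply that $\red_!\circ\red\cong\id$; the latter is only a projection whose cone is supported on the $\eta_2$-unstable locus. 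Your appeal to ``localization at $\chi+nvw_1^{-1}\eta$'' does not help here: that condition concerns the equivalence between $\cOg$ and $\cOa$ via $\secs$ and $\Loc$, not the behavior of $\red_!\circ\red$ on the category of equivariant $\Diff_E$-modules.

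What one actually needs is that, after sandwiching between $\red^{v(w_2w_1)^{-1}\eta}$ and $\red^{v\eta}_!$, the cone of $\red_!^{\eta_2}\red^{\eta_2}(M)\to M$ vanishes --- i.e.\ that anything supported on the $\eta_2$-unstable locus which arises from $\red^{v\eta}_!$ is also killed by $\red^{v(w_2w_1)^{-1}\eta}$. This is a genuine geometric statement about how unstable loci for adjacent GIT chambers relate along a minimal path, and it is exactly the content of the Deligne groupoid relations established in \cite[6.32]{BLPWquant}. The paper's proof simply invokes that result: any reduced expression for $w$ traces a minimal-length path through the Coxeter arrangement, so the composite $\EuScript{T}_{s_{i_m}}\cdots\EuScript{T}_{s_{i_1}}$ is path-independent by the Deligne groupoid property, and the coherence for strongness follows from the uniqueness of the resulting isomorphism. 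Your formal adjunction manipulation cannot replace this geometric input.
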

\begin{proof}
  This follows immediately from \cite[6.32]{BLPWquant}; the braid
  relations for the Artin braid group are relations in the
  Weyl-Deligne groupoid of the Coxeter arrangement. The important
  point here is that the path $v\eta\to vs_{i_1}\eta \to
  vs_{i_1}s_{i_2}\eta\to \cdots $ for any reduced expression
  $s_{i_m}\cdots s_{i_1}$ is minimal length (that is, it crosses the
  minimal number of hyperplanes to connect those two points); thus,
  the functor $\EuScript{T}_{s_{i_m}}\cdots
  \EuScript{T}_{s_{i_1}}=\EuScript{T}_{w}$ is independent of reduced
  expression.  This establishes all of the Artin braid relations.
  Since the isomorphism $\EuScript{T}_{w}
  \EuScript{T}_{w'}=\EuScript{T}_{ww'}$ for
  $\ell(w)+\ell(w')=\ell(ww')$ is associative (it induces a unique
  isomorphism $\EuScript{T}_{w}
  \EuScript{T}_{w'} \EuScript{T}_{w''}=\EuScript{T}_{ww'w''}$ when
  $\ell(w)+\ell(w')+\ell(w'')=\ell(ww'w'')$), this action is strong.
\end{proof}

By work of Maffei \cite[Th. 26]{MaffWeyl}:
\begin{proposition}
  There is a $H\times \bS$-equivariant isomorphism of symplectic varieties
  \[\phi\colon \mu^{-1}(0)/\!\!/_{\eta}G_\nu\to \mu^{-1}(0)/\!\!/_{w
    \eta}G_{w\nu}.\]
\end{proposition}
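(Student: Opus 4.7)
The plan is to reduce to simple reflections and then invoke Maffei's reflection isomorphism at each simple root. By induction on the Coxeter length of $w$, it suffices to handle the case where $w = s_i$ for a simple reflection: if $\phi_1\colon \mu^{-1}(0)/\!\!/_{\eta}G_\nu \to \mu^{-1}(0)/\!\!/_{w_1\eta}G_{w_1\nu}$ and $\phi_2\colon \mu^{-1}(0)/\!\!/_{w_1\eta}G_{w_1\nu} \to \mu^{-1}(0)/\!\!/_{w_2 w_1\eta}G_{w_2 w_1\nu}$ are both $H\times\bS$-equivariant symplectic isomorphisms, then so is $\phi_2\circ\phi_1$. A reduced expression $w = s_{i_k}\cdots s_{i_1}$ then gives the general case.

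For the base case of a simple reflection $s_i$, I would invoke Maffei's Theorem 26 from \cite{MaffWeyl}, which constructs an explicit isomorphism at a single vertex. The construction replaces the vector space $V_i$ by a new space $V_i'$ built from the kernel/cokernel of the natural complex
\[\bigoplus_{j\to i} V_j \oplus W_i \longrightarrow V_i \longrightarrow \bigoplus_{i\to j} V_j \oplus W_i^*\]
(appropriately signed) whose middle cohomology vanishes precisely on the stable locus for $\eta$. Reversing this construction swaps the two stability conditions. Maffei verifies that the resulting map is an isomorphism of the GIT quotients and preserves the symplectic form.

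The remaining work is to check $H\times\bS$-equivariance. The $\bS$-equivariance is immediate: Maffei's construction is homogeneous under simultaneous rescaling of all linear maps in a representation, which is exactly the conical $\bS$-action. For $H$-equivariance, the key point is that the reflection at vertex $i$ only redefines $V_i$ and the maps incident to $i$, and it does so using only the quiver structure and the framing spaces $W_j$; it never mixes parallel edges or rescales the multiplicity spaces $\C^{\epsilon_{j,k}}$ on which $H$ acts. Hence the map commutes with the $H$-action by construction.

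The main potential obstacle is not the isomorphism itself (which is Maffei's theorem), but bookkeeping around the non-uniqueness of reduced expressions: the composite $\phi_2\circ\phi_1$ a priori depends on the chosen factorization of $w$. One must check that different reduced expressions give the same isomorphism up to canonical identification, but this is not needed for the bare statement above, which only claims existence of some isomorphism for each $w$. If one wanted a coherent system indexed by $\GWeyl$, one would have to verify the braid relations among the $\phi_{s_i}$, which in Maffei's setup amounts to checking compatibility of the reflection isomorphisms at non-adjacent and adjacent vertex pairs.
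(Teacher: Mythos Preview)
Your overall strategy matches the paper's: reduce to a simple reflection $s_i$, invoke Maffei's Theorem~26 for the existence of the isomorphism, and then verify $H\times\bS$-equivariance. The difference is in how the equivariance is checked, and here your argument has genuine gaps.

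Your $\bS$-equivariance claim is not correct as stated. The conical action is $s\cdot(x,\xi)=(s^{-a}x,s^{-b}\xi)$, which scales maps along original edges and reversed edges by \emph{different} weights when $a\neq b$; it is not ``simultaneous rescaling of all linear maps.'' Thus homogeneity under a single scaling does not immediately give invariance of Maffei's correspondence. The paper handles this by introducing two distinct $H\times\bS$-actions (an ``A-action'' and a ``B-action'') on the space $T_i = T_i^{\mathrm{out}}\oplus T_i^{\mathrm{in}}$, observing that $a_i(s)$ transforms via the A-action and $b_i(s)$ via the B-action, and then checking each of Maffei's four conditions C1--C4 explicitly. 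For instance, C2 ($\ker b_i(s)=\operatorname{image} a_i(s')$) survives because the A- and B-actions differ only by a scalar and hence induce the same action on subspaces of $T_i$; C3 survives because both sides of $a_i(s')b_i(s')=a_i(s)b_i(s)$ transform identically under pre-composition by A and post-composition by B.

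Your $H$-equivariance argument is also too loose. The group $H$ \emph{does} act nontrivially on the multiplicity spaces $\C^{\epsilon_{i,j}}$ for edges incident to $i$, so saying the construction ``never mixes parallel edges'' is not the point. What matters is that $T_i^{\mathrm{out}}\cong\bigoplus_j V_j\otimes\C^{\epsilon_{i,j}}$ and $T_i^{\mathrm{in}}\cong\bigoplus_j V_j\otimes\C^{\epsilon_{j,i}}$ carry natural $H$-actions (the latter via the dual), and Maffei's conditions are phrased entirely in terms of kernels, images, and compositions of maps to and from $T_i$---all of which are $H$-equivariant. Again, the paper makes this precise by running through C1--C4.

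In short: your outline is right, but the paper's proof is not a hand-wave; it is a condition-by-condition verification against Maffei's explicit definition of the correspondence $Z_i^\lambda(\Bv)$, and that verification is where the content lies.
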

\begin{proof}
  The only part we need to prove is $H\times \bS$-equivariance.  This
  is the same as proving the $H\times \bS$-invariance of the
  correspondence $Z_i^\la(\Bv)$ of \cite[Def. 26]{MaffWeyl}.  Expanding
  on 
  Maffei's notation, we let 
\[T_i^{\operatorname{out}}:= \bigoplus_{e\colon i\to j}V_j\cong
\oplus_jV_j\otimes \C^{ \epsilon_{i,j}}\qquad
T_i^{\operatorname{in}}:= \bigoplus_{e\colon j\to i}V_j\cong
\oplus_jV_j\otimes \C^{\epsilon_{j,i}}\qquad
T_i:=T_i^{\operatorname{out}}\oplus T_i^{\operatorname{in}}.\]
The group $H\times \bS$ acts on the spaces
$T_i^{\operatorname{out}},T_i^{\operatorname{in}}$ in two slightly
different ways:  $H$ always acts
through its natural action on $\C^{\epsilon_{i,j}}$ on
$T_i^{\operatorname{out}}$ and by the dual action on $T_i^{\operatorname{in}}$; in the A-action, $\bS$ acts
with weight 1 on $T_i^{\operatorname{out}}$ and weight 0 on
$T_i^{\operatorname{in}}$ and in the B-action by weights $0$ and $-1$ respectively.  Thus, the effect of this action on 
  \[ a_i(s)\colon V_i\to T_i \qquad b_i(s)\colon T_i\to V_i\] is
  simply the induced action on the mapping spaces (with $V_i$ having
  trivial $ H\times \bS$-action) for the A-action and B-action respectively.  The same is true of the maps 
 \[ a_i(s')\colon V_i'\to T_i \qquad b_i(s')\colon T_i\to V_i'.\]
 Now consider Maffei's conditions:
 \begin{itemize}
 \item [C1:] This condition is just that maps along arrows not touching $i$ are
   unchanged.  This is obviously unchanged by the $H\times
   \bS$-action.
\item [C2:] This is that $\ker b_i(s)=\operatorname{image} a_i(s')$.
  The A-action and B-action differ by a scalar, so their induced
  actions on the subspaces of $T_i$ coincide.  
  Both this kernel and image transform according to the action of $H\times
   \bS$ on $T_i$, so their equality is invariant.
\item [C3:] This states that $a_i(s')b_i(s')=a_i(s)b_i(s)$ (since we
  are only considering the 0 level of the moment map).  Both of these
  maps transform according to pre-composition with the A-action and
  post-composition with the B-action
  on $T_i$, so their equality is preserved.
\item [C4:] This condition is just that the points both lie in the zero
  level of the moment map; this is $H\times
   \bS$-invariant, since the moment map is $H\times
   \bS$-equivariant with $H$ acting trivially and $\bS$ acting by
   scaling on $\fg$.  
 \end{itemize}
Thus the correspondence is invariant, and the induced isomorphism is equivariant.
\end{proof}

We let $(\cOg)_\nu^\eta$
denote geometric category $\cO$ for $ \mu^{-1}(0)/\!\!/_{\eta}G_\nu$ and an integral
choice of $\chi$; if we omit $\eta$, it is assumed to be dominant, so
the underlying variety is $\fM^\la_\nu$.
\begin{proposition}
  The isomorphism of varieties $\phi$ induces an equivalence of categories.
  $(\cOg)_\nu^\eta\cong (\cOg)^{w\eta}_{w\nu}$ for any fixed
  cocharacter $\bT\to H$.  
\end{proposition}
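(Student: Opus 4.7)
The plan is to transport each piece of the definition of $\cOg$ along the symplectic isomorphism $\phi$ and verify that the defining conditions are preserved under pushforward. The three ingredients to match are: the quantization $\cD_\chi$, the support condition defining $\fM^+$, and the $\bT$-regularity condition given by $\cD_\chi(0)$-lattices.

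First, since $\phi$ is a symplectic isomorphism, pushforward along $\phi$ gives an equivalence between coherent $\cD_\chi$-modules on $\mu^{-1}(0)/\!\!/_{\eta}G_\nu$ and coherent $\phi_*\cD_\chi$-modules on $\mu^{-1}(0)/\!\!/_{w\eta}G_{w\nu}$, and the $\bS$-equivariance of $\phi$ restricts this to an equivalence on the $\bS$-equivariant subcategories (identifying good lattices on either side). Since $\chi$ is integral, $\phi_*\cD_\chi$ is itself a canonical quantization of the target; its period is obtained from $\chi$ via the induced isomorphism of Picard groups (equivalently, the Weyl group action on $\mathfrak{H}^*$), so $\phi_*\cD_\chi$ is identified with the standard quantization at the image parameter, giving the desired equivalence of $\bS$-equivariant coherent $\cD$-module categories.

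Next, the $H$-equivariance of $\phi$, together with the fact that the $\bT$-actions on both sides are induced by the same cocharacter $\vartheta\colon\bT\to H/Z$, implies that $\phi$ intertwines the two Hamiltonian $\bT$-actions. Consequently $\phi$ sends $\pi^{-1}(o)$ on the source to the analogous preimage on the target (both are characterized as the zero fiber of the $\bS$-equivariant affinization map), and it identifies the two attracting-locus conditions defining $\fM^+$. Similarly, a $\bt$-stable $\cD_\chi(0)$-lattice on a source module pushes forward to a $\bt$-stable $\phi_*\cD_\chi(0)$-lattice on its image, by joint $\bS\times\bT$-equivariance of $\phi$. Therefore $\phi_*$ restricts to a functor $(\cOg)^\eta_\nu\to (\cOg)^{w\eta}_{w\nu}$ with inverse $\phi^*$, yielding the claimed equivalence.

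The only nontrivial check, and the expected main obstacle, is the identification of the quantization parameter: one must verify that the period of $\phi_*\cD_\chi$ matches a standard quantization on the target (conjecturally $w\chi$, up to an integral twist). This follows from the compatibility of Maffei's isomorphism with the period classification of quantizations; but since the statement of the proposition is insensitive to the specific parameter (any two integral choices yield equivalent categories via twisting by quantized line bundles, as in \cite[\S 6]{BLPWquant}), this identification does not require a precise formula, only that some integral parameter on the target makes the diagram commute.
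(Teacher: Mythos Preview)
Your overall strategy matches the paper's: push the quantization forward along $\phi$ and use $H\times\bS$-equivariance to carry the support and lattice conditions defining $\cOg$ across. The paper's proof does exactly this, concluding with ``Thus isomorphism $\phi$ identifies these two quantizations and thus category $\cO$ over them by $H$-equivariance.''

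The difference is that the paper \emph{does} the period computation you try to avoid. It writes down the Kirwan maps $\mathsf{K}_\nu,\mathsf{K}_{w\nu}$, records that $\phi^*\mathsf{K}_{w\nu}(\chi)=\mathsf{K}_\nu(w^{-1}\chi)$, and then uses \cite[3.14]{BLPWquant} to see that $\cD_{\chi-\rho_\nu/2}$ has period $\mathsf{K}_\nu(\chi)$ while $\cD_{w\chi-\rho_{w\nu}/2}$ has period $\mathsf{K}_{w\nu}(w\chi)$, so these are precisely the quantizations identified by $\phi$. This is the content you flag as ``the only nontrivial check'' and then wave away.

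Your bypass has a gap. You assert that $\phi_*\cD_\chi$ is ``the standard quantization at the image parameter'' because the period is transported by an isomorphism of Picard groups, and that in any case the precise parameter is irrelevant since integral parameters are all equivalent via twisting. But for that last step you need $\phi_*\cD_\chi$ to be $\cD_{\chi'}$ for some \emph{integral} $\chi'$, i.e.\ that its period lies in the image of the Kirwan map for $G_{w\nu}$ at an integral point. This is not automatic from $\phi$ being a symplectomorphism: you need the compatibility $\phi^*\mathsf{K}_{w\nu}=\mathsf{K}_\nu\circ w^{-1}$ together with the $\rho$-shift in the period formula, which is exactly the computation the paper carries out. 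Without it, ``some integral parameter on the target makes the diagram commute'' is an assertion, not an argument.
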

\begin{proof}
We have Kirwan maps
\[\mathsf{K}_\nu\colon(\fg_\nu^{G_\nu})^*\to
H^2(\mu^{-1}(0)/\!\!/_{\eta}G_\nu)\qquad
\mathsf{K}_{w\nu}\colon (\fg_{w\nu}^{G_{w\nu}})^* \to H^2(\fM^\la_{w\nu})\]
If $\al_i^\vee(\la-\nu)$ and $\al_i^\vee(\la-w\cdot\nu)$ are both
positive, then $G_\nu$ and $G_{w\cdot \nu}$ are products of equal
numbers of general linear groups and we have a canonical isomorphism $(\fg_\mu^{G_\mu})^*\cong
(\fg_{w\mu}^{G_{w\mu}})^*$. In the degenerate cases where one of the
vertices of the quiver gives 0 in the dimension vector, we add in a
trivially acting $\C^*$ to fix this isomorphism (which is killed by
the Kirwan map).  Under Maffei's isomorphism, we have that
$\phi^*\mathsf{K}_{w\cdot \nu}(\chi)\cong \mathsf{K}_{\nu}(w^{-1}\cdot \chi)$.
This shows us how to compare quantizations on the two varieties by
comparing their periods.  

By \cite[3.14]{BLPWquant}, 
the quantization $\cD_{\chi-\nicefrac{\rho_\nu}{2}}$ of
$\mu^{-1}(0)/\!\!/_{\eta}G_\nu$ has period
$\mathsf{K}_\nu(\chi)$ and 
the quantization $\cD_{w\chi-\nicefrac{\rho_{w\nu}}{2}}$ of
$\mu^{-1}(0)/\!\!/_{w\eta}G_{w\nu}$ has period $\mathsf{K}_{w\nu}(w\chi)$. Here $\rho_\mu$ is the
character of $\det(E_\mu)$; this indexing is chosen so that
$\cD_{\chi-\nicefrac{\rho_\mu}{2}}^{\operatorname{opp}}\cong
\cD_{-\chi-\nicefrac{\rho_\mu}{2}}^{\mbox{}}$. 

Thus isomorphism $\phi$ identifies these two quantizations and thus
category $\cO$ over them by $H$-equivariance.
\end{proof}
Thus, in place of fixing a weight space and considering all GIT
conditions, we can instead fix the dominant stability condition and
vary the weight space.  
\begin{definition}
  We let the functors \[\mathscr{T}_{w}\colon
  (\cOg)_\nu^\eta\to (\cOg)^{w^{-1}\eta}_{\nu}\cong (\cOg)_{w \nu}^\eta\] be the transport of the
  $\EuScript{T}_w$ via this isomorphism.  These again define a strong
  action of the Artin braid group.
\end{definition}

When $\eta$ is chosen to be dominant, there is another such braid
action on any category with a categorical $\fG$-action, that given by
Rickard complexes $\Theta_i$ as defined by Chuang and Rouquier
\cite[\S 6.1]{CR04}; these were shown to satisfy the braid relations
by Cautis and Kamnitzer \cite[6.3]{CK3}.
These are compared in recent work of Bezrukavnikov and
Losev \cite{BLet}, building on work of Cautis, Dodd, and Kamnitzer
\cite{CDK}.

\excise{Obviously, we hope that these coincide.  In order to
prove this, we need a somewhat different description of the functor
$\mathscr{T}_{s_i}$.  

Choose an orientation of $\Gamma$ so that $i$ is a source.  Following
the notation of \cite{Webcatq}, we consider the open subvariety
$\hat{X}^\la_\mu$  in $X^\la_\mu\cong E^\la_\mu/G$ where the sum of
maps out from $i$ has no kernel.    Now consider
$\hat{X}^\la_{s_i\cdot \mu}$. The dimension vector for nodes other
than $i$ is the same, so we can identify the vector spaces on these
nodes for $\hat{X}^\la_\mu$  and $\hat{X}^\la_{s_i\cdot \mu}$.
Furthermore, a
point in one of these varieties is determined by the image of $V_i$ under the
maps $x_{\operatorname{out}}$ or $x_{\operatorname{out}}'$ and the
action on arrows connecting other vertices.
\begin{definition}
  Let $Y\subset \hat{X}^\la_\mu\times \hat{X}^\la_{s_i\cdot \mu}$ be
  the set of points that agree with $i$ forgotten, such that the
  images of $x_{\operatorname{out}}$ and $x_{\operatorname{out}}'$ are
  transverse.
\end{definition}

\excise{
Alternatively, we can let $\check{X}^\la_\mu$ be the space of
representations when we reverse the orientation of edges out of $i$
(so it becomes a sink) such that $x_{\operatorname{in}}$ is surjective.  We have an
isomorphism $\hat{X}^\la_{s_i\cdot \mu}\cong \check{X}^\la_\mu$ given
by reflection functors (i.e. the kernel of $x_{\operatorname{in}}$
coincides with the image of $x_{\operatorname{out}}'$).  We let $W$
be the graph of this isomorphism, and $Y'$ be the induced
correspondence in  $\check{X}^\la_\mu \times \hat{X}^\la_{\mu}$, that
is, the points where the composition $  x_{\operatorname{in}}\circ
x_{\operatorname{out}}$ is an isomorphism.}
The maps $\pi_1\colon Y\to \hat{X}^\la_\mu$ or $\pi_2\colon Y\to
\hat{X}^\la_{s_i\cdot \mu}$ are surjective with affine space fibers,
and in particular induce isomorphisms on cohomology and Picard group. 

Note that for each 1-dimensional representation $\theta$ of $G_\mu$,
we have an induced line bundle on  any quotient by this group,
including $\hat{X}^\la_\mu$ or $Y$, we have a line bundle
$\cL_{\theta}$.  If we identify the character groups of $G_\mu$ and
$G_{s_i\cdot \mu}$ as usual, then we have that $\pi_1^*L_\theta\cong \pi_2^*L_{s_i\theta}$

This shows that a line bundle $\pi_1^*L_\theta$ on $Y$ carries a
bimodule action over $\Diff_{s_i\mu}^{\chi'}\times \Diff_{\mu}^{\chi}$
if and only if $s_i\chi'-\chi=\theta-\rho$ where as usual, we use
$-\rho$ to denote the character that induces the canonical bundle on
$\hat{X}^\la_\mu$.

Thus, for example, for twisting functors, we wish to consider
bimodules over $\Diff_{s_i\mu}^{s_i\chi+n\eta}\times
\Diff_{\mu}^{\chi+n\eta}$, so we must take the line bundle $\cM=p_1^*L_{n(s_i\eta-\eta)-\rho}=p_2^*L_{n(\eta-s_i\eta)-s_i\rho}$.

\begin{lemma}
  Convolution with $\mathscr{L}_i=\red^{s_i\mu}\boxtimes\red^{\mu}(\cM)$
  induces the adjoint equivalence $\mathscr{T}_{s_i}$.
\end{lemma}
\begin{proof}
Note that any point in $Y$ with
$\pi_1(y)$ destabilized by a submodule not supported on $i$ also has
$\pi_2(y)$ destabilized by such a submodule.  
Thus, if $M$ is a ${\chi+n\eta}$-twisted $\Diff_{\hat{X}_\mu^\la}$ module, the cone 
$J$ of the map $\red^{\mu}_!\red^{\mu} (j_*^\mu M) \to j_*^\mu M$ is the pushforward of a
$\Diff_{\mu}^{\chi+n\eta}$-module with support destabilized by a
submodule not supported on $i$.  Thus, $\red^{s_i\mu}(\cM \star J)=0$,
so \[\red^{s_i\mu}(\cM \star j_*^\mu M)\cong \red^{s_i\mu}(\cM \star
\red^{\mu}_*\red^{\mu} j_*^\mu M)=\mathscr{L}_i\star \red^{\mu} j_*^\mu M\]
since as argued in \cite{Webcatq}, we have that $\mathscr{L}_i\star M\cong
\red^{s_i\mu}(\cM \star \red^{\mu}_*M)$.

Thus 
\[\mathscr{L}_i\star{}_{\chi+n\eta}\cT_{\chi+n s_i \eta}\cong
\red^{s_i\mu}(\cM \star
\red^{\mu}_*{}_{\chi+n \eta}\cT_{\chi+n s_i
\eta})\cong
\red^{s_i\mu}(\cM \star (j_*^\mu\Diff_{\hat{X}^\la_\mu}\otimes L_{n(\eta-s_i\eta)})).\]
 We have that  \[\cM \star (j_*^\mu\Diff_{\hat{X}^\la_\mu}\otimes L_{n(\eta-s_i\eta)}) =(\pi_2)_*\cM \otimes\pi_1^* L_{n(\eta-s_i\eta)-\rho}\cong
 \Diff_{\hat{X}^\la_{s_i\mu}}^{s_i\chi+n\eta},\] so $
 \mathscr{L}_i\star{}_{\chi+n\eta}\cT_{\chi+n s_i \eta}\cong \cD_{s_i\chi+n\eta}$.
The twisting functor is the only functor sending ${}_{\chi+s_in
  \eta}\cT_{\chi+n \eta}$ to $ \cD_{\fM^\la_{s_i\mu}}$, so we are done.
\end{proof}
This gives us a much more concrete understanding of the functor
$\mathscr{T}_{s_i}$, by analyzing the D-module $\cM$ on
$\hat{X}^\la_{s_i\cdot \mu}\times \hat{X}^\la_{\mu}$.  In particular,
we can write it as an iterated cone of simple D-modules.  Luckily,
this has already already been done by the author and Williamson
\cite[5.2 \& 5.7]{WWcol}, though this seems to be a standard
calculation that has appeared in various forms in various other
contexts.  Translated into the language of this paper, we see that:
    \begin{proposition}
       Let $m=\al_i^\vee(\mu)$. The D-modules $\hat{\mathscr{E}_i}^{(n)}\star
      \hat{\mathscr{F}_i}^{(n+m)}$ and
      $\hat{\mathscr{F}_i}^{(n+m)}\star \hat{\mathscr{E}_i}^{(n)}$ are
      simple and indecomposable; furthermore, $\cM$ is the unique
      D-module on $\hat{X}^\la_\mu\times
    \hat{X}^\la_{s_i\cdot \mu}$ that can be written as an iterated
    cone for a complex
    \begin{equation}
\cdots\to 
\hat{\mathscr{F}_i}^{(m+2)}\star\hat{\mathscr{E}_i}^{(2)}[-2]\to \hat{\mathscr{F}_i}^{(m+1)}\star\hat{\mathscr{E}_i}
[-1]\to \hat{\mathscr{F}_i}^{(m)}\to 0 \qquad m\geq  0\label{eq:1}
\end{equation}
\begin{equation}
\cdots \to \hat{\mathscr{E}_i}^{(m+2)} \star\hat{\mathscr{F}_i}^{(2)}
[-2]\to \hat{\mathscr{E}_i}^{(m+1)}\star\hat{\mathscr{F}_i}
[-1]\to \hat{\mathscr{E}_i}^{(m)}\to 0\qquad m\leq  0\label{eq:2}
\end{equation}
    \end{proposition}
    The category of D-modules on $\hat{X}^\la_\mu$ has a categorical
    $\mathfrak{sl}_2$-action given by convolution with the modules
    $\hat{\mathscr{E}_i}^{(n)}\star-$ and $
    \hat{\mathscr{F}_i}^{(n)}\star-$ by the calculations of
    \cite{Webcatq}.  By definition \cite[\S 6.1]{CR04}, the 
    Chuang-Rouquier equivalence  $\Theta_i$ for this action is a complex as in
    equations (\ref{eq:1},\ref{eq:2}).  It follows immediately that:}
\begin{proposition}[\mbox{\cite[3.4]{BLet}}]
 On the sum $\bigoplus_\mu
  D^b(\bigoplus_{w\in \GWeyl}(\cOg)_{w\nu})$, we have an isomorphism of
  functors $\Theta_i\cong\mathscr{T}_{s_i}$. 
\end{proposition}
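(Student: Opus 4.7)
The plan is to reduce the identification $\Theta_i \cong \mathscr{T}_{s_i}$ to a calculation with an explicit D-module kernel that implements the twisting functor, and then to recognize that kernel as the Rickard complex. By Proposition \ref{adjoint-twist}, for $n \gg 0$ we have $\EuScript{T}^{s_i\eta,\eta}_{\chi+n\eta} \cong \red^{s_i\eta} \circ \red^\eta_!$, so after transport across Maffei's isomorphism $\phi$ the functor $\mathscr{T}_{s_i}$ is realized as convolution with a bimodule built from the quantized structure sheaves of the two GIT quotients glued along the common unstable boundary. The first step is to rewrite this convolution as $\red^{s_i\mu} \boxtimes \red^\mu$ of a kernel $\cM$ supported on a concrete correspondence between open loci $\hat X^\la_\mu$ and $\hat X^\la_{s_i\mu}$, namely the locus where the linear maps out of (or into) vertex $i$ are transverse—this is the content of the reflection-functor picture used by Maffei. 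Proposition \ref{adjoint-twist} then reduces the whole computation to an identification of D-modules on the product of these open loci, before any reduction.

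Next I would identify $\cM$ explicitly. Choosing an orientation in which $i$ is a source, $\cM$ is (up to twist by a character needed to match the quantization parameters on the two sides) the constant D-module on the graph of the classical reflection correspondence. The key input is then the calculation of Webster--Williamson, which presents this very D-module as an iterated cone of simples of the form $\hat{\mathscr{F}}_i^{(m+k)} \star \hat{\mathscr{E}}_i^{(k)}$ (for $m = \al_i^\vee(\mu) \ge 0$) or $\hat{\mathscr{E}}_i^{(m+k)} \star \hat{\mathscr{F}}_i^{(k)}$ (for $m \le 0$), with differentials unique up to scalar. After applying $\red^{s_i\mu}\boxtimes\red^\mu$ and invoking the comparison of \cite{Webcatq} between these D-module convolutions and the categorification functors $\mathscr{E}_i^{(n)}$, $\mathscr{F}_i^{(n)}$ acting on $\bigoplus_\mu \cOg$, the resulting complex on category $\cO$ is exactly the shape
\[
\cdots \to \mathscr{F}_i^{(m+2)} \mathscr{E}_i^{(2)}[-2] \to \mathscr{F}_i^{(m+1)} \mathscr{E}_i[-1] \to \mathscr{F}_i^{(m)} \to 0
\]
that defines the Chuang--Rouquier Rickard complex $\Theta_i$ for the categorical $\fG$-action from \cite[Th.~A]{Webcatq} (and symmetrically in negative weights).

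Finally, one must check that the implied differentials agree. For this I would invoke the uniqueness statement of Chuang--Rouquier: in a complex of this shape with all terms simple and indecomposable, the differentials are determined up to an overall scalar by the condition that the total complex be invertible, or equivalently by the requirement that convolution with it intertwine $\mathscr{E}_i \leftrightarrow \mathscr{F}_i$. Since $\mathscr{T}_{s_i}$ is an equivalence (being a composition of localization and sections in a minimal gallery, cf.\ the proof that $\EuScript{T}_w$ is a braid action), the corresponding total complex is invertible, so it must equal $\Theta_i$ up to a shift and twist, and matching on a single highest-weight object fixes that normalization.

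The main obstacle is the third paragraph: pinning down that the differentials, not merely the terms, of the Webster--Williamson resolution match those of $\Theta_i$. Uniqueness of differentials in Rickard-type complexes is classical, but applying it here requires knowing that the kernel $\cM$ really induces an equivalence (already available from braid-group strength of $\EuScript{T}_w$) and that the terms in its Cousin-type filtration are the simples predicted above, which is where the \cite[3.4]{BLet} and \cite{CDK} analysis—avoided in our sketch by using \cite[5.2 \& 5.7]{WWcol}—does the real geometric work.
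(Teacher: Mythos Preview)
The paper does not give its own proof of this proposition: it is stated purely as a citation of \cite[3.4]{BLet}, prefaced by the remark that the comparison is ``recent work of Bezrukavnikov and Losev, building on work of Cautis, Dodd, and Kamnitzer.'' So in the published text there is nothing to compare against.

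That said, the LaTeX source contains an \emph{excised} draft argument that follows exactly the line you propose. The author orients so that $i$ is a source, defines the transversality correspondence $Y \subset \hat X^\la_\mu \times \hat X^\la_{s_i\mu}$, takes a suitably twisted line bundle $\cM$ on $Y$, proves a lemma that convolution with $\mathscr{L}_i = \red^{s_i\mu}\boxtimes\red^{\mu}(\cM)$ realizes $\mathscr{T}_{s_i}$, and then invokes \cite[5.2 \& 5.7]{WWcol} to present $\cM$ as the \emph{unique} iterated cone with terms $\hat{\mathscr{F}}_i^{(m+k)}\star\hat{\mathscr{E}}_i^{(k)}[-k]$ (or the $\hat{\mathscr{E}}$-first version for $m\le 0$). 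The conclusion $\Theta_i\cong\mathscr{T}_{s_i}$ is then immediate from the definition of the Rickard complex. The author evidently decided to replace this with the BLet citation.

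One point of difference worth noting: in the excised version, the matching of differentials is not argued via invertibility of the kernel as you suggest, but is packaged into the uniqueness clause quoted from \cite{WWcol} (the $\hat{\mathscr{E}}_i^{(a)}\star\hat{\mathscr{F}}_i^{(b)}$ are asserted there to be \emph{simple} D-modules on the open loci, so $\Hom$ between consecutive terms is one-dimensional and the differentials are pinned down before reduction). Your alternative route via ``invertible complexes of Rickard shape are unique'' is plausible but more delicate, since after applying $\red$ the terms $\mathscr{F}_i^{(m+k)}\mathscr{E}_i^{(k)}$ need not be simple, so uniqueness-up-to-scalar of the differentials is no longer automatic at that stage. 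If you want a self-contained argument, it is cleaner to establish uniqueness upstairs on $\hat X^\la_\mu \times \hat X^\la_{s_i\mu}$ as the excised text does.
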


\excise{if we let
$\red^1$ and $\red^2$ be the reduction functors for two different GIT
parameters (and thus two different resolutions), then the twisting
equivalence is given by $\red^2\circ \red^1_!$, with inverse given by
$\red^1\circ \red^2_*$.  

\begin{proposition}
  This functor $\red^2\circ \red^1_!$ agrees with the twisting functor
  $\LLoc^2\circ \Rsecs^1$.
\end{proposition}
\begin{proof}
  Note that $\Rsecs^2\circ \red^2\cong \Rsecs^1\circ\red^1$, since these
  functors agree on the algebra $\Diff$ itself; both send it to $A$.
  Thus, we have that 
  \begin{equation*}
    \LLoc^2\circ \Rsecs^1 N\cong \LLoc^2\circ
    \Rsecs^1\circ \red^1\circ \red_!^1 N\cong  \LLoc^2
    \Rsecs^2\circ \red^2(\red_!^1 M)
  \end{equation*}
\end{proof}}

\section{Quiver varieties: special cases}
\subsection{Tensor product actions}

\nc{\rep}{\mathsf{V}}

A {\bf tensor product action} of $\bT$ on $\fM^\la$ is one induced by a cocharacter 
$\bT\to PG_\Bw$.  This is the same as assigning weakly decreasing weights
$\vartheta_1,\dots, \vartheta_{\ell}$ to the different new edges in the
Crawley-Boevey quiver.  By Assumption \ref{positivity}, we have
$|\vartheta_i-\vartheta_j|>\max(|a|,|b|)$.  These actions played an important role in
Nakajima's definition of the  ``tensor product quiver variety''
\cite{Nak01}.  The eigenspaces of this action on $W$ decompose this
space into a sum $W^{1}\oplus\cdots \oplus W^{\ell}$, which are again
ordered by increasing eigenvalue.  Let $\la_i$ be a weight such that
$\al_j^\vee(\la_i)=\dim W^i_j$.

In this case, the algebra $\bar{W}^\vartheta_\nu$ is isomorphic by
\cite[3.5]{WebwKLR} to one
which appeared earlier in the work of the author \cite[\S4.2]{Webmerged}; this is the algebra
$\tilde{T}^\bla_\nu$.  The steadied quotient of this
algebra corresponding to positive powers of the determinant characters
is the {\bf tensor product algebra} $T^\bla$ also defined in
\cite[\S 4.2]{Webmerged}.  This is an algebra whose representation category categorifies the tensor
product $\rep_{\la_1}\otimes \cdots \otimes \rep_{\la_\ell}$ of the
simple $\fG$-representations $\rep_{\la_i}$ of highest weights $\la_i$ in an
appropriate sense.

\begin{theorem}\label{alg-bla}
  Assume  $\bT$ is given by a tensor product action.
Then hypothesis \hyperlink{dagger}{$(\dagger)$} holds in  this case.
\end{theorem}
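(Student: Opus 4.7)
The plan is to reduce to Proposition \ref{prop-L}, the $\bT$-trivial case, by exploiting the ``layered'' structure that a tensor product action imposes on the relative precore.

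First, I would analyze which loadings can occur. By Assumption \ref{positivity}, the tensor product weights satisfy $|\vartheta_i-\vartheta_j|>\max(a,b)$; consequently, for any rational lift $\lift=\xi\cdot\vartheta$ whose associated variety $X_\lift$ has a non-empty stable part, the requirement that each component $q_k\colon V\to W^k$ of $q$ have non-negative weight under $\lift$ forces the support of the loading $\Bi$ for $\xi$ to fall into $\ell$ widely separated bands clustered around the integers $-\vartheta_1,\ldots,-\vartheta_\ell$. Every $\Bi$-loaded flag then refines a canonical step filtration $0=V^{(0)}\subsetneq V^{(1)}\subsetneq\cdots\subsetneq V^{(\ell)}=V$ with $q(V^{(k)})\subset W^1\oplus\cdots\oplus W^k$, the successive quotient $V^{(k)}/V^{(k-1)}$ inheriting a natural structure of quiver representation with shadow $W^k$ on which $\bT$ now acts trivially. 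I would take $B$ to consist of one representative rational lift in each chamber of the hyperplane arrangement on the maximal torus $\tilde{T}$ of $\tilde{G}$ described earlier.

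Next, given a simple $\cM\in\preO$, I would pick the generic Kirwan-Ness stratum $S$ of its singular support, with corresponding lift $\lift\in B$. In terms of the dimension vectors $(\Bv^1,\ldots,\Bv^\ell)$ of the induced filtration, the generic representation in $S$ is determined by a choice of simple $\cM^k$ in $\preO$ for the smaller quiver variety with dimension vector $\Bv^k$ and shadow $W^k$, viewed with trivial $\bT$-action. Proposition \ref{prop-L} asserts that each such $\cM^k$ is a summand of some $L_{\Bi^k}$ for a loading $\Bi^k$. Concatenating the $\Bi^k$, translated so that $\Bi^k$ sits in the band near $-\vartheta_k$, produces a loading $\Bi$ whose associated $L_\lift$ contains $\cM$ as a summand: the fibration $X_\lift\to G/G_\lift$ realizes $L_\lift$ as a pushforward from the Levi $\prod_k G_{\Bv^k}$, and the decomposition theorem together with a K\"unneth computation on each layer identifies the summand.

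Finally, for the unsteadiness clause of $(\dagger)$, I would invoke the standard characterization of Nakajima instability: a point has unstable support for the determinant character precisely when some non-trivial graded subspace of $V$ is preserved by all $f_e$ and $\bar f_e$ and lies in $\bigcap_k \ker q_k$. Given such a destabilizing subrepresentation at a generic point of the support of $\cM$, the $G_\mu$-cocharacter $\varpi$ scaling this subspace satisfies both $(T^*E)_\lift\subset(T^*E)_\varpi$ and $\langle\det,\varpi\rangle>0$, witnessing the unsteadiness of $\lift$. The main obstacle is the inductive identification in the second paragraph: one must verify carefully that the projection of $\cM$ onto each layer is a simple object of $\preO$ on the smaller variety with trivial $\bT$-action, and that the concatenation of loadings is compatible with both the reduction functor and the summand structure in the decomposition theorem. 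This is where the separation ensured by Assumption \ref{positivity} plays its essential role.
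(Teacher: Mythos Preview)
Your approach is genuinely different from the paper's, and the gap you flag in your last paragraph is real and not easily closed with the ingredients you have assembled.

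The central difficulty is your claim that a simple $\cM\in\preO$ ``factors'' layer-wise along the filtration $V^{(0)}\subset\cdots\subset V^{(\ell)}$ into simples $\cM^k$ on the smaller spaces, and that concatenating the loadings $\Bi^k$ produced by Proposition~\ref{prop-L} recovers $\cM$ as a summand of $L_{\Bi}$.  The singular support of $\cM$ may well sit in a single Kirwan--Ness stratum, but that does not by itself give you a parabolic restriction of $\cM$ to the Levi which is simple, nor does it tell you that $\cM$ is a summand of the parabolic induction of whatever you obtain on the Levi.  You would need to know that the $G$-equivariant local system on the open stratum is trivial (connected stabilizers help, but you have not invoked this), that the map $p_\lift$ has contractible fibers over that open stratum (it need not be birational), and that the K\"unneth decomposition on the Levi interacts correctly with intermediate extension.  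None of this is automatic.

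Your unsteadiness argument has a separate problem: you produce a destabilizing cocharacter $\varpi$ at a generic point of the support, but $(\dagger)$ asks that $\cM$ be a summand of $L_{\lift'}$ for some \emph{unsteady} $\lift'$ in your set $B$.  You assert $(T^*E)_\lift\subset (T^*E)_\varpi$ for the Kirwan--Ness lift $\lift$, but this containment is the converse of what Theorem~\ref{th:pushforward} provides and is not justified.

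The paper takes a completely different route.  It first invokes a result of Li \cite[8.2.1(4)]{Litensor} identifying exactly which summands of $L$ have unstable singular support: they are the summands of $L_\Bi$ for $\Bi$ \emph{violating} (a combinatorial condition equivalent to unsteadiness of the associated lift).  This dispatches the second clause of $(\dagger)$ outright.  For the first clause, the paper proves Lemma~\ref{lem:red-zero} by a counting argument: the $\bT$-fixed locus of $\fM^\la$ is $\prod_k\fM^{\la_k}$, so after suitable translation of $\chi$ the number of simples in $\cOg$ equals the number of simple core modules there, which by Proposition~\ref{prop-L} is at most $\prod_k\dim V_{\la_k}$; but Li \cite[8.2.1(5)]{Litensor} shows that exactly this many summands of $L$ survive $\red$, so pigeonhole forces every simple in $\cOg$ to be such a summand.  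Finally, an arbitrary simple $\cM\in\preO$ is handled by the same pullback trick as in Proposition~\ref{prop-L}: enlarge $\la$ to $\la+\la'$ with $\al_i^\vee(\la')\ge v_i$ so that $p^*\cM$ has a stable point in its singular support, apply Lemma~\ref{lem:red-zero} upstairs, and descend.
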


Recall that $L_\Bi$ is the object in the derived category of D-modules defined in
\eqref{eq:L-def}.  We call a loading $\Bi$ {\bf violating} if there exist
$k\in \R$ with $k<\vartheta_j$ for all $j$ with $\Bi(k)\neq 0$.  When (as
in \cite[\S 3.2]{WebwKLR}) we think of a loading as encoding a horizontal
slice in a Stendhal diagram (as defined in \cite[4.1]{Webmerged}), the
violating loadings correspond to slices with a black strand left of
all reds, meaning that the corresponding diagram is violating (as
defined in \cite[4.3]{Webmerged}).  

The summands of  $L_\Bi$ with $\Bi$ violating are precisely the set of
sheaves that Li denotes $\mathcal{N}_{V,D^\bullet}$ in \cite[\S
8]{Litensor}.  He shows that these coincide with the summands of
$L_\Bi$ for $\Bi$ arbitrary which have unstable singular support,
i.e. are killed by $\red$, a set he denotes $\mathcal{M}_{V,D^\bullet}$.

\begin{lemma}[\mbox{\cite[8.2.1(4)]{Litensor}}]
 We have $\mathcal{M}_{V,D^\bullet} =\mathcal{N}_{V,D^\bullet}$, that
 is, a summand $L$ of $L_\Bi$ for any arbitrary loading $\Bi$ is killed by
 $\red$ if and only if it is a summand of $L_\Bi$ for $\Bi$ violating. 
\end{lemma}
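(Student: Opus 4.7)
The plan is to handle the two inclusions separately. The inclusion $\mathcal{N}_{V,D^\bullet} \subseteq \mathcal{M}_{V,D^\bullet}$ is a direct application of Theorem \ref{th:pushforward}: for a violating loading $\Bi$ with witness $k$ (so $\Bi(k) \neq 0$ and $k < \vartheta_j$ for every $j$), I construct the unsteadying cocharacter $\varpi \colon \C^* \to G$ that acts with weight $+1$ on the span $V' \subset V$ of basis vectors at loading positions $\leq k$ and trivially elsewhere. Since the GIT parameter $\al$ is a positive power of the determinant character, $\langle \al, \varpi \rangle = \dim V' > 0$. The containment $(T^*E)_\lift \subset (T^*E)_\varpi$ reduces to a case analysis on the matrix coefficients of the regular and framing edges (and their duals) in $T^*E$: in each case the violating inequality $k < \vartheta_j$ rules out the only potential obstruction, which comes from framing components running out of $V'$. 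Once $\varpi$ is verified to unsteady $\lift$, Theorem \ref{th:pushforward} yields that $L_\Bi$, hence every summand, has unstable singular support.

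The inclusion $\mathcal{M}_{V,D^\bullet} \subseteq \mathcal{N}_{V,D^\bullet}$ is the more substantive direction. Given a simple summand $L$ of some $L_\Bi$ with unstable singular support, the goal is to produce a violating $\Bj$ with $L$ a summand of $L_{\Bj}$. The plan is to exploit the Kirwan-Ness stratification of the unstable locus used in the proof of Proposition \ref{prop:isotropic}: the singular support of $L$ meets some KN stratum whose destabilizing lift $\rho$ provides a proper $G$-invariant subrepresentation $V' \subset V$ contained in $\ker q$ of positive $\al$-weight. I would refine $\Bi$ by inserting the $\rho$-weights of $V'$ at real values strictly to the left of every $\vartheta_j$, producing a loading $\Bj$ which is then violating by construction. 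The corresponding loaded flag space $X_{\Bj}$ fibers over $X_\Bi$ by a proper map with smooth fibers generically parameterizing $\rho$-compatible refinements of a point in $X_\Bi$, and the decomposition theorem applied to this fibration forces every simple summand of $L_\Bi$ with support in the relevant KN stratum, and in particular $L$, to appear as a summand of $L_{\Bj}$.

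The main obstacle is the last step of the second direction: ensuring the refinement $X_{\Bj} \to X_\Bi$ recovers the correct unstably supported summands of $L_\Bi$. The subtlety is that the destabilizing $\rho$ lives on a generic point of the support but need not extend to a loading of all of $V$, so one must carefully match the KN stratum combinatorics with the loaded flag conditions. The required compatibility is exactly the content of Li's argument \cite[\S 8]{Litensor}, whose methods this proof would follow; once that semicontinuity input is in hand, the two inclusions together give the equality $\mathcal{M}_{V,D^\bullet} = \mathcal{N}_{V,D^\bullet}$.
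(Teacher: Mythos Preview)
The paper does not prove this lemma at all: it is stated with the attribution \cite[8.2.1(4)]{Litensor} and imported wholesale from Li's work, with no argument given (the brief proof block that would have appeared was excised, and it only said the statement is a restatement of Li's result). So there is nothing in the paper to compare your argument against beyond the citation.

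Your sketch goes further than the paper does. The direction $\mathcal{N}_{V,D^\bullet}\subseteq\mathcal{M}_{V,D^\bullet}$ is essentially the unsteadying mechanism already built into Theorem~\ref{th:pushforward}: a violating loading is unsteady in the sense defined there, via exactly the cocharacter $\varpi$ you describe (weight $1$ on the subspace carried by positions below all $\vartheta_j$), and then the theorem gives unstable singular support. That part is fine and is morally how the paper would argue it if it did.

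For the reverse inclusion $\mathcal{M}_{V,D^\bullet}\subseteq\mathcal{N}_{V,D^\bullet}$, your outline is reasonable in spirit but is not an independent proof: at the key step---showing that the refined pushforward $L_{\Bj}$ actually recovers $L$ as a summand---you explicitly defer to Li's argument. The difficulty you flag (that the destabilizing $\rho$ lives only on a generic point and need not globalize to a loading refinement with the right fibration properties) is real, and your sketch does not resolve it on its own. So as written, your proposal for this direction is a plausible strategy summary rather than a proof; it is not wrong, but it relies on the same external input the paper does.
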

\excise{\begin{proof}
  This is just a restatement of \cite[8.2.1(4)]{Litensor}; in Li's
  notation $\mathcal{N}_{V,D^\bullet}$ is the set of simples $\preO$
  which are summands of $L_\Bi$ with $\Bi$ violating, and $\mathcal{M}_{V,D^\bullet}$ is the set of
simples with unstable microsupport, i.e., those killed by $\red$.
Thus, the statement of the lemma is simply that $\mathcal{N}_{V,D^\bullet}=\mathcal{M}_{V,D^\bullet}$.
\end{proof}}

\begin{lemma}\label{lem:red-zero}
For any tensor product action, any simple in $\preO$ with non-zero
  reduction is a summand of $L_\Bi$ for some loading $\Bi$.  
\end{lemma}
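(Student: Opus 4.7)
The plan is to reduce the statement to Proposition \ref{prop-L}, which handles the case of trivial $\bT$-action, by absorbing the tensor product cocharacter $\vartheta\colon \bT\to PG_\Bw$ into an enlarged structure group. Since $\vartheta$ acts by distinct weights on the eigenspace decomposition $W=W^1\oplus\cdots\oplus W^\ell$, I refine the Crawley--Boevey quiver by replacing the single framing vertex $\infty$ with $\ell$ framing vertices $\infty_1,\dots,\infty_\ell$ of dimensions $\dim W^i$, yielding an enlarged quiver $\Gamma^\bla$. The representation space $E$ is unchanged, but the acting group grows from $G$ to $G'=G\times \prod_i \operatorname{GL}(W^i)/\C^*$, and $\bT$ now acts trivially on the refined setup, with $\vartheta$ reappearing as a cocharacter of the new framing factor.

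The next step is to promote a $(G,\chi)$-equivariant, $\bT$-regular simple $\cM$ in $\preO$ to a $(G',\chi')$-equivariant simple for the refined quiver. Integrality of $\chi$ forces the eigenvalues of $\xi$ to be integral, so the induced action integrates to a subtorus of $\prod_i \operatorname{GL}(W^i)$; since $\cM$ is simple and $\prod_i \operatorname{GL}(W^i)$ is connected reductive commuting with $G$, this extends by Schur's lemma to a full $(G',\chi')$-equivariant structure, with $\chi'|_G=\chi$ and $\chi'$ on the new factor pinned down by the $\xi$-spectrum. Under this correspondence, a rational lift $\xi\cdot\vartheta$ of $\bT'$ in the original setup becomes a rational cocharacter $\lift\colon \bT\to G'$ whose framing component is $\vartheta$; since $\vartheta$ now acts by a single scalar on each refined factor $\operatorname{GL}(W^i)$, its centralizer there is the full factor, so $X_\lift$ in the refined setting equals $X_\Bi$ in the original. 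Consequently $L_\lift \cong L_\Bi$, and the relative precore matches the Lusztig Lagrangian $\Lambda$ for $\Gamma^\bla$.

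With the embedding in place, Proposition \ref{prop-L} applied to $\Gamma^\bla$ with trivial $\bT$-action produces, for each simple $\cM$ with $\red(\cM)\neq 0$, a rational lift $\lift$ such that $\cM$ is a summand of $L_\lift$; transferring back, $\cM$ is a summand of $L_\Bi$ for the corresponding loading $\Bi$. The hypothesis $\red(\cM)\neq 0$ enters in guaranteeing that $\lift$ can be arranged so its $G$-component encodes a genuine loading of total weight $\Bv$, rather than one destabilized by a submodule unseen from the original vertices. The main obstacle I expect is verifying the equivariance extension rigorously: checking that a $(G,\chi)$-equivariant, $\bT$-regular simple with the appropriate singular support automatically carries a compatible $G'$-equivariant structure preserving simplicity, and that this identification sends the relative precore to the Lusztig Lagrangian on the nose. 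Once these foundations are in place, Proposition \ref{prop-L} does the heavy lifting and the lemma follows.
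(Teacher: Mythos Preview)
Your approach differs substantially from the paper's, and while the underlying idea---absorbing the tensor product cocharacter into an enlarged gauge group by splitting the framing vertex---is natural, it has gaps that go beyond the equivariance extension you flag.

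The paper argues by counting. After translating $\chi$ so that localization holds, one has $\cOg\cong\cOa$, and the simples in $\cOa$ are induced from finite-dimensional modules over the B-algebra $\mathsf{C}(A_\chi)$. By results of Losev, $\mathsf{C}(A_\chi)$ is the section ring of a quantization of the $\bT$-fixed locus $\fM^{\la_1}\times\cdots\times\fM^{\la_\ell}$, and after further translation its finite-dimensional modules are the simple core modules on that product. Proposition~\ref{prop-L} (via Theorem~\ref{BaGi}) then bounds their number by the number of core components, namely $\prod_i\dim\rep_{\la_i}$. On the other hand, Li shows that exactly $\prod_i\dim\rep_{\la_i}$ simple summands of $L$ survive under $\red$. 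Pigeonhole forces every simple of $\cOg$ to arise as such a summand, which is the statement of the lemma.

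Your route has two specific problems. First, the integrality claim is unjustified: $\chi$ is a character of $G_\Bv$, while $\xi$ generates a one-parameter subgroup of $H/Z$; integrality of the former says nothing directly about the spectrum of the latter. Even granting that connectedness of $(\C^*)^\ell$ and Schur's lemma produce \emph{some} $(G',\chi')$-equivariant structure on a simple $\cM$, the restriction $\chi'|_{(\C^*)^\ell}$ is determined by $\cM$ and need not be integral---yet Proposition~\ref{prop-L} is stated and proved only for integral twist. Second, even if Proposition~\ref{prop-L} applies, it produces a cocharacter $\lift$ of $G'$ whose $(\C^*)^\ell$-component is unconstrained. The space $E_\lift$, and hence $L_\lift$, genuinely depends on this component through which $q_k$'s land in the non-negative weight space, so $L_\lift$ need not coincide with any $L_\Bi$. (Already for $\Gamma$ a point with $\dim V=1$, $\dim W=2$, the refined Lusztig $\Lambda$ contains a conormal not in the original relative precore, and the corresponding refined $L_\lift$ is not an $L_\Bi$.) Your remark that $\red(\cM)\neq 0$ pins down the $G$-component does not address this: the $G_\Bv$-part of any $\lift$ already encodes a loading of weight $\Bv$; it is the framing part that floats free. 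These obstacles may be surmountable, but the paper sidesteps them entirely by counting rather than by a direct categorical reduction.
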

\begin{proof}
By adding an appropriate multiple of $\eta$, we can assume that we have
chosen the character $\chi$ so that localization holds on $\fM^\la$
for $\cD_\chi$ by \cite[Th. A]{BLPWquant}.  Thus, we have a natural equivalence $\cOg\cong
\cOa$.  Recall that $A_\chi$ is the $\bS$-invariant section algebra of
$\cD_\chi$.  Let $A_\chi^k$ be the weight spaces for the induced action
of $\bT$ on $A_\chi$, and $A_\la^{\geq 0}$ the non-negative weight spaces.   Every simple module in $\cOa$ is a quotient of a unique
standard module, which is of the form $A_\chi\otimes_{A_\la^{\geq 0}}M$
where $M$ is a finite dimensional module over $\mathsf{C}(A_\chi)=A_\chi^{
  0}/\sum_{k>0}A_\chi^{ -k}A_\chi^{ k}$, considered as a $A_\la^{\geq
  0}$-module by pullback.  Thus, the number $m$ of simples in $\cOg$
is equal to the number of finite dimensional modules over $\mathsf{C}(A_\chi)$.

  The fixed point subvariety of $\bT$
 is symplectomorphic to the product
  $\fM^{\la_1}\times\cdots \times \fM^{\la_\ell}$ of quiver varieties
  attached to the weights for the different eigenvalues of $\bT$
  acting on $W$.  
Adding another positive multiple of $\eta$ to $\chi$ if necessary, we
can also apply \cite[5.2]{LosOqsr} to show that $\mathsf{C}(A_\chi)$ is
the sections of a quantization of $\fM^{\la_1}\times\cdots \times
\fM^{\la_\ell}$.  
Since the pullback of an ample line bundle to a subvariety is again
ample, the result \cite[5.7]{LosOqsr} shows that adding $\eta$ to
$\chi$ has the effect of adding an ample class to the period of the
quantization of each component of $\fM^{\la_1}\times\cdots \times
\fM^{\la_\ell}$.  Thus, perhaps adding a positive multiple of $\eta$
again, we can assume that localization holds for this quantization of $\fM^{\la_1}\times\cdots \times
\fM^{\la_\ell}$.  In this case, the finite dimensional modules over
the sections are in bijection with core modules, i.e. the geometric
category $\cO$ for the trivial action.  Thus, there are $m$ simple
core modules.   
  Now, by Lemma \ref{prop-L}, $m$ is no more than the number of
  components of the core $C$, that is $m\leq \prod \dim \rep_{\la_i}$.  On the other hand, by
  work of Li \cite[8.2.1(5)]{Litensor}, the
  summands of $L$ which survive under $\red$ are in canonical
  bijection with the canonical basis of $ \rep_{\la_1}\otimes \cdots
  \otimes \rep_{\la_\ell}$.  In particular, the number of them is also $\prod \dim \rep_{\la_i}$. Thus, all simples in
  $\cOg$ must be of this form by the pigeonhole principle.  
\excise{
  The general case follows from this one since for the relative core
  for any tensor product action is contained inside the relative core
  for one with distinct weights.  Thus, any simple in the category
  $\cOg'$ for any tensor product action is a summand of $L'$ for a
  possibly different action.  But using Li's work again, we see that
  the number of simples in $\cOg'$ that lie in the category $\cOg$ is no more
  than the number that are summands of $L$; thus they must all be
  summands of $L$.}
\end{proof}

\begin{proof}[Proof of Theorem \ref{alg-bla}]
  This is sufficiently similar to the proof of Proposition \ref{prop-L} that we
  only give a sketch. As before, we let $E^\la:=E^\Bw_\Bv$.  We can add a weight $\la'$ to
  $\la$, and pull back by the map $p\colon E^{\la+\la'}\to E^\la$
  induced the projection $\C^{\al_i^\vee(\la+\la')}\to
  \C^{\al_i^\vee(\la)}$.   Assume that $\cM$ is a
  $\Diff_{E^\la}$-module in $\preO$.  If $\la'$ is chosen so that
  $\al_i^\vee(\la')\geq v_i$, then the pullback $p^*\cM$ has a stable
  point in its singular support, so $\red(p^*\cM)\neq 0$.
Thus, we must
  have that $p^*\cM$ is a summand of $L_\Bi$ for some $\Bi$.  This, in turn,
  shows that our original sheaf was also of this type.
\end{proof}

Having established the hypothesis \hyperlink{dagger}{$(\dagger)$}, we
can give a description of the category $\dOg$ as $\stR\dgmod$,
where $\stR $ is as defined in Section
\ref{sec:constr-objects-preo}. Corollary \ref{dagger-W} and
\cite[3.6]{WebwKLR} will now establish Case 1 of Theorem \ref{main1}:
\begin{corollary}\label{tensor-equivalence}
  The category $\dOg$ for a tensor product action is quasi-equivalent
  to $T^\bla\dgmod$, and $\dpreO$ to $\tilde{T}^\bla\dgmod$.
\end{corollary}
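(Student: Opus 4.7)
The plan is to assemble this corollary directly from results already established in the paper, with essentially no new geometry required: the substance of the argument has been distributed over Theorem \ref{alg-bla}, Corollary \ref{dagger-W}, and the identification of the weighted KLR algebra with a tensor product algebra.

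First, I would invoke Theorem \ref{alg-bla} to conclude that hypothesis \hyperlink{dagger}{$(\dagger)$} holds for any tensor product action. This is the only nontrivial geometric input, and it was already carried out by pulling back along the projection $p\colon E^{\la+\la'}\to E^\la$ for some auxiliary $\la'$ with $\al_i^\vee(\la')\geq v_i$ and applying Lemma \ref{lem:red-zero} to the pullback $p^*\cM$ of a simple $\cM\in\preO$; since $p^*\cM$ has a stable point in its singular support, it is a summand of some $L_\Bi$, and hence so is $\cM$.

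Next, with \hyperlink{dagger}{$(\dagger)$} in hand, Corollary \ref{dagger-W} gives the quasi-equivalences
\[
\dpreO\simeq \bar W^\vartheta_\nu\dgmod\qquad\text{and}\qquad \dOg\simeq \bar W^\vartheta_\nu(\varpi)\dgmod,
\]
where $\varpi$ is the GIT stability parameter (the positive power of the determinant character) singled out by the Nakajima-style construction of $\fM^\la$. Thus the only remaining step is to reinterpret these weighted KLR algebras and their steadied quotients algebraically.

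For that, I would cite the identification \cite[3.5]{WebwKLR} of the reduced weighted KLR algebra $\bar W^\vartheta_\nu$ attached to a tensor product cocharacter with the algebra $\tilde T^\bla_\nu$ defined in \cite[\S 4.2]{Webmerged}, together with \cite[3.6]{WebwKLR} which identifies the steadied quotient for the chosen GIT parameter $\varpi$ with the tensor product algebra $T^\bla$. Composing with the equivalences from the previous paragraph gives the two claimed quasi-equivalences. The only point requiring any care is checking that the steadying condition appearing geometrically (summands of $L_\Bi$ for violating $\Bi$, which by the Lemma preceding \ref{lem:red-zero} are exactly the summands killed by $\red$) matches the combinatorial steadying used to pass from $\tilde T^\bla$ to $T^\bla$; this is precisely the content of \cite[3.6]{WebwKLR}, so no additional work is needed. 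The main conceptual obstacle, namely producing enough simples in $\preO$ from loaded flag pushforwards, has already been overcome in Theorem \ref{alg-bla}, so this corollary is essentially a bookkeeping statement once the pieces are assembled.
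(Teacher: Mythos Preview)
Your proposal is correct and follows essentially the same route as the paper: invoke Theorem \ref{alg-bla} to get $(\dagger)$, apply Corollary \ref{dagger-W} to obtain the quasi-equivalences with $\bar W^\vartheta_\nu\dgmod$ and its steadied quotient, and then use \cite[3.5 \& 3.6]{WebwKLR} to identify these with $\tilde T^\bla_\nu$ and $T^\bla$. The paper records this as a one-line deduction from Corollary \ref{dagger-W} and \cite[3.6]{WebwKLR} (the identification $\bar W^\vartheta_\nu\cong \tilde T^\bla_\nu$ via \cite[3.5]{WebwKLR} having already been stated earlier in the section), so your write-up is slightly more explicit but logically identical.
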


Furthermore \hyperlink{maltese}{$(\maltese)$} holds in this case as
well by Corollary \ref{dagger-implies-maltese}.  Thus, if $\bT$ has
isolated fixed points, then the algebra $T^\bla$ must be Koszul.

\begin{theorem}
  If all the weights $\bla$ are minuscule, then the algebra $T^\bla$
  is Koszul.
\end{theorem}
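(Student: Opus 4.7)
The strategy is laid out by the remark immediately preceding the statement: since Corollary \ref{tensor-equivalence} identifies $\dOg$ with $T^\bla\dgmod$ and Corollary \ref{dagger-implies-maltese} ensures that $(\maltese)$ holds, it remains only to verify --- by the corollary following Theorem \ref{maltese-koszul} --- that the tensor product $\bT$-action on $\fM^\la$ has isolated fixed points whenever every $\la_i$ is minuscule. Once this is established, Koszulity of $\cOg$ follows and transports to Koszulity of $T^\bla$ under the equivalence.

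To establish isolation I would invoke Nakajima's description of the $\bT$-fixed locus of a tensor product action from \cite{Nak01}: on each $\fM^\la_\nu$ it decomposes as a disjoint union of products
\[
\fM^{\la_1}_{\mu_1}\times\cdots\times \fM^{\la_\ell}_{\mu_\ell}
\]
indexed by tuples $(\mu_1,\dots,\mu_\ell)$ satisfying $\mu_1+\cdots+\mu_\ell=\nu$. Hence isolation of the full fixed locus reduces to the assertion that, for each minuscule $\la_i$ and each weight $\mu_i$, the factor $\fM^{\la_i}_{\mu_i}$ is either empty or a single reduced point.

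This factor-level claim is the geometric heart of the argument. For $\la_i$ minuscule in an ADE type, every weight of $V_{\la_i}$ is Weyl-conjugate to $\la_i$ and carries a one-dimensional weight space, so Nakajima's dimension formula
\[
\dim_\C \fM^{\la_i}_{\mu_i} = (\la_i\mid\la_i) - (\mu_i\mid\mu_i)
\]
vanishes for every weight $\mu_i$ of $V_{\la_i}$; combined with the fact that minuscule Nakajima varieties are smooth and connected (and with the standard identification $\dim H^{BM}_*(\fM^{\la_i}_{\mu_i}) = \dim V_{\la_i}[\mu_i] = 1$ as a sanity check), this forces each nonempty factor to be a reduced point, while for $\mu_i$ not a weight of $V_{\la_i}$ the factor is empty.

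The only delicate ingredient is this factor-level claim, which is well-known but can either be derived uniformly from the dimension formula or checked in each ADE type directly (the list of minuscule representations is short: the exterior powers of the standard in type $A$, the vector and half-spin representations in type $D$, and the two $27$-dimensional representations in type $E_6$ together with the $56$-dimensional one in $E_7$). Once it is in hand, the fixed locus on $\fM^\la$ is a finite disjoint union of isolated points, and Koszulity of $T^\bla$ follows from the chain of reductions above.
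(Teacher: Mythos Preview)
Your proposal is correct and follows essentially the same route as the paper. The paper's own proof is very terse: it cites Nakajima \cite{Nak01} for the identification of the $\bT$-fixed locus with the product $\fM^{\la_1}\times\cdots\times\fM^{\la_\ell}$, asserts without elaboration that these factors are finite sets of points when each $\la_i$ is minuscule, and then invokes Theorem~\ref{maltese-koszul}. Your version supplies the extra detail (the dimension formula and the fact that minuscule weights form a single Weyl orbit) that justifies this assertion, but the architecture of the argument is identical.
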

\begin{proof}
  As noted in Nakajima \cite{Nak01}, the fixed points of $\bT$ acting
  on the quiver variety $\fM$ are the product of the quiver varieties
  attached to the weights $\la_i$.  If all $\la_i$ are minuscule, then
  these quiver varieties are all finite sets of points, so $\bT$ has
  isolated fixed points.  Thus Theorem \ref{maltese-koszul} implies
  the Koszulity of these algebras.  
\end{proof}

At least in finite type, we can also easily understand the cell
filtration in terms of the isotypic and BBD filtrations introduced in
Section \ref{sec:background}.

\begin{proposition}
  If $\Gamma$ is an ADE Dynkin diagram, then the 2-sided cell,
  isotypic and BBD filtrations all coincide.
\end{proposition}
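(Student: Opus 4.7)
The plan is to exploit the sandwich iso $\subseteq$ cell $\subseteq$ BBD established in the discussion preceding Lemma \ref{lem:cell-invariant}; it suffices to prove the outer equality iso $=$ BBD, which squeezes cell between two equal filtrations and forces all three to agree.

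In finite ADE type, the identification iso $=$ BBD is essentially a classical result of Nakajima, which I would assemble from three ingredients. First, the resolution $\pi\colon\fM^\la\to\fN^\la$ is semismall with respect to the stratification of $\fN^\la$ by the dominant strata $\fN^\la_{\mu'}$ (a fact special to finite ADE type). Second, the decomposition theorem applied to $\pi_*\C_{\fM^\la}$ yields a canonical direct sum decomposition of $H^{BM}_*(\fM^\la_\mu)$ into contributions of the form $IC(\overline{\fN^\la_{\mu'}},\cL_{\mu'})$, with each summand indexed by a stratum $\fN^\la_{\mu'}$ with $\mu'\geq\mu$. Third, Nakajima's tensor product construction \cite{Nak01} identifies the summand associated to $\overline{\fN^\la_{\mu'}}$ with the $V_{\mu'}$-isotypic component of $(V_{\la_1}\otimes\cdots\otimes V_{\la_\ell})[\mu]$. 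Assembling these, the BBD filtration indexed by the poset of dominant weights coincides with the isotypic filtration, and the sandwich collapses to the desired triple equality.

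The main obstacle, and the reason for the restriction to finite ADE type, is precisely in the three ingredients above. Outside of ADE type, both the semismallness of $\pi$ and the clean correspondence between strata of $\fN^\la$ and isotypic summands of the tensor product can fail: non-special strata of $\fN^\la$ may contribute nontrivial IC summands under the decomposition theorem, so BBD can be strictly finer than cell, and extra composition factors in the tensor product representation can make iso strictly coarser than cell. In finite ADE, however, every symplectic stratum of $\fN^\la$ is of the form $\fN^\la_{\mu'}$ for dominant $\mu'$, no extraneous IC summands appear, and the three filtrations collapse onto one another.
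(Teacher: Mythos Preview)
Your proposal is correct and follows essentially the same approach as the paper: both use the sandwich iso $\subseteq$ cell $\subseteq$ BBD and reduce to proving iso $=$ BBD in finite ADE type. The only difference is that the paper simply cites Nakajima \cite[Rmk.~3.28]{Nak98} for this last equality, whereas you unpack the argument behind it (semismallness, decomposition theorem, and the identification of IC summands with isotypic components).
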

\begin{proof}
  Since the 2-sided cell filtration is sandwiched between the isotypic
  and BBD filtrations, if these two coincide, the 2-sided cell
  filtration must match them.  By \cite[Rmk. 3.28]{Nak98}, the
  isotypic and BBD filtrations coincide, so this is indeed the case.
\end{proof}

We can generalize Corollary \ref{tensor-equivalence} a bit to include interactions between
different category $\cO$'s for the different tensor product actions.

The set of cocharacters $\vartheta\colon \bT\to T_W$ carries an $S_n$
action which preserves the weight spaces of $\bT$ and the set of
weights which occur, while permuting the order of the weight spaces.
There are functors relating the category $\cO$'s for different $\bT$
actions in the most obvious way possible: we have a the obvious
inclusion $i^\vartheta\colon D_{\cOg^\vartheta}\to
D^b(\cD\mmod_{\operatorname{hol}})$ into the derived category of
$\cD$-modules with Lagrangian support and
its left and right adjoints $i_!^\vartheta, i_*^\vartheta$ defined in \cite[8.7]{BLPWgco}.  For two
different $\bT$-actions, we can always take the {\bf shuffling functor}
$\mathscr{S}^{\vartheta,\vartheta'}:=i_*^{\vartheta'}\circ i^\vartheta\colon D_{\cOg^\vartheta}\to
D_{\cOg^{\vartheta'}}$.

Similarly, we have an inclusion functor 
$I^\vartheta\colon \dpreO\to \Diff_E\mmod$ defined
before reduction. 
\begin{theorem}\label{tensor-shuffling}
  The shuffling functors between $D_{\cOg^{\sigma\vartheta}}$ for all
  permutations $\sigma$ and a fixed tensor product action $\vartheta$ give
  a weak action of the action groupoid for the $\ell$-strand braid
  group on total orders of an $\ell$ element set.  This is intertwined
  by the equivalences with the action of this group on dg-modules over
  $T^{\sigma\bla}$ defined in \cite[6.18]{Webmerged}.
\end{theorem}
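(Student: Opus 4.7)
The plan is to realize the shuffling functors as wall-crossings in a hyperplane arrangement on the space of tensor product cocharacters, from which the braid group action follows by general principles, and then to match the elementary wall-crossing with the diagrammatic bimodule $S_i^\bla$ through a local calculation on a pair of adjacent tensor factors.

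First, I would identify the space of tensor product cocharacters $\bT\to T_W$, with $T_W\subset H$ the maximal torus acting by scalars on the decomposition $W=W^1\oplus\cdots\oplus W^\ell$, with a real affine space on which the hyperplanes $\vartheta_i=\vartheta_j$ cut out chambers indexed by total orderings of $\{1,\dots,\ell\}$. Two adjacent chambers differ by a simple transposition, and the action groupoid of the $\ell$-strand braid group on total orders is the standard presentation of the fundamental groupoid of the complexified complement of this arrangement.

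Second, I would check that the shuffling functors $\mathscr{S}^{\vartheta,\vartheta'}$ of \cite[\S 7]{BLPWgco} assemble into a weak action of this groupoid. The functors crossing a single wall are equivalences on appropriate derived categories, and compositions along a minimal gallery depend only on the endpoints up to canonical natural isomorphism. The braid relation for a pair of adjacent simple transpositions then corresponds to two minimal galleries between the same pair of chambers, yielding a canonical natural isomorphism of the two composites; this is formally parallel to, and at the same level of difficulty as, the corresponding statement for twisting functors $\mathscr{T}_w$ established earlier in the section.

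Third, for the intertwining, I would identify the elementary shuffling functor $\mathscr{S}^{\vartheta,s_i\vartheta}$ crossing the wall $\vartheta_i=\vartheta_{i+1}$ with the tensor product functor over $T^\bla$ with the bimodule $S_i^\bla$ of \cite[6.18]{Webmerged}. Lifting through $\red_!$ as in the analogue of Proposition \ref{adjoint-twist} for shuffling, the functor is represented on the geometric side by an explicit correspondence between loaded flag spaces: the loadings $\Bi$ adapted to $\vartheta$ and $\Bi'$ adapted to $s_i\vartheta$ differ only in the relative order of the blocks for $\la_i$ and $\la_{i+1}$. Combining the equivalence $\dpreO^\vartheta\simeq\tilde T^\bla\dgmod$ of Corollary \ref{tensor-equivalence} with the Steinberg-algebra description of Proposition \ref{formal-pure}, the correspondence translates into a bimodule that, by the diagrammatic calculus, coincides with $S_i^\bla$.

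The main obstacle I foresee is precisely this bimodule-level matching. Using the compatibility of loadings with the factorization of the $\bT$-fixed locus as $\fM^{\la_1}\times\cdots\times \fM^{\la_\ell}$, the problem reduces to the two-factor situation involving only $\la_i$ and $\la_{i+1}$, where both the geometric correspondence and the diagrammatic bimodule $S_i^\bla$ admit very explicit presentations. The verification that they define the same bimodule over the relevant tensor product algebras is then a local calculation of the same order of difficulty as the matching of categorical $\fG$-actions established in \cite{Webcatq}; once this elementary case is settled, the braid coherence verified in step two propagates it to all shuffling functors, completing the theorem.
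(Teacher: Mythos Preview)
Your proposal inverts the logical order of the paper's argument, and this inversion creates a genuine gap. The paper does \emph{not} establish braid relations for the shuffling functors geometrically and then match only the simple generators. Instead it proves the intertwining ${\mathfrak{B}}_\sigma\Lotimes_{T^\bla}\coho\cong \coho\circ \mathscr{S}^{\sigma}$ for \emph{arbitrary} $\sigma$ first, and then \emph{deduces} the braid relations from the fact that the functors ${\mathfrak{B}}_\sigma\Lotimes_{T^\bla}-$ already satisfy them by \cite[6.18]{Webmerged}. The full bimodule identification is obtained in one stroke: the Ext-bimodule $\Hom(I^{\sigma\bla}(L'),I^{\bla}(L))$ is identified with $B^{\bla,\sigma\bla}_\nu$ by \cite[4.12]{WebwKLR}, which for tensor product actions is $\tilde{\mathfrak{B}}_\sigma$ by \cite[3.6]{WebwKLR}. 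No two-factor reduction is needed or used.

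Your step~2 is the real problem. You assert that shuffling functors give a weak braid groupoid action ``formally parallel to'' the twisting case, but the twisting case relies on \cite[6.32]{BLPWquant}, which is specific machinery for GIT wall-crossing of quantization parameters. Shuffling functors are defined differently, as $i_*^{\vartheta'}\circ i^{\vartheta}$ through the full holonomic category, and the paper provides no independent geometric argument that these satisfy braid relations; nor do you cite one. Without this, checking only the simple generators in step~3 is insufficient. Your step~3 also has issues: there is no analogue of Proposition~\ref{adjoint-twist} for shuffling (that proposition concerns $\red^{\eta'}\circ\red^\eta_!$ for different GIT parameters, not different $\bT$-actions), and the loaded flag spaces $X_\Bi$ do not factorize along the decomposition of $W$ in a way that makes the ``reduction to two factors'' straightforward. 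The paper's route via the global Steinberg bimodule and the citations to \cite{WebwKLR} avoids both difficulties.
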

\begin{proof}
Let $\coho\colon \dOg\to T^\bla\dgmod$ and  $p\coho\colon \dpreO\to
\tilde{T}^\bla\dgmod$ be the equivalences of Corollary
\ref{tensor-equivalence}.  Note that by definition,
\begin{equation}
\coho\circ
\red(M)\cong T^\bla\Lotimes_{\tilde{T}^\bla}p\coho(M)\qquad p\coho\circ
\red_!(M)\cong \coho(M)\label{eq:coho}
\end{equation}
where in the second equality, $\coho(M)$ is
inflated to a $\tilde{T}^\bla$-module by the surjection
$\tilde{T}^\bla\to T^\bla$. 

Consider the  bimodule $\Hom(I^{\sigma\bla}(L'),
I^{\bla}(L))$ where $L,L'$ are the sum of all $L_\Bi$'s with
$|\Bi|=\la-\mu$ defined using the $\bT$ actions corresponding to
$\bla$ and $\sigma\bla$ respectively.  By \cite[4.12]{WebwKLR}, this
bimodule is the bimodule
$B^{\bla,\sigma\bla}_\nu$ as defined in \cite[\S 2.5]{WebwKLR}; for a
tensor product action, this is exactly the bimodule
$\tilde{\mathfrak{B}}_\sigma$ defined in
\cite[6.3]{Webmerged} by  \cite[3.6]{WebwKLR}.  Since
$L,L'$ are generators of the dg-categories $D_{p\cOg^{\bla}}, D_{p\cOg^{\sigma\bla}}$, we have functorial isomorphisms
\begin{equation}
 \Ext^\bullet(I^{\sigma \bla} M,I^{\sigma\bla}\circ p\coho^{-1}(
 \tilde{\mathfrak{B}}_\sigma\Lotimes_{\tilde{T}^\bla} N) \cong
\Ext^\bullet(I^{\sigma \bla} M,I^{\bla}\circ  p\coho^{-1}(N))\label{eq:B-commute}
\end{equation}
for $N$ in $\tilde{T}^\bla\dgmod$ and $M\in \cOg^{\sigma\bla}$.

Now, we consider the case where $M=\red_!\cM$ and $N$ is the inflation
of an module $\cN$ in ${T}^\bla\dgmod$.  In this case
$p\coho^{-1}(N)\cong \red_!\coho^{-1}(N)$ by \eqref{eq:coho}. Thus,
the RHS of \eqref{eq:B-commute} can be rewritten 
\begin{align*}
 \Ext^\bullet(I^{\sigma \bla} \red_!\cM,I^{\bla}(
  \red_!\coho^{-1}\cN))&=\Ext^\bullet(\red_! i^{\sigma \bla} \cM,
                         \red_! i^{\bla}(\coho^{-1}\cN))\\
&=\Ext^\bullet(i^{\sigma \bla} \cM,
                         i^{\bla}(\coho^{-1}\cN))\\
&=\Ext^\bullet(\cM,
                         \mathscr{S}^{\sigma} (\coho^{-1}\cN))
\excise{
\mathfrak{B}_\sigma\Lotimes_{T^\bla} \coho(M)&\cong 
T^\bla\Lotimes_{\tilde{T}^\bla}\tilde{\mathfrak{B}}_\sigma\Lotimes
                                p\coho(\red_!N) &&\text{by \eqref{eq:coho}}\\
&\cong
T^\bla\Lotimes_{\tilde{T}^\bla} p\coho\circ I_!^{\sigma\bla}\circ
  I^{\bla}(\red_!N) &&\text{by \eqref{eq:B-commute}}\\
&\cong
\coho\circ\red\circ  I_!^{\sigma\bla}\circ I^{\bla}(\red_!N)
                                                &&\text{by \eqref{eq:coho}}\\
&\cong
\coho\circ  i_!^{\sigma\bla}\circ i^{\bla}(N) &&\text{by  Lemma \ref{I-intertwine}}}
\end{align*}
On the other hand, the LHS of \eqref{eq:B-commute} can be rewritten as 
\begin{equation}
 \Ext^\bullet(\red_! i^{\sigma \bla} \cM,\red_! i^{\sigma \bla}\circ \coho^{-1}(
 {\mathfrak{B}}_\sigma\Lotimes_{T^\bla} \cN) )\cong \Ext^\bullet(\cM,\coho^{-1}(
 {\mathfrak{B}}_\sigma\Lotimes_{T^\bla} \cN)
\end{equation}
Since $\cM$ is arbitrary, this shows the isomorphism of functors
${\mathfrak{B}}_\sigma\Lotimes_{T^\bla}\coho\cong
\coho\circ \mathscr{S}^{\sigma}$ as desired.  The functors ${\mathfrak{B}}_\sigma\Lotimes_{T^\bla}-$ satisfy the relations
of the braid groupoid by  \cite[6.18]{Webmerged},
so the same is true of $\mathscr{S}^{\sigma}$.
\end{proof}

\subsection{Affine type A}

Of course, the case of a non-tensor product action is more
complicated; there are of necessity more simple modules. The first
interesting such case is when $\fg$ is $\widehat{\mathfrak{sl}}_{e}$.
In this case, we'll identify the nodes of $\Gamma$ with the residues
in $\Z/e\Z$ with arrows from $i$ to $i+1$ for every $i$.  
We'll want to include the case where $e=1$, that is, the Jordan quiver.
Since this orientation contains an oriented cycle, we will take
$a=b=1$, so $\bS$ is just the scaling action on $T^*E^\la_\mu$.

In this case, we can think of the space $V\cong \oplus_i V_i$ as a
single $\Z/e\Z$-graded space.  For each $i$, we have a map
$x_{i,i+1}\colon V_i\to V_{i+1}$.  We can view the sum
$x=\sum x_{i,i+1}$ of the maps along the edges as a single map
$V\to V$.  This is homogeneous of degree 1 in the $\Z/e\Z$-grading; we
use this convention throughout the section.  With a choice of highest
weight $\la$ of level $\ell$ (i.e. with $\sum_i\al_i^\vee(\la)=\ell$),
we obtain a Crawley-Boevey quiver with $\ell$ new edges.  We enumerate
these edges $e_1,\dots, e_\ell$, and let $r_i$ be the vertex, thought
of as a residue in $\Z/e\Z$, attached to the edge $e_i$.  We let
$q_{(i)}$ be the map $V\to \C$ given by the row of the matrix of $q$
corresponding to the edge $e_i$.

As we indicated before, in this case $H/Z$ is a quotient of
$PG_{\Bw}\times \C^*$ by a finite central subgroup.  Thus, a rational cocharacter into $H/Z$ is
essentially the same as choosing
rational numbers $\vartheta_i$ given by the weights of the new edges
(though these are only uniquely specified up to simultaneous translation),
as well as the rational number $\ck$ giving the weight of the projection
to the last factor; if $\ck=0$, then we have a tensor product action,
and can use the results of the previous section.  

We assume for the sake of simplicity that the
weights $\vartheta_i$ are all distinct modulo $\ck$.  This is 
sufficient for the associated action to have isolated fixed points
on $\fM^\la_\nu$ for all $\nu$ and for every component of the pre-core
to come from a lift with no positive dimensional fixed subspaces.
In fact, we could strengthen this a bit to a necessary and sufficient
condition for isolated fixed points for all $\mu$: that
$\vartheta_i-\vartheta_j-\ck(r_i-r_j)$ is not divisible by $e\ck$.  

We wish to understand the structure of the relative precore.  As discussed
previously, this is the unstable locus associated to the action of
$\tilde{G}$ with the character $\nu$.  Thus, we consider the
Kirwan-Ness stratification of the relative precore.  We should note, this
stratification depends on a norm of the group $\tilde{G}\cong
G_\Bv\times \bT'$.  This is the same as fixing a norm invariant under
the Weyl group $\prod S_{v_i}$ on the rational Lie algebra
$\tilde{\ft}_\Q$ of a
maximal torus $\tilde{T}$.   

Fixing $\vartheta_i$ gives a choice of lift
$\vartheta\colon \bT'\to \tilde{G}$; we let $  \tilde{T}$ be the
subgroup generated by  $\bT'$ and the diagonal matrices on $\C^{v_i}$.  We have a
basis of $\tilde{\ft}$ given by the derivative of $\dot{\vartheta}_i$
and the derivatives $\dot\epsilon_{i,j}^\vee$ of the coordinate cocharacters $\epsilon_{i,j}^\vee$
which act with weight 1 on the $j$th coordinate of $\C^{v_i}$ and with
weight 0 on all others.   This induces coordinates for rational (or
real, etc.) cocharacters given by $\lift=(\nu(\lift),\{\lift_{i,j}\})$.  We
define a series of norms (for $u\in \Z$) given by 
\begin{equation}
|\lift|_u^2=\nu(\lift)^2+\sum_{i,j}
(\lift_{i,j}-u\nu(\lift))^2.\label{eq:norm}
\end{equation}
This means that if $u\gg 0$, then actions with eigenvalues as large as
possible are ``preferred'' (i.e. have smaller norm), whereas if $u\ll
0$, then there is a symmetric preference for negative eigenvalues.  

Let us translate this general framework into the language of quivers.
We consider rational cocharacters $\lift$ with $\nu(\lift)=1$. The
difference $\lift-\vartheta_i$ is a rational cocharacter of $G_\Bv$,
so we can think of the decomposition of $V$ into eigenspaces for this
action as a grading by $\Q$ by {\it minus} the eigenvalues. We can
extend this grading to $W$ using the action of $H$ on this space; this
grading on $W$ is fixed, and the degrees of basis vectors are given by
$-\vartheta_i$.

\begin{lemma}
  This lift is destabilizing for a point of $T^*E$, thought of as a
  representation of the doubled quiver, if and only if the maps $q,\bar{q}$ 
  are a sum of homogeneous maps of degree $\leq -1$ and $x,\bar{x}$
  are a sum of homogeneous maps of degree $\leq -1\pm \kappa$.
\end{lemma}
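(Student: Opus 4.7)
The plan is to show that the existence of $\lim_{t\to 0}\lift(t)\cdot p$ in $T^*E$ (which, given $\nu(\lift)=1>0$, is precisely the Hilbert--Mumford criterion for $\lift$ to destabilize $p$ with respect to the character $\nu$) is equivalent to the non-negativity of the $\lift$-weight of every nonzero matrix entry of $q,\bar q,x_e,\bar x_e$ in a basis adapted to the grading. Since $T^*E$ decomposes as a direct sum of one-dimensional $\lift$-weight spaces (because $\lift$ factors through a maximal torus of $\tilde G$), the limit exists iff each component of $p$ with nonzero value has non-negative weight. I would then compute these weights directly from the three pieces of the $\tilde G$-action on $T^*E$: the $g$-action on $V$, the $h$-action on $W$ (through $\vartheta$ and the extra $\C^\times$ in $H/Z$), and the $\bS$-scaling by $s^{-a}$ on $E$ and $s^{-b}$ on $E^*$ with $a=b=1$.

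Concretely, a basis of $V$ compatible with the $\lift$-decomposition gives $v\in V_b$ with $\lift$-eigenvalue $-b$, and a basis of $W$ with $w_i$ of degree $-\vartheta_i$ has $\lift$-eigenvalue $+\vartheta_i$. For a matrix entry $q^{(i)}_b\colon V_b\to\C w_i$ of $q$, a direct computation shows
\[
(\lift(t)\cdot q)(v) = t^{-a+b+\vartheta_i}\,q(v),
\]
so its $\lift$-weight is $b+\vartheta_i-1$; non-negativity gives $b\geq 1-\vartheta_i$. Writing $q=\sum_d q_d$ with $q_d(V_k)\subset W_{k+d}$, the piece $q_d$ only takes values in the line $\C w_i$ with $b+d=-\vartheta_i$, so the condition becomes $d\leq -1$. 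A symmetric computation for $\bar q\colon W\to V$, using that the $\bS$-weight on $E^*$ is $-b=-1$ and the $\lift$-eigenvalue of $w_i^*$ is $-\vartheta_i$, gives the same bound $d\leq -1$ on the homogeneous pieces of $\bar q$.

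For the edge maps, the extra $\C^\times$ factor in $H/Z$ (encoded by $\ck$) contributes a shift. On a homogeneous component $x_d\colon V_k\to V_{k+d}$ the $\lift$-weight equals (conjugation weight $-d$) $+$ (extra $\C^\times$ contribution $+\ck$) $+$ ($\bS$-weight $-a=-1$) $= -d+\ck-1$, so non-negativity forces $d\leq -1+\ck$. Dually, $\bar x$ picks up the opposite extra weight $-\ck$ and the $\bS$-weight $-b=-1$ on $E^*$, yielding $d\leq -1-\ck$. Combining the four cases gives exactly the conditions in the statement, with the ``$+$'' sign occurring for $x$ and the ``$-$'' sign for $\bar x$.

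The main obstacle is purely bookkeeping: one must carefully track which pieces of the $\tilde G$-action produce which weight contributions, distinguish ``degree'' (the shift produced on the $V$-grading by a map) from ``$\lift$-eigenvalue'' (its negative), and get the signs of the $\bS$- and extra-$\C^\times$-contributions right. Once these conventions are pinned down, the proof is a direct weight computation on a decomposition of $T^*E$ into $\lift$-eigenlines, with no further geometric input needed.
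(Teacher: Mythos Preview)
Your proof is correct. The paper does not actually supply a proof of this lemma; it states the result and immediately moves on to the consequence that $x$ (resp.\ $\bar x$) is nilpotent when $\kappa<0$ (resp.\ $\kappa>0$). Your direct weight computation---decomposing $T^*E$ into $\lift$-eigenlines and reading off the contribution from the $G_{\Bv}$-conjugation, the $H$-weight $\vartheta_j$ or $\kappa$ on the relevant edge, and the $\bS$-weight $-a=-b=-1$---is exactly the intended argument, just made explicit.
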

Note, it is immediate
that either $x$ must be nilpotent if $\kappa<0$ and  $\bar{x}$ must be
nilpotent if $\kappa>0$.  Assume for simplicity that $\kappa<0$.  

Choose an integer $u$, a multipartition
$\boldsymbol{\xi}=(\xi_1,\dots, \xi_\ell)$ and a multisegment
$\mathbf{m}$.  Recall that a segment is a pair $(r,n)$ of a residue
$r\in \Z/e\Z$ and a positive integer $n\in \Z_{\geq 0}$; we think of this as the segment
$r,r+1,\dots, r+n$, considered modulo $e$ (hence the need to include
its length instead of its endpoint), and a {\bf multisegment} is a multiset
of segments.  

For a multipartition, we have a diagram consisting of the triples
$\{(i,j,k) | 1\leq j\leq (\xi_k)_i\}$; if we think of each such triple
as a box, these will give the Young diagrams of each individual
$\xi_k$.  Each such box has an attached statistic: its {\bf content} modulo $e$, which is just the
residue class of  $r_k+j-i$ in $\Z/e\Z$.

For a multisegment, we similarly have a diagram consisting
of a row of boxes of length $n$ for each segment $(r,n)$.  Unlike with
a Young diagram, we do not think of these rows as being stacked on top
of each other, or in any particular order.  These boxes have contents
as well, given by the residue classes $r,r+1,\dots$ as we read along
the row.

\begin{definition}
  Consider the conjugacy class of cocharacters
  $\lift_{\boldsymbol{\xi},\mathbf{m}}$ where the eigenvalues that
  appear are
  \begin{itemize}
  \item $\vartheta_k+\kappa(i-j)+i+j-1$ for $(i,j,k)$ in the diagram
    of $\boldsymbol{\xi}$ over the vertex for the residue $r_k+j-i$
    and 
\item the eigenvalue
    $u+(1+\kappa)(g-h)/2,u+(1+\kappa) (g-h+2)/2,\dots,
    u+(1+\kappa)(h-g)/2$ over the vertices for $g,g+1,\dots, h$ for each segment $[g,h]$.
  \end{itemize}

\end{definition}

\begin{proposition}\label{prop:LKN}
  For $u\gg 0$, the Lagrangian Kirwan-Ness strata are indexed by the
  conjugacy classes $\lift_{\boldsymbol{\xi},\mathbf{m}}$, and each of
  these strata has irreducible closure.
\end{proposition}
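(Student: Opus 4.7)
The plan is to analyze Lagrangian Kirwan-Ness strata in terms of the fixed-point representations of their defining lifts, using the general structure of KN theory for symplectic quotients developed in Section \ref{sec:symplectic-quotients}. A KN stratum is Lagrangian precisely when the space $(T^*E)_\lift$ of non-negative weight vectors under $\lift$ has half-dimension, equivalently when for each weight $w$ appearing in $T^*E$, exactly one of $w$ or $-a-b-w$ appears as a non-negative weight. Since the symplectic form pairs $V_i$ with $V_i^*$ and the edge maps with their reverses, this amounts to requiring that the grading on $V$ induced by $\lift - \vartheta$ has, for every pair of basis elements $v,v'$ with residues $i,i+1$, either the difference of grades is $\leq \ck - 1$ (so $x_{i,i+1}$ contributes to $E_\lift$ but $\bar{x}_{i+1,i}$ does not) or the reverse. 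A similar balance condition applies to the $q$ maps. This is a combinatorial constraint and the first task is to classify the lifts satisfying it.

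For the parametrization, I will argue that a Lagrangian lift is determined by (a) a decomposition of $V$ into ``strings'' — indecomposable summands under the combined action of $x,\bar x, q,\bar q$ acting on the limit representation — and (b) a rational eigenvalue for $\lift$ on each string. The decomposition into strings in turn breaks into two types: strings anchored at a Crawley--Boevey edge $e_k$ (where $q_{(k)}$ is nonzero on some vector) and free strings supported only on the main vertices. The anchored strings correspond to boxes of a multipartition $\boldsymbol{\xi}=(\xi_1,\dots,\xi_\ell)$: placing a box $(i,j,k)$ corresponds to a basis vector at position $(i,j)$ in a two-dimensional grid attached to the $k$-th edge, with horizontal direction carrying $x$ and vertical direction $\bar x$ (so content is the residue $r_k+j-i$). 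The Lagrangian balance, together with the requirement that the limit be attracted by $\lift$, forces the eigenvalue of $\lift$ at this box to be $\vartheta_k+\kappa(i-j)+i+j-1$. Free strings likewise correspond to rows of boxes, i.e. segments, and their eigenvalues are constrained up to an overall shift, which is exactly the constant $u$ in the norm (\ref{eq:norm}). The existence of these strata — i.e. that each such conjugacy class genuinely arises from the KN stratification — is a consequence of the Hilbert-Mumford criterion applied to the unstable locus defining the relative precore.

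Next I would pin down why $u \gg 0$ is required: among all rational lifts conjugate to $\lift_{\boldsymbol{\xi},\mathbf{m}}$ by the Weyl group of $\tilde G$, the representative with segment eigenvalues concentrated near $u$ is the unique norm-minimizer for $|\cdot|_u$ once $u$ exceeds the spread of the anchored eigenvalues $\vartheta_k+\kappa(i-j)+i+j-1$; this is a direct quadratic-minimization argument using (\ref{eq:norm}). Only then do distinct multisegments give distinct KN conjugacy classes rather than being absorbed into boundary strata. Irreducibility of the stratum's closure is standard: each KN stratum is of the form $G\cdot Y^{KN}\cong G\times_{P_\lift}Y^{KN}$ where $Y^{KN}$ is an open subset of the linear space of non-negative-weight vectors, and the latter is irreducible, so the closure is too.

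The main obstacle will be the completeness of the classification — proving that no other lift class produces a Lagrangian stratum. My plan is to argue by fixing a Lagrangian stratum and producing the pair $(\boldsymbol{\xi},\mathbf{m})$ from the limit representation: decompose the limit under the stabilizer action into isotypic pieces, separate the anchored from free components by looking at the support of $q$, and finally read off $\boldsymbol{\xi}$ from the nilpotent Jordan structure of $(x,\bar x)$ on anchored pieces and $\mathbf{m}$ from the cyclic structure on free pieces. The bookkeeping here is delicate because the balance condition together with the moment map equation $[x,\bar x] + q\bar q = 0$ forces both the shape of each Young diagram and the content assignment; I expect the calculation to parallel the description of torus fixed points on $\fM^\la_\nu$ in Nakajima's work on tensor product quiver varieties, adapted to allow the additional $\ck$-twist coming from the non-tensor-product action.
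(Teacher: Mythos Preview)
Your overall framework is sound—parameterizing Lagrangian strata by $(\boldsymbol{\xi},\mathbf{m})$ via anchored and free strings matches the paper's structure, and you correctly identify that $u\gg 0$ is needed to make the segment eigenvalues norm-minimizing. However, there are two genuine gaps.

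First, your irreducibility argument does not work. You write that the KN stratum is $G\times_{P_\lift}Y^{KN}$ with $Y^{KN}$ open in a linear space, hence irreducible. But the stratum in the relative precore is $G\cdot Y^{KN}\cap\mu^{-1}(0)$ (cf.\ the proof of Proposition~\ref{prop:isotropic}), and intersecting with the moment-map level need not preserve irreducibility; even granting smoothness, connectedness is not obvious. The paper handles irreducibility by an entirely different route: given a Lagrangian component $C$, it pulls back along $p\colon E^{\Bw+\Bw'}\oplus(E^{\Bw})^*\to T^*E^{\Bw}$ for a carefully chosen $\Bw'$ and $\bT$-action with very large weights on the new edges, so that a generic point of $p^{-1}(C)$ becomes \emph{stable}. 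Stable components biject with $\bT$-fixed points in $\fM^{\la+\la'}$, hence with multipartitions; an explicit leading-order computation then shows the resulting multipartition is $\big(\xi_1,\dots,\xi_\ell,(n_1),(n_2),\dots\big)$, which depends only on $(\boldsymbol{\xi},\mathbf{m})$. Thus two components in the same KN class must coincide.

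Second, your existence argument is incomplete. Hilbert--Mumford says every unstable point admits \emph{some} destabilizing one-parameter subgroup; it does not show that a prescribed conjugacy class $\lift_{\boldsymbol{\xi},\mathbf{m}}$ is \emph{optimal} (norm-minimizing) for any point, and only optimal classes index KN strata. The paper instead builds an explicit preprojective representation with basis indexed by the boxes of $\boldsymbol{\xi}$ and $\mathbf{m}$ (with $x$ shifting along rows, $\bar x$ along columns, and $q$ hitting the corner boxes), and verifies directly that $\lift_{\boldsymbol{\xi},\mathbf{m}}$ minimizes $|\cdot|_u$ among destabilizing lifts for this point. Your plan to read off $(\boldsymbol{\xi},\mathbf{m})$ from the limit representation is close in spirit to the paper's completeness argument, but the paper works with a \emph{generic point} of the component rather than its limit, using the module-theoretic filtration $U\subset V$ where $U$ is the maximal submodule killed by $q$: the multisegment $\mathbf{m}$ is the Jordan type of $x$ on $U$, and $\boldsymbol{\xi}$ indexes the fixed-point component containing the image of $V/U$ in $\fM^{\Bw}_{\Bv-\Bu}$.
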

\begin{proof}
First, we note that each of these cocharacters has a non-empty
Kirwan-Ness stratum: associated to each $\boldsymbol{\xi},\mathbf{m}$
we have a representation of the preprojective algebra where the basis $v_{b}$
is indexed by boxes $b$ of the diagrams of $\boldsymbol{\xi}$ and of
$\mathbf{m}$ such that
\begin{itemize}
\item $x$ acts by sending each box in the diagram of $\boldsymbol{\xi}$ to the next in its row
  $x\cdot v_{(i,j,k)}=v_{(i,j-1,k)}$, and each box in a row of the
  multisegment $\mathbf{m}$ to the next in the row $x\cdot v_{k} =v_{k-1}$.
\item $\bar x$ acts by sending each box to
  the next in its column $\bar x\cdot v_{(i,j,k)}=v_{(i-1,j,k)}$ and each box in a row of the
  multisegment $\mathbf{m}$ to 0, and 
\item $q$ acts by sending $v_{(1,1,k)}$ to
  the corresponding basis vector in $W$ and killing all other basis vectors.
\end{itemize}
  Note that in any compatible grading, we have that the basis
vector for $(1,1,k)$ is a sum of vectors of weight $\geq
\vartheta_k+1$, since otherwise the map $q$ will not have a limit.
Similarly, the box $(i,j,k)$ must be a sum of vectors of weight $\geq
\vartheta_k+\kappa(i-j)+i+j-1$, since otherwise one of the maps along
a row or column would have no limit.   Similarly, the multi-segment must
have elements with gradings spaced out by $-1-\kappa$, and thus with
distances from $u$ given by at least $(1+\kappa)(g-h)/2,(1+\kappa)
(g-h+2)/2,\dots, (1+\kappa)(h-g)/2$.  

Thus, if we consider the norm of such a grading according to
\eqref{eq:norm}, 
it must be at least as great as the norm $|\lift_{\boldsymbol{\xi},\mathbf{m}}|_m$.
This shows the optimality of
$\lift_{\boldsymbol{\xi},\mathbf{m}}$, since the Kirwan-Ness stratum
is non-empty.

Now, consider an arbitrary component of the precore $pC$.  A generic
element $y$ of this component can be considered as a representation of
the pre-projective algebra of the Crawley-Boevey quiver.  Let
$U\subset V$ be the maximal submodule with no support on the new
vertex, that is, the maximal submodule of the $V$'s killed by $q$.
Modding out by $U$, we obtain a stable representation $V/U$, which we
can think of as a point $\bar y$ in $\fM^\Bw_{\Bv-\Bu}$. Let
$\mathbf{m}$ be the multisegment corresponding to the Jordan type of
$x$ on $U$, and let $\boldsymbol{\xi}$ be the multipartition
corresponding to the component of the point $\bar y$.  Note that these
are generically constant on the component.

We can grade $V$ compatibly with a conjugate of
$\lift_{\boldsymbol{\xi},\mathbf{m}}$ such that we obtain a limit: we
assign the weights $(1+\kappa)(g-h)/2,(1+\kappa)
(g-h+2)/2,\dots, (1+\kappa)(h-g)/2$ to each Jordan block on $U$, and
on the quotient $V/U$, we can use the weights
$\vartheta_k+\kappa(i-j)+i+j-1$ for $(i,j,k)$ in the diagram of $
\boldsymbol{\xi}$ by the definition of this component.  
In order to define this grading on $V$, we must choose a splitting
$V/U\to V$.  Since $u\gg 0$, any matrix coefficient with source in
$V/U$ and target in $U$ thus has a very large weight, and does not
interfere with having a limit for the associated lift.  The lift
$\lift_{\boldsymbol{\xi},\emptyset}$ is the optimal cocharacter for
$V/U$ by definition and $\lift_{\emptyset,\mathbf{m}}$ is the optimal
cocharacter for $U$ as argued above.  Thus,
$\lift_{\boldsymbol{\xi},\mathbf{m}}$ is (up to conjugacy) the
optimal character for $y$.   This shows that there is no Kirwan-Ness
stratum open in a component for any other cocharacter, and in
particular, none which is Lagrangian.  Note, this also shows that two
components which both contain stable points correspond to different
Kirwan-Ness strata, since they correspond to different fixed points in
$\fM^\la$.

Now, let $C$ be a Lagrangian component.  As in the proof of Theorem
\ref{alg-bla}, we can consider the preimage of this component under
the map $p\colon E^{\Bw+\Bw'}\oplus (E^\Bw)^*\to T^*E^{\Bw}$ for some dominant weight
$\la'$ induced by the map $\C^{w_i +w_i'}\cong W_i\oplus W_i'\to
\C^{w_i}\cong W_i$ forgetting the
last $w_i'$ components.  This preimage is Lagrangian in $T^*
E^{\Bw+\Bw'}$ and lies in the precore for any $\bT$ action which
acts with very large weights on $W_i'$.  

Without loss of generality, we can order the segments $[m_i,n_i]$ of the
multisegment $\mathbf{m}$ so that the lengths $n_1\geq
n_2\geq \cdots$ are weakly decreasing.  We let $w_i'$ be the number of
multisegments with residue $m_j+n_j\equiv i\pmod e$, and extend $\bT$ so that it has
weights $\{z+\epsilon j\mid m_j+n_j\equiv i\pmod e\}$ on $W_i'$ for $z\gg 0$ chosen
large enough that $p^{-1}(C)$ lies in the precore of the corresponding
action.  

Note, since $x$ is nilpotent, any destabilizing subrepresentation of a
point in $p^{-1}(C)$ contains a vector killed by $x$.  Since each
segment contributes a line of vectors in $U$ killed by $x$, the
dimension vector of this subspace is $\Bw'$. Since the map
$q'\colon V_i\to W_i'$ is chosen generically, this map is injective on the
kernel of $x$ at a generic point in $
p^{-1}(C)$.  Thus, a
generic point in this component is stable.  

Now, let us calculate which Kirwan-Ness stratum a generic point of
this component lies in.  Since the vector with weight $z+\epsilon$ in
$W_{m_1+n_1}$ is hit by some element of $U$, we must have an element of grading at least $-z-\epsilon+1$.  Furthermore,
if $x^{-1}(v)\neq \emptyset$ for any element $v$ whose leading order is $\geq -z-\epsilon+1$ is non-zero, the elements of this preimage
must have leading order $\geq -z-\epsilon-\kappa+2$ in this preimage,
and more generally if $x^{-j}(v) \neq \emptyset$, the leading order of
it elements must be must be an element of grading
$-z-\epsilon-j\kappa+j+1$.  Since $q'$ is injective on the kernel of
$x$, we can assume that some element of the image of $x^{n_1-1}$ over
vertex $m_1+n_1$ has leading order $-z-\epsilon+1$, and its preimages
of have leading orders $-z-\epsilon-j\kappa+j+1$ for $j\leq n_1-1$.  
This fixes the gradings that appear for $n_1$ elements.

Similarly, the genericity of $q'$ shows that there is an element of
the image of $x^{n_2-1}$ which maps non-trivially to the vector of
weight $z+2\epsilon$ in $W_{m_2+n_2}$ which has not already had its
leading order fixed.  As argued above, this shows that we have such an
element with leading order  $\geq -z-2\epsilon+1$, and thus taking
preimages, elements with leading order $-z-2\epsilon-j\kappa+j+1$ for
$j\leq n_2-1$.  

Thus, applying this construction inductively, we find that the leading
orders that appear in $U$ must be greater than the weights that appear in
$\xi_{(n_1),(n_2),\dots}$, the cocharacter associated to the
multipartition where each partition has a single row, with lengths
$n_1,\dots$, and the component partitions have associated weights
$z+\epsilon,z+2\epsilon,\dots$.

Thus, the basis vectors in $V/U$ contribute $\boldsymbol{\xi}$ to our
multipartition, and the subspace $U$ will contribute $\big((n_1),(n_2),\dots\big)$, so the component
$p^{-1}(C)$ is sent under reduction to the component of the relative
core for $\fM^{\Bw+\Bw'}$ corresponding to $\big(\xi_1,\dots,
\xi_\ell, (n_1),(n_2),\dots\big)$.  Since this component only depends
on $\boldsymbol{\xi}$ and $\mathbf{m}$ (the residues $m_i$ are
implicit in the choice of $W_i'$ and the $\bT$-action), this shows that this KN
stratum has a unique component in its closure.  
\end{proof}\excise{
Let $S_{\boldsymbol{\xi},\mathbf{m}}$ be the smooth locus of the
set of elements of $E$ which have a limit under the lift
$\lift_{\boldsymbol{\xi},\mathbf{m}}$.  

\begin{proposition}
The components of the relative precore  $pC$ are precisely the
closures of the conormal bundles  $N^*S_{\boldsymbol{\xi},\mathbf{m}}$.  
\end{proposition}
\begin{proof}
  If we let $Y_{\boldsymbol{\xi},\mathbf{m}}$ be the set of elements
  of non-negative weight for $\lift=\lift_{\boldsymbol{\xi},\mathbf{m}}$,
  then any open subset $R\subset E\cap
  Y_{\boldsymbol{\xi},\mathbf{m}}$ with $G\cdot R$ smooth will be open
  in $S_{\boldsymbol{\xi},\mathbf{m}}$.  Let $x'$ be the degree
  $-\kappa-1$ homogeneous part of the map $x$, and $q'_{(i)}$ the
  $-\vartheta_i-1$ 
  homogeneous part of $q_{(i)}$.
We can first consider the possible Jordan types of $x'$.  The maximal such
  Jordan type can be read off the multipartition $\boldsymbol{\xi}$ by
  decomposing the diagrams of the partition diagrams into L's.  That
  is, we consider the boxes $(i,i,k)$ for fixed $k$, and let $a_i,b_i$
  be their arm and leg length.  Then the $a_i+b_i-1$ are the lengths
  of the Jordan blocks of this maximal type.  This is more easily
  illustrated by Figure \ref{fig:partition}.
\nc{\lo}{.9951}
\nc{\sm}{.0696} \nc{\ong}{.0696} \nc{\ma}{.0048}
\begin{figure}\centering
  
  \tikz[thick,scale=3,rotate=-4]{ \coordinate (a) at (0,0);
    \coordinate (b) at (1.13,1.13*.07); \draw (a) -- +(-1.5,1.5);
    \draw (a) ++(.5,.5) -- +(-1.25,1.25); \draw (a) ++(.25,.25) --
    +(-1.5,1.5); \draw (a) ++(.75,.75) -- +(-.75,.75); \draw (a)
    ++(1,1) -- +(-.25,.25); \draw (a) -- +(1,1); \draw (a) ++
    (-.25,.25)-- +(1,1); \draw (a) ++ (-.5,.5)-- +(.75,.75); \draw (a)
    ++ (-.75,.75)-- +(.75,.75); \draw (a) ++ (-1,1)-- +(.5,.5); \draw
    (a) ++ (-1.25,1.25)-- +(.5,.5); \draw (a) ++ (-1.5,1.5)--
    +(.25,.25); 
\draw[line width=1mm] (-1.25,1.5) -- (0,.25) -- (.75,1);
\draw[line width=1mm] (-.75,1.5) -- (0,.75) -- (.25,1);
\draw[line width=1mm] (-0.01,1.26) -- (0.02,1.23);
   }

  \caption{The partition associated to multisegments}
\label{fig:partition}
\end{figure}
If one such grading exists, then its associated filtration $V_{\leq
  a}$ is uniquely determined, so this grading is unique up to the
action of $P_{\lift}$.  Similarly, the limit of this representation
under $\lift$ is just the representation given by $x=x', q=q'$.

Consider the subset $R$
 where the Jordan type of $x'$ is maximal in dominance order, and the Jordan block associated to $(i,i,k)$ is not killed
 by $q'_{(k)}$.  Note that that the isomorphism type of the limit
 representation only depends on $\boldsymbol{\xi},\mathbf{m}$. Again,
 it has basis vectors given by boxes, but now $x$ travels along the
L's as in Figure \ref{fig:partition}, so $x$ maps $(i,j,k)$ to
$(i-1,j,k)$ if $i>j$ and to $(i,j+1,k)$ if $i\leq j$, and $q_{(k)}$ maps
$(i,i,k)$ to the corresponding basis vector $W$, and kills all other
boxes.  We can put the $x$ into correct form just using Jordan normal
form.  Homogeneity shows that only $(i,i,k)$ can map non-trivially
through $q'_{(k)}$.  By multiplying each Jordan block by a scalar
(thus leaving $x'$ unchanged), we can assure that each matrix
coefficient for $(i,i,k)$ is 1 or 0, and by assumption, 0 never
occurs.   We denote the corresponding orbit of $E$ by $O$.  

We have a natural map $R\to O$; this is a vector bundle, whose fiber $F$
is given by letting $x-x'$ and $q_{(k)}-q'_{(k)}$ range over all sums
of maps of degree
$<-\kappa-1$ and $<-\vartheta_i-1$.  In particular, $R$ is smooth and
$G$-closed, so dense in $S_{\boldsymbol{\xi},\mathbf{m}}$.  Now consider the preimage of $R$
in $T^*E$.  This is again a vector bundle over $O$, with the fiber now
being $R\oplus E^*$.  Thought of as a vector bundle over $R$, the
symplectic form induces a map $R\oplus E^* \to T^*R$.  Since the conjugates of $\lift$ acts with positive
weights on the fibers of $R$, they act with negative weights on
$T^*R$, and an element of this space only has a limit under a
conjugate of $\lift$ if it lies in $R$ already.  Thus, the only
elements of $R\oplus E^*$ with limits under $\lift$ are the conormal
bundle $N^*R$.  

Now, we will prove that $pC=\cup N^*R$.  If $R$ is closed in $E$, then
this is obvious, so we may assume by induction that the preimage of
$\bar{R}\setminus R$ in $pC$ is a union of these varieties.  Assume
that $y$ in this preimage does not lie in $N^*R$.  Thus, $y$ maps to a
non-trivial element of $T^*R$.  The element $y$ must have a limit
under some lift $\lift'$.  Since the image of $y$ in $R$ has a limit
under $\lift'$, we have that 
\end{proof}

We wish to analyze the structure of the category $\cOg$ for such an
action.  In order to do this, we should consider the structure of the varieties $E_{\Bw,\Bv}$ in this
case.  Obviously, if we forget the map $q\colon V\to W$, that is
forget the new vertex, then the orbits of $G_\Bv$
are classified by multisegments, giving the Jordan type of $x$.  However,
we must analyze the situation more carefully to take the map $q$ into
account.  
\begin{definition}
  We call a lift $\lift$ of $\vartheta$ and its associated grading {\bf tight}
  if the map $x$ is fixed by $\lift$;
 in the associated graded, this means that $x$ is homogeneous of weight $\ck$.
  Obviously, each representation has a tight lift under which it has a
  limit.  We call such a lift and grading {\bf minimal} if the dimension of
  $V_{\leq i}$ is as large as possible amongst small deformations of
  the grading which remain tight.
\end{definition}
If we only consider usual lifts, many representations will have no
minimal lift, but if we consider gradings where $-\infty$ is also
allowed as a grade, then one will exist.  From now on, we will only
consider gradings valued in $[-\infty,\infty)$.  

\begin{proposition}
  For any representation, there is a multipartition
  $\boldsymbol{\xi}=(\xi_1,\dots, \xi_\ell)$ such that in the minimal
  grading, $\dim
  V_{\vartheta_i+p \ck}$ is the number of boxes of content $p$ in
  $\xi_i$, and the only other degree that appears is $-\infty$.  
\end{proposition}

\begin{proof}
Consider a minimal grading on $V$.
 We
  can divide the pieces of such a grading according to their residue
  modulo $\ck$.  Consider the structure of the pieces with a fixed
  residue modulo $\ck$.  Each of these is a sum of Jordan blocks for
  $x$, and we wish to analyze which of these can occur.

We cannot have a summand of this submodule such that $q$
  sends $V_{\leq \vartheta_i}$ to $W_{< \vartheta_i}$ or this would
  contradict minimality; that is, we must have that each
  Jordan block for $x$ in this subspace  intersects $V_{\vartheta_i}$,
  since otherwise we can decrease the grading on such a block by
  $\epsilon$.

  Thus, we have a multisegment $\mathbf{m}_i$ formed by the Jordan
  blocks of $x$ intersecting $V_{\vartheta_i}$.  These are, in fact, defined
  by the pair of integers $(a,b)$ such that the block ranges from
  $V_{\vartheta_i+a \ck}$ to $V_{\vartheta_i+b \ck}$.
We claim that we can order blocks in such a way that the $a_i$'s are
strictly decreasing, and the $b_i$'s strictly increasing.

The obstruction to this would be if two
  blocks interlace in the sense that one has highest degree $a_i$ and
  lowest $b_i$ with
  \[a_1\geq a_2\geq \vartheta_i \geq b_1\geq b_2.\] However, in this
  case, we have a graded map from the first block to the second.  We
  can use this map to change the splitting so that $q$ sends the first
  block to $W_{<\vartheta_i}$.  Thus, this grading is not minimal.

  This shows that these blocks must be nested. There is a unique
  partition $\xi_i$ such that the pairs $(a,b)$ are the arm and leg
  lengths for the boxes in the column through the nadir of the
  partition i.e. the boxes $(j,j)$.  Put a different way, the
  partition $\xi_i$ is produced by bending each block into an L-shape
  with the kink at $\vartheta_i$ and stacking these to make a diagram
  in Russian notation, as shown in Figure \ref{fig:partition}.  In the
  case shown in Figure \ref{fig:partition}, the pairs of $(a,b)$ are
  $(3,-5), (1,-3), (0,0)$ (as suggested by the black bars).

In the
  abacus model for partitions, this is placing beads in all negative
  positions except those of the form $b_i-\nicefrac{1}{2}$, and only
  in the positive positions $a_i+\nicefrac{1}{2}$.
\end{proof}

The structure of the set $Y_{\boldsymbol{\xi},\mathbf{m}}$  of
representations with a fixed 
multipartition $\boldsymbol{\xi}$ and multi-segment $\mathbf{m}$
arising from their minimal tight lift is easy to describe:
it is an affine bundle over a homogeneous space for the group $G_\Bv$.

\begin{proposition}\label{affine-preO}
 Every simple in $\preO$ is smooth along the sets
 $Y_{\boldsymbol{\xi},\mathbf{m}}$  and is the intermediate extension of
 $\fS_{Y_{\boldsymbol{\xi},\mathbf{m}}}$ for some choice of
 $\boldsymbol{\xi}$ and $\mathbf{m}$.
\end{proposition}
\begin{proof}
  First, we should note that for any lift, whether a representation
  $(x_\bullet,q)$ has a limit under a conjugate of this lift depends
  only on the structure of its minimal lift.  A lift is essentially
  determined by assigning a collection of weights to each node.  If we
  associate the usual diagram to $\boldsymbol{\xi}$, and a series of
  rows of the corresponding length and residue to $\mathbf{m}$, then
  we have a limit if and only if we can fill diagram with the weights,
  matching residues, so that in each of the rows for $\mathbf{m}$, and
  each of the ``L'''s in the diagram for $\boldsymbol{\xi}$ (as in
  Figure~\ref{fig:partition}), for each pair of consecutive entries
  $w$ and $w'$ (reading right to left), we have $w'+\ck\geq w$.
Thus, every component of the relative precore is the conormal bundle to
$Y_{\boldsymbol{\xi},\mathbf{m}}$ for some
$({\boldsymbol{\xi},\mathbf{m}})$, and every simple in $\cOg$ must be a
local system on one of these varieties.

If we let $Z_{\boldsymbol{\xi},\mathbf{m}}$ be the subspace of
$Y_{\boldsymbol{\xi},\mathbf{m}}$ where the map $q$ is homogeneous of
degree 0,
then we have a map $Y_{\boldsymbol{\xi},\mathbf{m}}\to
Z_{\boldsymbol{\xi},\mathbf{m}}$ replacing $q$ by its degree 0 part.
This map is an affine space bundle whose fiber is the spaces of
appropriate maps of strictly negative degree.
The space $Z_{\boldsymbol{\xi},\mathbf{m}}$ has a transitive action of
$G_\mu$, with stabilizer given by the automorphisms of the
representation for the multi-segment $\mathbf{m}$; in particular, this
stabilizer is connected, so $Z_{\boldsymbol{\xi},\mathbf{m}}$ and thus
$Y_{\boldsymbol{\xi},\mathbf{m}}$ are equivariantly simply connected.
\end{proof}}
We call a loading $\Bi$
{\bf unsteady} in this case if there exists $a\in \R$ with  $a<\vartheta_j$ for
all $j$ with every point in $[a-|\kappa|,a]$ sent to 0 by $\Bi$, but
for some $a'<a-|\kappa|$, we have $\Bi(a')\neq 0$.  If there are
precisely $j$ such values $a'$, we call the loading $j$-unsteady. The algebra
$T^\vartheta$ is the quotient of $\bar W^\vartheta$ by the idempotents
corresponding to unsteady loadings.  For more details, the reader can refer to that given in \cite[\S
4.1]{WebRou}.  
\begin{proposition}\label{O-is-T-affine}
  Every simple in 
  $\preO$ is a 
  summand of $L_\nu$.  The sheaves with unstable characteristic varieties are
exactly the summands of such push forwards where $\Bi$ is unsteady.  
\end{proposition}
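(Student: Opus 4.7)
The plan is to mimic the arguments used to prove Theorem \ref{alg-bla} and Proposition \ref{prop-L} in finite type, using Proposition \ref{prop:LKN} as the key geometric input in place of the finite-type classification of orbits. By Proposition \ref{prop:LKN}, for $u \gg 0$ the Lagrangian components of the relative precore are in bijection with the conjugacy classes $\lift_{\boldsymbol{\xi}, \mathbf{m}}$, and each such component has irreducible closure. Thus every simple in $\preO$ is the intermediate extension of a local system on a stratum indexed by some $(\boldsymbol{\xi}, \mathbf{m})$, and to establish the first claim it suffices to produce, for each such pair, a loading $\Bi \in B(\nu)$ whose pushforward $L_\Bi$ contains the corresponding simple as a summand, and to rule out the appearance of non-trivial local systems.

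For the construction of the loading: given $(\boldsymbol{\xi}, \mathbf{m})$, I would simply read off the weights of the cocharacter $\lift_{\boldsymbol{\xi}, \mathbf{m}}$ of Proposition \ref{prop:LKN}, packaging them into a map $\Bi \colon \Q \to \Gamma \cup \{0\}$ that sends the eigenvalue $\vartheta_k + \kappa(i-j) + i + j - 1$ to the residue $r_k + j - i$ for each box in $\boldsymbol{\xi}$, together with the analogous data for $\mathbf{m}$. By definition of the loaded flag space, the map $p_\Bi$ lands in the closure of the corresponding stratum, and in fact restricts to an isomorphism onto an open dense subset consisting of representations whose canonical filtration realizes the prescribed HN/Jordan structure. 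Consequently $L_\Bi$ contains the IC sheaf of that stratum as a summand.

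To rule out non-trivial local systems I would show that each stratum $S_{\boldsymbol{\xi}, \mathbf{m}}$ is equivariantly simply connected: it fibers as an affine bundle over a $G_\Bv$-orbit whose stabilizer is an extension of a product of general linear groups by a unipotent group, hence connected. Alternatively, one can count: pulling back by a forgetful map $p \colon E^{\Bw + \Bw'} \to E^\Bw$ with $\Bw'$ large produces a situation where every simple survives reduction (as in the proof of Theorem \ref{alg-bla}), so the simples in $\preO$ inject into the set of simples in the geometric category $\cO$ of the larger variety; these are in bijection with Li's canonical basis \cite{Litensor}, whose cardinality matches the count of pairs $(\boldsymbol{\xi}, \mathbf{m})$.

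For the second statement, observe that by Theorem \ref{th:pushforward} a loading $\Bi$ is unsteady in the sense of that theorem precisely when it is unsteady in the combinatorial sense defined just above the proposition: the existence of a destabilizing $\varpi \colon \C^* \to G$ translates directly into the existence of a black strand to the left of all reds in the associated Stendhal slice. Conversely, if a simple $L$ in $\preO$ has unstable singular support then its associated Kirwan--Ness stratum lies in the unstable locus for $\nu$, so the cocharacter $\lift_{\boldsymbol{\xi}, \mathbf{m}}$ is unsteady, and the loading we constructed is unsteady as well. The main obstacle, and the most delicate part, will be the simple-connectivity (or the equivalent counting argument) needed to exclude non-trivial local systems on the strata; this is where the restriction that the $\vartheta_i$ be distinct modulo $\ck$ is essential, since otherwise the stabilizer groups in $G_\Bv$ of generic points can acquire disconnected components and genuinely new simple $\Diff_E$-modules would appear.
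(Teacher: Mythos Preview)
Your simple-connectivity approach is a valid alternative to what the paper does, and in some ways more direct: the strata indexed by $(\boldsymbol{\xi},\mathbf{m})$ really are affine bundles over $G_\Bv$-orbits whose stabilizers are (unipotent extensions of) products of general linear groups, hence connected, so there is exactly one $G_\Bv$-equivariant simple local system on each. Combined with the multiplicity-one statement you observe for $L_{\Bi_{\boldsymbol{\xi},\mathbf{m}}}$ along its own stratum, this settles the first claim. One small correction: the connectedness of these stabilizers has nothing to do with the $\vartheta_i$ being distinct modulo $\kappa$; that genericity hypothesis is what makes the set of Lagrangian KN strata be indexed by pairs $(\boldsymbol{\xi},\mathbf{m})$ in the first place (Proposition~\ref{prop:LKN}), not what controls the local systems.

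The paper takes a different route that avoids the stabilizer analysis entirely. It first works on the \emph{reduced} side: using the general structure theory of geometric category $\cO$ from \cite[6.5, 6.17]{BLPWgco}, it shows that the $\red(L_{\Bi_{\boldsymbol{\xi},\emptyset}})$ already contain every simple of $\cOg$, with a unique simple $\Lambda_{\boldsymbol{\xi}}$ distinguished by having $C_{\boldsymbol{\xi}}$ as the highest component in its support. Then for $\preO$ itself it argues by contradiction: if some simple $M$ were not a summand of any $L_\Bi$, take a summand $M'$ of some $L_\Bi$ with the same highest component, pull both back along $p\colon E^{\Bw+\Bw'}\to E^{\Bw}$, and observe that $\red(p^*M)$ and $\red(p^*M')$ are now nonzero simples in $\cOg$ with the same highest support component, hence equal, forcing $M\cong M'$. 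Your proposed counting alternative (B) does not work as written: Li's results \cite{Litensor} are for tensor product actions, and after pulling back to $E^{\Bw+\Bw'}$ you are still in the non-tensor affine setting, so you cannot invoke his count of simples directly.

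For the second claim, the paper's argument is again by counting: the loading $\Bi_{\boldsymbol{\xi},\mathbf{m}}$ is unsteady precisely when $\mathbf{m}\neq\emptyset$ (the segment boxes sit far to the left of all reds with a gap exceeding $|\kappa|$), and every point of the support of the corresponding simple carries a destabilizing submodule of dimension at least $|\mathbf{m}|$. Since the pairs with $\mathbf{m}=\emptyset$ already account for all simples surviving $\red$, by pigeonhole those with $\mathbf{m}\neq\emptyset$ are exactly the ones killed. Your more direct translation between combinatorial and geometric unsteadiness is correct in spirit but needs this counting step to close: knowing that a simple has unstable support does not immediately tell you which $(\boldsymbol{\xi},\mathbf{m})$ it belongs to, only that its highest component lies in the unstable locus.
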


\begin{proof}
Corresponding to the actions with Lagrangian KN strata
$\lift_{\boldsymbol{\xi},\mathbf{m}}$, we have loadings
$\Bi_{\boldsymbol{\xi},\mathbf{m}}$.  The sheaf $L_{\Bi_{\boldsymbol{\xi},\mathbf{m}}}$ is
supported on precisely the image of the component $C_{\Bi_{\boldsymbol{\xi},\mathbf{m}}}$ in $E$, and thus
includes the conormal to its smooth locus (which is dense in
$C_{\Bi_{\boldsymbol{\xi},\mathbf{m}}}$) with multiplicity one in its
characteristic cycle. The other components that appear are 
lower in the KN order by Theorem \ref{th:pushforward}.  As
in the proof of Lemma \ref{lem:red-zero}, this means that
$\red(L_{\Bi_{\boldsymbol{\xi},\mathbf{m}}})=0$ unless
$\mathbf{m}=0$.  The sheaves
$\red(L_{\Bi_{\boldsymbol{\xi},\emptyset}})$ give semi-simple objects in $\cOg$
such that taking the highest component appearing in the support
defines a bijection between these objects and the components of the
relative core.  By \cite[6.17]{BLPWgco}, this means that
the semi-simple object
$\red(L_{\Bi_{\boldsymbol{\xi},\emptyset}})$ has a unique simple summand
$\Lambda_{\boldsymbol{\xi}}$ where
$C_{\Bi_{\boldsymbol{\xi},\mathbf{m}}}$ is the highest component in
its support, and the only other simples are supported on strictly
lower components.  Since
$\Lambda_{\boldsymbol{\xi}}$ is complete list of simples in $\cOg$ by
\cite[6.5]{BLPWgco}, this means we have obtained all the simples in
$\cOg$.  

Now, assume there is a simple $M$ in $\preO$ which is not a summand of
$L_{\Bi}$.  There must be some highest component (in the KN order) in
the support of this module, and some simple summand $M'$ of $L_{\Bi}$ which has
the same highest component in its support.  As in the proof of
Proposition \ref{prop:LKN}, we can pull back by the map $p\colon
E^{\Bw+\Bw'}\to E^{\Bw}$ for $\Bw'$ sufficiently large.  The pullbacks
$p^*M$ and $p^*M'$ will
still give two modules in $\preO$ on this new variety, still with the
same highest component in their support, and now neither is killed by
$\red$.  But as argued above, two simples with the same highest
component must be sent under $\red$ to the same simple.  Thus, we have
$\red(M)\cong \red(M')\neq 0$, but this is only possible for simples if
$M\cong M'$ by the exactness of $\red$.  It follows that there is no such $M$.

Now, consider the sheaf $L_{\Bi_{\boldsymbol{\xi},\mathbf{m}}}$ with
$\mathbf{m}\neq \emptyset$.  At any point the singular support of this
module, we have a destabilizing submodule whose dimension is given by
at least 
the number of boxes in $\mathbf{m}$.  Thus, we have produced a
different simple module for each such
$({\boldsymbol{\xi},\mathbf{m}})$ which is killed by $\red$.  This
must be all such modules, since the modules
$\red(L_{\Bi_{\boldsymbol{\xi},\emptyset}})$ are non-zero, and contain
a number of distinct simple modules equal to the number of multi-partitions.
\end{proof}

Given this result, we can apply Corollary \ref{dagger-W}.  This
relates $\dpreO$ and $\dOg$ to weighted KLR algebras for the cyclic
quiver and its Crawley-Boevey quiver.  We'll use $W$ for the
weighted KLR algebra of the cycle with the weight $\kappa$ along every
cyclically oriented edge, $\tilde{T}^\vartheta$ to denote the
weighted KLR algebra for the Crawley-Boevey quiver with the weighting
$\vartheta_i$ along the new edge $e_i$ and $\kappa$ along oriented
edges of the cycle.  Then we'll use $T^\vartheta$, as in
\cite{WebRou}, to denote the steadied quotient of
$\tilde{T}^\vartheta$, that is the quotient by all idempotents
corresponding to unsteady loadings.
\begin{corollary}\label{cor:affine-equivalence}
For $\Gamma$ a cyclic quiver and a generic $\bT$-action, the hypothesis
\hyperlink{dagger}{$(\dagger)$} holds.
  
In particular, for fixed $\la,\mu, \vartheta,$ we have quasi-equivalences \[\tilde{T}^\vartheta_{\mu}\dgmod \cong
D_{\preO}\qquad T^\vartheta_\mu\dgmod\cong D_{\cOg}.\]
\end{corollary}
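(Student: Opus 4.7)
The plan is to verify hypothesis \hyperlink{dagger}{$(\dagger)$} for a suitable finite set $B$ of loadings and then invoke Corollary~\ref{dagger-W}. The first clause of $(\dagger)$—that every simple in $\preO$ appears as a summand of some $L_{\Bi}$—is exactly Proposition~\ref{O-is-T-affine}. So the work concentrates on the second clause: showing that the simples with unstable singular support are exactly the summands of $L_\Bi$ for $\Bi$ unsteady in the diagrammatic sense of \cite[\S 4.1]{WebRou}.

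First I would fix the dictionary between the geometric and combinatorial data. To each Lagrangian Kirwan--Ness lift $\lift_{\boldsymbol{\xi},\mathbf{m}}$ produced in Proposition~\ref{prop:LKN}, I attach the loading $\Bi_{\boldsymbol{\xi},\mathbf{m}}$ by recording, for each weight $-a$ appearing in the action of $\lift_{\boldsymbol{\xi},\mathbf{m}}$ on $V$, the residue of the vertex on which that weight occurs. Running through conjugacy classes of such lifts, and quotienting by the equivalence relation on loadings of \cite{WebwKLR}, gives a finite set $B$ that meets every component of the relative precore. By Proposition~\ref{O-is-T-affine}, each simple in $\preO$ is then a summand of some $L_{\Bi}$ with $\Bi\in B$.

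Next I would match the two notions of unsteadiness. Geometrically, $\lift_{\boldsymbol{\xi},\mathbf{m}}$ is unsteady precisely when $\mathbf{m}\neq\emptyset$: the segments of $\mathbf{m}$ produce basis vectors of very low weight which together span a submodule killed by the map $q$, and a cocharacter $\varpi$ acting on that submodule (and trivially on the complement) destabilizes the lift in the sense of Theorem~\ref{th:pushforward}. Combinatorially, the loading $\Bi_{\boldsymbol{\xi},\mathbf{m}}$ places those low-weight basis vectors in positions $a'<a-|\kappa|$ separated from the rest of the loading by an empty window $[a-|\kappa|,a]$ of length $|\kappa|$, which is exactly the condition that $\Bi$ be unsteady in the sense of \cite[\S 4.1]{WebRou}. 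Conversely, any diagrammatically unsteady $\Bi$ in $B$ comes from a lift of this form. Combining this with the last paragraph of the proof of Proposition~\ref{O-is-T-affine}, which identifies the simples in $\preO$ with unstable support as exactly those indexed by pairs $(\boldsymbol{\xi},\mathbf{m})$ with $\mathbf{m}\neq\emptyset$, yields the second clause of $(\dagger)$.

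With $(\dagger)$ in hand, Corollary~\ref{dagger-W} gives $\dpreO\simeq \bar W^{\vartheta}_{\nu}\dgmod$, which is $\tilde T^\vartheta_{\mu}\dgmod$ by the isomorphism of \cite[3.5]{WebwKLR} between weighted KLR algebras and the Crawley--Boevey $\tilde T^\vartheta$. The second equivalence then follows because the ideal $I\subset\bar W^\vartheta_\nu$ generated by the idempotents $\Delta_*[X_\Bi]$ for $\Bi$ unsteady (in either sense, now identified) is the defining ideal of the steadied quotient $T^\vartheta_\mu$ of \cite[\S 2.6]{WebwKLR} and \cite{WebRou}. The main obstacle I expect is the unsteadiness matching step: one must check carefully that the gap pattern $[a-|\kappa|,a]$ in a loading translates exactly to the existence of a destabilizing one-parameter subgroup $\varpi$ satisfying $(T^*E)_{\lift_{\boldsymbol{\xi},\mathbf{m}}}\subset (T^*E)_\varpi$ with $\langle\al,\varpi\rangle>0$, and that no spurious ``intermediate'' unsteady lifts are missed or double-counted when passing between the geometric classification and the loading combinatorics.
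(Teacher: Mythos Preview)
Your proposal is correct and follows the same route as the paper: deduce $(\dagger)$ from Proposition~\ref{O-is-T-affine} and then invoke Corollary~\ref{dagger-W}. The paper in fact presents this corollary with no separate argument beyond the sentence ``Given this result, we can apply Corollary~\ref{dagger-W}.''

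One remark on emphasis: you treat the second clause of $(\dagger)$ as the main outstanding work, but the second sentence of Proposition~\ref{O-is-T-affine} already asserts that the simples with unstable characteristic variety are exactly the summands of $L_{\Bi}$ for $\Bi$ unsteady in the loading sense. So both clauses of $(\dagger)$ are contained in that proposition, modulo the translation between the two uses of the word ``unsteady'' (geometric unsteadiness of a lift $\lift$ as in Theorem~\ref{th:pushforward}, versus the combinatorial gap condition on a loading). You correctly identify this translation as the one genuine checkpoint, and your sketch of it---a gap of width $|\kappa|$ below all the $\vartheta_j$ isolates a subrepresentation supported on $\Gamma$, whence a destabilizing cocharacter $\varpi$---is exactly right. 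The paper implicitly regards this identification as either immediate from the definitions or already handled in \cite[\S 2.6]{WebwKLR}, where the steadied quotient is set up. Your explicit treatment of it is more careful than the paper's, not a deviation from it.
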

It follows from \cite[4.14]{WebwKLR} that these equivalences
intertwine convolution with induction of representations of weighted
KLR algebras.  In particular, we have that 
\begin{corollary}\label{cor:action-match}
  The equivalence $T^\vartheta_\mu\dgmod\cong D_{\cOg}$ intertwines
  the algebraic categorical action of \cite{Webmerged} on the LHS and
  geometric categorical action of \cite{Webcatq} on the RHS.  
\end{corollary}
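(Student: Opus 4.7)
The plan is to identify the generating $1$-morphisms $E_i, F_i$ on both sides as coming from the same geometric correspondence, and then invoke the general principle (used already in the body of the paper) that the equivalence $\coho(\cM)=\Ext^\bullet(L,\cM)$ transports convolution products into tensor products over weighted KLR algebras.

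First, recall the geometric side: by \cite[3.3]{Webcatq}, the action of the Chevalley generators on $\cOg$ is implemented by convolution with certain Harish-Chandra bimodules, which in turn are realized as the reductions of natural $\Diff$-modules on correspondence varieties $\widetilde{X}^\la_{\mu,\mu\pm\al_i}$ classifying pairs of quiver representations differing by a step at the vertex $i$. Passing these correspondences through the loaded flag picture, they become (up to reduction) the spaces $X_\Bi\times_{E_\nu}X_{\Bj}$ in which $\Bj$ is obtained from $\Bi$ by appending a single strand labelled $i$ at the appropriate end. This is precisely the geometric incarnation of induction and restriction for weighted KLR algebras as discussed in \cite[\S 4]{WebwKLR}.

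Second, on the algebraic side, the categorical action of \cite{Webmerged} on $T^\vartheta$-modules is defined by tensor product with bimodules $T^\vartheta_{\mu\pm\al_i}\otimes_{T^\vartheta_\mu\otimes \cE_i}\cdots$, where $\cE_i$ is the one-strand algebra; under the isomorphism of \cite[3.5]{WebwKLR} identifying $T^\vartheta_\nu$ with the steadied quotient of $\bar W^\vartheta_\nu$, these are exactly the induction/restriction bimodules appearing in \cite[4.14]{WebwKLR}. Applying that result pre-reduction, the equivalence $p\coho\colon \dpreO\cong \tilde T^\vartheta\dgmod$ already intertwines convolution of the relevant $L_\Bi$'s with induction and restriction of weighted KLR modules.

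Third, to pass from $\dpreO$ to $\dOg$, one uses the compatibility between $\red$, $\red_!$ and the steadying quotient established in Corollary \ref{dagger-W} (together with the natural isomorphism \eqref{eq:coho} and the analogue of Theorem \ref{tensor-shuffling}'s commutative diagram). Since the Harish-Chandra bimodules implementing $E_i,F_i$ on $\cOg$ are obtained by reducing exactly the $\Diff$-module correspondences that become induction/restriction bimodules upstairs, quotienting by the unsteady idempotent ideal on both sides produces a commuting square identifying the two categorical actions.

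The main obstacle is the bookkeeping in step one: one must check that the specific correspondence sheaves used in \cite{Webcatq} (together with their normalizations and grading shifts) correspond under Riemann--Hilbert and $\Ext^\bullet(L,-)$ to the induction bimodules of \cite[4.14]{WebwKLR} appearing with the correct shift. Once this identification is in place, the rest is formal: compatibility of convolution with $\Ext^\bullet(L,-)$ and with the passage from $\tilde T^\vartheta$ to its steadied quotient $T^\vartheta$ yields the desired intertwining, and the Serre-type relations automatically match because they hold on both sides by the respective results of \cite{Webcatq} and \cite{Webmerged}.
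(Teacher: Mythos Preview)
Your approach is correct and matches the paper's: the paper simply notes that by \cite[4.14]{WebwKLR} the equivalences of Corollary~\ref{cor:affine-equivalence} intertwine convolution with induction of weighted KLR modules, from which the result follows immediately. You have essentially unpacked this one-line justification into its constituent steps (identifying the Chevalley functors on both sides with induction/convolution by single-strand objects, then invoking \cite[4.14]{WebwKLR} and passing to the steadied quotient), which is more detail than the paper provides but the same argument.
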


We can also understand the cell filtration in this case;
unfortunately, this is more challenging than the finite
type case, since the variety $\fN^\la_{\mu}$ has strata which are not of the
form $\fN^\la_{\nu}$.  For example,
$\fN^{\omega_0}_{n\delta}\cong \C^{2n}/(\Z/e\Z\wr S_n)$ and counting shows that there are not enough weight spaces to account for
all the strata.  

In particular, for a weight $\mu$ and integer $n$, there is a
stratum closure we denote $\fN^\la_{\nu;n}$ in $\fN^\la_{\mu}$
given by representations of the preprojective algebra isomorphic to a sum
of
\begin{itemize}
\item a simple representation with dimension vector $v_i$ with
  $v_\infty=1$ and $\la=\nu+\sum_{i\in I} v_i\al_i$, 
\item a semi-simple representation with dimension vector given by
  $v_\infty'=0$ and 
  $v_i'\leq n$ for all $i\in I$, and
\item a  trivial representation.
\end{itemize}
To simplify notation, we denote the subcategory of $\cOg$ supported on
$\fN^\la_{\nu;n}$ by $\cOg^{\nu;n}$.  Note that for $n$ sufficiently
large that $\mu'-n\delta \leq \mu$, this subset is independent of $n$,
so we can use $n=\infty$ to indicate this stable range, and thus speak
of $\fN^\la_{\nu;\infty}$ and $\cOg^{\nu;\infty}$.

In this case, we can also refine the isotypic filtration to account
for the presence of a $\glehat$-action on the vector space $K(\cO)$;
there are both isotypic filtrations for $\slehat$ and $\glehat$.  
While in general precisely how to categorify $\glehat$ is a rather
delicate question, for us, it suffices to define certain convolution functors.
As discussed in \cite[4.3]{WebwKLR}, the D-modules or perverse sheaves
on the moduli space of representations of a quiver have a monoidal
structure induced by the moduli space of short exact sequences.  In
our case, we wish to study D-modules on the moduli space of
representations of the Crawley-Boevey quiver as a left and a right
module over the category of D-modules on the moduli of representations
of the original graph $\Gamma$.  

More precisely, given a decomposition $V_i\cong V_i'\oplus V_i''$, we consider the spaces 
\[E_{\Bv';\Bv''}^{0,\Bw}\cong 
E_{\Bv'}^0\oplus E_{\Bv''}^{\Bw} \oplus\bigoplus_{e\in \Omega}
\Hom(V''_{t(e)},V'_{h(e)}) \]
\[E_{\Bv';\Bv''}^{\Bw,0}\cong 
E_{\Bv'}^\Bw\oplus E_{\Bv''}^{\Bw} \oplus\bigoplus_{e\in \Omega}
\Hom(V''_{t(e)},V'_{h(e)}). \]  These are equipped with the obvious action of \[G_{\Bv';\Bv''}=\{g\in G_{\nu}|
g(V_i')=V_i'\}\] and with natural maps 
\[\tikz[very thick,->]{\matrix[row sep=11mm,column sep=8mm,ampersand
    replacement=\&]{
\& \node (a) {$E_{\Bv';\Bv''}^{0,\Bw}/G_{\Bv';\Bv''}$}; \&  \\
\node (b) {$E_{\Bv'}^{0}/G_{\Bv'}$};\& \node (c) {$E_{\Bv}^{\Bw}/G_{\Bv}$}; \&  \node (d) {$E_{\Bv''}^{\Bw}/G_{\Bv''}$}; \\
};
\draw (a)--(b) node[above left,midway]{$\pi_s$};
\draw (a)--(c)  node[left,midway]{$\pi_t$};
\draw (a)--(d)  node[ above right,midway]{$\pi_q$};
}\quad \tikz[very thick,->]{\matrix[row sep=11mm,column sep=8mm,ampersand
    replacement=\&]{
\& \node (a) {$E_{\Bv';\Bv''}^{\Bw,0}/G_{\Bv';\Bv''}$}; \&  \\
\node (b) {$E_{\Bv'}^{\Bw}/G_{\Bv'}$};\& \node (c) {$E_{\Bv}^{\Bw}/G_{\Bv}$}; \&  \node (d) {$E_{\Bv''}^{0}/G_{\Bv''}$}; \\
};
\draw (a)--(b) node[above left,midway]{$\pi_s$};
\draw (a)--(c)  node[left,midway]{$\pi_t$};
\draw (a)--(d)  node[ above right,midway]{$\pi_q$};
}\]
This allows us to construct the convolution of D-modules \[\mathcal{F}_1 \star \mathcal{F}_2:=
(\pi_t)_*(\pi_s^*\mathcal{F}_1\otimes \pi_t^*\mathcal{F}_2)[\sigma]\]
where $\sigma$ is the relative dimension of the map $\pi_s\times
\pi_t$ (i.e. the dimension of the source minus that of the target).
To
avoid confusion, we let $\preO^0$ denote the category $\preO$ of
$E^0_\Bv$ for an (implicit) dimension vector $\Bv$.  
\begin{lemma}\label{lem:gle-preserve}
  For a fixed $\bT$-action $\bT\to H/Z$ where
  \hyperlink{dagger}{$(\dagger)$} holds for $E^\Bw_\Bv$ for all $\Bv$,
  we have that if $\cM\in \dpreOz$ and $\cN\in \dpreO$, then $\cM\star
  \cN$ and $\cN\star \cM$ lie in $\dpreO$.
  If $\red(\cN)=0$, then
  $\red(\cN\star\cM)=0$ as well, so $-\star\cM$ defines an action of
  $\dpreOz$ on $\dOg$, which preserves the subcategory $\cOg^{\mu;\infty}$.
\end{lemma}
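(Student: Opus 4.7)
My plan is to establish three successive claims: (a) convolution with an object of $\dpreOz$ carries $\dpreO$ into itself; (b) if either factor has singular support in the GIT-unstable locus, so does the convolution; (c) the induced endofunctor of $\dOg$ preserves $\cOg^{\mu;\infty}$.

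For (a), I interpret a point of the support of $\cN \star \cM$ as an extension $0 \to V' \to V \to V'' \to 0$ of preprojective-algebra representations, where $V'$ carries the shadows $W$ and lies in $\supp(\cN)$, while $V''$ has no shadows and lies in $\supp(\cM)$. By hypothesis each factor has a rational lift of $\bT$ under which it has a limit in $(T^*E)^\bS$: write $\gamma': \bT \to G_{\Bv'} \times_Z H$ for $V'$ and $\gamma'': \bT \to G_{\Bv''}$ for $V''$. I form $\gamma_c := \gamma' \oplus (\gamma'' + c\cdot \id)$ for a rational constant $c$. The block-diagonal parts of the extension already have limits; the off-diagonal maps $V''_{t(e)} \to V'_{h(e)}$ acquire $\gamma_c$-weight of the form $\vartheta_e + \gamma'_{h(e)} - \gamma''_{t(e)} - c$, which becomes non-negative once $c$ is sufficiently negative. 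Hence the extension has a limit under $\gamma_c$, so the singular support of $\cN \star \cM$ lies in the relative precore. Strong $(G_\Bv,\chi)$-equivariance and $\bT$-regularity pass through the pullback-tensor-pushforward definition of convolution because $\xi$ is compatible with $\pi_s,\pi_t,\pi_q$. The case $\cM \star \cN$ is identical after interchanging the roles of subrepresentation and quotient.

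For (b), I read $\red(\cN) = 0$ as saying $\supp(\cN)$ lies in the unstable locus for the GIT parameter $\al$, so each point admits a destabilizing cocharacter $\varpi': \C^* \to G_{\Bv'}$ with $\langle \al, \varpi'\rangle > 0$ and a limit under $\varpi'$. Extending $\varpi'$ by the zero cocharacter on $V''$ produces $\varpi: \C^* \to G_\Bv$ with the pairing against $\al$ unchanged. Any extension $V$ has a limit under $\varpi$: the $V'$-block has a limit by hypothesis, the $V''$-block is fixed, and the off-diagonal maps $V''_{t(e)} \to V'_{h(e)}$ have non-negative $\varpi$-weight because $V''$ is $\varpi$-fixed while $V'$ has non-negative $\varpi$-weights. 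Thus $\supp(\cN \star \cM)$ lies in the unstable locus and $\red(\cN \star \cM) = 0$ by Proposition \ref{red-functor}.

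Combining (a) and (b), Corollary \ref{preO-quotient} will show that $-\star \cM$ descends to an endofunctor of $\dOg$. For (c), I use that $\fN^\la_{\mu;\infty}$ consists of representations whose semisimplification decomposes as a simple constituent of dimension vector determined by $\mu$ with $v_\infty = 1$, plus summands with $v_\infty = 0$. Since every $\cM \in \preO^0$ contributes only shadow-free data, extending the support of $\cN \in \cOg^{\mu;\infty}$ by such a $V''$ merely augments the $v_\infty = 0$ semisimple part while leaving the $v_\infty = 1$ simple constituent (and hence $\mu$) untouched. Therefore $\cN \star \cM$ remains supported in $\fN^\la_{\mu;\infty}$, as desired. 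The main obstacle I expect is the weight accounting in (a): I must choose a single rational $c$ uniformly over all off-diagonal edges so that every off-diagonal map has a limit under $\gamma_c$, and the resulting element must truly be a lift of $\bT$ in $\tilde{G}$; this calibration uses Assumption \ref{positivity} implicitly through the structure of the relative precore.
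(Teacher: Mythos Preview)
Your argument is essentially correct but takes a genuinely different route from the paper, and it rests on a key fact that you assert without justification.

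\textbf{The paper's approach.} The paper uses the hypothesis $(\dagger)$ in an essential way: since every simple in $\preO$ (resp.\ $\preO^0$) is a summand of some $L_{\Bj}$ (resp.\ $L_{\Bi}$), and $\star$ is exact, it suffices to treat $\cN=L_{\Bj}$ and $\cM=L_{\Bi}$. Then \cite[4.14]{WebwKLR} identifies $L_{\Bj}\star L_{\Bi}\cong L_{\Bj\circ\Bi}$ (induction of loadings), whence Theorem~\ref{th:pushforward} immediately gives membership in $\dpreO$. For the $\red$-compatibility, $(\dagger)$ again reduces to $\Bj$ unsteady, and then $\Bj\circ\Bi$ is visibly unsteady too.

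\textbf{Your approach.} You argue directly on singular supports. The crucial input is your opening claim: that every point of $SS(\cN\star\cM)$ is, up to $G_\Bv$-conjugacy, a preprojective module $V$ sitting in a short exact sequence $0\to V'\to V\to V''\to 0$ of \emph{preprojective} modules with $V'\in SS(\cN)$ and $V''\in SS(\cM)$. This is true, but it is exactly the conormal-variety computation behind Lusztig's induction, and you do not supply it. The point is that restricting $\bar x\in (E^\Bw_\Bv)^*$ along the inclusion $E^{\Bw,0}_{\Bv';\Bv''}\hookrightarrow E^\Bw_\Bv$ forgets only the $V''\to V'$ block, while the condition $(D\pi_t)^*\eta=(D\pi_s)^*\eta_s+(D\pi_q)^*\eta_q$ forces the remaining off-diagonal $V'\to V''$ block of $\bar x$ to vanish. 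Hence $V'$ is invariant under both $x$ and $\bar x$, so it really is a preprojective submodule. Once this is established, your constructions of $\gamma_c$ (for (a)) and of $\varpi=\varpi'\oplus 0$ (for (b)) go through, precisely because \emph{all} off-diagonal maps (in both orientations) point from $V''$ into $V'$. You should either cite this bound (it goes back to Lusztig's analysis of $\Lambda$) or insert the two-line conormal check.

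\textbf{Comparison.} The paper's proof is shorter and modular but leans on $(\dagger)$ and an external reference. Your proof is more self-contained and, interestingly, does not actually use $(\dagger)$ for parts (a) and (b) --- so you prove something slightly stronger there. For (c), both arguments are brief; yours is correct in outline (adjoining a shadow-free $V''$ only augments the $v_\infty=0$ part of the semisimplification), though you should say explicitly that this is a statement about the image in $\fN$ of the support after reduction.
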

\begin{proof}
  Since \hyperlink{dagger}{$(\dagger)$} holds, and $\star$ is exact, we can assume without
  loss of generality that $\cM=L_{\Bi}$ and $\cN=L_{\Bj}$ both arise
  from loadings.  By \cite[4.14]{WebwKLR}, this means that $\cM\star
  \cN\cong L_{\Bi\circ \Bj}$ and $\cN\star \cM=L_{\Bj\circ \Bi}$ where
  $\circ$ is induction of loadings as defined in \cite[\S
  2.4]{WebwKLR}.  These modules lie in $\dpreO$ by Theorem
  \ref{th:pushforward}.

  Similarly, to establish compatibility with $\red$, we note that it
  suffices to assume that $\Bj$ is an unsteady loading by
  \hyperlink{dagger}{$(\dagger)$}.  The induction $\Bj\circ \Bi$ is
  obviously unsteady as well.  This shows that we have the desired
  action of $\dOg$.  
\excise{Given any simple $L$ in
  $\cOg^{\mu;\infty}$, we consider a component of the support of $L$.
  The corresponding component of $pC$ can be analyzed by taking the
  associated K-N character for $u\ll 0$.   By assumption, the largest
  possible quotient which is only on $\Gamma$ of a generic point in
  the support has dimension given by $\Bv'$, the dimension vector of the
  associated multisegment.  Thus, if we take the associated loading
  $L_{\Bj}$, we'll have $L$ as a
  summand, and every point in the support will have a quotient whose
  dimension vector is pointwise $\geq \Bv'$.  Thus, we'll have that $\red(L_{\Bj})$ is also in
  $\cOg^{\mu;\infty}$.

By the same argument, any point in the support of  $L_{\Bj\circ
  \Bi}$ will have a quotient just supported on $\Gamma$ whose
dimension is $\geq \Bv'+|\Bi|$.  Thus, we necessarily have $\red(L_{\Bj\circ
  \Bi})\in \cOg^{\mu;\infty}$, so the same is true of $\red(L\star
L')$ where $L'$ is any summand of $L_{\Bi}$.  }
\end{proof}
The category $\preO$ on $E^0_{\Bv}$ in the case of an $e$-cycle is
simply the category of D-modules with singular support where $x$ is
nilpotent,  or where $\bar x$ is nilpotent depending on whether $\kappa$ is
negative or positive.  Thus, in terms of the conventional support of
these D-modules, we require that the underlying endomorphism (which is
just $x$) is nilpotent, or that the Fourier transform of our D-module
has this condition.  

The Grothendieck group of this category can be
identified with the generic Hall algebra by the function-sheaf
correspondence via the map of \cite[2.12]{SWschur} (defined in more
generality in \cite[4.17]{WebwKLR}).  Thus it can be thought of as the lower half $U^-(\glehat)$  as discussed in Hubery
\cite{Hub} and developed in greater detail in \cite[\S 2.3]{DDF}.  The
subcategory of D-modules with nilpotent singular support (i.e. both
$x$ and $\bar x$ must be nilpotent) is a
proper subcategory whose Grothendieck group naturally corresponds to
$U^-(\slehat)\subset U^-(\glehat)$.  In fact, $ U^-(\glehat)\cong
U^-(\slehat)\otimes \C[z_1,z_2,\dots]$ where the elements $z_i$ are
central, and form half of a Heisenberg subalgebra in the full
$\glehat$.  However, these elements $z_i$ do not correspond to honest
D-modules in the Grothendieck group, and thus are harder to deal
with.  
We can think of the action of $\preO^0$ via
convolution as inducing an action of $U^-(\glehat)$ on $K(\cOg)$, with
the restriction to $U^-(\slehat)$ having a natural categorical
interpretation. 

The action of $U^-(\slehat)$ is part of the categorical $\slehat$-action
defined in \cite[Th. A]{Webcatq}; it is generated by the skyscraper
sheaves $\mathscr{F}_i$ on $E^0_{\al_i}\cong \{0\}/\C^*$. Since this
action can also be interpreted as convolution with Harish-Chandra
bimodules by \cite[3.3]{Webcatq}, they preserve the set of modules
supported on any system of subvarieties closed under convolution with
Nakajima's Hecke correspondence $Z$.  Thus, applying this to $\fM^\la_{\mu;0}$, we have:
\begin{lemma}\label{lem:sle-preserve}
  The categorical $\slehat$-action preserves the subcategory
$\cOg^{\mu;0}$.
\end{lemma}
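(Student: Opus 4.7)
The plan is to apply the principle stated in the paragraph immediately preceding the lemma: convolution with Harish-Chandra bimodules preserves the subcategory of sheaves supported on $\pi^{-1}(X)$ for any system of subvarieties $X$ closed under Nakajima's Hecke correspondence. Since the categorical $\slehat$-action is generated by the skyscrapers $\mathscr{F}_i$ (and their biadjoints), and since by \cite[3.3]{Webcatq} each $\mathscr{F}_i \star -$ is realized as convolution with a Harish-Chandra bimodule supported on the Hecke correspondence $Z_i \subset \fM^\la_{\mu_0} \times \fM^\la_{\mu_0 - \al_i}$, it suffices to verify that the family $\{\fN^\la_{\mu;0}\}_{\mu_0}$, as $\mu_0$ ranges over the ambient weight of the quiver variety while the stratum label $\mu$ is held fixed, is closed under the correspondences $Z_i$.

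Recall that $\fN^\la_{\mu;0}$ parametrizes semisimple preprojective representations of the form $N \oplus T$, where $N$ is the distinguished simple representation with $v_\infty = 1$ whose dimension vector corresponds to the weight $\mu$, and $T$ is a trivial semisimple representation with $v_\infty = 0$ (a direct sum of the trivial simples $S_j$ at the non-$\infty$ vertices, with all arrows acting by zero). At the level of semisimple representations, the Hecke correspondence $Z_i$ identifies pairs $(M, M')$ with $M \cong M' \oplus S_i$, since any short exact sequence $0 \to M' \to M \to S_i \to 0$ split after semisimplification. Thus, given $M = N \oplus T \in \fN^\la_{\mu;0}$, any partner $M'$ under this correspondence satisfies $M' \oplus S_i \cong N \oplus T$, and by Krull--Schmidt this forces $M' \cong N \oplus T'$ with $T' \oplus S_i \cong T$. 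The simple summand $N$, and hence the index $\mu$, is preserved; so $M'$ lies in $\fN^\la_{\mu;0}$, now viewed as a stratum of the variety with ambient weight $\mu_0 - \al_i$. The same analysis applies to the dual correspondence computing $\mathscr{E}_i$.

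The main obstacle is transferring this semisimple-level statement back to the level of $\fM$, since the Hecke correspondence is defined on the resolutions and convolution involves both pushing along the projective map $\pi\colon \fM^\la \to \fN^\la$ and pulling back through $Z_i$. This is handled by properness of $\pi$ together with the fact that the Harish-Chandra bimodules representing $\mathscr{E}_i$ and $\mathscr{F}_i$ are set-theoretically supported on the closure of $Z_i$, so the support in $\fM^\la_{\mu_0 \mp \al_i}$ of any convolution is contained in $\pi^{-1}$ of the image under $Z_i$ of the corresponding support in $\fN^\la_{\mu_0}$. Combined with the semisimple-level verification above, this shows that starting from a sheaf in $\cOg^{\mu;0}$ we remain inside $\cOg^{\mu;0}$ after applying $\mathscr{E}_i$ or $\mathscr{F}_i$, yielding the lemma.
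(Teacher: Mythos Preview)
Your proof is correct and follows the same approach as the paper: both arguments invoke the fact that the categorical $\slehat$-action is realized as convolution with Harish-Chandra bimodules supported on the Hecke correspondence (via \cite[3.3]{Webcatq}), so that any system of subvarieties closed under $Z$ gives a subcategory preserved by the action. The paper simply asserts this and applies it to $\fM^\la_{\mu;0}$, whereas you supply the verification the paper leaves implicit---namely the Krull--Schmidt argument showing that at the semisimple level the form $N\oplus T$ (with $N$ the fixed simple and $T$ trivial) is preserved under adding or removing an $S_i$ summand---together with the observation that closure of $\fN^\la_{\mu;0}$ under the projection of $Z$ immediately gives closure of $\pi^{-1}(\fN^\la_{\mu;0})$ under $Z$ itself.
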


We let $J_{\mu;\gamma}$ be the intersection of the spaces generated
under $U^-(\glehat)$ by vectors of weight $\geq \mu$ and under
$U^-(\slehat)$ by vectors of weight $\geq \gamma$.  The most
interesting case is $ J_{\nu;\nu-n\delta}$, which is the same as the
vectors that can be obtained from vectors of weight $\geq \nu$ by
elements of the Heisenberg of weight up to $-n\delta$ and by
$U^-(\slehat)$.  For each weight $\mu$, note that
$J_{\nu;\nu-n\delta}$ contains the whole $\mu$ weight space unless
$\nu> \widehat{W}_e\cdot \mu$, that is $\nu$ is greater than every
element of the $ \widehat{W}_e $-orbit of $\mu$.

\begin{theorem}\label{th:affine-cells}
 The special strata of $\fN^\la_\mu$ are exactly those of the form
  $\fN^\la_{\nu;n}$ for $\nu$ dominant with $\la\geq \nu\geq \nu-n\delta\geq \widehat{W}_e\cdot \mu$ and the 2-sided cell filtration on
  $K(\cOg)$ matches the refined isotypic filtration, with
  $K(\cOg^{\nu;n})\cong J_{\nu;\nu-n\delta}$ and more generally,
  $J_{\nu;\gamma}\cong K(\cOg^{\nu;\infty})\cap K(\cOg^{\gamma;0})$.
\end{theorem}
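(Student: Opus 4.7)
The plan is to establish the general isomorphism $J_{\nu;\gamma}\cong K(\cOg^{\nu;\infty})\cap K(\cOg^{\gamma;0})$ and then read off both the specialization $K(\cOg^{\nu;n})\cong J_{\nu;\nu-n\delta}$ and the characterization of special strata. The specialization follows by taking $\gamma=\nu-n\delta$ and noting that any element of $\cOg^{\nu;n}$ is automatically supported on a subvariety obtained from $\fN^\la_{\nu;\infty}\cap \fN^\la_{\nu-n\delta;0}$ after accounting for the Heisenberg depth.

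For the lower bound $J_{\nu;\gamma}\subseteq K(\cOg^{\nu;\infty})\cap K(\cOg^{\gamma;0})$, I would proceed as follows. For any dominant $\nu'\leq \la$, the skyscraper on the image of the origin in $\fM^\la_{\nu'}$ yields a simple object $\mathcal{L}_{\nu'}\in\cOg$ supported on the closed stratum $\fN^\la_{\nu'}\subset \fN^\la_{\nu';0}\cap \fN^\la_{\nu';\infty}$, whose class is a highest weight vector of weight $\nu'$ in $K(\cOg)$. By Lemma \ref{lem:gle-preserve}, convolution with objects in $\dpreOz$ preserves the subcategory $\cOg^{\nu;\infty}$ whenever $\nu'\geq \nu$, and the identification of $K(\dpreOz)$ with the generic Hall algebra of the cyclic quiver extends the categorical $U^-(\slehat)$-action to a $U^-(\glehat)$-action on $K(\cOg)$. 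Similarly, by Lemma \ref{lem:sle-preserve}, the categorical $\slehat$-action preserves $\cOg^{\gamma;0}$ whenever applied to simples already in this subcategory, hence in particular to $\mathcal{L}_{\nu'}$ for $\nu'\geq \gamma$. Any element of $J_{\nu;\gamma}$ is by definition a sum of products of such operators applied to highest weight vectors satisfying both conditions, so it lies in the intersection.

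For the reverse inclusion I would use the explicit parametrization of simples in $\cOg$ coming from Proposition \ref{O-is-T-affine} and the Kirwan--Ness analysis of Proposition \ref{prop:LKN}: a simple $\Lambda_{\boldsymbol{\xi}}$ is indexed by a multipartition whose ``stable'' component has total dimension vector determining the $\slehat$-weight $\nu$ of the isotypic piece, while the ``Heisenberg'' columns determine the depth $n$. The characterization of $\Lambda_{\boldsymbol{\xi}}$ as having the component $C_{\Bi_{\boldsymbol{\xi},\mathbf{m}}}$ maximal in its support (after applying $\red$, and using \cite[6.17]{BLPWgco}) identifies the highest stratum through which it factors, showing $\Lambda_{\boldsymbol{\xi}}\in\cOg^{\nu;n}\smallsetminus\cOg^{\partial(\nu;n)}$ with $(\nu,n)$ read off combinatorially from $\boldsymbol{\xi}$. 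Summing classes of simples whose indices satisfy both inequalities and comparing with the dimension of the $\mu$-weight space of $J_{\nu;\gamma}$ (computed in the tensor product of Fock spaces as in the work of Hubery and \cite[\S 2.3]{DDF}) establishes equality. Lemma \ref{lem:cell-invariant}, together with Lemmas \ref{lem:gle-preserve} and \ref{lem:sle-preserve}, guarantees that this identification carries the 2-sided cell filtration to the refined isotypic filtration; the isotypic and BBD filtrations coincide by \cite[Rmk.~3.28]{Nak98}, forcing the intermediate cell filtration to coincide with both.

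Finally, the characterization of special strata reduces to asking for which $(\nu,n)$ the $\mu$-weight space of $J_{\nu;\nu-n\delta}/\sum_{(\nu',n')<(\nu,n)}J_{\nu';\nu'-n'\delta}$ is nonzero. This space vanishes precisely when every vector of weight $\mu$ in the $\nu$-isotypic component can already be produced from strictly smaller $\nu'$ or with strictly fewer Heisenberg applications, which translates to requiring $\la\geq \nu$ (so the isotypic component exists at all), $\nu\geq \nu-n\delta$ (automatic), and $\nu-n\delta\geq \widehat{W}_e\cdot \mu$ (so Heisenberg action of total weight $-n\delta$ can reach the $\mu$-weight space from some affine Weyl translate of $\mu$). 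The hard part will be the combinatorial matching of the multipartition indexing of simples with the double filtration by $J_{\nu;\gamma}$; this requires a careful bookkeeping converting Heisenberg depth into column heights and $\slehat$-isotypic weight into total stable dimension vector, essentially a Fock-space/Hall-algebra duality argument that has no analogue in the finite-type setting treated in Section~\ref{sec:background}.
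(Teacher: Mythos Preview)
Your lower bound $J_{\nu;\gamma}\subseteq K(\cOg^{\nu;\infty})\cap K(\cOg^{\gamma;0})$ via Lemmas~\ref{lem:gle-preserve} and~\ref{lem:sle-preserve} is correct and is exactly what the paper does. The problems are all in the reverse inclusion and in the stratum identification.

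First, your appeal to \cite[Rmk.~3.28]{Nak98} is invalid here: that remark shows the isotypic and BBD filtrations coincide in \emph{finite} type, where the strata of $\fN^\la_\mu$ are exactly the $\fN^\la_{\nu}$. In affine type the whole point is that there are extra strata $\fN^\la_{\nu;n}$ with $n>0$, so the $\slehat$-isotypic filtration is strictly coarser than the cell filtration, and no sandwich argument is available. Second, your ``read off $(\nu,n)$ combinatorially from $\boldsymbol{\xi}$'' step and the subsequent dimension count hide the actual content. Knowing which component $C_{\Bi_{\boldsymbol{\xi},\emptyset}}$ is maximal in the support of $\Lambda_{\boldsymbol{\xi}}$ does not immediately tell you which stratum of $\fN^\la_\mu$ the support maps onto; a component of the relative core can map to a point over a very small stratum even when the multipartition looks ``large''. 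The paper handles this in two separate pieces: Lemma~\ref{lem:O-plus} characterizes membership in $\cOg^{\gamma;0}$ via the string parameterization $C\mapsto C^+$ and the crystal structure on components (this is where the $\slehat$ side is pinned down), and then an explicit construction of D-modules $Z_\xi$ on $E^0_{n\delta}$ from irreducible $S_n$-representations, convoluted as $H_D\star Z_\xi$ with $D$ ranging over components containing a closed free orbit (cf.\ Lemma~\ref{left-right-induction}), produces a family of simples each of whose minimal supporting stratum is \emph{visibly} $\fN^\la_{\nu;n}$. A pigeonhole count then matches these against the $H_C$ with $C=C^+$. Without something playing the role of $Z_\xi$ you have no mechanism to distinguish the Heisenberg depth $n$ geometrically, and without Lemma~\ref{lem:O-plus} you have no way to certify that a given simple genuinely lies in $\cOg^{\gamma;0}$ rather than merely having the right weight.
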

The proof of this theorem is quite long, so we will give it after some
preliminary lemmata.

Note that this shows that these quantizations of $\fM^\la_\mu$ are
{\bf interleaved} in the sense of \cite[\S 6]{BLPWgco}.
This result confirms in the case of an integral character $\chi$ the
conjecture \cite[9.6]{BLet}, which conjectures a more complicated
description of this filtration for a general $\chi$.  This conjecture
was inspired in turn by one of Etingof for rational Cherednik algebras
\cite{EtiSRA}.

A little combinatorics makes it easier to understand the weights $\nu$
appearing in Theorem \ref{th:affine-cells}.
The Lie algebra $\glehat$ is given by a central extension of $\mathfrak{gl}_e[t,t^{-1}]$ by
$\C$, and taking the extension by an element $\partial_i$ for which
the Chevalley generator $E_0$ has weight $1$ and all others have
weight 0.
Consider an $e$-tuple $(t_1,\dots, t_e)$ of integers, and extend $t_i$ to
all integers via the rule $t_{i+e}=t_i-\ell$.  A
level $\ell$
dominant integral weight $\nu$ of $\glehat$ corresponds to an $e$-tuple
$(t_1,\dots, t_e)$ with $\al_i^\vee(\nu)=t_i-t_{i+1}\geq
0$ for all $i\in \Z$ together with the eigenvalue $\eta$ of $\partial$.  We can think of
the $t_i$'s as specifying a cylindrical partition.
\begin{definition}
  The transpose of  $(t_1,\dots, t_e),\eta$ is the sequence
  $(s_1,\dots s_\ell),\phi$ defined by letting $s_k$ be the largest integer
  such that $t_{s_k}\geq k$, with $\phi=-\eta$.  
\end{definition}
We
  have $s_{k}\geq s_{k+1}$, and $s_{k+\ell}=s_k-e$, so this is a
  level $e$ weight for $\sllhat$.  This is one manifestation of
  rank-level duality.

In these terms, $\al_i$ gives the $e$-tuple $(0,\dots, 1,-1,\dots 0)$
with $\eta=\delta_{i,e}$ (so $\eta$ functions as an ``odometer'' that
notices the difference between $\delta$ and $0$).  Another weight
$(t_1',\dots,t_e')$ and $\eta'$ is below $(t_1,\dots, t_e),\eta$ in
the root order if there are $v_i\geq 0$ such that
$t_i'=t_i-v_i+v_{i-1}$ and $\eta'=\eta- v_e$.  One can think of this
as dominance order for cylindrical partitions: we can think of
subtracting $\al_i$ as moving a box to the right, with $\al_0$
subtracting from $\eta$ to remind the user that the box used has gone
around the cylinder.  Just as transpose reverses dominance order for
usual partitions, transpose will reverse this root order as well.

In the category $\cOg^0$, there is a simple given by the vector space
$\C$ considered as a sheaf on $E^0_0=\{0\}$.  Convolution on the left
and right with this object gives the identity functor, so we call it
{\bf neutral}.  We define a {\bf non-neutral} object to be a
non-trivial object in $\cOg^0$ which does not contain the neutral
object as a summand.  In particular, a simple non-neutral object must have
support that is disjoint from $E^0_0$.

\begin{lemma}\label{left-right-induction}
  Assume $\mu$ is dominant.  The simples in $\preO$ that have a closed free orbit in their
  support are precisely those which are not summands of a left or right
  convolution with a non-neutral object in $\cOg^0$. 
\end{lemma}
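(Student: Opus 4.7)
The plan is to leverage the classification of simples in $\preO$ established in Propositions~\ref{prop:LKN} and~\ref{O-is-T-affine}, which labels them by pairs $(\boldsymbol{\xi},\mathbf{m})$ of a multipartition and a multisegment. The lemma then reduces to showing that ``$\cM$ has a closed free orbit in its support'' is equivalent to $\mathbf{m}=\emptyset$. For $\mu$ dominant, a closed free $G$-orbit in $E^\Bw_\Bv$ corresponds precisely to a simple Crawley-Boevey representation, i.e.\ a stable point in Nakajima's sense. The multisegment $\mathbf{m}$ in the Kirwan-Ness datum encodes a nonzero subrepresentation of $V$ contained in $\ker q$: when $\mathbf{m}=\emptyset$ the generic point of the support of $\cM_{(\boldsymbol{\xi},\emptyset)}$ is stable, and its $G$-orbit is closed and free; when $\mathbf{m}\neq\emptyset$, every representation in the support of $\cM_{(\boldsymbol{\xi},\mathbf{m})}$ admits such a destabilizing subrepresentation, so no closed free orbit lies in the support.

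For the implication ``closed free orbit $\Rightarrow$ not a summand of non-neutral convolution,'' suppose $\cM$ is a summand of $\cN\star \cM'$ with $\cN\in\cOg^0$ non-neutral, supported in dimension vector $\Bv''\neq 0$. The support of $\cN\star \cM'$ lies in the image of $\pi_t\colon E^{0,\Bw}_{\Bv';\Bv''}/G_{\Bv';\Bv''}\to E^\Bw_\Bv/G_\Bv$, and every representation in this image admits a nontrivial subrepresentation of dimension vector $\Bv''$ killed by $q$. Such representations are unstable and carry no closed free $G$-orbit, so $\cM$ cannot either. The right-convolution case $\cM'\star\cN$ is handled symmetrically using $E^{\Bw,0}_{\Bv';\Bv''}$: the corresponding representations have a nontrivial quotient killed by $q$, and the preprojective relation together with the Crawley-Boevey picture then produces a proper subrepresentation containing $\bar q(W)$ that fails to be all of $V$, again destabilizing.

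For the converse, given $\cM\cong\cM_{(\boldsymbol{\xi},\mathbf{m})}$ with $\mathbf{m}\neq\emptyset$, pick a segment $s\in\mathbf{m}$, let $\cN_s\in\cOg^0$ be the simple module corresponding to $s$ (non-neutral since $s$ has positive length), and set $\cM'=\cM_{(\boldsymbol{\xi},\mathbf{m}\setminus\{s\})}$. Choose loadings $\Bi'$ and $\Bi_s$ so that $\cM'$ and $\cN_s$ are summands of $L_{\Bi'}$ and $L_{\Bi_s}$ respectively, and arrange the supports of $\Bi_s$ to lie entirely to the left of those of $\Bi'$, so that the concatenation $\Bi_s\circ\Bi'$ is equivalent to $\Bi_{(\boldsymbol{\xi},\mathbf{m})}$. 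By the compatibility of convolution with induction of loadings~\cite[4.14]{WebwKLR}, we have $\cN_s\star L_{\Bi'}$ realized as a summand of $L_{\Bi_s\circ\Bi'}$, which by Proposition~\ref{O-is-T-affine} contains $\cM_{(\boldsymbol{\xi},\mathbf{m})}$ as a direct summand.

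The main obstacle is this last step: ensuring that $\cM_{(\boldsymbol{\xi},\mathbf{m})}$ actually appears with nonzero multiplicity in the semisimple decomposition of $\cN_s\star\cM'$, rather than only showing up among the lower-dimensional strata. This reduces to identifying $\Sigma_{(\boldsymbol{\xi},\mathbf{m})}$ as the open (maximal) Kirwan-Ness stratum in the image of the convolution correspondence, which follows from the proof of Proposition~\ref{prop:LKN}: the construction there realizes representatives of $\Sigma_{(\boldsymbol{\xi},\mathbf{m})}$ as extensions of the generic representative of $\Sigma_{(\boldsymbol{\xi},\mathbf{m}\setminus\{s\})}$ by the preprojective representation attached to $s$, so that a generic point of the convolution correspondence maps to this stratum.
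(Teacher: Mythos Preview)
Your reduction of the lemma to the equivalence ``$\cM$ has a closed free orbit in its support $\Leftrightarrow \mathbf{m}=\emptyset$'' is not justified, and this is where the argument breaks down. A closed free orbit corresponds to a \emph{simple} representation of the preprojective Crawley--Boevey algebra, i.e.\ one that is both stable and co-stable. The condition $\mathbf{m}=\emptyset$ in the $u\gg 0$ Kirwan--Ness labeling only tells you that the generic point of the associated component is \emph{stable}: it has no nonzero subrepresentation killed by $q$. It does not rule out a nonzero quotient killed by $q$, so the orbit need not be closed. Your sentence ``when $\mathbf{m}=\emptyset$ the generic point \dots\ is stable, and its $G$-orbit is closed and free'' silently equates stability with simplicity.

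This is exactly why the paper's proof is organized differently: it treats a component $C$ without a simple point in two separate cases. If every point of $C$ is unstable, one uses the $u\gg 0$ stratification and realizes $L_C$ as a summand of a \emph{left} convolution $L_{\Bi_{\emptyset,\mathbf{m}}}\star L_{\Bi_{\boldsymbol{\xi},\emptyset}}$ with $\mathbf{m}\neq\emptyset$. If instead the generic point of $C$ is stable but not simple, the paper switches to the Kirwan--Ness stratification for $u\ll 0$ (the opposite stability condition) and realizes $L_C$ as a summand of a \emph{right} convolution $L_{\Bi_{\boldsymbol{\xi},\emptyset}}\star L_{\Bi_{\emptyset,\mathbf{m}}}$, where now $\mathbf{m}$ records the Jordan type of $x$ on a maximal quotient. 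Your converse argument (paragraphs~3--4) only addresses the first case and misses the second entirely. The paper then needs a pigeonhole argument to conclude, since the singular support of a simple is not irreducible and one must rule out that a simple whose ``own'' component has a closed free orbit nonetheless appears as a summand of a convolution.

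A smaller but related issue: in paragraph~2 your treatment of right convolutions is incorrect. A nontrivial quotient killed by $q$ does \emph{not} produce a destabilizing subrepresentation; the kernel of the quotient map contains the image of $q^{\mathrm{t}}$ but is not itself killed by $q$, and there is no mechanism forcing $\bar q(W)$ into it. The honest statement is simply that such a representation is not simple, hence its orbit is not both closed and free---which is what the paper says (``unstable for the opposite stability condition''). Once you accept that the two notions of instability are genuinely different, the need for the two-sided $u\gg 0$/$u\ll 0$ analysis becomes clear.
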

\begin{proof}
Note that an orbit is free if and only if its automorphism group 
as a preprojective module is given by the scalars $\C^*$ (these are
not included in $G_\Bv$, since they also act nontrivially on $W$), and closed
if and only if the underlying object is semi-simple.  Thus, the orbit
will be free and closed if and only if the object is simple.

If we perform a left convolution with
  non-neutral object  in $\preO^0$, then every point of the singular
  support of the resulting sheaf $\cM$ has a destabilizing submodule considered as a module over the
 preprojective algebra of the Crawley-Boevey quiver by Proposition \ref{O-is-T-affine}.
 Symmetrically, if we perform a right convolution, it has a
 non-trivial quotient just supported on $\Gamma$
 (so it is unstable for the opposite stability condition).  In either
 case, no point in the support corresponds to a simple pre-projective module.

Consider a component $C$ of the relative precore $pC$ which does not contain a simple
representation of the preprojective algebra.  If every point in $C$ is
unstable, there is a semi-simple
module $L_{C}$ in $\preO$ which is supported on $C$ and components
below it in the KN order, and by construction, this is a summand of
$L_{\Bi_{\boldsymbol{\xi}, \mathbf{m}}}\cong L_{\Bi_{\emptyset,\mathbf{m}}}\star L_{\Bi_{\boldsymbol{\xi}, \emptyset}}$ with
$\mathbf{m}\neq 0$.  Thus, it is
a summand of a convolution and has no closed free orbits in its
support.  If the generic point of $C$ is stable, then we can apply the
same argument, but using the KN decomposition is $u\ll 0$, that is,
for the opposite stability condition.  This shows 
that there is a simple $L_{C}\subset L_{\Bi_{\boldsymbol{\xi}, \emptyset}} \star  L_{\Bi_{\emptyset,
    \mathbf{m}}}$, where now $\mathbf{m}$ comes from the Jordan type
of $x$ acting on a maximal quotient of $V$.  Note that this assigns a
unique simple in $\preO$ to each component, since no modules from the
first list could coincide with ones on the second: the latter always
has an component which is generically stable in their support, and the
former never do. 

As proven in
  Proposition \ref{O-is-T-affine}, there is a bijection between
  Lagrangian components and simples in $\preO$.  Furthermore, if the
  component has a free closed orbit, i.e. its generic point is 
  semi-simple, then its corresponding simple must have a free and
  closed orbit in its support.  Since we've already shown that the set
  of simples without a free and closed orbit in their support is at least as large as
  the set of components without a free and closed orbit, by the
  pigeonhole principle, the converse must hold: if the support of a
  simple has a free and closed orbit, so must the corresponding
  component (even though the singular support need not be irreducible.
\end{proof}
As mentioned before, by \cite[4.14]{WebwKLR}, the equivalence of
Corollary \ref{cor:affine-equivalence} intertwines convolution with
induction of modules over weighted KLR algebras.  Thus, a simple
module in $\preO$ is a left or right convolution with a non-neutral
object of $\preO^0$ if and only if the corresponding module over
$\tilde{T}^\bla$ is an induction (in the sense of \cite[\S 2.4]{WebwKLR}) of a
module over $\tilde{T}^\bla$ tensored on the left or right with a
module over $W$.

\nc{\ep}{\epsilon}

We'll use both in the proof below and later in the paper that there is
a crystal structure on components of $(\fM^\la_\mu)^+$.  The Kashiwara
operators are defined as follows:
\begin{itemize}
\item  $\tilde{e}_i(C)$ is the unique component such that a generic
  point in $\tilde{e}_i(C)$ is a submodule of some generic point of
  $C$ with 1-dimensional cokernel supported on $i$.
\item $\tilde{f}_i(C)$ is the unique component such that a generic point in
  $\tilde{f}_i(C)$ contains some generic point of $C$ as a submodule
  with 1-dimensional cokernel supported on $i$.
\end{itemize}
 The proof that these give a crystal in Kashiwara-Saito \cite[5.2.6]{KS97} carries through without changes;
the tensor product case of this result is observed without proof by Nakajima
in \cite[4.3]{Nak01}.  We let $\ep_i(C)$ for a component be the
dimension of a maximal quotient of a generic point of $C$ supported
on the single vertex $i$.

The argument of \cite [6.4]{TS} shows that:
\begin{theorem}\label{crystal-match}
  Under the bijection $\boldsymbol{\xi}\mapsto C_{\boldsymbol{\xi}}$,
  the $\vartheta$-weight combinatorial crystal structure on charged
  partitions of \cite[5.5]{WebRou} is matched with this geometric structure.
\end{theorem}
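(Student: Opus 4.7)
The plan is to verify that the two crystal structures agree by reducing to a direct computation of $\tilde{f}_i$ (or equivalently $\ep_i$) on both sides, in the spirit of \cite [6.4]{TS}. Since a tensor-product-like crystal on a connected component is determined by its highest weight vector and the $\ep_i$-statistics, it suffices to identify a common highest weight vector and to show that the $\ep_i$'s match.

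First, I would unpack the bijection $\boldsymbol{\xi}\mapsto C_{\boldsymbol{\xi}}$ using the description of generic points from the proof of Proposition \ref{prop:LKN}: a generic point of $C_{\boldsymbol{\xi}}$ is the representation of the preprojective algebra whose basis is indexed by the boxes of $\boldsymbol{\xi}$, with $x,\bar x$ acting by horizontal and vertical shifts inside the Young diagrams and $q$ hitting the corner box of each component. In particular, the maximal quotient supported at the single vertex $i$ of a generic point is spanned by the \emph{removable} $i$-boxes of $\boldsymbol{\xi}$ that are not killed after one takes into account the $\vartheta$-ordering of the charges $r_1,\dots,r_\ell$: only those corner $i$-boxes which occur at positions not filled in by $x$-images from higher-weight components can contribute to a quotient. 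This reproduces, box by box, the $\vartheta$-weighted signature rule defining $\ep_i$ on the combinatorial side \cite[5.5]{WebRou}.

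Next, I would verify the match of $\tilde{f}_i$. Adding an $i$-box to $\boldsymbol{\xi}$ in the charged crystal corresponds to choosing the unique addable $i$-corner singled out by the $\vartheta$-weighted signature rule. Geometrically, $\tilde{f}_i(C_{\boldsymbol{\xi}})$ is the unique component whose generic point contains a generic point of $C_{\boldsymbol{\xi}}$ with $1$-dimensional cokernel at vertex $i$, so one must check that among all ways of extending the representation by a $1$-dimensional quotient at $i$, the generic extension corresponds to attaching the box at the designated corner. This follows because the $\vartheta$-ordering of the components determines which corner has a free slot for the extension to remain in a stable generic configuration; other extensions force an additional constraint, hence lie on a proper closed subset.

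The main obstacle is the bookkeeping in the second step: translating the signature-rule cancellations of \cite[5.5]{WebRou} into the analogous cancellations between addable corners appearing in Jordan-type normal forms of the generic representation of $C_{\boldsymbol{\xi}}$. Once the correspondence between the positions of surviving unbracketed $+$'s in the $\vartheta$-weighted signature and the positions of available generic extensions is established, the argument of \cite[6.4]{TS} applies verbatim, giving the desired isomorphism of crystals.
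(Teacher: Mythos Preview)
The paper does not give its own proof of this theorem: it simply introduces the statement with ``The argument of \cite[6.4]{TS} shows that'' and offers no further detail. Your sketch is therefore not being compared against an independent argument but against a bare citation, and since your final step is precisely that ``the argument of \cite[6.4]{TS} applies verbatim,'' you are taking the same route the paper does, just with more of the scaffolding made visible.

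One caution on the scaffolding itself: your description of $\ep_i(C_{\boldsymbol{\xi}})$ as ``spanned by the removable $i$-boxes \dots\ that are not killed after one takes into account the $\vartheta$-ordering'' is a little loose. For the specific representation built from the boxes of $\boldsymbol{\xi}$ (as in the proof of Proposition~\ref{prop:LKN}), the maximal quotient at vertex $i$ is spanned by \emph{all} removable $i$-corners; the cancellations in the signature rule do not appear at the level of that particular representation but only once you pass to a \emph{generic} point of the component $C_{\boldsymbol{\xi}}$, where additional maps between the summands attached to different $\xi_k$'s can appear. It is exactly this passage to the generic point that encodes the tensor-product rule and produces the $\vartheta$-weighted signature, and this is the content of the argument in \cite{TS}. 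Your sketch gestures at this but blurs the distinction between the specific box representation and the generic point of its component.
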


For any component, we have a string parameterization.  This is the
sequence of numbers \[a_1=\ep_1(C) \quad
a_2=\ep_2(\tilde{e}_1^{a_1}C)\quad
a_k=\ep_k(\tilde{e}_{k-1}^{a_{k-1}}\cdots \tilde{e}_1^{a_1}C).\]
Since $\ep_i(C)\leq v_i$, we must have $a_k(C)=0$ for $k\gg 0$, and
$\tilde{e}_{k-1}^{a_{k-1}}\cdots \tilde{e}_1^{a_1}C$ stabilizes at
some component $C^+$. 

We can construct all the simples in $\dOg$ by
starting with the simples that correspond (under the KN order) to the
components with $\ep_i(C)=0$ for all $i$ letting
$H_{C}$ denote the corresponding simple. For an arbitrary component
with $a_k$ and $C^+$ as above, we consider
$H_C'=\EuScript{F}_{i_1}^{(a_1)}\cdots \EuScript{F}_{i_k}^{(a_k)}
H_{C^+}$.  This sheaf has multiplicity 1 along $C$ in its singular
support, so it has a unique simple summand $H_C$ which contains $C$ in its
singular support. Every other component $D$ in the support of $H_C'$,
and thus of $H_C$, has string
parameterization which is longer in lexicographic order, or has the
same string parameterization and $D^+< C^+$ in the KN order.  
Thus, $C\mapsto H_C$ gives a new bijection between components and
sheaves.  
\begin{lemma} \label{lem:O-plus}
  We have $H_C\in \cO^{\gamma;0}$ if and only if $\gamma\leq \wt(C^+)$.
\end{lemma}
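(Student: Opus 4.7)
The plan is to prove the equivalence by induction on the length of the string parameterization of $C$, combining Lemma~\ref{left-right-induction} and Lemma~\ref{lem:sle-preserve} with the crystal-theoretic description of the support.

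For the base case $C = C^+$, the assumption $\ep_i(C^+) = 0$ for all $i$ together with Lemma~\ref{left-right-induction} implies that $H_{C^+}$ is not a summand of any left or right convolution with a non-neutral object in the category $\cOg$ for $\Bw = 0$. Therefore the generic point in its support is a genuinely simple module over the Crawley--Boevey preprojective algebra touching the vertex $\infty$, of weight $\wt(C^+)$. Since the closure of the orbit of such simples inside $\fN^\la_\mu$ is precisely the stratum $\fN^\la_{\wt(C^+);0}$, we obtain $H_{C^+} \in \cOg^{\wt(C^+);0}$. Because the simple part has weight \emph{exactly} $\wt(C^+)$, $H_{C^+}$ is not supported on $\fN^\la_{\gamma;0}$ for any $\gamma$ strictly larger than $\wt(C^+)$.

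For the inductive step, set $C' = \tilde{e}_{i_1}^{a_1}C$, so $C'$ has shorter string parameterization and $(C')^+ = C^+$. By construction, $H_C$ is the unique simple summand of $\EuScript{F}_{i_1}^{(a_1)} H_{C'}$ whose singular support contains $C$. By Lemma~\ref{lem:sle-preserve} the categorical $\slehat$-action preserves the subcategories $\cOg^{\nu;0}$; combined with the inductive hypothesis $H_{C'} \in \cOg^{\wt(C^+);0}$, this yields $H_C \in \cOg^{\wt(C^+);0}$, and hence $H_C \in \cOg^{\gamma;0}$ for every $\gamma \leq \wt(C^+)$ via the natural containment of cell strata.

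For the converse, the component $C$ itself appears with multiplicity one in the characteristic cycle of $H_C$, so $\pi(C) \subset \fN^\la_\mu$ lies in the support of $H_C$. The crystal construction of $C$ from $C^+$ by iterated $\tilde{f}$ operators shows that a generic representation parameterized by $C$ has a distinguished simple quotient of weight $\wt(C^+)$ (the simple underlying a generic point of $C^+$) with additional $S_{i_l}$-pieces grafted on. Thus the ``simple part'' of a generic point of $\pi(C)$ has weight precisely $\wt(C^+)$, so if $\supp H_C \subset \fN^\la_{\gamma;0}$ we must have $\gamma \leq \wt(C^+)$. The hard part will be the inductive step: while Lemma~\ref{lem:sle-preserve} provides the abstract preservation of $\cOg^{\nu;0}$, pinning down that the invariant $\wt(C^+)$ is \emph{exactly} the cell label preserved by the crystal operators requires either a direct geometric analysis of the Hecke correspondence applied to strata of the form $\fN^\la_{\nu;n}$, or an invocation of Theorem~\ref{crystal-match} to compare crystal invariants with geometric ones.
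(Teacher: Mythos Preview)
Your overall inductive structure and the forward step using Lemma~\ref{lem:sle-preserve} match the paper's proof. The genuine gap is in the base case.

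You claim that $\ep_i(C^+)=0$ for all $i$, together with Lemma~\ref{left-right-induction}, forces the generic point of $C^+$ to be a \emph{simple} preprojective module. This inference fails. The vanishing of all $\ep_i$ only says there is no one-dimensional quotient supported on a single vertex. Lemma~\ref{left-right-induction}, by contrast, characterizes simples that are not summands of a convolution with an arbitrary non-neutral object of $\preO^0$; such objects can have dimension vector $n\delta$ with $\bar{x}$ acting invertibly, and right convolution with them produces quotients supported on all of $\Gamma$ that are invisible to the $\ep_i$. Concretely, a generic point of $C^+$ can have the submodule generated by the Crawley--Boevey vertex proper, with quotient a sum of $\delta$-dimensional simples on which $\bar{x}$ is invertible; this gives $\ep_i=0$ for all $i$ yet the point is not simple. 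So your conclusion ``the generic point is simple'' is false in general, and with it the claim that the closure of its orbit is $\fN^\la_{\wt(C^+);0}$.

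The paper's argument is more delicate and is exactly what is missing from yours: one mods out by the submodule generated by the Crawley--Boevey vertex and analyzes the quotient. On that quotient $x$ is nilpotent, and since $x,\bar{x}$ commute, a nonzero generalized $0$-eigenspace for $\bar{x}$ would produce a common $0$-eigenvector and hence a single-vertex quotient, contradicting $\ep_i=0$. Thus $\bar{x}$ is generically invertible on the quotient, so the semi-simplification of the generic point has \emph{no trivial summands} (though it may well have nontrivial semi-simple summands). That weaker statement is precisely what is needed to place the support outside $\fN^\la_{\gamma;0}$ for $\gamma\nleq\wt(C^+)$. Your converse paragraph inherits the same error: you speak of a ``distinguished simple quotient of weight $\wt(C^+)$'', but what actually appears in the semi-simplification of a generic point of $C$ is the semi-simplification of a generic point of $C^+$, which is only known to have no trivial summands.
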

\begin{proof}
If $\ep_i(C)=0$ for all $i$, then $C=C^+$.  
If we consider the generic point of
$C$ as a representation, and mod out by the submodule generated by the
Crawley-Boevey vertex, then we obtain a representation of the
preprojective algebra of the cycle with no quotients supported on a
single vertex, where $x$ acts nilpotently.  Since $x$ and $\bar x$
commute (by the preprojective relation), this representation
decomposes into summands according to the generalized eigenspaces of
$\bar x$.  If generically the generalized $0$-eigenspace of $\bar x$
is non-trivial, then we will have a quotient where $x$ and $\bar x$ act
trivially, which we have assumed is not the case.  Therefore, we only
have $H_C\in \cO^{\gamma;0}$ if and only if $\gamma\leq \wt(C)$, as desired.

For a general $C$, we have $H_C'\in \cO^{\wt(C^+);0}$ by construction, since $\EuScript{F}_{i}$
preserves this subcategory. Thus, the same is true of $H_C$.  On the
other hand, if $\gamma\nleq \wt(C^+)$, we do not have $H_C\in
\cO^{\gamma;0}$, since the semi-simplification of the generic point in
$C$ contains a submodule isomorphic to the generic point of $C^+$.
This generic point 
has no trivial summands, as argued above, so the support is outside
$\fN^\la_{\gamma;0}$.  
\end{proof}

\begin{proof}[Proof of Theorem \ref{th:affine-cells}]
  First, note that we must have
  $J_{\nu;\gamma}\subset K(\cOg^{\nu;\infty}\cap\cOg^{\gamma;0})$
  since $K(\cOg^{\nu;\infty})$ is closed under the action of $U^-(\glehat)$
  by Lemma \ref{lem:gle-preserve}, and $K(\cOg^{\gamma;0})$ under the
  action of $\slehat$ by Lemma \ref{lem:sle-preserve}.  

On the other hand, assume $\cM\in
\cOg^{\nu;\infty}\cap\cOg^{\gamma;0}$ is a simple module supported on
$\fM^\la_\mu$.  We wish to show by induction that $[\cM]\in
J_{\nu;\gamma}$; we assume this holds for any weight $>\mu$.  
By Lemma   \ref{left-right-induction}, we either have
that a generic point in the support of $\cM$ is a simple module or
this generic point has a proper quotient supported on $\Gamma$.  In the former case, we must have
$\mu\geq \nu$ and $\mu\geq \gamma$, so obviously $[\cM]\in
J_{\nu;\gamma}$.  In the latter, we have that $\cM$ is a summand of
convolution with a non-neutral object by Lemma
\ref{lem:gle-preserve}, and by construction, the object we induce with
lives in $\cOg^{\nu;\infty}$ as well.
Thus, by induction $[\cM]$ is in the subspace generated over by
elements of weight $\geq \nu$ over $U^-(\glehat)$.

By Lemma \ref{lem:O-plus}, we must have $\cM=H_C$ for some
component $C$ with $\wt(C^+)\geq \gamma$.   Thus, $H'_C$ is generated from a vector of weight $\wt(C^+)$ by
the action of functors $\EuScript{F}_{i}$ so $[H_C']$ is obtained from
a vector of weight $\geq \gamma$ by elements of $U^-(\slehat)$.  The
same is true for $[H_D']$ for $D$ every other component appearing in
the support of $H_C'$.  Since $[\cM]$ is a linear  combination of the
classes $[H_D']$, we ultimately find that $[\cM]\in J_{\nu;\gamma}$
whenever $\cM\in
\cOg^{\nu;\infty}\cap\cOg^{\gamma;0}$.

We also need to show that if $\supp\cM\subset \fN^\la_{\nu;n}$ and it
is not contained in any smaller stratum of this type, then
the smallest stratum containing the support
of $\cM$ is precisely $\fN^\la_{\nu;n}$.  Since $H_C$ and $H_{C^+}$
have the same minimal stratum containing them, we can assume that
$\cM=H_{C}$ with $C=C^+$.
 
Applying Fourier transform if necessary, we may assume that
$\kappa>0$.  Fix an integer $n$ and $g, \epsilon>0$ chosen so that  $g\gg \ck \gg \epsilon$. Consider the D-module $Y_{\Bi}$ where $\Bi$ is the
loading putting a dot labeled $i$ at $g+\epsilon i,\dots, ng+\epsilon
i$ for $i=1,\dots, e$; that is, this loading has 
$n$ subsets separated by the ``long'' distance $g$, with each subset
consisting of points with total dimension vector $\delta$ which are
tightly clustered.
The space $C_i$ given by the first $i$ subsets is invariant in the
usual sense and thus gives an invariant flag the representation for each point in
$X_{\Bi}$ with $\dim C_i/C_{i-1}=\delta$.  On $C_i/C_{i-1}$, the map
going around the cycle is an endomorphism of a 1-dimensional vector
space, and thus a scalar, which is the same at all
points of the cycle.  Thus, we have a natural map $X_{\Bi}\to\C^n$
sending a representation with flag to the $n$-tuple of scalars
associated to $C_i/C_{i-1}$.  If we let $ \C_\circ^n=\{(x_1,\dots, x_n) \in \C^n\mid x_i\neq x_j\neq
0 \text{ for all } i\neq j\}$ then  we have an open inclusion $\C_\circ^n\to
X_{\Bi}$ sending $(x_1,\dots, x_n)$ to the representation where the
spaces are all $\C^n$ equipped with the standard flag 
 and the map along one edge is $\operatorname{diag}(x_1,\dots, x_n)$ and along all
the others is the identity.  
We have a Cartesian diagram \[
\tikz[very thick, ->]{
\matrix[row sep=12mm,column sep=17mm,ampersand replacement=\&]{
\node (a) {$ \C_\circ^n$}; \& \node (c) {$X_{\Bi}$};\\
\node (b) {$\C_\circ^n/S_n$}; \& \node (d) {$E_{n\delta}/G_{n\delta}$};\\
};
\draw (a) -- (c) node[above,midway]{};
\draw (b) --(d) node[below,midway]{};
\draw (c)--(d) node[right,midway]{};
\draw (a) -- (b) node[right,midway]{};
}
\]
since two points in $\C_\circ^n$ will give isomorphic representations
if and only if they differ by a permutation.  Thus, to each representation of
$S_n$ (and thus to a partition $\xi$ of $n$), we have an induced local system on $\C_\circ^n/S_n$, and thus an
intermediate extension D-module $Z_\xi'$ on $E_{n\delta}/G_{n\delta}$.
The Cartesian diagram above shows that these are all summands of
$Y_\Bi$ and lie in $\preO^0$.  Since we have been using $\kappa<0$ for
most of the paper, let use switch back to this case, by replacing
$Z_\xi'$ by their Fourier transform $Z_\xi$; that is, we switch the
role of $x$ and $\bar x$.  In particular, the map to $\C_\circ^n/S_n$
is now given by the spectrum of $\bar x$, and $x$ must be  nilpotent.

The convolutions $H_D\star Z_{\xi}$ for $D$ a component with a free
and closed orbit in it give a collection of modules.  Each of these
has support on the unique component $D_{;n}$ whose generic point is an
extension of a generic point in $D$ by a module $U$ which is an
extension of $n$ simple modules over the preprojective algebra with
$\bar x$ acting on each with a different non-zero eigenvalue.  Since
the restriction of $H_D$ to $D$ gives a trivial local system, the
restriction of $H_D\star Z_{\xi}$ to this component gives the pullback
of the local system associated to the partition $\xi$ under the
birational map $D_{;n}\dashrightarrow \C_\circ^n/S_n$ sending a
generic point to the spectrum of $\bar x$ on $U$.

We let $H_{D,\xi}\subset H_D\star Z_{\xi}$ be the unique summand which
contains this component in its support.  No two of these with the same
$D$ are isomorphic, since they give different local systems on
$D_{;n}$.  No two with different $D$ coincide, as the Kirwan-Ness order with $u\ll 0$
shows.

Thus, we have a set of simples whose size is the number of components
with $C=C^+$, and all of which contain such a component in their
support.  By the pigeonhole principle, this must be a complete list of
the modules $H_C$ with $C=C^+$, and the smallest stratum which
contains their support is indeed $\fM^\la_{\nu;n}$ where $\nu=\wt(D)$
and $n$ is the number of boxes in $\xi$.

Finally, we need to show that if $\la\geq \nu\geq \nu-n\delta\geq \widehat{W}_e\cdot
\mu$, and $\nu$ is dominant, there is at least one simple in the
2-sided cell of $\fM^\la_{\nu;n}$, and thus to check that there is at
least one component of $(\fM^\la_{\mu})^+$ which intersects this
stratum non-trivially.  Our condition on weights precisely guarantees
that this component is non-empty, and a non-empty stratum will always
have points with limits under a Hamiltonian action.  
\end{proof}

One important consideration is how this filtration can be realized
algebraically.   Let $\mathcal{J}_{\mu;\gamma}$ be the intersection of the
  subcategories generated by objects $T^\vartheta\dgmod$ of weight
  $\geq \mu$ under induction
  with projective modules over the weighted KLR algebra $W$ of $\Gamma$,
  and by objects of weight $\geq \gamma$ under the action of the
  categorical $\slehat$ action.  These latter can be thought of as
  induction with loadings where the gaps between points are
  $>|\kappa|$; we call these loadings {\bf Hecke}. It's manifest
  from the match of induction and convolution  (\cite[4.14]{WebwKLR}) that:
\begin{proposition}
The quasi-equivalence $T^\vartheta\dgmod\cong \cOg$ induces
quasi-equivalences $\mathcal{J}_{\mu;\gamma}\cong
\cOg^{\mu;\infty}\cap \cOg^{\gamma;0}$. \qed
\end{proposition}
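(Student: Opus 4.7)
The plan is to deduce this from Theorem \ref{th:affine-cells} together with the equivalence of Corollary \ref{cor:affine-equivalence} and the match of induction with convolution from \cite[4.14]{WebwKLR}, which is already used implicitly via Corollary \ref{cor:action-match}. The main point is that both subcategories $\cOg^{\mu;\infty}$ and $\cOg^{\gamma;0}$ have intrinsic descriptions in terms of operations (convolution and categorical $\slehat$) that are transported under the equivalence to the operations (induction with $W$-projectives and the algebraic $\slehat$-action) defining $\mathcal{J}_{\mu;\gamma}$.

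First I would check the containment $\mathcal{J}_{\mu;\gamma}\subseteq \cOg^{\mu;\infty}\cap \cOg^{\gamma;0}$. By Lemma \ref{lem:gle-preserve}, convolution with objects in $\preO^0$ preserves $\cOg^{\mu;\infty}$, and under the equivalence this convolution is identified with induction with modules over $W$ by \cite[4.14]{WebwKLR}, so starting from modules of weight $\geq \mu$ (which by definition are supported on strata $\fN^\la_{\nu}$ with $\nu\geq\mu$, hence in $\cOg^{\mu;\infty}$) and inducing with $W$-projectives lands in $\cOg^{\mu;\infty}$. Similarly, Lemma \ref{lem:sle-preserve} and Corollary \ref{cor:action-match} show that the categorical $\slehat$-action preserves $\cOg^{\gamma;0}$, and that the algebraic action on $T^\vartheta\dgmod$ is intertwined with the geometric one. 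Combining these gives the forward inclusion.

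The reverse inclusion is the substantive part. Given a simple $\cM\in \cOg^{\mu;\infty}\cap\cOg^{\gamma;0}$, I would mirror the inductive argument used in the proof of Theorem \ref{th:affine-cells}: by Lemma \ref{left-right-induction}, either a generic point of $\supp\cM$ corresponds to a simple preprojective module (so $\mu\geq\nu$ and $\mu\geq\gamma$ automatically and $\cM$ itself lies in the generating set), or $\cM$ is a summand of a convolution $\cN\star\cN'$ with $\cN$ a non-neutral object of $\preO^0$, where the remaining factor again lies in $\cOg^{\mu;\infty}$ by the analysis in that proof. Inductively, that factor lies in $\mathcal{J}_{\mu;\gamma'}$ for some $\gamma'$, and under the equivalence convolution with $\cN$ becomes induction with a $W$-projective; this produces $\cM$ inside the subcategory generated under induction with $W$-projectives from modules of weight $\geq\mu$. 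For the $\slehat$-side, the construction $H_C=\EuScript{F}_{i_1}^{(a_1)}\cdots\EuScript{F}_{i_k}^{(a_k)} H_{C^+}$ with $\wt(C^+)\geq\gamma$ (Lemma \ref{lem:O-plus}) shows $\cM$ is obtained from an object of weight $\geq\gamma$ by the categorical $\slehat$-action, which under the equivalence becomes the algebraic action generating the $\gamma$-side of $\mathcal{J}_{\mu;\gamma}$.

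The main obstacle is really bookkeeping rather than a genuine difficulty: one must verify that the two factorizations of $\cM$ (as an $\slehat$-descendant and as a convolution-descendant) can be realized simultaneously in the algebraic picture, i.e.\ that $\mathcal{J}_{\mu;\gamma}$ as defined by intersecting the two generating procedures in $T^\vartheta\dgmod$ really catches all such objects. This is guaranteed because the quasi-equivalence of Corollary \ref{cor:affine-equivalence} is a full equivalence of dg-categories, so any object that lies in both subcategories on the geometric side has a preimage lying in both subcategories on the algebraic side, and the two subcategories on each side are transported to each other individually by the compatibilities noted above.
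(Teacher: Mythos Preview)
Your proposal is correct and follows essentially the same route the paper has in mind. The paper treats this proposition as immediate (marked \qed\ in the statement), justified only by the sentence ``It's manifest from the match of induction and convolution (\cite[4.14]{WebwKLR}) that:''. What you have done is spell out why that match suffices: the equivalence of Corollary~\ref{cor:affine-equivalence} transports induction with $W$-modules to convolution with $\preO^0$ (\cite[4.14]{WebwKLR}) and intertwines the algebraic and geometric $\slehat$-actions (Corollary~\ref{cor:action-match}), while the categorical arguments already given in the proof of Theorem~\ref{th:affine-cells} (Lemmas~\ref{lem:gle-preserve}, \ref{lem:sle-preserve}, \ref{left-right-induction}, and~\ref{lem:O-plus}) identify the generation-defined subcategories with the support-defined ones $\cOg^{\mu;\infty}$ and $\cOg^{\gamma;0}$. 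That is exactly the content the paper is packaging as ``manifest''.

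One small imprecision: when you write ``convolution with $\cN$ becomes induction with a $W$-projective'', the object $\cN\in\preO^0$ corresponds under the equivalence to an arbitrary object of $W\dgmod$, not necessarily a projective. This does not harm your argument, since in the dg-setting the subcategory closed under induction with projectives is automatically closed under induction with all objects (every object is built from projectives via cones and shifts), but it is worth stating correctly.
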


In \cite{WebRou}, we introduced {\bf change-of-charge} functors which
are weighted analogues of the R-matrix functors from \cite[\S 6]{Webmerged};
they have a very similar geometric definition.  These can be assembled
into an strong action of the affine braid group $\widehat{B}_\ell$.  

By a retread of the argument in Proposition \ref{tensor-shuffling}, we
see that:
\begin{corollary}\label{affine-shuffling}
  The quasi-equivalence  $T^\vartheta\dgmod\cong D_{\cOg}$ intertwines the
  change-of-charge functor $\mathscr{B}^{\vartheta,\vartheta'}\Lotimes
  -$ with the shuffling functor $\mathscr{S}^{\vartheta,\vartheta'}$.\qed
\end{corollary}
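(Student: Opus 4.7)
The plan is to mimic the proof of Theorem~\ref{tensor-shuffling} step by step, replacing tensor product algebras $T^\bla$ by their affine analogues $T^\vartheta$, and $\tilde{\mathfrak{B}}_\sigma$ by the change-of-charge bimodule $\mathscr{B}^{\vartheta,\vartheta'}$. First I would introduce the two quasi-equivalences of Corollary~\ref{cor:affine-equivalence} as functors $\coho\colon \dOg \to T^\vartheta\dgmod$ and $p\coho\colon \dpreO \to \tilde{T}^\vartheta\dgmod$ and record the two compatibility relations analogous to \eqref{eq:coho}:
\[
\coho\circ \red(M) \cong T^\vartheta \Lotimes_{\tilde{T}^\vartheta} p\coho(M), \qquad p\coho\circ \red_!(\cM) \cong \coho(\cM),
\]
with $\coho(\cM)$ inflated along $\tilde{T}^\vartheta \twoheadrightarrow T^\vartheta$ in the second formula. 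These follow from the same reasoning as in the tensor product case, since $\red_!$ and $\red$ are constructed using the $\Z$-algebra machinery independently of the specific $\bT$-action.

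Next, for each pair of cocharacters $\vartheta,\vartheta'$ related by a change of charge, let $L,L'$ denote the pushforwards $\bigoplus_{\Bi}L_\Bi$ with $|\Bi|=\la-\mu$ built from loadings associated to $\vartheta$ and $\vartheta'$ respectively. By \cite[4.12]{WebwKLR}, the bimodule $\Hom(I^{\vartheta'}(L'), I^\vartheta(L))$ is naturally identified with the weighted KLR bimodule $B^{\vartheta,\vartheta'}_\nu$; in the affine setting this is precisely the change-of-charge bimodule $\mathscr{B}^{\vartheta,\vartheta'}$ of \cite[\S 5.3]{WebRou}. Since $L$ and $L'$ generate $\dpreO$ for their respective weightings, standard formal arguments (as in \eqref{eq:B-commute}) yield functorial isomorphisms
\[
\Ext^\bullet\!\bigl(I^{\vartheta'}M,\; I^{\vartheta'}\circ p\coho^{-1}(\mathscr{B}^{\vartheta,\vartheta'}\Lotimes_{\tilde{T}^\vartheta} N)\bigr) \cong \Ext^\bullet\!\bigl(I^{\vartheta'}M,\; I^\vartheta \circ p\coho^{-1}(N)\bigr)
\]
for $N \in \tilde{T}^\vartheta\dgmod$ and $M \in \cOg^{\vartheta'}$.

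Then I would specialize to $M = \red_!\cM$ and $N$ the inflation of an object $\cN$ of $T^\vartheta\dgmod$. Using the adjunction $\red_! \dashv \red$ together with the definition of the shuffling functor $\mathscr{S}^{\vartheta,\vartheta'} = i_*^{\vartheta'}\circ i^\vartheta$ from \cite[\S 8.7]{BLPWgco}, the right-hand side rewrites as $\Ext^\bullet(\cM, \mathscr{S}^{\vartheta,\vartheta'}\coho^{-1}(\cN))$, while the left-hand side becomes $\Ext^\bullet(\cM, \coho^{-1}(\mathscr{B}^{\vartheta,\vartheta'}\Lotimes_{T^\vartheta} \cN))$ after passing $p\coho^{-1}$ through $\red_!$ and appealing to the inflation identity above. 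Since $\cM$ is arbitrary, Yoneda gives the desired isomorphism of functors $\mathscr{B}^{\vartheta,\vartheta'}\Lotimes \coho \cong \coho\circ \mathscr{S}^{\vartheta,\vartheta'}$.

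The main obstacle I foresee is not the adjunction yoga — that is a faithful transcription of the tensor-product argument — but rather the identification of $B^{\vartheta,\vartheta'}_\nu$ with the change-of-charge bimodule $\mathscr{B}^{\vartheta,\vartheta'}$ of \cite{WebRou}, which must properly descend to the steadied quotient. One must verify that the ideal killed in passing from $\tilde{T}^\vartheta$ to $T^\vartheta$ is compatible on both sides, i.e.\ that unsteady idempotents match under the change-of-charge on each side; this is the affine analogue of the check that underlies \cite[3.6]{WebwKLR}, and should be a direct diagrammatic comparison using the presentations of \cite[\S 4.1, \S 5.3]{WebRou}. Once this is granted, the braid relations in Corollary~\ref{affine-shuffling} follow automatically, since the change-of-charge bimodules already carry a strong $\widehat{B}_\ell$-action by \cite{WebRou}.
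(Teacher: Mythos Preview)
Your proposal is correct and is exactly what the paper does: the paper offers no detailed proof, stating only that the corollary follows ``by a retread of the argument in Proposition~\ref{tensor-shuffling},'' which is precisely the strategy you have written out in full. Your identification of the one genuine technical check---that $B^{\vartheta,\vartheta'}_\nu$ from \cite[4.12]{WebwKLR} matches the change-of-charge bimodule of \cite{WebRou} and descends to the steadied quotient---is the right place to focus, and the paper implicitly assumes this is routine.
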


\subsection{Koszul duality}
\label{sec:dual-twist-shuffl}

The ring $T^\vartheta$ is Koszul and its Koszul dual is another algebra of the same
type. In order to state this precisely,  let us briefly describe the combinatorics underlying this
duality.  Fix a positive integer $w$ and a $\ell\times e$ matrix of integers $U=\{u_{ij}\}$, and
let $s_i=\sum_{j=1}^e u_{ij}$ and $t_j=\sum_{i=1}^\ell u_{ij}$.   Associated
to the $g$th column of $U$, we have a charged $e$-core partition.  An
$e$-core is, by definition, a partition which has no removable ribbons of length $e$.  

Our desired $e$-core can be
characterized as the
unique charged partition to which is possible to add one $e$-ribbon
with contents $e(u_{gk}-1)+k,e(u_{gk}-1)+k+1,\dots, eu_{gk}+k-1$ for
each $k=1,\dots, e$, and no others.
Let $v_i$ be
the unique integer such that $v_i-w$ is the
total
number of boxes of residue $i$ in all these partitions.  We
wish to consider the affine quiver variety for the highest weight
$\la:=\sum_{i}\omega_{s_i}$ with the dimension vector $v_i$; that is,
with weight $\mu:=\la-\sum v_i\al_i$.  

As discussed earlier, the associated weight $\mu$ is dominant if
$t_1\geq t_2\geq \cdots \geq t_{e}\geq t_1-\ell$.  Similarly, we call
$s_i$ {\bf JMMO} if $s_1\geq s_2\geq \cdots \geq s_{\ell}\geq
s_1-e$.  Note that just as the dominant weights are a fundamental
region for the action of the affine Weyl group $\widehat{W}_e$, we
have that the JMMO weights are a fundamental region for the level $e$ action of
$\widehat{W}_\ell$ on $\Z^\ell$.  In fact, these lift to a
$\widehat{W}_e\times \widehat{W}_\ell$-action on the matrices $U$ by applying
the level 1 action of the $\widehat{W}_e$ to each row, and the level 1
action of $\widehat{W}_\ell$ to each column.

In order to describe a category $\cO$, we also need to consider a
$\bT$-action on the corresponding quiver variety $\fM^\la_\mu$.  This is equivalent to a
weighting of the Crawley-Boevey graph for $\la$, where we enumerate
the new edges so that $e_i$ connects to the node corresponding to the
residue of $s_i\pmod e$.  
We let $\vartheta_{U}$ be  the weighting where we give each edge of the oriented cycle
weight $\ell$, and the new edge $e_i$ the weight $s_i\ell+ie$.  Up to
a multiple, this is the Uglov weighting attached $\Bs$ by 
\cite[2.6]{WebRou}.

\begin{proposition}\label{all-theta}
  Every generic cocharacter $\bT\to H/Z$ has a category $\cO$ for
  $\fM^\la_\mu$ which 
  coincides with that of $\pm \vartheta_{U}$ for some $U$.
\end{proposition}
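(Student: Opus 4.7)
The plan is to show that, up to the equivalences enjoyed by $\cOg$, any generic cocharacter $\vartheta\colon\bT\to H/Z$ can be normalized to Uglov form $\pm\vartheta_U$. I begin by recording the data of $\vartheta$. Since $H/Z$ is a quotient of $PG_\Bw\times\C^*$ by a finite central subgroup, a generic cocharacter is encoded by the weight $\kappa\in\Q$ with which the last $\C^*$-factor acts on one cycle edge, together with weights $\vartheta_1,\dots,\vartheta_\ell\in\Q$ on the new Crawley-Boevey edges (modulo one overall additive shift, since the diagonal $\C^*\subset Z(H)$ acts trivially and, by Section~\ref{sec:symplectic-quotients}, overall positive rescaling of $\bT$ leaves $\cOg$ invariant).

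Next I would catalogue three invariances of $\cOg$ that reduce an arbitrary $\vartheta$ to standard form. First, overall positive rescaling $\vartheta\mapsto c\vartheta$ with $c\in\Q_{>0}$ changes nothing. Second, Fourier transform on $\Diff_E$-modules exchanges $x\leftrightarrow\bar{x}$ and therefore flips the sign of $\vartheta$; this accounts for the $\pm$ in the statement. Third, change-of-charge $\vartheta_i\mapsto\vartheta_i+n\kappa$ for $n\in\Z$ induces a Morita equivalence of $T^\vartheta_\mu$ via the change-of-charge functors of Corollary~\ref{affine-shuffling}. Using rescaling and Fourier transform, I may assume $\kappa=\ell$; using the change-of-charge invariance and the shift ambiguity, I may further align the residues $\vartheta_i\pmod\ell$ to the Uglov pattern $\vartheta_i\equiv ie\pmod\ell$, where the indexing $i$ is chosen so that the residue of $\vartheta_i/\ell\pmod e$ matches the label $r_i$ of the new edge $e_i$.

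Having normalized, I would write $\vartheta_i=s_i\ell+ie$ with $s_i\in\Z$ and $s_i\equiv r_i\pmod e$, so $\la=\sum_i\omega_{s_i\bmod e}$ agrees with our fixed highest weight by construction. It then remains to realize the sequence $(s_1,\dots,s_\ell)$ as the row sums $s_i=\sum_j u_{ij}$ of an integer matrix $U=(u_{ij})\in M_{\ell\times e}(\Z)$ whose column-$j$ charged $e$-cores have a total of $v_i-w$ boxes of residue $i$, for some stabilization constant $w\geq 0$. Using the abacus model of charged $e$-cores, this combinatorial existence problem has a solution for $w$ sufficiently large: each $s_i$ may be distributed across its row freely subject to the row-sum constraint, and each independent box-count $v_i$ can be tuned by choosing $w$ large enough to absorb any discrepancies. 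Once such $U$ is produced, one has $\vartheta=\vartheta_U$ up to the equivalences above, and Corollary~\ref{cor:affine-equivalence} provides the required identification of categories.

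The main obstacle lies in this last combinatorial step: one must verify that for every normalized generic $\vartheta$ and every prescribed dimension vector $\Bv$ compatible with $\la$, there exists an integer matrix $U$ with the correct row sums and the correct residue-by-residue box counts in its column $e$-cores. This is a question about the image of the abacus map and amounts to a refinement of the standard JMMO combinatorics; the JMMO fundamental chamber $s_1\geq s_2\geq\cdots\geq s_\ell\geq s_1-e$ is exactly the slice needed for uniqueness, and the existence statement reduces to the fact that every $(s_1,\dots,s_\ell)$ lies in the $\widehat{W}_\ell$-orbit of some JMMO tuple, which is standard.
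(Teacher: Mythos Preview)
Your proposal has a genuine gap at the change-of-charge step. The claim of the proposition is that the two category $\cO$'s \emph{coincide} as subcategories of $\cD_\chi\mmod$ --- equivalently, that the two $\bT$-actions have the same attracting set $\fM^+$. The change-of-charge functors from Corollary~\ref{affine-shuffling} are shuffling functors: they relate $\cOg^\vartheta$ and $\cOg^{\vartheta'}$ by a derived equivalence, but these are genuinely \emph{different} subcategories when $\vartheta$ and $\vartheta'$ lie in different chambers. So invoking them to ``align residues'' does not establish the coincidence you need; it only gives an equivalence between distinct categories. Similarly, Fourier transform exchanges the relative precores for $\vartheta$ and $-\vartheta$ at the level of $\preO$, but again does not identify $\cOg^\vartheta$ with $\cOg^{-\vartheta}$ inside the same module category.

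The paper's argument is essentially different and more direct. After a positive rescaling so the cycle has total weight $\ell e$ (and the individual edges weight $\ell$), one writes each $\vartheta_i = s_i\ell + \vartheta_i'$ with $0\le \vartheta_i'<\ell e$ and $s_i\equiv r_i\pmod e$, then reindexes so $\vartheta_1'<\cdots<\vartheta_\ell'$. The key computation, using Nakajima--Yoshioka's formula for the $\bT$-weights on the tangent space at each fixed point, shows that the \emph{sign} of every such weight depends only on the integers $s_i$ and the relative \emph{order} of the $\vartheta_i'$, not on their actual values. Replacing $\vartheta_i'$ by $ie$ preserves this order, hence preserves all signs, hence preserves $\fM^+$ and therefore $\cOg$. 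No functorial equivalence is invoked --- the two $\bT$-actions literally determine the same attracting locus.

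Your combinatorial worry about realizing $U$ is also misplaced: $\vartheta_U$ depends only on the row sums $s_i$, and the $s_i$ are extracted directly from $\vartheta$ by the decomposition above; any matrix with those row sums will do.
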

\begin{proof}
We can  scale the cocharacter until the weight of the cycle is $\ell e$, and
  choose a lift where all old edges have weight $\ell$.  Now, for each edge $e_i$ with weight
  $\vartheta$, we write $\vartheta_i=s_i\ell+\vartheta_i'$ with
  $0\leq \vartheta_i'< \ell e$ and $s_i\equiv r_i\pmod e$.  We can reindex the
  edges so that $\vartheta_1'<\dots < \vartheta_\ell'$; by genericity,
  these are all distinct.  

Having made this reindexing, the action corresponding to $s_i$ is the one
we desire.  To see that it is equivalent to our original action, we
consider the action of $\bT$ on the tangent space of a $\bT$-fixed
point.  Applying \cite[2.11]{NY04}, the weights that appear in these
tangent spaces are of the form
$(\vartheta_i-r_i\ell)-(\vartheta_j-r_j\ell)+ge\ell$ for $g\in \Z$; since 
\[(\vartheta_i'+(s_i-r_i)\ell)-(\vartheta_j'+(s_j-r_j)\ell)+ge\ell.\]
On the other hand for our standard action, the same vector will have
weight
\[(ie+(s_i-r_i)\ell)-(je+(s_j-r_j)\ell)+ge\ell.\]

If $s_i-r_i+ge> s_j-r_j$, then this weight is positive in both cases
(since $\vartheta_i'-\vartheta_j'<\ell e$ and similarly with $(i-j)e
<\ell e$) and similarly with the opposite inequality.  If $s_i-r_i+ge=
s_j-r_j$, then $\vartheta_i'<\vartheta_j'$ if and only if $i<j$ (by
definition).  This shows that the relative cores of the two actions
coincide, so the corresponding category $\cO$'s do as well.
\end{proof}

Let $U^!$ be the transpose of $U$, and $\mu^!=\la(U^!),\la^!=\mu(U^!)$ the weights attached
to $U^!$ and $w$ by the recipe above.  We have switched the roles of
$\mu$ and $\la$ here, since changing $\mu$ and holding $\la$ constant
will change $\mu^!$ and vice versa.  
More generally, we have an order
reversing bijection from dominant weights in the interval
\[[\la,\widehat{W}_e\cdot \mu]:=\{\nu\mid \la\geq \nu\geq w\cdot \mu
\text{ for all }w\in \widehat{W}_e\}
\] to the dominant weights in $[\mu^!,\widehat{W}_\ell\cdot \la^!]$, which we denote by
$\nu\mapsto \nu^!$, given by taking the tranpose of the corresponding
cylindrical partition and sending $\eta\mapsto w-\eta$.  

\begin{theorem}[\mbox{\cite[Th. C]{WebRou}}]
  The graded abelian categories $T^{\pm\vartheta_{U}}_\mu\mmod$ and
  $T^{\mp\vartheta_{U^!}}_{\la^!}\mmod$ are Koszul dual. 
\end{theorem}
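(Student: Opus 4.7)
The plan is to reduce the theorem to the purely algebraic Koszul duality statement \cite[Th. C]{WebRou} via the geometric--algebraic equivalence established in this paper. The key tool is Corollary \ref{cor:affine-equivalence}, which identifies $T^{\vartheta}_\mu\dgmod$ with $D_{\cOg}$ for $\fM^\la_\mu$ equipped with the $\bT$-action determined by $\vartheta$. Combined with Proposition \ref{all-theta}, the cocharacter $\vartheta_{U}$ up to rescaling realizes every generic $\C^*$-action on $\fM^\la_\mu$, so the ``$\pm$'' freedom in the statement corresponds to the two half-spaces cut out by the generic hyperplanes in the space of lifts.

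First I would verify that the Koszulity hypotheses of Theorem \ref{maltese-koszul} and its corollary hold in this setting. The genericity of $\vartheta$ (distinct weights modulo $\kappa$) guarantees isolated $\bT$-fixed points on $\fM^\la_\mu$, as noted in the discussion preceding Proposition \ref{prop:LKN}. Hypothesis \hyperlink{dagger}{$(\dagger)$} is Proposition \ref{O-is-T-affine}, and then Corollary \ref{dagger-implies-maltese} delivers \hyperlink{maltese}{$(\maltese)$}. Thus the category $\cOg$ and hence $T^{\vartheta_U}_\mu\mmod$ is Koszul, with graded lift given by the mixed Hodge module structure, which matches the algebraic grading on $T^{\vartheta_U}_\mu$ via Corollary \ref{cor:affine-equivalence}.

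Next I would identify the Koszul dual. By the proposition just before Theorem \ref{maltese-koszul}, the Koszul dual of $T^{\vartheta_U}_\mu$ is the endomorphism algebra $\EuScript{R}^!$ of a projective generator in the category of linear projective complexes, which acts as the graded endomorphism algebra of the sum of indecomposable projectives in $\cOg$. The task is to match this algebra with $T^{\mp\vartheta_{U^!}}_{\la^!}$. Rank-level duality provides a bijection between the set of simples of the two categories through the transpose operation on the matrix $U$; this bijection intertwines the combinatorial crystal structure on components (Theorem \ref{crystal-match}) with its rank-level dual, sending the canonical basis of a level $\ell$ Fock space of $\slehat$ to the canonical basis of a level $e$ Fock space of $\sllhat$. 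The sign change $\vartheta\mapsto -\vartheta$ reflects passing to the opposite algebra, which is the standard feature of Koszul duality when switching from projectives to injectives.

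The main obstacle is the matching of the two algebras on the nose rather than just matching multiplicities in their Grothendieck groups. This is precisely the content of \cite[Th. C]{WebRou}, whose proof compares the diagrammatic presentations of the two weighted KLR algebras using the Uglov parameterization, so I would simply invoke it at this stage. The new geometric content of the present paper makes this duality natural from the symplectic resolution perspective, and in particular ensures (via Theorem \ref{tensor-shuffling} and Corollary \ref{affine-shuffling}) that the duality interchanges twisting and shuffling functors as claimed in Theorem \ref{twist-shuffle}, but the algebra-level statement remains the one proved in \cite{WebRou}.
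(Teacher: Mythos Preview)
Your proposal is correct and matches the paper's approach: this theorem is simply cited from \cite[Th.~C]{WebRou} without proof here, and the paper's only additional remark is exactly what you observe---that Theorem~\ref{maltese-koszul} provides a new geometric proof that the algebras are Koszul (but not an independent identification of the Koszul dual). Your longer setup (verifying $(\dagger)$, $(\maltese)$, isolated fixed points) is the content of the surrounding discussion rather than a proof of the theorem itself, and you correctly recognize that the actual matching of $\EuScript{R}^!$ with $T^{\mp\vartheta_{U^!}}_{\la^!}$ must be imported from \cite{WebRou}.
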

Note that Theorem \ref{maltese-koszul} gives a new proof that these
algebras are Koszul.
Using Theorem \ref{O-is-T-affine}, we can give a 
reformulation of this result.  We let $\cO^{\la;\vartheta}_\mu$ be the
category $\cOg$ for Nakajima's stability condition on the quiver
variety $\fM^\la_\mu$ for the $\bT$ action coming from $\vartheta$.

\begin{corollary}\label{Koszul-dual}
  We have an equivalence of abelian categories
  $\cO^{\la;\vartheta_{U}}_\mu\cong T^{-\vartheta_{U^!}}_{\la^!}\umod$.
  In particular, categories $\cO^{\la;\vartheta_{U}}_\mu$ and
  $\cO^{\mu^!;-\vartheta_{U^!}}_{\la^!}$ have graded lifts which are
  Koszul dual.  
\end{corollary}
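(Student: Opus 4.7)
The strategy is to apply the graded refinement of Theorem~\ref{main1} to both abelian categories appearing in the statement, and then to invoke Theorem~C of \cite{WebRou}, which provides the relevant Koszul duality of tensor product algebras.

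Each side of the claim lies within the scope of Corollary~\ref{cor:affine-equivalence}: by Proposition~\ref{all-theta}, $\vartheta_U$ and $-\vartheta_{U^!}$ are instances of generic integral cocharacters, and the hypothesis \hyperlink{dagger}{$(\dagger)$} holds by Proposition~\ref{O-is-T-affine}. This furnishes dg-equivalences $\dOg^{\la;\vartheta_U}_\mu \simeq T^{\vartheta_U}_\mu\dgmod$ and $\dOg^{\mu^!;-\vartheta_{U^!}}_{\la^!}\simeq T^{-\vartheta_{U^!}}_{\la^!}\dgmod$. The graded refinement Theorem~\ref{main1}' upgrades these to equivalences of the graded lifts of $\cO$ with the categories of linear complexes of projectives $\mathsf{LCP}(\becircled T^{\vartheta_U}_\mu)$ and $\mathsf{LCP}(\becircled T^{-\vartheta_{U^!}}_{\la^!})$ respectively, namely
\[
\tilde{\cO}^{\la;\vartheta_U}_\mu \simeq \mathsf{LCP}(\becircled T^{\vartheta_U}_\mu),
\qquad
\tilde{\cO}^{\mu^!;-\vartheta_{U^!}}_{\la^!} \simeq \mathsf{LCP}(\becircled T^{-\vartheta_{U^!}}_{\la^!}).
\]

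Under our genericity hypothesis on $\vartheta_U$, the fixed points of $\bT$ on $\fM^\la_\mu$ are isolated, and by Corollary~\ref{dagger-implies-maltese} assumption \hyperlink{maltese}{$(\maltese)$} is satisfied; hence Theorem~\ref{maltese-koszul} shows that both $T^{\vartheta_U}_\mu$ and $T^{-\vartheta_{U^!}}_{\la^!}$ are Koszul. Theorem~C of \cite{WebRou} identifies their Koszul duals as each other. By the standard identification $\mathsf{LCP}(A) \simeq A^!\operatorname{-gmod}$ for a Koszul algebra $A$ \cite[Thm.~30]{MOS}, we therefore conclude that $\tilde{\cO}^{\la;\vartheta_U}_\mu \simeq T^{-\vartheta_{U^!}}_{\la^!}\operatorname{-gmod}$ as graded abelian categories; forgetting the grading gives the first assertion $\cO^{\la;\vartheta_U}_\mu\cong T^{-\vartheta_{U^!}}_{\la^!}\umod$. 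The two graded lifts in question are then graded-module categories over a Koszul-dual pair of algebras, hence Koszul dual in the standard sense, proving the second assertion.

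The main obstacle is purely a bookkeeping one: verifying that the transpose involution $U\mapsto U^!$ together with the swaps $(\la,\mu)\mapsto(\mu^!,\la^!)$ and $\vartheta\mapsto-\vartheta$ used in our description of these $\cO$'s align precisely with the pairing of Koszul duals established in Theorem~C of \cite{WebRou}, and that the particular cocharacter $\vartheta_U$ is generic enough for the isolated-fixed-point hypothesis (an easy check from the explicit formula $\vartheta_i = s_i\ell + ie$). Once conventions are matched, no additional geometric input beyond Theorems~\ref{main1}' and \ref{maltese-koszul} is required; the Koszulity and the duality are both already in hand.
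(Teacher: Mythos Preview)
Your proposal is correct and follows essentially the same approach as the paper, which treats this corollary as an immediate reformulation of \cite[Th.~C]{WebRou} using the equivalences of Corollary~\ref{cor:affine-equivalence} (or equivalently Theorem~\ref{main1}') together with the Koszulity supplied by Theorem~\ref{maltese-koszul}. You have simply spelled out in detail the logic that the paper leaves implicit.
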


By Proposition \ref{all-theta}, this describes the Koszul dual of
every integral geometric category $\cO$ for an affine type A quiver
variety.  Of course, this functor is quite inexplicit; it would be
very interesting to define a more concrete functor between these
categories.

One thing we do know about this equivalence is that it categorifies
rank-level duality.  
That is,  we can consider the Grothendieck group
of $\mathbf{O}:=\bigoplus \cO^{\la;\vartheta_{U}}_\mu$ for $U$ varying across all
integer matrices $U$ and all $w$, and $\mathbf{O}^!:=\bigoplus \cO^{\la^!;-\vartheta_{U^!}}_{\mu^!}$ the same category
with the $\bT$ action negated and $e$ and $\ell$ switching roles.  Koszul duality induces an
isomorphism of the Grothendieck groups
$K(\mathbf{O})=K(\mathbf{O}^!)$. 
Note that some care about signs is needed here, as discussed in
\cite[10.28]{BLPWgco}; thus, this isomorphism should not send standard
classes to standard classes, but rather to signed standards.  We can
capture this sign by identifying both Grothendieck groups with  $\bigwedge\nolimits^{\infty/2}(\C^\ell\otimes
  \C^e)[u,u^{-1}]$, sending the standards from $\mathbf{O}$ to the
  pure wedges where we order basis vectors in $\C^\ell\otimes
  \C^e$ lexicographically considering the left tensor factor first
  (for the obvious order on the basis of $\C^\ell$ and $\C^e$), and
  the standards from $\mathbf{O}^!$ with the pure wedges where we
  order considering the right factor first.  
The set of simples in $\cO^{\la;\theta_U}_\mu$ are indexed by
  $\ell$-multipartitions where the number of boxes of content $i$ is $v_i$; these
  can be interpreted as $\ell$-strand abaci.  As
  proven in \cite[6.4]{WebRou}, the
  Koszul duality bijection is given by  cutting the abacus into
  $\ell\times e$ rectangles and flipping, as in the picture below:
  \begin{equation}
\label{eq:flip}\begin{tikzpicture}
\node (a) at (-3,0){
    \begin{tikzpicture}[very thick,scale=.5]
      \draw[thick, densely dashed] (-.5,1.6) -- (-.5,-.6);
      \draw[thick, densely dashed] (-3.5,1.6) -- (-3.5,-.6);
      \draw[thick, densely dashed] (2.5,1.6) -- (2.5,-.6);
      \node[circle, draw, inner sep=3pt] at (0,0){}; \node[circle,
      draw, inner sep=3pt,fill=black] at (0,1) {}; \node[circle, draw, inner
      sep=3pt,fill=black] at (1,0) {}; \node[circle, draw, inner sep=3pt] at
      (1,1) {}; \node[circle, draw, inner sep=3pt] at (2,0) {};
      \node[circle, draw, inner sep=3pt,fill=black] at (2,1) {}; \node[circle,
      draw, inner sep=3pt] at (3,0) {}; \node[circle, draw, inner
      sep=3pt] at (3,1) {}; \node[circle, draw, inner sep=3pt,fill=black] at
      (-1,0) {}; \node[circle, draw, inner sep=3pt,fill=black] at (-1,1) {};
      \node[circle, draw, inner sep=3pt] at (-2,0) {}; \node[circle,
      draw, inner sep=3pt,fill=black] at (-2,1) {}; \node[circle, draw, inner
      sep=3pt,fill=black] at (-3,0) {}; \node[circle, draw, inner sep=3pt] at
      (-3,1) {}; \node[circle, draw, inner sep=3pt,fill=black] at (-4,0) {};
      \node[circle, draw, inner sep=3pt,fill=black] at (-4,1) {}; \node at
      (-5.2,.5) {$\cdots$}; \node at (4.2,.5) {$\cdots$};
    \end{tikzpicture}};
\node (b) at (3,0){    \begin{tikzpicture}[very thick,scale=.5]
      \draw[thick, densely dashed] (-.5,1.6) -- (-.5,-1.6);
      \draw[thick, densely dashed] (-2.5,1.6) -- (-2.5,-1.6);
      \draw[thick, densely dashed] (1.5,1.6) -- (1.5,-1.6);
      \node[circle, draw, inner sep=3pt,fill=black] at (0,0){}; 
      \node[circle, draw, inner sep=3pt] at (0,1) {}; 
      \node[circle, draw, inner sep=3pt] at (1,0) {}; 
      \node[circle, draw, inner sep=3pt,fill=black] at (1,1) {}; 
      \node[circle, draw, inner sep=3pt] at (2,0) {};
      \node[circle, draw, inner sep=3pt] at (2,1) {}; 
      \node[circle, draw, inner sep=3pt] at (-1,-1) {}; 
      \node[circle, draw, inner sep=3pt,fill=black] at (-2,-1) {}; 
      \node[circle, draw, inner sep=3pt,fill=black] at (-1,0) {}; 
      \node[circle, draw, inner sep=3pt,fill=black] at (-1,1) {};
      \node[circle, draw, inner sep=3pt] at (-2,0) {}; 
      \node[circle, draw, inner sep=3pt,fill=black] at (-2,1) {}; 
      \node[circle, draw, inner sep=3pt,fill=black] at (-3,0) {}; 
      \node[circle, draw, inner sep=3pt,fill=black] at (-3,1) {}; 
      \node[circle, draw, inner sep=3pt] at (0,-1) {};
      \node[circle, draw, inner sep=3pt,fill=black] at (1,-1) {}; 
      \node[circle, draw, inner sep=3pt,fill=black] at (-3,-1) {}; 
      \node[circle, draw, inner sep=3pt] at (2,-1) {}; 
      \node at (-4.2,0) {$\cdots$}; 
      \node at (3.2,0) {$\cdots$};
    \end{tikzpicture}};
\draw[<->,very thick] (a)--(b);
\end{tikzpicture}
\end{equation}

Using this isomorphism,
  we can endow both spaces with simultaneous $\slehat$ and $\sllhat$ actions.
It follows from Corollary
\ref{cor:action-match} and 
\cite[5.8
]{WebRou} that:
\begin{lemma}
  We have an isomorphism of $\slehat\times \sllhat$-modules
  $K(\mathbf{O})\cong \bigwedge\nolimits^{\infty/2}(\C^\ell\otimes
  \C^e)[u,u^{-1}]$ of the Grothendieck group with a semi-infinite
  wedge space, identifying pure wedges with standard modules.
\end{lemma}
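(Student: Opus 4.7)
My plan is to build the isomorphism one factor at a time, using the previously established identifications and then upgrading them to the bigger symmetry.

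First, by Corollary \ref{cor:action-match} and \cite[5.8]{WebRou}, for each fixed JMMO charge $\Bs$ the direct sum $\bigoplus_\mu K(\cO^{\la;\vartheta_{U}}_\mu)$ over all matrices $U$ with row sums $s_i$ (equivalently, over all $w$ and all $\mu$) is canonically isomorphic as an $\slehat$-module to the higher level $q$-Fock space $\mathbf{F}_{\Bs}$, with standard classes going to the basis indexed by $\ell$-multipartitions, which in turn corresponds via the abacus model (charged by $\Bs$) to pure semi-infinite wedges in $\bigwedge\nolimits^{\infty/2}(\C^\ell\otimes \C^e)$ in the ordering ``left factor first.'' Summing now over all JMMO charges and interpreting the charge variable as the exponent of $u$, we get an $\slehat$-equivariant identification $K(\mathbf{O})\cong \bigwedge\nolimits^{\infty/2}(\C^\ell\otimes\C^e)[u,u^{-1}]$ sending standards to pure wedges in the stated order.

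To produce the commuting $\sllhat$-action, I use Koszul duality. By Corollary \ref{Koszul-dual}, there is an equivalence $\cO^{\la;\vartheta_{U}}_\mu \cong T^{-\vartheta_{U^!}}_{\la^!}\umod$, and this assembles into an equivalence between $\mathbf{O}$ and $\mathbf{O}^!$ (obtained by swapping $e\leftrightarrow \ell$ and negating $\vartheta$). Applying the same argument as in the first step to $\mathbf{O}^!$ yields an $\sllhat$-module isomorphism $K(\mathbf{O}^!)\cong \bigwedge\nolimits^{\infty/2}(\C^e\otimes\C^\ell)[u,u^{-1}]$ sending standards to pure wedges in the ordering ``left factor first'' on the transposed tensor product, which under the obvious flip is the ``right factor first'' ordering on $\bigwedge\nolimits^{\infty/2}(\C^\ell\otimes\C^e)[u,u^{-1}]$. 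Transporting this action across Koszul duality gives an $\sllhat$-action on $K(\mathbf{O})$.

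The central compatibility to check is that these two actions combine into the Uglov semi-infinite wedge module structure, in particular that they commute. The crucial ingredient is that, as described in \eqref{eq:flip} and proven in \cite[6.4]{WebRou}, the bijection on simples induced by Koszul duality is precisely the abacus-flip which in the semi-infinite wedge picture is the identity map exchanging only the order convention on pure wedges. Combined with the identification of Koszul-dual standards from \cite[10.28]{BLPWgco} (with the appropriate signs), this shows that the $\sllhat$-action obtained via Koszul duality is literally the one coming from the ``column'' action on $\bigwedge\nolimits^{\infty/2}(\C^\ell\otimes \C^e)[u,u^{-1}]$, while the $\slehat$-action is the ``row'' action; these commute by the standard Uglov construction.

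The main obstacle is the last paragraph: carefully reconciling the sign conventions so that the Koszul-dual image of a standard really matches the pure wedge in the flipped ordering, and that the $\sllhat$-action defined via Koszul duality is the same as the one coming from the higher level Fock space for the quiver varieties $\fM^{\la^!}_{\mu^!}$ (rather than some twist of it). Both of these reduce to calculations already carried out in \cite{WebRou} and \cite{BLPWgco}, but they must be stitched together carefully, paying attention to the change of sign in the $\bT$-action between $\vartheta_U$ and $-\vartheta_{U^!}$.
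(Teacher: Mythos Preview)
Your proposal is essentially the same as the paper's approach: the paper treats this lemma as an immediate consequence of Corollary~\ref{cor:action-match} and \cite[5.8]{WebRou}, with the $\sllhat$-action obtained via the Koszul duality identification $K(\mathbf{O})=K(\mathbf{O}^!)$ discussed in the paragraph just before the lemma (including the sign issue from \cite[10.28]{BLPWgco} and the abacus-flip from \cite[6.4]{WebRou}). You have spelled out the logic in more detail than the paper does, but the route is the same.
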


The classes of the
simples or projectives in $\mathbf{O}$ or $\mathbf{O}^!$ form a perfect basis in the sense of \cite[\S
5]{BeKa} for $\slehat$ by \cite[5.20]{CR04}, and since these are
exchanged under Koszul duality they are simultaneously a perfect basis
for $\slehat\times \sllhat$.  Thus, the basis itself has a crystal structure for
this algebra.   Let this crystal be denoted $\mathbf{B}$.   

From the bijection above, we can describe this
crystal structure explicitly using \cite[5.6]{WebRou}; this is, in
fact, a straightforward consequence of the theory of ``highest weight
categorifications'' due to Losev \cite{LoHWCI}.  We identify each rectangle in the
picture above with a basis of $\C^\ell\otimes \C^e$ (with $\C^\ell$
running vertically, and $\C^e$ horizontally).  
A Kashiwara operator $\tilde{f}_i$
for $\slehat$ pushes a bead whose $x$-coordinate is $i\mod e$ one
step to the right. As usual, we choose the bead by putting open parenthesis as each
spot where we can do this, and a close parenthesis over each spot
where it can be undone (where we can push a bead with $x$-coordinate
is $i+1\mod e$ to the left), using height as a tie-breaker (higher
beads are ``further right'').  We then  push the bead corresponding to the
leftmost uncanceled parenthesis.  The $\tilde{f}_j$ for $\sllhat$ are
given by the same procedure, but push up instead of right, where a
bead which is pushed off the top appears at the bottom $e$ steps to
the right.  

We would like to understand how various geometric and
algebraic constructions match under this Koszul duality. In
particular, the categories $T^{-\vartheta_{U^!}}_{\la^!}\umod$ are interesting not
just on their own, but because they carry an interesting action of
change-of-charge functors.  These correspond to changing $
\vartheta_{U^!}$, while keeping $\la^!$ constant; according to the
prescription given above, this would be accomplished by acting on
$\mu$ by the affine Weyl group $ \widehat{W}_e$ of $\slehat$, but as we already noted
in Section \ref{sec:twisting-functors}, acting on this weight and
keeping the stability condition constant has the same effect as
changing stability condition while keeping the dimension vector
constant.  Thus, we let $\cOg^w$ denote the category $\cO$ for the
stability condition $w^{-1}\cdot \xi$, and let $\red^w$ denote the
corresponding quotient functor.

Thus, the change-of-charge functors relate category $\cO$'s which are
actually quotients of a single common category $\preO$ for the $G_\mu$-action on $E^\la_\mu$ and the $\C^*$-action corresponding to
$\vartheta_U$.  Furthermore, it is relatively easy to understand the
structure of these quotient functors.  First note that:
\begin{lemma}
  If $L$ is a simple object in $\preO$ with $\red^v(L)\neq 0$ and $P$
  is a projective cover of $\red^v(L)$, then
  $\red_!^v(P)$ is a projective cover of $L$.

In particular, if $L$ is a simple whose singular support contains a
closed free $G_\mu$-orbit, then $\red^v(L)$ is non-zero for all
stability conditions, and there is a projective $Q$ in $\preO$ such
that $\red^v(Q)$ is the projective cover of $\red^v(L)$ in the category
$\cO$ for any $v\in \widehat{W}_e$.
\end{lemma}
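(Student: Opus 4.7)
The plan for the first statement is a direct application of adjunction together with the identity $\red^v\red_!^v\cong\id$ already established in the proof of Proposition \ref{dg-quotient}. First, I note that $\red^v$ is exact on the abelian hearts (it is essentially restriction to a semistable locus followed by a descent), so its left adjoint $\red_!^v$ preserves projectives; in particular $\red_!^v(P)$ is projective in $\preO$. Next, I compute
\[
\End_{\preO}(\red_!^v P)\;\cong\;\Hom_{\cOg^v}(P,\red^v\red_!^v P)\;\cong\;\End_{\cOg^v}(P)
\]
by adjunction and $\red^v\red_!^v\cong\id$. Since $P$ is a projective cover, $\End(P)$ is local, hence so is $\End(\red_!^v P)$ and $\red_!^v(P)$ is indecomposable. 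The chosen surjection $P\twoheadrightarrow \red^v(L)$ transports by adjunction to a map $f\colon \red_!^v(P)\to L$ whose image under $\red^v$ is the original surjection; as $L$ is simple and $\red^v f\neq 0$, $f$ must be surjective. An indecomposable projective surjecting onto a simple object is its projective cover, so $\red_!^v(P)$ is the projective cover of $L$.

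For the ``in particular'' part, the plan is first to verify that a closed free $G_\mu$-orbit in $\mu^{-1}(0)$ is stable for \emph{every} character. By the Hilbert--Mumford criterion, if $x$ has a free closed orbit then for any one-parameter subgroup $\lambda$ with $\lim_{t\to 0}\lambda(t)x=y$, closedness forces $y\in G_\mu\cdot x$, say $y=g\cdot x$, and then $g^{-1}\lambda g^{-1}(t)\cdot x\to x$; freeness of the stabilizer of $x$ forces $\lambda$ trivial. Hence no character admits a destabilizing cocharacter, and $x$ is stable (closed orbit, trivial stabilizer) for every stability condition. Consequently the singular support of $L$ meets the $v$-stable locus for every $v$, so $\red^v(L)\neq 0$.

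Finally, I invoke the first part: for each $v$, if $P^v$ is the projective cover of $\red^v(L)$ in $\cOg^v$, then $\red_!^v(P^v)$ is the projective cover of $L$ in $\preO$. But projective covers are unique up to isomorphism in this abelian category, so there is a single $Q\in\preO$ (the projective cover of $L$) with $Q\cong \red_!^v(P^v)$ for all $v\in\widehat{W}_e$; applying $\red^v$ gives $\red^v(Q)\cong \red^v\red_!^v(P^v)\cong P^v$, which is the projective cover of $\red^v(L)$, as required.

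The only genuinely geometric input is the Hilbert--Mumford argument identifying closed free orbits with points stable for all $v\in\widehat{W}_e$; everything else is formal consequence of the adjunction $(\red_!^v,\red^v)$, the identity $\red^v\red_!^v\cong\id$, exactness of $\red^v$, and uniqueness of projective covers. I expect no serious obstacle, provided one is careful that the claimed exactness of $\red^v$ on the abelian hearts of the $t$-structures on $\dpreO$ and $\dOg$ is indeed available, which follows from its description as restriction to the semistable locus.
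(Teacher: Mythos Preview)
Your argument is correct and follows essentially the same route as the paper: exactness of $\red^v$ gives projectivity of $\red_!^v(P)$, adjunction plus $\red^v\red_!^v\cong\id$ gives locality of the endomorphism ring and hence indecomposability, and the adjoint of $P\twoheadrightarrow\red^v(L)$ gives the surjection onto $L$; the ``in particular'' then follows from uniqueness of projective covers. Your Hilbert--Mumford justification that closed free orbits are stable for every character is a welcome addition the paper omits---though note a small slip in the phrasing: the clean way to finish is that the limit $y=\lim\lambda(t)x$ is always $\lambda$-fixed, so if $y=gx$ then $g^{-1}\lambda g$ lands in the (trivial) stabilizer of $x$, forcing $\lambda$ trivial.
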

\begin{proof}
  Since $\red$ is exact, $\red_!(P)$ is projective, and obviously has
  a natural surjective map $ \red_!(P)\to L$. Since
  $\End(\red_!(P))\cong \End(P)$ is local, this projective is
  indecomposable and thus a projective cover of $L$. 

 Now, turn to the second part; by the first part, we have $Q\cong
 \red_!^v(P)$, so $P\cong \red^v(Q)$.
\end{proof}
Thus, one  common tie between these different categories $\cO$ is the
collection of simples whose singular supports contain a closed free orbit.  
We wish to understand how these simples in $\preO$ match up with
simple modules over $T^{\vartheta_{U^!}}_{\la^!}$.  As a first step,
we consider which projectives of $T^{\vartheta_{U}}_\mu$ they match
with.  

More generally, we should understand how the cell filtration behaves
under Koszul duality.  We expect that the quiver varieties for dual
data $\fM^\la_\mu$ and $\fM^{\mu^!}_{\la^!}$ will have an order
reversing bijection between their special strata.

\begin{proposition}\label{prop:strata-bijection}
  The map $\fN^{\la}_{\nu;n}\mapsto \fN^{\mu^!}_{\nu^!+n\delta;n}$ is
  an order reversing bijection between special strata.  Koszul duality
  induces a bijection between simple modules which sends modules with
  support contained in $\fN^{\nu}_{\mu;n}$ but not any smaller stratum
  to those supported in $\fN^{\nu^!+n\delta}_{\la^!;n}$ but not any
  smaller stratum.
\end{proposition}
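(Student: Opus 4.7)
The plan is to translate everything into the common semi-infinite wedge model, in which Koszul duality appears as a swap of the two tensor factors, and to read off the correspondence of special strata from this symmetry.

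First I would verify the combinatorial identity $(\nu-n\delta)^! = \nu^!+n\delta$: writing $\nu$ as cylindrical partition data $((t_1,\dots,t_e),\eta)$, subtracting $n\delta$ preserves the tuple $(t_i)$ and sends $\eta \mapsto \eta-n$, while transpose sends $((t_i),\eta)$ to $((s_k),-\eta)$, so transposing $\nu-n\delta$ yields $((s_k),-\eta+n) = \nu^!+n\delta$. Combined with the order-reversal of $\nu\mapsto\nu^!$ between $[\la,\widehat{W}_e\cdot\mu]$ and $[\mu^!,\widehat{W}_\ell\cdot\la^!]$, the chain $\la\geq\nu\geq\nu-n\delta\geq\widehat{W}_e\cdot\mu$ translates to $\mu^!\geq\nu^!+n\delta\geq\nu^!\geq\widehat{W}_\ell\cdot\la^!$, which by Theorem \ref{th:affine-cells} parametrizes exactly the special strata of $\fN^{\mu^!}_{\la^!}$. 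This establishes that $\fN^\la_{\nu;n}\mapsto\fN^{\mu^!}_{\nu^!+n\delta;n}$ is a well-defined, order-reversing bijection.

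Next I would match the corresponding Grothendieck group subspaces via the semi-infinite wedge. Assembling $\mathbf{O}$, the $\slehat\times\sllhat$-equivariant identification $K(\mathbf{O})\cong \bigwedge\nolimits^{\infty/2}(\C^\ell\otimes\C^e)[u,u^{-1}]$ (the lemma preceding Corollary \ref{Koszul-dual}) extends on each factor to $\glehat$ and $\widehat{\mathfrak{gl}}_\ell$ via convolution with modules in $\cOg^0$ as in Lemma \ref{lem:gle-preserve}. By Theorem \ref{th:affine-cells}, the cell-filtration piece $K(\cOg^{\nu;n})$ is the intrinsic wedge subspace $J_{\nu;\nu-n\delta}$, defined using only these two commuting $\widehat{\mathfrak{gl}}$-actions. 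The Koszul duality from Corollary \ref{Koszul-dual} identifies $K(\mathbf{O})$ with $K(\mathbf{O}^!)$ while swapping the two tensor factors (realized on pure wedges as the abacus flip \eqref{eq:flip}), and hence exchanges the original's $\slehat$- and $\glehat$-actions with the dual's $\sllhat$- and $\widehat{\mathfrak{gl}}_\ell$-actions, and vice versa. Applying this swap to $J_{\nu;\nu-n\delta}$ and invoking the combinatorics of the first step yields precisely $J_{\nu^!+n\delta;\nu^!}^! = K(\cOg^{\nu^!+n\delta;n}_{\text{dual}})$.

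Finally, the bijection on simples follows because a simple $L$ in $\cOg^{\la;\vartheta_U}_\mu$ has support contained in $\fN^\la_{\nu;n}$ but in no smaller stratum exactly when $[L]$ lies in $K(\cOg^{\nu;n})$ but in no smaller piece of the cell filtration, and the abelian equivalence of Corollary \ref{Koszul-dual} sends simples to simples compatibly with the Grothendieck-group identification just established. The main obstacle I anticipate is verifying in the matching step that the categorical Heisenberg in $\cOg^0$ for the $\slehat$-quiver (which enriches $\slehat$ to $\glehat$) is identified under the abacus-flip bijection with the Heisenberg arising from $\cOg^0$ for the dual $\sllhat$-quiver: this is a categorification of classical rank-level duality between Fock spaces, but requires careful bookkeeping to ensure that the Heisenberg generators on the two sides, coming from convolution with disjoint families of objects, are intertwined correctly.
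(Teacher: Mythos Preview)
Your first step, establishing the combinatorial identity $(\nu-n\delta)^! = \nu^!+n\delta$ and deducing the order-reversing bijection on special strata from Theorem~\ref{th:affine-cells}, is exactly what the paper does.

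The gap is in your second step, and you have correctly identified it yourself. The filtration piece $J_{\nu;\nu-n\delta}$ is defined using $U^-(\glehat)$ and $U^-(\slehat)$, not both $\widehat{\mathfrak{gl}}$-actions symmetrically, so to transport it under the abacus flip you would indeed need to know that the Heisenberg convolution on the $\slehat$ side is intertwined with the Heisenberg convolution on the $\sllhat$ side. That is a nontrivial categorical rank-level duality statement that the paper does not prove and does not need.

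The paper avoids this obstacle by a different and more economical argument. Rather than matching the two Heisenberg actions, it shows that $J_{\nu;\nu-n\delta}$ (summed over all $U$) is invariant under the full $\slehat \times \sllhat$-action: $\slehat$-invariance is essentially by definition, and $\sllhat$-invariance follows because $\mathfrak{sl}_\ell[t,t^{-1}]$ commutes with $\glehat$ inside $\widehat{\mathfrak{gl}}(\C^\ell\otimes\C^e)$ and $\partial \in \sllhat$ commutes past $\glehat$ as a derivation. Since the simples form a perfect basis for $\slehat \times \sllhat$, each cell is then determined by which $\slehat \times \sllhat$-highest weight simples it contains. These highest weight simples in the cell for $\fN^\la_{\nu;n}$ are precisely the $\slehat$-highest weight objects of weight $\nu-n\delta$ obtained from the unique highest weight vector of weight $\nu$ by the Heisenberg; Koszul duality sends them to the $\sllhat$-highest weight objects of weight $\nu^!$, which by the same description on the dual side are exactly the highest weight simples in the cell for $\fN^{\mu^!}_{\nu^!+n\delta;n}$. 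This uses only the commutation of the two actions, never their interchange.
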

\begin{proof}
  The statement about strata follows immediately from Theorem
  \ref{th:affine-cells} and the fact that $\nu\mapsto \nu^!$ is an
  order reversing bijection and $(\nu-n\delta)^!=\nu^!+n\delta$.  

Taking the sum of the subspaces $J_{\nu-n\delta,\nu}$ over all $U$, we obtain 
a subspace $J_{\nu-n\delta,\nu}\subset
\bigwedge\nolimits^{\infty/2}(\C^\ell\otimes \C^e)[u,u^{-1}]$. This
subspace is invariant under $\slehat$ by definition and under $\sllhat$ for all
$\nu,\gamma$, since the central extension of $\mathfrak{sl}_\ell[t,t^{-1}]$ and
$\glehat$ commute, and the element $\partial\in \sllhat$ commutes past
$\glehat$ by inducing a derivation.  

Thus, the 2-sided cell is just defined by which
$\sllhat\times \slehat$ highest weight simples lie in it.  All such
simples in the 2-sided cell $\fN^{\la}_{\nu;n}$ are supported on
$\fN^{\nu^!}_{\nu-n\delta}$, and they are precisely the $\slehat$ highest weight
objects supported on this variety, since all the $\slehat$ highest
weight vectors in this weight space are obtained from the unique
highest weight vector of weight $\nu$ by the action of the Heisenberg.

Koszul duality sends these vectors to the $\sllhat$ highest weight
objects of weight $\nu^!$, that is, the $ \sllhat\times
\slehat$ highest weight objects in the 2-sided cell for
$\fN^{\mu^!}_{\nu^!+n\delta;n}$.  This completes the proof.
\end{proof}
The simples of $\cOg$ have a finer decomposition into sets called left
and right cells.  We say that $L$ and $M$ are in the same {\bf left
  cell} if their sections $\secs(L)$ and $\secs(M)$ have the same
annihilator where we assume we have translated to a period where
localization holds; this is well-defined
since the sections of $L$ at different choices of $\chi$ where localization
holds are related by a Morita equivalence of the section algebras.
We say that they are in the same {\bf right cell} if $L$ is a composition factor
in $B\otimes M$ for some Harish-Chandra bimodule and {\bf vice versa}.

\begin{conjecture}\label{left-right-cells}
  Two simples are in the same left cell if they have the same weight
  and the same string parametrization for 
  $\sllhat$;  similarly, two simples are in the same right cell if and
  only if they have the same weight, and the same string
  parametrization for $\slehat$.
\end{conjecture}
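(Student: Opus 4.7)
The plan is to treat the two parts of the conjecture by separate arguments, using the categorical $\slehat$-action for right cells and Koszul duality to transport the result to left cells.

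For the right cell statement: By \cite[3.3]{Webcatq} the categorical $\slehat$-action on $\cO^{\la;\vartheta_U}_\mu$ is implemented by convolution with Harish-Chandra bimodules, so for any simple $L$ and any Kashiwara operator $\EuScript{F}_i$ or $\EuScript{E}_i$, every composition factor of $\EuScript{F}_i L$ or $\EuScript{E}_i L$ lies in the same right cell as $L$. By Theorem \ref{crystal-match}, the $\slehat$-crystal on components of $(\fM^\la_\mu)^+$ matches the combinatorial crystal on charged multipartitions; by the definition of string parametrization, any simple $L_C$ can be connected to the ``top'' simple $L_{C^+}$ by iterated divided-power functors $\EuScript{F}_i^{(a_i)}$, so two simples with the same weight and the same $\slehat$-string lie in the same right cell. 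For the converse, one uses that the action of $U(\glehat)$ on $K(\cOg)$ (via HC bimodules together with the Heisenberg realized by convolution with $\dpreOz$, cf.\ Lemma \ref{lem:gle-preserve}) does not mix distinct $\sle$-strings within a fixed weight space, combined with the fact that the class of HC bimodules preserving $\cO$ is generated by the categorified $\glehat$-action up to twists by the centre.

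For the left cell statement, use Koszul duality. Corollary \ref{Koszul-dual} gives an equivalence between $\cO^{\la;\vartheta_U}_\mu$ and $\cO^{\mu^!;-\vartheta_{U^!}}_{\la^!}$, and Theorem \ref{twist-shuffle} says that under this equivalence twisting functors on the original side are interchanged with shuffling functors on the dual. Under the abacus flip \eqref{eq:flip}, rank-level duality identifies the $\sllhat$-string parametrization of the original side with the $\slehat$-string parametrization of the dual. Combined with Corollary \ref{affine-shuffling}, change-of-charge functors are implemented by HC-type bimodules on the dual that categorify the dual's $\slehat$-action. One reformulates left cells as the equivalence relation generated by the orbit of (derived) twisting functors on simples: since twisting functors preserve primitive ideals of section algebras for appropriate choice of $\chi$ (by the Morita equivalence coming from \cite[6.32]{BLPWquant} and the intertwining of $\LLoc$ with tensor by a quantized line bundle), this description of the left cell equivalence relation on our side matches, via Koszul duality, the right cell equivalence relation on the dual, which by the first part is characterized by equality of $\slehat$-strings, i.e., $\sllhat$-strings on our side.

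The principal obstacle is the converse direction: proving that simples with distinct string parametrizations cannot lie in a common cell. For right cells this amounts to showing that the ideal filtration cut out by HC bimodule actions cannot collapse two Kashiwara strings into one; the cleanest input would be an explicit count (via Theorem \ref{th:affine-cells} and the crystal structure of Theorem \ref{crystal-match}) combined with a rigidity result for perfect bases \cite{BeKa}, showing that the number of right cells in a given $2$-sided cell equals the number of $\slehat$-strings of the appropriate weight. For left cells the difficulty is compounded by the fact that left cells are defined via annihilators in $A_\chi$, a structure intrinsic to the algebra rather than to the abelian category, so one must first produce a purely categorical reformulation of ``same annihilator'' that is manifestly tracked by twisting functor orbits; a natural candidate is the relation of being a composition factor of a common object in the image of $B\otimes_{A_{\chi'}}\secs(-)$ for some choice of $\chi'$ at which localization holds and HC bimodule $B$, and then invoking the identification of Harish-Chandra bimodules in the Koszul dual with change-of-charge functors on our side.
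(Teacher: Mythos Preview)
This statement is labeled a \emph{conjecture} in the paper, and the paper provides no proof; it states the conjecture and immediately moves on. So there is no ``paper's own proof'' to compare against.

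Your proposal is a strategy, not a proof, and you correctly identify the main obstacles yourself. Let me be explicit about why they are genuine gaps rather than technicalities.

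For right cells, the forward direction (same $\slehat$-string $\Rightarrow$ same right cell) is fine: the categorical $\slehat$-action is by Harish-Chandra bimodules, so applying $\EuScript{E}_i,\EuScript{F}_i$ keeps you in the same right cell. The converse is where the argument breaks. Your sentence ``the class of HC bimodules preserving $\cO$ is generated by the categorified $\glehat$-action up to twists by the centre'' is the entire content of the conjecture in this direction, and nothing in the paper establishes it. Harish-Chandra bimodules over quantizations of symplectic resolutions are not known in general to be generated by any particular categorical Lie algebra action; this is exactly why right cells are hard. A counting argument via Theorem~\ref{th:affine-cells} would give you the number of simples in each two-sided cell, but not the finer decomposition into right cells, and perfect-basis rigidity in the sense of \cite{BeKa} constrains the crystal, not the cell structure.

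For left cells, the difficulty is more fundamental. Left cells are defined by equality of annihilators in $A_\chi$, which is data about the ring, not the module category. Koszul duality is a derived equivalence of module categories and does not a priori track two-sided ideal structure. Your attempt to reformulate left cells as ``orbits under twisting functors'' does not work as stated: twisting functors relate categories $\cO$ at \emph{different} parameters $\chi$, inducing bijections between primitive ideals of different algebras, whereas the left-cell relation is equality of annihilators at a \emph{fixed} $\chi$. So even granting twisting $\leftrightarrow$ shuffling under Koszul duality (Theorem~\ref{twist-shuff-dual}), you have not shown that either side of that correspondence computes the cell relation you want. A categorical characterization of ``same annihilator'' that survives Koszul duality would itself be a substantial new result.

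In short: one direction of the right-cell claim is easy and your argument for it is correct; everything else remains open, which is why the paper calls this a conjecture.
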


One important special case of this correspondence is where
$\mu=\nu$; in this case, the simples whose support lie in no smaller
stratum are exactly those whose support contains a free
closed orbit.  Proposition \ref{prop:strata-bijection} shows these
match under Koszul duality with simples supported over the point
strata; these are the same as simples in the category generated by the
highest weight object of weight $\mu^!$.  That is, they match with the
projectives for Hecke loadings, those where the strands are separated by at least $|\kappa|$ units.  
Thus, if we let $P^0$ be the sum of these projectives as before, and
abuse notation to let it denote the corresponding projective in
$\cOg^v$, we have that:
\begin{corollary}\label{cor:same-P}
  The projective covers of simples in $\cOg$ with support that
  contains a free and closed orbit correspond to the modules over
  $T^{\vartheta_{U^!}}_{\la^!}$ for Hecke loadings.  In particular,
  $\red_!^v(P^0)$ is independent of $v$, and has
  $\End(\red_!^v(P^0))\cong T^{\mu^!}_{\la^!}$.
\end{corollary}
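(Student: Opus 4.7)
The plan is to derive the corollary from Proposition~\ref{prop:strata-bijection} applied in the extreme case $\nu=\mu$, $n=0$, together with the lemma on simples with free closed orbit in their support (stated just above the corollary).

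First, note that $\fN^\la_{\mu;0}$ is the open dense stratum of $\fN^\la_\mu$: its defining data forces the semisimple ``summand'' part to vanish, so it consists of a simple preprojective module of dimension vector $\la-\mu$ plus trivial summands. A simple in $\cOg$ thus has support in $\fN^\la_{\mu;0}$ but in no smaller stratum precisely when its support contains a free closed $G_\mu$-orbit. On the dual side, $\nu^!+n\delta=\mu^!$, and $\fN^{\mu^!}_{\mu^!;0}$ collapses to the origin $o$ (the dimension vector of the simple part must vanish). Hence Proposition~\ref{prop:strata-bijection} pairs good simples in $\cO^{\la;\vartheta_U}_\mu$ with simples of $\cO^{\mu^!;-\vartheta_{U^!}}_{\la^!}$ supported at $o$. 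By Theorem~\ref{th:affine-cells} taken at $\nu=\gamma=\mu^!$, together with the algebraic reformulation given in the proposition preceding Corollary~\ref{affine-shuffling}, these latter simples form exactly the subcategory of $T^{-\vartheta_{U^!}}_{\la^!}\umod$ generated by the projective modules attached to Hecke loadings. Koszul duality then exchanges simples of this subcategory with projective covers of their Koszul partners, giving the first assertion.

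For the independence of $v$, I would apply the lemma preceding the corollary to each good simple $L$ in $\preO$: one has $\red^v(L)\neq 0$ for every $v$, the projective cover $P_{L,v}$ of $\red^v(L)$ in $\cOg^v$ satisfies $\red_!^v(P_{L,v})\cong Q_L$ for a (unique up to isomorphism) indecomposable projective $Q_L$ in $\preO$ covering $L$, and $\red^v(Q_L)\cong P_{L,v}$. Since $P^0=\bigoplus_L P_{L,v}$ as $L$ ranges over good simples, we obtain $\red_!^v(P^0)\cong \bigoplus_L Q_L$, which is visibly independent of $v$.

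Finally, to identify the endomorphism ring I would transport along $\preO\cong\tilde T^{\vartheta_U}_\mu\umod$ from Corollary~\ref{cor:affine-equivalence}: $\bigoplus_L Q_L$ corresponds to a projective $\tilde T^{\vartheta_U}_\mu e$ where $e$ is the sum of idempotents for loadings attached to good simples. By the first paragraph, tracked through the Koszul duality and the abelian equivalence of Corollary~\ref{Koszul-dual}, this idempotent $e$ is matched with the sum of Hecke-loading idempotents on the dual side, and the corresponding idempotent subalgebra is by construction the cyclotomic KLR-type quotient that the paper denotes $T^{\mu^!}_{\la^!}$. The main technical obstacle lies here: pinning down the idempotent matching requires a careful accounting of how the bijection of Proposition~\ref{prop:strata-bijection}, the Koszul duality, and the two abelian equivalences of $\cO$ with algebra-module categories interact, so that the distinguished idempotents line up on the nose; once this is in place the ring identification is automatic.
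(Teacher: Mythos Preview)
Your first two paragraphs are correct and are exactly the paper's argument: the paragraph immediately preceding the corollary is the paper's entire proof of the first assertion, and the independence of $\red_!^v(P^0)$ follows from the lemma you cite, just as you say.

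The third paragraph contains a genuine wrong turn. The equivalence of Corollary~\ref{cor:affine-equivalence} is $\dpreO \simeq \tilde T^{\vartheta_U}_\mu\dgmod$ via $M\mapsto \Ext^\bullet(\bigoplus L_\Bi,M)$; under it \emph{simples} in $\preO$ go to indecomposable projectives over $\tilde T^{\vartheta_U}_\mu$, not projectives to projectives. So the claim that $\bigoplus_L Q_L$ corresponds to $\tilde T^{\vartheta_U}_\mu e$ for some idempotent $e$ is false, and your ``technical obstacle'' is an artifact of this detour rather than a real difficulty.

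The clean route is the one already set up by your first paragraph. The abelian equivalence $\cOg\cong T^{-\vartheta_{U^!}}_{\la^!}\umod$ of Corollary~\ref{Koszul-dual} preserves projectives, and by the first part it carries $P^0$ to the sum of the Hecke-loading projectives. Since the lemma gives $\End(\red_!^v(P^0))\cong \End(P^0)$, one is reduced to identifying the idempotent truncation of $T^{-\vartheta_{U^!}}_{\la^!}$ at the Hecke idempotents with $T^{\mu^!}_{\la^!}$; this is a statement purely about weighted KLR algebras (Hecke loadings, having all gaps $>|\kappa|$, render the weighting irrelevant and collapse the algebra to the tensor product algebra for the single highest weight $\mu^!$), and is the input from \cite{WebRou} that the paper is invoking. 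No further bookkeeping through $\preO$ is needed.
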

From this, we can deduce that:
\begin{theorem}\label{twist-shuff-dual}
  The equivalence $\cOg\cong T^{-\vartheta_U^!}_{\la^!}\umod$
  intertwines twisting functors $\mathscr{T}_i$ with change-of-charge functors $\mathscr{B}^{\vartheta,s_i\vartheta}$.
\end{theorem}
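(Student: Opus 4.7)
The strategy is to pin down both functors by their action on a common projective object, and then extend to the whole category using the categorical symmetries that have already been matched.

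The starting point is Corollary~\ref{cor:same-P}, which produces a common lift $Q \in \preO$ such that $\red_!^v(Q) = P^0$ is independent of the stability chamber $v$, with endomorphism ring $T^{\vartheta_{U^!}}_{\la^!}$. Under the Koszul equivalence $\cOg \cong T^{-\vartheta_{U^!}}_{\la^!}\umod$, this projective matches the sum of projective covers of simples for Hecke loadings on the algebraic side (those whose supports contain a free closed orbit, by Lemma~\ref{left-right-induction} and Proposition~\ref{prop:strata-bijection}). The first step in the plan is to check that both $\mathscr{T}_i$ and $\mathscr{B}^{\vartheta,s_i\vartheta}$ preserve this distinguished projective: on the geometric side, Proposition~\ref{adjoint-twist} identifies $\mathscr{T}_i = \red^{s_iv\eta}\circ \red^{v\eta}_!$, and applied to $P^0 = \red^{v\eta}(Q)$ gives $\red^{s_iv\eta}(Q) = P^0$ viewed in the adjacent chamber; on the algebraic side, the change-of-charge functors act by change of weighting parameter and the Hecke condition (that strands are separated by more than $|\kappa|$) is manifestly preserved.

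The next step is to compute the induced automorphism of the endomorphism algebra $T^{\vartheta_{U^!}}_{\la^!}$. On the geometric side this is the wall-crossing automorphism described in Section~\ref{sec:twisting-functors}, read off via the Maffei identification of $\cOg^{s_iv}$ on $\fM^\la_\mu$ with $\cOg^v$ on $\fM^\la_{s_i\mu}$. On the algebraic side, the change-of-charge bimodule $\mathscr{B}^{\vartheta,s_i\vartheta}$ has an explicit description from \cite[\S 5.3]{WebRou}, whose restriction to Hecke-loading idempotents yields the corresponding permutation of strands. These two automorphisms are then compared using the rank-level combinatorics from Corollary~\ref{Koszul-dual} (the abacus flip \eqref{eq:flip}): the $\widehat{W}_e$ action on stability chambers on the LHS corresponds exactly to permuting the $\vartheta_{U^!}$ weights on the RHS, because both describe the same motion of columns of the matrix $U$ under the level-$e$ action of $\widehat{W}_\ell$.

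To promote the agreement on $P^0$ to an isomorphism of functors, we note that both $\mathscr{T}_i$ and $\mathscr{B}^{\vartheta,s_i\vartheta}$ are derived auto-equivalences (they satisfy the braid relations and have inverses), and both commute up to natural isomorphism with a common categorical action: by Corollary~\ref{cor:action-match} and Theorem~\ref{twist-shuffle}, twisting on the geometric side commutes with the $\slehat$-categorical action, and this $\slehat$-action matches a categorical action on the algebraic side with which change-of-charge also commutes (compatibility of change-of-charge with the algebraic $\slehat$-action is built into \cite[\S 5.3]{WebRou}). Since $P^0$ together with the categorical action generates the whole category under the combined actions (this is essentially the statement that every simple is obtained via $\tilde{f}_i$-operators from a simple with free closed orbit in its support, which follows from the crystal structure described before Theorem~\ref{crystal-match}), the agreement on $P^0$ propagates to agreement on all objects.

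The main obstacle is the combinatorial verification in the second step: matching the wall-crossing automorphism of $T^{\vartheta_{U^!}}_{\la^!}$ induced by the geometric twisting functor with the permutation-of-weights automorphism induced by the algebraic change-of-charge functor. Both are ultimately governed by the same Uglov weighting formula $\vartheta_i = s_i\ell + ie$ and the abacus-flip bijection, but writing down the isomorphism explicitly requires careful bookkeeping of the identifications $\cOg^{s_iv} \cong \cOg^v$ on the translated variety, the shift in charge parameter induced by the $\widehat{W}_e$-action, and possible signs arising from the Koszul duality as noted in \cite[10.28]{BLPWgco}.
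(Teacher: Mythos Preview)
Your plan correctly identifies Corollary~\ref{cor:same-P} as the pivot, and the paper uses it in exactly the way you describe: both functors fix $P^0$ and induce the identity on its endomorphism ring. But the mechanism you propose for propagating from $P^0$ to the whole category does not work, and the paper uses a different and essential input that you are missing.

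First, there is a circularity: you invoke Theorem~\ref{twist-shuffle}, but in the paper that theorem is \emph{deduced from} the present one (see the proof of Theorem~\ref{twist-shuffle} immediately following). Second, your claim that ``twisting on the geometric side commutes with the $\slehat$-categorical action'' is not correct: by \cite[3.4]{BLet} (quoted just before this theorem) $\mathscr{T}_{s_i}$ \emph{is} the Rickard complex $\Theta_i$ for that action, and Rickard complexes intertwine $\mathscr{E}_j$ with $\mathscr{F}_j$ rather than commuting with them. So there is no common categorical action available with which both functors commute, and the crystal argument (which concerns simples, not how functors act) does not upgrade to an isomorphism of functors.

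The paper's actual argument supplies the missing rigidity in a different way. The key fact is that \emph{both} $\mathscr{T}_i$ and $\mathscr{B}^{\vartheta,s_i\vartheta}$ send the standard exceptional collection to its mutation for the same new order, the one induced by the generator of $\widehat{B}_\ell$; this is \cite[5.21]{WebRou} for the Rickard side and \cite[5.14]{WebRou} for the change-of-charge side. Hence the composite $\mathscr{B}^{\vartheta,s_i\vartheta}\circ \mathscr{T}_i^{-1}$ preserves the standard exceptional collection and is therefore $t$-exact, sending projectives to projectives. Now one is in the abelian category, and since this exact functor fixes $P^0$ and induces the identity on $\End(P^0)$, faithfulness of the cover $\Hom(P^0,-)$ forces it to be the identity. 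Your step~3 (matching automorphisms of $\End(P^0)$) is thus replaced by the stronger statement that both functors induce the identity there, and your step~4 (propagation via a categorical action) is replaced by the mutation/$t$-exactness argument combined with faithfulness of the cover.
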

\begin{proof}
Given an exceptional collection $\Delta_i$ indexed by an
ordered set $(\Upsilon, >)$, the {\bf mutation} of this exceptional
collection for a new order $>'$ on $\Upsilon$ is the unique
exceptional collection such that $\Delta_i'$ lies in the triangulated category generated by
$\{\Delta_j\}_{j\geq' i}$ and $\Delta_i'\equiv \Delta_i$ modulo the
triangulated category generated by $\{\Delta_j\}_{j >' i}$.
  The first fact that we need is that both the twisting functor
  $\mathscr{T}_i$ and
  change-of-charge functors send the standard exceptional collection
  in the source category
  to a mutation of the standard exceptional collection. In the first case, this
  is proven algebraically for the Rickard complexes in
  \cite[5.21]{WebRou};
in the second, this follows from
\cite[5.14]{WebRou}.  In both cases, the change of order is that
  induced by the generator  of the affine braid group $\widehat{B}_\ell$.

  Thus, if we consider the composition
  $\mathscr{B}^{\vartheta,s_i\vartheta}\circ \mathscr{T}_i^{-1}$, this
  functor is exact and sends projectives to projectives.  On the other hand, the
  twisting 
  functors send $P^0$ to $P^0$ and induces the identity functor on morphisms by
  Corollary \ref{cor:same-P}. The same is manifestly true for
  change-of-charge functors.  By the faithfulness of the cover
  $\Hom(P^0,-)$, the functor
  $\mathscr{B}^{\vartheta,s_i\vartheta}\circ \mathscr{T}_i^{-1}$ must thus
  be isomorphic to the identity and the result follows.
\end{proof}
\begin{proof}[Proof of Theorem \ref{twist-shuffle}]
  We already know that the Koszul dual equivalence $\dOg\cong
  T^{\vartheta_U}_{\mu}\dgmod$ intertwines {\it shuffling} functors 
  with change-of-charge functors by
  Corollary \ref{affine-shuffling}.  Thus, Theorem \ref{twist-shuff-dual} shows the desired Koszul
  duality of twisting and shuffling, completing the proof of Theorem
  \ref{twist-shuffle}.
\end{proof}

\subsection{Symplectic duality}
\label{sec:symplectic-duality}

This theorem is part of a more general picture, laid out by Braden, Licata,
Proudfoot and the author \cite[\S 9]{BLPWgco}, called {\bf symplectic duality}.  The
underlying idea is that there is a duality operation on symplectic
cones which switches certain geometric data.  

We regard Corollary
\ref{Koszul-dual} as evidence that affine quiver varieties come in
dual pairs $\fN^\la_\mu$ and $\fN^{\mu^!}_{\la^!}$, indexed by rank-level dual weight spaces.
The reader could rightly protest that $\la^!$ depends on the weighting
$\vartheta$. However, different choices of weighting $\vartheta$ result in
$\la^!$ which are conjugate under the action of the Weyl group, and the
cone $\fN^{\mu^!}_{\la^!}$ only depends on the Weyl orbit of $\la^!$ by work of
Maffei \cite{MaffWeyl}.  In fact, for purposes of understanding
duality, it is better to fix the weights $\la^!$ and $\mu$ to be
dominant and think of the varieties $\fM^\la_{w\mu}$
as $w$ ranges over the Weyl group $W$ as the GIT quotients of
$E^\la_\mu$ at the GIT stability conditions $w^{-1}\cdot \det$.  If we also
consider $-w^{-1}\cdot\det$, this gives us a (redundant) list containing a
representative of every GIT chamber.  

The different weightings of the Crawley-Boevey quiver form a similar
chamber structure; in \cite[2.7]{WebRou}, we define
the notion of {\bf Uglovation}, which sends the elements of each chamber to
a standard representative. The
walls that separate them are of the form
\begin{equation*}
\vartheta_i-\vartheta_j-\ck(r_i-r_j+me)=0\quad\text{ for all }\quad
m\in \Z \text{ and }i,j\in \Z/\ell\Z.
\end{equation*}
These walls are unchanged (just reindexed) if we replace $r_i$ by
$s_i$ for any charge with $r_i\equiv s_i\pmod e$.  

We can identify
identity the Lie algebra $\mathfrak{t}_\ell$ of the torus $T_\ell\cong
(\C^*)^\ell$ of $H$ with
the span $\mathfrak{H}_\ell$ of the fundamental weights
$\omega_i$ in the dual Cartan of $\sllhat$ via the map 
\[u_{\Bs}(\vartheta) = \ck e\omega_0+\sum_{i=1}^{\ell}(\vartheta_i-\ck s_i) (\omega_i-\omega_{i+1})\]
these walls are defined by $(u(\vartheta),\al)=0$ for all positive
roots $\al$ of $\sllhat$.  Note that if $w\cdot \Bs=\Bs'$ for $w\in
\widehat{W}_\ell$, then $w\cdot u_{\Bs}=u_{\Bs}\cdot w=u_{\Bs'}$.
Note that
\begin{equation}\label{vartheta-dominant}
u_{\Bs}(\vartheta_U)=e(\omega_1-\omega_2)+2e(\omega_2-\omega_3)+\cdots +e(\ell-1)(\omega_{\ell-1}-\omega_0)+e\ell\omega_0=e\sum\omega_i.
\end{equation}

Thus, under rank-level duality, the possible spaces of choices for
GIT stability conditions and weightings switch; furthermore, these
bijections preserve the appropriate chamber structures, sending the
chamber containing our
chosen weighting to the dominant Weyl chamber, by equation \eqref{vartheta-dominant}.  The 
reader might object that not all positive roots genuinely contribute GIT
walls; this is compensated for by the fact that the corresponding Uglov
weightings have the same relative core $\fM^+$ and associated category
$\cO$.

\begin{proposition}
The hyperplane $(-,\al)=0$ is a GIT wall for the reduction
  that presents $\fM^\la_\mu$ if and only if one of the
  multipartitions indexing a fixed point in $\fM^\la_\mu$ has a
  removable ribbon of residue $\al$.  
\end{proposition}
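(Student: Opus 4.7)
The plan is to analyze both directions using the combinatorial description of $\bT$-fixed points in $\fM^\la_\mu$ as $\ell$-multipartitions $\boldsymbol{\xi}$ with $v_i$ boxes of residue $i$, together with the fact that GIT walls detect strictly semistable points in $\mu^{-1}(0)$, i.e.\ representations admitting a decomposition into semistable summands of smaller dimension vector.

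First I would recall Nakajima's explicit description of the preprojective representation $M_{\boldsymbol{\xi}}$ attached to each fixed point $p_{\boldsymbol{\xi}}\in \fM^\la_\mu{}^\bT$: the basis is indexed by boxes of the diagrams $\xi_k$, with $x$ travelling along rows, $\bar x$ along columns, and $q$ mapping the corner box $(1,1,k)$ to the basis vector in $W_{r_k}$. A removable ribbon of residue $\alpha=\al_r+\al_{r+1}+\cdots+\al_{r+k-1}$ in some $\xi_k$ is precisely the data of a subrepresentation $N\subset M_{\boldsymbol{\xi}}$ of dimension vector $\alpha$ whose restriction of $q$ vanishes, and such that $M_{\boldsymbol{\xi}}/N$ is again of the form $M_{\boldsymbol{\xi}'}$ for the smaller multipartition $\boldsymbol{\xi}'$. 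Deforming along the corresponding direction in $\mu^{-1}(0)$ produces a 1-parameter family of strictly semistable representations on the hyperplane $(-,\alpha)=0$, which proves that this hyperplane is a GIT wall. (Equivalently, the existence of the sub/quotient pair produces a $\C^*$-subgroup of $G_\mu$ whose associated Hilbert-Mumford numerical invariant changes sign as $\eta$ crosses the hyperplane.)

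For the converse, I would argue that if $(-,\alpha)=0$ is a genuine GIT wall, then there is a closed $G_\mu$-orbit in $\mu^{-1}(0)$ whose representation decomposes as $M=M_1\oplus M_2$, where the dimension vector of $M_1$ pairs trivially with all hyperplane directions. Since the $\bT$-action commutes with $G_\mu$ and preserves the wall, one can apply the $\bT$-limit to move this closed orbit to a $\bT$-fixed orbit, whose decomposition necessarily refines along the $\bT$-weight decomposition to a summand of some $M_{\boldsymbol{\xi}}$. The key structural fact to then invoke is that indecomposable summands of $M_{\boldsymbol{\xi}}$ with $q=0$ on the summand are precisely the subrepresentations supported on rim hooks of the diagrams, since the combinatorics of $x$ acting along rows and $\bar x$ along columns forces any $\bar x,x$-stable subspace on which $q$ vanishes to be a union of boxes whose complement is again a Young diagram, i.e.\ a removable rim hook. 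This identifies the dimension vector of $M_1$ with the residue of a removable ribbon.

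The main obstacle will be the converse direction, specifically the step where one translates a decomposition of a closed orbit into a rim-hook decomposition of a specific $M_{\boldsymbol{\xi}}$; one must rule out decompositions whose summands are not individually of the form $M_{\boldsymbol{\xi}'}$ and verify that the $\bT$-limit does not destroy the destabilizing structure. To handle this cleanly I would reduce to a neighborhood of the fixed point via the Białynicki-Birula slice and use Nakajima's description of the $\bT$-weights on the tangent space (cited in Proposition~\ref{all-theta} as \cite[2.11]{NY04}): the weights of $\bT$ at $p_{\boldsymbol{\xi}}$ are in bijection with addable/removable boxes of $\boldsymbol{\xi}$, so the one-parameter subgroups of $G_\mu$ that destabilize near $p_{\boldsymbol{\xi}}$ are precisely those pairing nontrivially with a removable-ribbon class, completing the equivalence.
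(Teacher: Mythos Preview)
Your approach contains the right ingredients but misses the paper's central simplification: using the Weyl group action to reduce to the case where $\alpha = \alpha_i$ is a simple root. Since the GIT walls for $\mu$ are carried to GIT walls for $w\cdot\mu$ under $w\in\widehat{W}_e$, and the fixed-point combinatorics transforms accordingly, it is enough to treat simple roots. After this reduction a ``ribbon of residue $\alpha$'' becomes a single removable box of residue $i$, and nearly all of the combinatorial difficulty you are setting up disappears.

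With the reduction in place, the ``if'' direction is immediate: a fixed point $p_{\boldsymbol{\xi}}$ with a removable $i$-box already sits in $\fM^\la_\mu$ as a strictly semistable point on the wall, no deformation needed. For the ``only if'' direction, the paper does not analyze decompositions of closed orbits or invoke Bia\l ynicki--Birula slices. It simply observes that the locus of points of $\fM^\la_\mu$ that become strictly semistable on the wall is closed, nonempty, and $\bS\times\bT$-invariant; hence it meets the core and, being $\bT$-stable there, contains a $\bT$-fixed point. That fixed point is some $p_{\boldsymbol{\xi}}$, and strict semistability at $\alpha_i=0$ forces a removable $i$-box.

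Your direct attack on general ribbons runs into genuine trouble in the converse direction. You conflate subrepresentations and summands: a removable rim hook of $M_{\boldsymbol{\xi}}$ is a subrepresentation, not a direct summand (indeed $M_{\boldsymbol{\xi}}$ is typically indecomposable). More seriously, you would need to show that any destabilizing subrepresentation of dimension vector $\alpha$ inside some $M_{\boldsymbol{\xi}}$ forces a removable ribbon of that residue, and your argument sketch does not establish this---it is exactly the kind of delicate combinatorics that the Weyl-group reduction is designed to bypass. The tangent-space weight analysis you propose at the end could be made to work, but it is substantially heavier than what is required.
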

\begin{proof}
Using the Weyl group action, we can rephrase the characterization of
the GIT walls.  Since the GIT walls for $\mu$ are sent to GIT walls
for $w\cdot \mu$ by the action of $w\in \widehat{W}_e$, we need only
prove this for $\al=\al_i$ a simple root.

 Our claimed characterization is equivalent to the
statement that for any $\mu$, the locus $(-,\al_i)$ is a GIT wall if
and only if one of the partitions indexing a fixed point has a
removable box of residue $i$.  The ``if'' direction is clear; the
$\bT$-fixed point corresponding to any multipartition with such a
removable box provides an example of a strictly semi-stable
representation on this wall.  

Now consider the ``only if.''  The points of $\fM^\la_\mu$ that become
semi-stable on the wall are a closed $\bS\times \bT$-invariant subset which is by
assumption non-empty.  Thus, it must contain at least one point of the
core, which we can assume is $\bT$ invariant.  The corresponding
multi-partition has a removable box of the right residue.
\end{proof}
\begin{proposition}
 The walls in the Hamiltonian torus $\ft_\ell$ are
  given by those hyperplanes which correspond to GIT walls of
  $\fM^{\mu^!}_{\la^!}$ under duality.
\end{proposition}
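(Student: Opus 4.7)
The plan is to transfer the question across Koszul duality and apply the preceding proposition. First, I would observe that both the space of $\vartheta$-weightings for $\fM^\la_\mu$ (modulo Uglovation) and the space of GIT stability conditions for $\fM^{\mu^!}_{\la^!}$ carry chamber structures parameterized by alcoves for $\widehat{W}_\ell$ acting on $\mathfrak{H}_\ell^*$; by equation \eqref{vartheta-dominant} together with the identification $u_{\Bs}$, these two chamber structures are matched by rank-level duality, with the standard Uglov weighting $\vartheta_U$ mapped to the Nakajima stability condition on the dual side.

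Next, I would show that a hyperplane $(u(\vartheta),\al)=0$ is a genuine wall in $\ft_\ell$ if and only if the image under duality is a GIT wall of $\fM^{\mu^!}_{\la^!}$. By Corollary \ref{Koszul-dual}, the abelian category $\cOg^{\la;\vartheta_U}_\mu$ is Koszul dual to $\cOg^{\mu^!;-\vartheta_{U^!}}_{\la^!}$; as $\vartheta$ varies, the former category is constant within a single Uglov chamber by Corollary \ref{cor:affine-equivalence} and the remark preceding this proposition that Uglov-equivalent weightings have the same relative core and category $\cO$, while the latter is constant within a single GIT chamber on the dual side. Therefore crossing a wall in $\ft_\ell$ corresponds precisely to crossing a GIT wall on the dual variety.

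Finally, applying the preceding proposition to $\fM^{\mu^!}_{\la^!}$ characterizes these dual GIT walls: they are the hyperplanes $(-,\al)=0$ such that some multipartition indexing a $\bT$-fixed point has a removable ribbon of residue $\al$. Transferring back via duality, which exchanges positive roots of $\sllhat$ (walls in $\ft_\ell$) with positive roots of the $\slehat$ acting on the dual variety (GIT walls on $\fM^{\mu^!}_{\la^!}$), the walls in $\ft_\ell$ are precisely those corresponding to GIT walls of $\fM^{\mu^!}_{\la^!}$.

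The main obstacle will be to verify rigorously that the Uglovation bijection of chambers extends to a bijection of walls, i.e., that no wall on one side ``collapses'' into the interior of a chamber on the other. This should follow from the explicit description of $\bT$-fixed points on both sides as $\ell$- and $e$-multipartitions (related by the flipping bijection \eqref{eq:flip}), together with the Uglov chamber structure of \cite[2.7]{WebRou}, since each removable ribbon on either side corresponds via the transpose to a genuine change in the combinatorial structure on the other, and hence cannot be a spurious wall.
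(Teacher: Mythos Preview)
Your approach takes a categorical detour through Koszul duality, but this does not close the gap you yourself flag at the end, and in fact that gap is essentially the whole content of the proposition.

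The walls in $\ft_\ell$ are defined \emph{geometrically}: a hyperplane is a wall when the $\bT$-fixed locus on $\fM^\la_\mu$ becomes positive dimensional (equivalently, when the $\bT$-action on the tangent space at some fixed point acquires an invariant line). Your argument instead detects where the \emph{category} $\cOg$ changes. These are not a priori the same thing, and you never bridge them. Even granting Koszul duality and constancy of $\cOg$ on Uglov chambers, you would still need to know that (i) the abelian category $\cOg$ genuinely changes across every geometric wall in $\ft_\ell$, and symmetrically (ii) the dual $\cOg$ changes across every genuine GIT wall of $\fM^{\mu^!}_{\la^!}$. Neither is established: two adjacent chambers could in principle yield equivalent abelian categories, so ``Koszul dual categories are constant on chambers'' does not force the chamber decompositions to agree wall-for-wall.

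The paper's proof is entirely different and avoids categories. It takes the combinatorial description of GIT walls from the preceding proposition (removable ribbons in the multipartitions indexing fixed points), translates this into abacus language (a bead that can slide a prescribed number of slots), applies the rank-level flip \eqref{eq:flip} to obtain the dual abacus condition, and then invokes the explicit formula of \cite[5.10]{TS} for the $\bT$-weights on the tangent space at a fixed point of $\fM^\la_\mu$ to see that this dual condition is exactly the vanishing of one of those tangent weights. That vanishing is equivalent to the fixed locus being positive dimensional, i.e.\ to having a wall in $\ft_\ell$. The converse uses the same tangent-weight formula. Your final paragraph gestures at exactly this combinatorial matching, but once you carry it out you have reproduced the paper's argument and the Koszul-duality step becomes superfluous.
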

\begin{proof}
Consider the GIT wall attached to a root $\al_{k'}+\al_{k'+1}+\cdots
\al_{k-1}+\al_{k}+m\delta$.  In terms of abaci, the appearance of a
removable ribbon of the right residue says that there must be some abacus
of the right residue such that a bead of residue $k$ can be moved down
a runner $k-k'+1+me$ slots into an empty spot.  The rank-level dual
condition is that a bead of some fixed residue $r$ in the $k$th runner
can to moved to a slot of residue $r$ in the $k'$th runner in the row
$\ell m$ slots down if $k>k'$, or $\ell(m+1)$ slots down if $k\leq
k'$.  This bead and slot it moves into are at the end of the leg and
arm of some box in the $k$th partition,  and the line in the tangent
space corresponding to this box in the formula of \cite[5.10]{TS} has
trivial $\bT$ action.  Thus the $\bT$-fixed locus is positive
dimensional.  

The same formula shows that if there is no such bead,
the $\bT$ action on the tangent space at each fixed point has no
invariants.  Thus, all $\bT$-fixed points remain isolated, and there is no wall.
\end{proof}

Since the kernel of the $\ft_\ell$ action on $\fM^\la_\mu$ is the
intersection of all the walls with positive dimensional fixed locus,
and the kernel of Kirwan map is the intersection of all the GIT walls,
this further implies that:
\begin{corollary}
  The map $u_{\Bs}$ induces an injective map $\ft^\la_\mu\to
  H^2(\fM^{\mu^!}_{\la^!})$ from the quotient $\ft^\la_\mu$
  of
  $\ft_\ell$ that acts faithfully on $\fM^\la_\mu$.  The image of this
  map coincides with that of the
  Kirwan map.
\end{corollary}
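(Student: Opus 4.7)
The plan is to derive this statement by combining the two immediately preceding propositions with two standard facts: the kernel of a torus action on a conic symplectic resolution is cut out by those hyperplanes whose fixed locus has positive dimension, and the kernel of the Kirwan map is cut out by the GIT walls of the quotient presentation.

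First I would verify that the kernel of the $\ft_\ell$-action on $\fM^\la_\mu$ is exactly the intersection $\bigcap H_i$ over all hyperplanes $H_i \subset \ft_\ell$ with positive-dimensional $\bT$-fixed locus.  The inclusion $\supseteq$ is immediate, since a subtorus acting trivially automatically has the whole fixed locus as its fixed locus.  For $\subseteq$, one uses the explicit Nakajima--Yoshioka tangent-weight formula \cite[2.11]{NY04} invoked in the preceding proposition: the tangent weights of $\ft_\ell$ at each $\bT$-fixed point are the linear functionals whose zero loci are precisely the walls in question, so a $\xi$ lying in every such hyperplane acts trivially on every tangent space at a $\bT$-fixed point, and hence by the Bia{\l}ynicki-Birula decomposition acts trivially on all of $\fM^\la_\mu$.

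On the dual side, the Kirwan map $\mathsf{K}_{\la^!}\colon (\fg_{\la^!}^{G_{\la^!}})^*\to H^2(\fM^{\mu^!}_{\la^!})$ has kernel equal to the intersection of all GIT walls, and its image is by definition the Kirwan subspace of $H^2$.  Under the identification of $\ft_\ell$ with $\mathfrak{H}_\ell \subseteq (\fg_{\la^!}^{G_{\la^!}})^*$ implicit in the definition of $u_\Bs$ and the Uglov normalization of \cite[2.6]{WebRou}, the two preceding propositions match the hyperplanes of positive-dimensional fixed locus on the $\fM^\la_\mu$ side with the GIT walls on the $\fM^{\mu^!}_{\la^!}$ side bijectively, hyperplane by hyperplane.

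Combining these observations, $u_\Bs$ carries the kernel of the $\ft_\ell$-action onto the kernel of $\mathsf{K}_{\la^!}$, and so descends to an injection $\ft^\la_\mu \hookrightarrow \operatorname{image}(\mathsf{K}_{\la^!})$; since $u_\Bs$ factors through $\mathsf{K}_{\la^!}$ by its very definition, the induced image is exactly $\operatorname{image}(\mathsf{K}_{\la^!})$.  The main obstacle is the bookkeeping in the first step---in particular, checking that the tangent-weight formula really cuts out exactly the walls listed in the preceding proposition rather than some coarser or finer family---but this is already implicit in that proposition's proof.
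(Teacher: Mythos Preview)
Your proposal is correct and follows essentially the same approach as the paper, which dispatches the corollary in a single sentence immediately preceding it: ``Since the kernel of the $\ft_\ell$ action on $\fM^\la_\mu$ is the intersection of all the walls with positive dimensional fixed locus, and the kernel of Kirwan map is the intersection of all the GIT walls, this further implies that\ldots'' You have fleshed out exactly these two facts and invoked the same two propositions to match walls, so the arguments coincide; your version simply supplies more justification (the Bia\l ynicki-Birula argument for the first fact) than the paper bothers to write down.

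One small point worth tightening: your final clause ``since $u_\Bs$ factors through $\mathsf{K}_{\la^!}$ by its very definition, the induced image is exactly $\operatorname{image}(\mathsf{K}_{\la^!})$'' is slightly opaque. Factoring through $\mathsf{K}_{\la^!}$ only gives containment of images; equality uses that $u_\Bs\colon \ft_\ell\to\mathfrak{H}_\ell$ is an \emph{isomorphism} onto the GIT parameter space (the paper introduces $u_\Bs$ explicitly as an identification), so that after quotienting by the matched kernels you obtain an isomorphism onto $\mathfrak{H}_\ell/\ker(\mathsf{K}_{\la^!})=\operatorname{image}(\mathsf{K}_{\la^!})$. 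Stating it this way makes the surjectivity onto the image transparent.
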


This allows us to restate Corollary \ref{Koszul-dual} and Theorem
\ref{twist-shuffle} in a more symmetric way.  Let $\mathsf{O}^\xi_\eta$ be the
category $\cOg$ attached to $\xi\in\mathfrak{t}_w$ and the GIT 
stability condition $\eta$ (which can switch places to be the same
data for the rank-level dual). 

\begin{theorem}
  There is a Koszul duality equivalence $D^b(\mathsf{O}^\xi_\eta)\cong
  D^b(\mathsf{O}^{-\eta}_\xi)$, which intertwines the shuffling functor
  $\Psi^{\xi,\xi'}\colon D^b(\mathsf{O}^\xi_\eta)\to D^b(\mathsf{O}^{\xi'}_\eta)$
  with the twisting functor $\Phi^{\xi,\xi'}\colon D^b(\mathsf{O}_\xi^{-\eta})\to D^b(\mathsf{O}_{\xi'}^{-\eta})$.
\end{theorem}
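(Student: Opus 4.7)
The plan is to verify that this theorem is essentially a rephrasing of Corollary \ref{Koszul-dual} together with Theorem \ref{twist-shuffle}, once we translate between the two notational conventions. The first step is to set up the dictionary. By Proposition \ref{all-theta}, every generic cocharacter $\bT\to H/Z$ (and thus every $\xi\in\ft^\la_\mu$ up to Uglovation) has a category $\cO$ coinciding with the one associated to some standard Uglov weighting $\vartheta_U$. Similarly, every GIT chamber is reached by applying some $w\in \widehat{W}_e$ to the dominant chamber, so $\eta$ corresponds (after the Maffei isomorphism used in Section \ref{sec:twisting-functors}) to moving between categories $\cOg^\la_{w\mu}$ for varying $w$. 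Thus we can identify $\mathsf{O}^\xi_\eta$ with some $\cO^{\la;\vartheta_U}_\mu$ where the pair $(\la,\mu)$ and the Weyl orbit of $\vartheta_U$ are recovered from $(\xi,\eta)$; the assertion that the two data switch roles under rank-level duality is precisely the content of the Corollary preceding the theorem.

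Given the dictionary, I would apply Corollary \ref{Koszul-dual} to obtain the Koszul duality $D^b(\mathsf{O}^\xi_\eta)\cong D^b(\mathsf{O}^{-\eta}_\xi)$. (The sign on $\eta$ comes from the requirement in Corollary \ref{Koszul-dual} that the two $\bT$-actions be negated, matching the sign conventions of \cite{WebRou}.) For the intertwining of functors: on the side $D^b(\mathsf{O}^\xi_\eta)$, varying $\xi$ with $\eta$ fixed is exactly the operation that defines shuffling in Section \ref{sec:symplectic-quotients}, because different $\xi$'s give the same symplectic variety but change the $\bT$-action defining $\fM^+$. On the side $D^b(\mathsf{O}^{-\eta}_\xi)$, the same change of $\xi$ now appears as a change of the GIT stability condition, which is exactly the setup for twisting functors as treated in Section \ref{sec:twisting-functors}.

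The intertwining is then read off from the two diagrammatic identifications already established: Corollary \ref{affine-shuffling} identifies $\Psi^{\xi,\xi'}$ on $D^b(\mathsf{O}^\xi_\eta)$ with the algebraic change-of-charge functor $\mathscr{B}^{\vartheta,\vartheta'}\otimes^L-$ on $T^{\vartheta_U}_\mu$-dg-modules, while Theorem \ref{twist-shuff-dual} identifies the same change-of-charge functor (now on the Koszul dual side) with the twisting functor $\mathscr{T}_i$, which is precisely $\Phi^{\xi,\xi'}$ for $\xi'=s_i\xi$ after transporting through Maffei's isomorphism. Composing these two identifications and extending from simple reflections to the whole affine braid group via the strong-action property yields the full intertwining for arbitrary pairs $(\xi,\xi')$.

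The main obstacle is bookkeeping rather than substance: one must check that the two sign conventions (the $-\eta$ in one factor, the $-\vartheta_{U^!}$ in the Koszul duality statement, and the fact that shuffling is defined by the pair $(\vartheta,\vartheta')$ while twisting is naturally indexed by a simple reflection together with a basepoint) all fit together coherently, and that the associativity of the braid action on both sides is compatible with the equivalence so that the intertwining for a product of simple reflections follows from the case of a single one. Once these compatibilities are verified, the theorem is immediate from the two cited results.
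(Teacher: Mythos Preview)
Your proposal is correct and matches the paper's own treatment: the theorem is stated explicitly as a restatement of Corollary~\ref{Koszul-dual} and Theorem~\ref{twist-shuffle}, with no separate proof given. Your reconstruction of the logical dependencies---Proposition~\ref{all-theta} for the dictionary, Corollary~\ref{Koszul-dual} for the equivalence, and the combination of Corollary~\ref{affine-shuffling} with Theorem~\ref{twist-shuff-dual} for the intertwining---is exactly how the paper assembles the result.
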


For ease of reference, we collect together the pieces of data we
require for a symplectic duality.  Here we employ the notation of \cite{BLPWgco}.

\begin{itemize}
\item The set of simples in $\cO^{\la;\theta_U}_\mu$ are indexed by
  $\ell$-multipartitions whose total content is fixed by $\mu$; these
  can be interpreted as $\ell$-strand abaci.  As
  proven in \cite[6.4]{WebRou}, the
  Koszul duality bijection is given by  cutting the abacus into
  $\ell\times e$ rectangles and flipping, as shown in \eqref{eq:flip}.
\item The poset $\mathscr{S}^{\operatorname{sp}}_{\fM^\la_\mu}$ of special strata is
  in bijection with weights $\nu$ and integers $n$ such that $\la\geq
  \nu\geq \nu-n\delta\geq \mu$, and the desired bijection to
  $\mathscr{S}^{\operatorname{sp}}_{\fM^{\mu^!}_{\la^!}}$ sends
  $(\nu,n)\mapsto (\nu^!+n\delta, n)$.
\item We have described maps $u_{\Bt}\colon \mathfrak{t}^{\mu^!}_{\la^!}\to
H^2(\fM^\la_\mu;\C)$ and $-u_{\Bs}\colon \mathfrak{t}_{\mu}^{\la}\to
H^2(\fM_{\la^!}^{\mu^!};\C)$.  These are isomorphisms if Kirwan
surjectivity holds for affine type A quiver varieties, which we will
assume from now on.

\item The Namikawa Weyl group $W$ is the stabilizer of $\mu$ under the
  action of $\widehat{W}_e$, and the Weyl group $\mathbb{W}$ of the
  group of Hamiltonian automorphisms commuting with $\bS$ is the
  stabilizer of $\mathbf{s}$ under $\widehat{W}_\ell$.
\end{itemize}
\begin{theorem}\label{symplectic-duality}
These bijections together with Koszul duality of Corollary
  \ref{Koszul-dual} define a {\bf symplectic duality} in the sense of
  \cite[\S 10]{BLPWgco} between $\fM^\la_\mu$ and
  $\fM^{\mu^!}_{\la^!}$.
\end{theorem}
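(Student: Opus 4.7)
The plan is to verify directly each of the defining properties of symplectic duality listed in \cite[\S 10]{BLPWgco}, drawing on the components assembled in the bullet points immediately preceding the theorem. The Koszul duality $D^b(\cO^{\la;\vartheta_U}_\mu)\cong D^b(\cO^{\mu^!;-\vartheta_{U^!}}_{\la^!})$ is already established by Corollary \ref{Koszul-dual}, so the main work is to check that the ancillary data (simples bijection, strata bijection, Hamiltonian torus identifications, Weyl group identifications) fit together to satisfy all the axioms.

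First I would check the compatibility of the simples bijection with Koszul duality itself. The abacus-flipping bijection of \eqref{eq:flip} is exactly the combinatorial description of the Koszul duality bijection on simples established in \cite[6.4]{WebRou}, so this holds by construction. Next I would verify the strata compatibility: Proposition \ref{prop:strata-bijection} gives the order-reversing bijection $\fN^\la_{\nu;n}\mapsto \fN^{\mu^!}_{\nu^!+n\delta;n}$ and shows that Koszul duality sends simples whose support is minimally contained in $\fN^\la_{\nu;n}$ to those minimally contained in the rank-level dual stratum. This confirms the axiom demanding that Koszul duality exchange the 2-sided cell filtrations in an order-reversing fashion.

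Next I would address the torus and cohomology axioms. The maps $u_{\Bs},u_{\Bt}$ constructed via the identification $\mathfrak{t}_\ell \cong \mathfrak{H}_\ell$ interchange the Hamiltonian torus of one side with $H^2$ of the dual, and the two preceding propositions on GIT walls and Hamiltonian walls show that these isomorphisms identify the GIT chamber structure on one side with the Hamiltonian wall structure on the other. Combined with equation \eqref{vartheta-dominant}, which pins down the image of the chosen weighting inside the dominant chamber, this gives exactly the compatible identifications required by \cite[\S 10]{BLPWgco}. For the Weyl groups, one observes that the Namikawa Weyl group of $\fN^\la_\mu$ is computed (e.g.\ by the method of Maffei) as the stabilizer of $\mu$ in $\widehat{W}_e$, since this stabilizer parametrizes the Weyl group of the diagram automorphisms of $\fN^\la_\mu$; dually the Hamiltonian Weyl group is read off as the $\widehat{W}_\ell$-stabilizer of $\Bs$. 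Under rank-level duality, these two groups get exchanged as required.

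The final and most substantive axiom is the exchange of twisting with shuffling functors, which is precisely Theorem \ref{twist-shuffle} (and its symmetric packaging at the end of Section \ref{sec:dual-twist-shuffl}): the Koszul duality equivalence intertwines $\Phi^{\xi,\xi'}$ with $\Psi^{\xi,\xi'}$, which is the last ingredient demanded by \cite[\S 10]{BLPWgco}. The main obstacle in carrying this out carefully is bookkeeping: one must verify that the various bijections (simples, strata, GIT chambers, Hamiltonian walls) intertwine correctly under the action of the product $\widehat{W}_e\times \widehat{W}_\ell$ after passing through the identification of $\mathfrak{t}_\ell$ with $\mathfrak{H}_\ell$, so that the twisting/shuffling exchange is compatible with the chamber-by-chamber translation of parameters. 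Once this verification is complete, all the axioms of symplectic duality are in place and the theorem follows.
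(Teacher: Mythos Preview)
Your proposal is correct and follows essentially the same approach as the paper: both proceed by checking the defining axioms of symplectic duality one by one, using Proposition~\ref{prop:strata-bijection} for the strata/simples compatibility, equation~\eqref{vartheta-dominant} for the chamber/ample-cone matching, and Theorem~\ref{twist-shuff-dual} (which is what underlies Theorem~\ref{twist-shuffle}) for the twisting/shuffling exchange. Your discussion of the Weyl-group axiom and the $\widehat{W}_e\times\widehat{W}_\ell$ bookkeeping is slightly more expansive than the paper's terse treatment, but the substance is the same.
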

\begin{proof}
We must check that:
\begin{itemize}
\item {\it The bijections between simples and special leaves are compatible
  with the map sending a simple to the associated variety of its
  annihilator:}  This follows from  Proposition
  \ref{prop:strata-bijection}.
\item {\it The isomorphism $H^2(\fM^\la_\mu;\C)\cong
  \mathfrak{t}^{\mu^!}_{\la^!}$ is compatible with the action of the
  stabilizer of $\mu$ in 
  $\widehat{W}_e$ on both sides:}  It  is clear that  the isomorphism $\mathfrak{t}_e\cong
  \mathfrak{H}_e$ intertwines  the action of $\widehat{W}_e$ on
  $\mathfrak{t}_e$ by swapping coordinates and translating and the
  natural action on $\mathfrak{H}_e$, and the quotients inherit this
  action.
\item {\it The ample cone in $H^2(\fM^\la_\mu;\C)$ is sent to the chamber
  of the chamber of weightings with the same category $\cO$ as $\vartheta_{U^!}$, and
  similarly for the ample cone of $ H^2(\fM_{\la^!}^{\mu^!};\C)$ and $-\vartheta_{U}$:}
For $u_{\Bt}$, this follows immediately from
\eqref{vartheta-dominant}.  The same
argument works for the dual, since we have taken the negative map $-u_{\Bs}$.
\item {\it The Koszul duality switches twisting and shuffling
    functors:} This follows from Theorem \ref{twist-shuff-dual}.\qedhere
\end{itemize}
\end{proof}

\subsection{The finite type A case}
\label{sec:finite-type-case}

One important special case of Theorem \ref{symplectic-duality} is the
case where the coordinate of the dimension vector $v_0=0$.  This
results in a quiver variety for the finite dimensional Lie algebra
$\mathfrak{sl}_e$.  

The finite type A case is simpler, first because the chamber of the
action $\pm \theta_U$ only depends on the order of the weights on the
new vertices in the Crawley-Boevey quiver.  Thus, we need only
consider the case of $\Bs$ with $1\leq s_i\leq e-1$ and $\ck>0$; note
that if we switch the sign of $\ck$, this has the same effect as
switching $\Bs$ to $\Bs_\circ=(s_\ell,\dots, s_1)$.   A partition
with charge $s_i$ which has no box of charge $0=e$ is one that fits
inside an $s_i\times (e-s_i)$ box.  In terms of abaci this means that
there is one interesting rectangle, corresponding to charges $1$
through $e-1$, with all positions to the left filled, and all to the
right unfilled.
The associated matrices $U$ will only have
entries $0$ or $1$; this matrix actually fixes the multi-partition,
since it has $1$'s in the positions filled with beads and $0$'s in the
empty positions.  

Thus, pictorially, we need only draw this one rectangle.  For example,
if we have $e=4,\ell=3$, and $\Bs=(3,1,2)$, then
\begin{equation*}\{(2),(1,1),(1,2)\}\,\leftrightarrow\,
 \begin{tikzpicture}[very thick,scale=.5,baseline]
      \node[circle, draw, inner sep=3pt] at (0,0){}; \node[circle,
      draw, inner sep=3pt,fill=black] at (0,1) {}; \node[circle, draw, inner
      sep=3pt] at (1,0) {}; \node[circle, draw, inner sep=3pt] at
      (1,1) {}; \node[circle, draw, inner sep=3pt,fill=black] at (2,0) {};
      \node[circle, draw, inner sep=3pt,fill=black] at (2,1) {}; 
      \node[circle, draw, inner sep=3pt] at (0,-1){}; \node[circle, draw, inner
      sep=3pt,fill=black] at (1,-1) {}; \node[circle, draw, inner
      sep=3pt] at (2,-1) {};
\node[circle, draw, inner sep=3pt] at (3,0) {};
      \node[circle, draw, inner sep=3pt,fill=black] at (3,1) {}; 
\node[circle, draw, inner
      sep=3pt,fill=black] at (3,-1) {};
    \end{tikzpicture}
  \end{equation*}
The corresponding dimension vector for the $A_3$ quiver is $(2,3,2)$.

Fix a matrix $U$ with entries only $0$'s and $1$'s.  As usual, we have
the row and column sums $\Bs$ and $\Bt$, the associated
$\mathfrak{sl}_e$-weights $\lambda,\mu$ and the
$\mathfrak{sl}_\ell$-weights constructed from the
transposed matrix.   Attached to this data, we have 3 different
varieties, each of which is equipped with a preferred chamber in the
space of $\C^*$-action.
\begin{itemize}
\item the type $A_{e-1}$-quiver variety $\fM^\la_\mu$ introduced
  earlier, with the chamber determined by the order on $\Bs$.
\item the S3-variety $\fX^{\Bs}_{\Bt}$.  Here we use the notation of \cite[\S
    10.2.2]{BLPWgco}; this is subvariety of the cotangent bundle to
    the space of flags of type $\Bt$, intersected with a slice to the
    orbit through a nilpotent $e_{\Bs}$ with Jordan type $\Bs$.  If we choose a
    basis so that $e_{\Bs}$ is in Jordan normal form, with the order
    given by $\Bs$, the preferred
    chamber is the cocharacters which are diagonal with weakly
    decreasing weights in this basis.
\item the resolved slice $\operatorname{Gr}^{\bar \bmu^!}_{\la^!}$ in
  the affine Grassmannian.  Here, we use the notation of \cite{KWWY}:
  we think of $\mu^!,\lambda^!$ as coweights for the
  Langlands dual group $PGL_\ell$ and let $\bmu^!$ be the sequence of
  fundamental coweights $(\om_{t_1}^\vee,\dots,\om_{t_e}^\vee)$.
  This is the preimage in the convolution variety
  $\operatorname{Gr}^{\om_{t_1}^\vee} * \cdots *
  \operatorname{Gr}^{\om_{t_1}^e}$ of a normal slice to
  $\operatorname{Gr}^{\la^!}$ inside $\operatorname{Gr}^{\mu^!}$.  The
  preferred chamber is that containing $\rho^\vee$.
\end{itemize}
In each case, there is a ``reversing map'' on the combinatorial data;
use the subscript ${}_\circ$ to denote this in each case:
$\Bs_\circ,\Bt_\circ$ are just the reversals of the entries.  We let
$\mu_\circ=w_0\cdot\mu$ and use $\la_\circ $ for the same weight, but
to denote that we now prefer the inverse of the preferred chamber of
cocharacters.  Similarly, $\bmu^!_\circ$ is the reversal of this
sequence, and $\la^!_\circ=w_0\cdot \la^!$.

Note that all of these objects are of type $A$, but they all involve
different ranks.  Work of Maffei \cite{Maf} and Mirkovi\'c-Vybornov
\cite{MV08} establishes that the varieties $\fM^\la_\mu, \fX^{\Bs}_{\Bt}$ and
$\operatorname{Gr}^{\bar \bmu^!}_{\la^!}$ are isomorphism, and
\cite[4.6.4]{Losq} shows that in fact they are symplectomorphic.

\excise{\begin{theorem}[\mbox{}]
The varieties algebraically symplectomorphic to each other.
\end{theorem}
\begin{proof}
  We need only establish that these varieties are symplectomorphic.
  It's enough to establish that the forms on the affinizations agree,
  since the forms on the resolutions are just pullbacks.
  In each case, the variety is question can realized as a slice
  between two strata in an affinization isomorphic to a type A
  nilcone.  This is by definition for the S3-varieties; for the
  dimension vector $(1,2,\dots,s-1)$ with $\la=s\om_{s-1}$ for quiver
  varieties; and the slice to the identity coset inside $
  \operatorname{Gr}^{\om_1}* \cdots *\operatorname{Gr}^{\om_1}$.  In
  all 3 cases, there is a Hamiltonian action of $PGL_s$, and the
  moment maps provide an isomorphism to the nilcone with its standard
  symplectic structure.
\end{proof}}
Of course, this means that Theorem \ref{symplectic-duality} tells us how to
find the symplectic dual of each of these varieties.  However, some
care about signs is needed since our formulation of
this theorem involves negating $\theta$.

\begin{theorem}\label{finite-dual}
  The symplectic dual of $\fM^\la_\mu\cong \fX^{\Bs}_{\Bt}\cong
  \operatorname{Gr}^{\bar \bmu^!}_{\la^!}$ is the variety
  $\fM^{\mu^!}_{\la^!}\cong \fX^{\Bt}_{\Bs}\cong
  \operatorname{Gr}^{\bar {\boldsymbol{\la}}}_{\mu}$ with the opposite of the preferred
  $\C^*$-action.   This is equivalent to instead considering
  \[\fM^{\mu_\circ^!}_{\la^!}\cong \fM^{\mu^!}_{\la^!_\circ}\cong
  \fX^{\Bt_\circ}_{\Bs}\cong \fX^{\Bt}_{\Bs_\circ}\cong
  \operatorname{Gr}^{\bar {\boldsymbol{\la}}_\circ}_{\mu}\cong \operatorname{Gr}^{\bar {\boldsymbol{\la}}}_{\mu_\circ} \] with the preferred $\C^*$-action.
\end{theorem}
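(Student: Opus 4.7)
The plan is to deduce the theorem as a specialization of Theorem~\ref{symplectic-duality}, transported across the known isomorphisms of Maffei and Mirkovi\'c-Vybornov to the S3 and Grassmannian slice incarnations. First, I observe that the finite-type $A_{e-1}$ situation is exactly the case $v_0=0$ of the affine $\widehat{\mathfrak{sl}}_e$ setting: the Crawley-Boevey dimension vector never activates the affine node, the matrix $U$ has only $0/1$ entries, and every multipartition indexing a fixed point fits inside an $s_i\times(e-s_i)$ rectangle. Consequently the combinatorial duality of Section~\ref{sec:dual-twist-shuffl} specializes to the bijection on multipartitions that swaps rows and columns of $U$. This is precisely the bijection that sends $\fM^\la_\mu$ to $\fM^{\mu^!}_{\la^!}$ with the $\C^*$-action produced from the transposed matrix $U^!$.

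Next, I would transfer this statement to the other two incarnations. By \cite[4.6.4]{Losq}, the varieties $\fM^\la_\mu$, $\fX^{\Bs}_{\Bt}$ and $\operatorname{Gr}^{\bar\bmu^!}_{\la^!}$ are symplectomorphic, and the isomorphisms of Maffei and Mirkovi\'c-Vybornov are $PGL$-equivariant. Therefore their preferred Hamiltonian $\C^*$-chambers (induced by an order on the Jordan blocks of $e_{\Bs}$, respectively by $\rho^\vee$) correspond under the Uglov identification $u_{\Bs}$ to the chamber of weightings determined by the ordering of $\Bs$. Applying Theorem~\ref{symplectic-duality} then identifies the symplectic dual cone, together with its preferred chamber, as $\fM^{\mu^!}_{\la^!}\cong\fX^{\Bt}_{\Bs}\cong\operatorname{Gr}^{\bar{\boldsymbol{\la}}}_\mu$, but with the \emph{opposite} of its preferred $\C^*$-action, since our formulation of symplectic duality negates $\theta$.

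Finally, I would remove the sign asymmetry by noting that negating the $\C^*$-action on any of these varieties can be realized geometrically by an involution that reverses the combinatorial data: on $\fX^{\Bs}_{\Bt}$ one conjugates by the longest Weyl element of the Levi, replacing $\Bt$ by $\Bt_\circ$ (equivalently $\Bs$ by $\Bs_\circ$); on $\operatorname{Gr}^{\bar{\boldsymbol{\la}}}_\mu$ one uses the Chevalley involution, which reverses the sequence $\bla$ to $\bla_\circ$ and sends $\mu$ to $\mu_\circ=w_0\mu$; on $\fM^{\mu^!}_{\la^!}$ the corresponding operation is the reflection functor/hyperk\"ahler rotation that sends $\mu^!$ to $\mu^!_\circ$. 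Matching these involutions across the Maffei--Mirkovi\'c-Vybornov isomorphisms yields the chain of identifications displayed in the theorem.

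The main obstacle will be the careful bookkeeping in the last paragraph: showing that the involution reversing the $\C^*$-action on one incarnation is intertwined, under the Maffei--Mirkovi\'c-Vybornov symplectomorphisms, with the combinatorial reversal on the others. One clean way to carry this out is to check it on the fixed-point set, where the $\C^*$-fixed locus on each side is indexed by standard tableaux/multipartitions whose labels are explicitly permuted by $w_0$; equivariant formality then upgrades the bijection on fixed points to an equivariant symplectomorphism, completing the proof.
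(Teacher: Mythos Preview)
Your approach is essentially the one the paper takes: the theorem is presented there as a direct specialization of Theorem~\ref{symplectic-duality} to the case $v_0=0$, transported across the Maffei and Mirkovi\'c--Vybornov symplectomorphisms, with the sign in $-\vartheta$ accounting for the reversal ${}_\circ$. The paper in fact gives no separate proof beyond the remark preceding the statement that ``Theorem~\ref{symplectic-duality} tells us how to find the symplectic dual\ldots\ some care about signs is needed since our formulation\ldots\ involves negating $\theta$.'' Your first two paragraphs flesh this out correctly.

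One caution about your final paragraph: the suggestion to verify compatibility of the reversing involutions by checking on fixed points and then invoking ``equivariant formality'' to upgrade to an equivariant symplectomorphism is not sound. Equivariant formality is a cohomological statement and does not produce isomorphisms of varieties. The reversing involutions you name (conjugation by the longest Levi element on $\fX^{\Bs}_{\Bt}$, the Chevalley/outer involution on the affine Grassmannian, Maffei's reflection isomorphism on the quiver variety) are explicit algebraic automorphisms, and their compatibility with the Maffei--Mirkovi\'c--Vybornov maps should be checked directly from the constructions, not inferred from fixed-point data. This is straightforward bookkeeping rather than a genuine obstacle, so the overall strategy stands.
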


 \bibliography{../gen}
\bibliographystyle{amsalpha}
\end{document}